\renewcommand{\vec}[1]{\boldsymbol{#1}}
\newcommand{\dif}{\mathrm{d}}
\newtheorem{theorem}{Theorem}[section]
\newtheorem{remark}[theorem]{Remark}
\newtheorem{lemma}[theorem]{Lemma}
\newtheorem{corollary}[theorem]{Corollary}
\newtheorem{proposition}[theorem]{Proposition}
\newcommand{\im}{\mathrm{i}}
\begin{document}
\begin{center}
{\Large \bf 
Characterisation of Multiple Conducting Permeable Objects in Metal Detection by Polarizability Tensors}
\\
P.D. Ledger$^*$, W.R.B. Lionheart$^\dagger$ and A.A.S. Amad$^*$ \\
$^*$Zienkiewicz Centre for Computational Engineering, College of Engineering, \\Swansea University Bay Campus, Swansea. SA1 8EN\\
$^\dagger$School of Mathematics, Alan Turing Building, \\The University of Manchester, Oxford Road, Manchester, M13 9PL\\
20th September 2018
\end{center}

\section*{Abstract}

Realistic applications in metal detection involve multiple inhomogeneous  conducting permeable objects and the aim of this paper is to characterise such objects by polarizability tensors. We show
 that, for the eddy current model, the leading order terms for the perturbation in the magnetic field, due to the presence of $N$ small conducting permeable homogeneous inclusions, comprises of a sum of $N$ terms with each containing a complex symmetric rank 2 polarizability tensor. Each tensor contains information about the shape and material properties of one of the objects and is independent of its position. The asymptotic expansion we obtain extends a previously known result for a single isolated object and applies in situations where the object sizes are small and the objects are sufficiently well separated. We also obtain a second expansion that describes the perturbed magnetic field for inhomogeneous and closely spaced objects, which again characterises the objects by a complex symmetric rank 2 tensor. The tensor's coefficients can be computed by solving a vector valued transmission problem and we include numerical examples to illustrate the agreement between the asymptotic formula describing the perturbed fields and the numerical prediction.  We  also include algorithms for the localisation and identification of multiple inhomogeneous objects.

\vspace{0.1in}

\noindent {\em MSC: 35R30, 35B30}

\vspace{0.1in}

\noindent {\bf Keywords:} Polarizability Tensors; Asymptotic Expansion; Eddy Currents; Metal Detectors; Land Mine Detection

\section{Introduction}

There is considerable interest in being able to locate and characterise  multiple conducting permeable objects from measurements of mutual inductance between a transmitting and a receiving coil, where the coupling is inductive. An obvious example is in metal detection where the goal is to identify and locate the multiple objects present in a low conducting background. Applications include security screening, archaeological digs, ensuring food safety as well as the search for landmines and unexploded ordnance and landmines.
Other applications include magnetic induction tomography for medical imaging applications and monitoring of corrosion of steel reinforcement in concrete structures.

In all these practical applications, the need to locate and distinguish between multiple conducting permeable inclusions is common. This includes benign situations, such as coins and keys accidentally left in a pocket during a security search or a treasure hunter becoming lucky and discovering a hoard of Roman coins, as well as threat situations, where the risks need to be clearly identified from the background clutter. For example, in the case of searching for unexploded landmines, the ground can be contaminated by ring-pulls, coins and other metallic shrapnel, which makes the process of clearing them  very slow as each metallic object needs to be dug up with care.
Furthermore, conducting objects are also often inhomogeneous and made up of several different metals. For instance, the barrel of a gun is invariably steel while the frame could be a lighter alloy, jacketed bullets have a lead shot and a brass jacket and modern coins often consist of a cheaper metal encased in nickel or brass alloy. Thus, in practical metal detection applications, it is important to be able to describe both multiple objects and inhomogeneous objects.

Magnetic polarizability tensors (MPTs) hold considerable promise for the low-cost characterisation in metal detection. An asymptotic expansion describing the perturbed magnetic field due to the presence of a small conducting permeable object has been obtained by Ammari, Chen, Chen,  Garnier and Volkov~\cite{ammarivolkov2013}, which characterises the object in terms of a rank 4 tensor. Ledger and Lionheart have shown that this asymptotic expansion simplifies for orthonormal coordinates and allows a conducting permeable object to be characterised by a complex symmetric rank 2 MPT with an explicit expression for its 6 coefficients~\cite{ledgerlionheart2014}. Ledger and Lionheart have also investigated the properties of this tensor ~\cite{ledgerlionheart2016} and they have written the article~\cite{ledgerlionheart2018}  to explain these developments to the electrical engineering community as well as to show how it applies in several realistic situations. In~\cite{ledgerlionheart2017} they have obtained a complete asymptotic expansion of the magnetic field, which characterises the object in terms of a new class of generalised magnetic polarizability tensors (GMPTs), the rank 2 MPT being the simplest case. The availability of an explicit formula for the MPT's coefficients, and its improved understanding, allows new algorithms for object location and identification to be designed e.g.~\cite{ammarivolkov2013b}.

Electrical engineers have applied MPTs to a range of practical metal detection applications, including walk through metal detectors, in line scanners and demining e.g.~\cite{barrowes2008,shubitidze2004,shubitdze2007,baumbook, marsh2015,dekdouk,zhao2016}, see  also our article~\cite{ledgerlionheart2018} for a recent review, but without knowledge of the explicit formula described above.  Engineers have made a prediction of the form of the response for multiple objects e.g.~\cite{braunisch2002}, but without an explicit criteria on the size or the distance between the objects in order for the approximation to hold. Grzegorczyk,  Barrowes, Shubitidze,  Fern\'andez and O'Neill have applied a time domain approach to classify multiple unexploded ordinance using descriptions related to MPTs~\cite{Gregorczy}. Davidson, Abel-Rehim, Hu, Marsh, O'Toole and Peyton have made measurements of MPTs for inhomogeneous US coins~\cite{davidson2018} and Yin, Li, Withers and Peyton have also made measurements to characterise inhomogeneous aluminium/carbon-fibre reinforced plastic sheets~\cite{yin2010}. But, in all cases, without an explicit formula.

Our work has the following novelties: Firstly, we characterise rigidly joined collections of different metals (i.e. metals touching or held in that configuration by a non-conducting material)  by MPTs  overcoming a deficiency of our previous work. Secondly, we find that the frequency spectra of the eigenvalues of the real and imaginary parts of the MPT for an inhomogeneous object exhibit multiple non-stationary inflection points and maxima, respectively, and the number of these gives an upper bound on the number of materials making up the object. To achieve this, we revisit the asymptotic formula of Ammari {\em et al.}~\cite{ammarivolkov2013} and our previous work~\cite{ledgerlionheart2014} and extend it to treat multiple objects by describing the perturbed magnetic field as a sum of terms involving the MPTs associated with each of the inclusions. We also provide a criteria based on the distance between the objects, which determines the situations in which the expression will hold. We derive a second asymptotic expansion that describes the perturbed magnetic field in the case of inhomogeneous objects and, as a corollary, this also describes the magnetic field perturbation in the case of closely spaced objects. In each case, we provide new explicit formulae for the MPTs. We also present algorithms for the localisation and characterisation of objects, which extends those for the isolated object case~\cite{ammarivolkov2013}.

The paper is organised as follows: In Section~\ref{sect:back}, the characterisation of a single homogeneous object is briefly reviewed. Section~\ref{sect:main} presents our main results for characterising multiple and inhomogeneous objects by MPTs. Sections~\ref{sect:proof1} and~\ref{sect:proof2} contain the details of the proofs for our main results. In Section~\ref{sect:num}, we present numerical results to demonstrate the accuracy of the asymptotic formulae and presents results of algorithms for the localisation and identification of multiple (inhomogeneous) objects.

\section{Characterisation of a Single Conducting Permeable Object} \label{sect:back}
We begin by recalling known results for the characterisation of a single homogenous conducting permeable object.
Following~\cite{ammarivolkov2013,ledgerlionheart2014} we describe a single inclusion by $B_\alpha := \alpha B + {\vec z}$, which means that it can be thought of a unit-sized object $B$ located at the origin, scaled by $\alpha$ and translated by ${\vec z}$. We assume the background is non-conducting and non-permeable and introduce the position dependent conductivity and permeability as
\begin{align}
\sigma_\alpha= \left \{ \begin{array}{ll} \sigma_* & \text{in $B_\alpha$} \\
                                                             0 & \text{in $B_\alpha^c$}
                                                             \end{array},  \right . \qquad
\mu_\alpha= \left \{ \begin{array}{ll} \mu_* & \text{in $B_\alpha$} \\
                                                             \mu_0 & \text{in $B_\alpha^c$}
                                                             \end{array} \right .  , \nonumber
\end{align}
where $\mu_0$ is the permeability of free space and $B_\alpha^c := {\mathbb R}^3 \setminus\overline{ B_\alpha}$. For metal detection, the relevant mathematical model is the eddy current approximation of Maxwell's equations since $\sigma_*$ is large and the angular frequency $\omega=2 \pi f$ is small (see Ammari, Buffa and N\'ed\'elec~\cite{ammaribuffa2000} for a detailed justification). Here 
the electric and magnetic interaction fields, ${\vec E}_\alpha$ and ${\vec H}_\alpha$, respectively, satisfy the curl equations
\begin{equation}
\nabla \times {\vec H}_\alpha = \sigma_\alpha {\vec E}_\alpha +{\vec J}_0, \qquad 
\nabla \times {\vec E}_\alpha = \im \omega \mu_\alpha {\vec H}_\alpha, \label{eqn:eddyqns}
\end{equation}
in ${\mathbb R}^3$ and decay as $O(|{\vec x}|^{-1})$ for $|{\vec x}| \to \infty$. In the above ${\vec J}_0$ is an external current source with support in $B_\alpha^c$. In absence of an object, the background fields ${\vec E}_0$ and ${\vec H}_0$ satisfy (\ref{eqn:eddyqns}) with $\alpha=0$.

The task is to find an economical description for the perturbed magnetic  field $({\vec H}_\alpha - {\vec H}_0)({\vec x})$ due to the presence of $B_\alpha$, which characterises the object's shape and material parameters by a small number of parameters separately to its location ${\vec z}$.  For ${\vec x}$ away from $B_\alpha$, the leading order term in an asymptotic expansion for  $({\vec H}_\alpha - {\vec H}_0)({\vec x})$ as $\alpha \to 0$ has been derived by Ammari {\it et al.}~\cite{ammarivolkov2013}. We have shown that this reduces to the simpler form~\cite{ledgerlionheart2014,ledgerlionheart2018}~\footnote{In order to simplify notation, we drop the double check on ${\mathcal M}$ and the single check on ${\mathcal C}$, which was used in~\cite{ledgerlionheart2014} to denote two and one reduction(s) in rank, respectively. We recall that ${\mathcal M}= ({\mathcal M})_{jk} {\vec e}_j \otimes {\vec e}_k$ by the Einstein summation convention where we use the notation ${\vec e}_j$ to denote the $j$th unit vector. We will denote the $j$th component of a vector ${\vec u}$ by ${\vec u} \cdot {\vec e}_j = ({\vec u})_j$ and the $j,k$th coefficient of a rank 2 tensor ${\mathcal M}$ by ${\mathcal M}_{jk}$.} 
\begin{align}
({\vec H}_\alpha - {\vec H}_0)({\vec x})_i =&( {\vec D}_{x}^2 G({\vec x},{\vec z}))_{ij}  ( {\mathcal M}[\alpha B])_{jk} ({\vec H}_0({\vec z}) )_k +( {\vec R}({\vec x}))_i   \nonumber \\
= &  \frac{1}{4 \pi r^3} \left ( 3\hat{\vec r} \otimes \hat{\vec r} - {\mathbb I} \right)_{ij}  ( {\mathcal M} [\alpha B])_{jk} ({\vec H}_0({\vec z}) )_k + ({\vec R}({\vec x}))_i    \label{eqn:asymp}.
\end{align}
In the above, $G({\vec x},{\vec z}) := 1/ ( 4 \pi |{\vec x}-{\vec z}|)$  is the free space Laplace Green's function, ${\vec r}: ={\vec x}- {\vec z}$, $r=|{\vec r}|$ and $\hat{\vec r}= {\vec r}/ r$ and ${\mathbb I}$ is the rank 2 identity tensor.
The term ${\vec R}({\vec x})$ quantifies the remainder and it is known that  $|{\vec R}| \le C \alpha^4 \| {\vec H}_0 \|_{W^{2,\infty}(B_\alpha) }$.  The result holds when $\nu  := \sigma_* \mu_0 \omega \alpha^2 = O(1)$ (this includes the case of fixed $\sigma_*$, $\mu_*$, $\omega$ as $\alpha \to 0$) and requires that the background field be analytic in the object. Note that (\ref{eqn:asymp}) involves the evaluation of the background field within the object, (usually at it's centre) i.e. ${\vec H}_0({\vec z}) $. 

The complex symmetric rank 2 tensor ${\mathcal M}[\alpha B]:= -{\mathcal C}[\alpha B] + {\mathcal N}[\alpha B]$ in this asymptotic expansion, which depends on $\omega$, $\sigma_*$, $\mu_*/\mu_0$, $\alpha$ and the shape of $B$, but is independent of ${\vec z}$, is the MPT, and its coefficients can be computed from
\begin{subequations}  \label{eqn:mcheck} 
\begin{align}
({\mathcal C} [\alpha B])_{jk} :=& -\frac{\im \nu \alpha^3 }{4}{\vec e}_j \cdot \int_B {\vec \xi} \times ({\vec \theta}_k + {\vec e}_k \times {\vec \xi} ) \dif {\vec \xi},  \\
({\mathcal N} [\alpha B])_{jk} := &  \alpha^3 \left ( 1- \frac{\mu_0}{\mu_*} \right ) \int_B \left (
{\vec e}_j \cdot {\vec e}_k+ \frac{1}{2}  {\vec e}_j \cdot  \nabla_\xi \times {\vec \theta}_k \right ) \dif {\vec \xi}.
\end{align}
\end{subequations}
These, in turn, rely on the vectoral solutions ${\vec \theta}_k$, $k=1,2,3,$ to the transmission problem
\begin{subequations}
\begin{align}
\nabla_\xi \times \mu_*^{-1} \nabla_\xi \times {\vec \theta}_k - \im \omega \sigma_* \alpha^2 {\vec \theta }_k & = \im \omega \sigma_* \alpha^2 {\vec e}_k \times {\vec \xi}  && \text{in $B$ } ,\\
\nabla_\xi \cdot {\vec \theta}_k  = 0 , \qquad \nabla_\xi \times \mu_0^{-1} \nabla_\xi \times {\vec \theta}_k  & = {\vec 0} && \text{in $B^c:= {\mathbb R}^3 \setminus \overline{B}$ }, \\
[{\vec n} \times {\vec \theta}_k ]_\Gamma = {\vec 0},  \qquad [{\vec n} \times \mu^{-1} \nabla_\xi \times {\vec \theta}_k ]_\Gamma & = -2 [\mu^{-1 } ]_\Gamma {\vec n} \times {\vec e}_k  && \text{on $\Gamma:= \partial B$},\\
{\vec \theta}_k & = O( | {\vec \xi} |^{-1}) && \text{as $|{\vec \xi} | \to \infty$ },
\end{align}\label{eqn:transproblemthetar}
\end{subequations}
where $[\cdot ]_\Gamma $ denotes the  jump of the function  over $\Gamma$ and ${\vec \xi}$ is measured from an origin chosen to be in $B$. In~\cite{ledgerlionheart2016} we have presented numerical results for the frequency behaviour of the coefficients of $ {\mathcal M}$ for a variety of simply and multiply connected objects. These have been obtained by applying a $hp$-finite element method to solve (\ref{eqn:transproblemthetar}) for ${\vec \theta}_k$, $k=1,2,3,$ and then to compute $ {\mathcal M}$ using (\ref{eqn:mcheck}). Our previously presented results have exhibited excellent agreement with for MPTs  previously presented in the electrical engineering literature. Pratical applications of the asymptotic formula have been discussed in~\cite{ledgerlionheart2018}.

\section{Main Results} \label{sect:main}

The asymptotic formula given in (\ref{eqn:asymp}) is for a single isolated homogenous object. But, as described in the introduction, for realistic metal detection scenarios, measurements of the perturbed magnetic field often relate to field changes caused by the presence of multiple objects or inhomogeneous objects. The purpose of this work is to extend the description to the cases of well separated multiple homogeneous objects and inhomogeneous objects. As a result of corollary, our second main result also describes the case of objects that of objects that are closely spaced.
Below we summarise the main results of our paper.

\subsection{Multiple Homogeneous Objects that are Sufficiently Well Spaced} \label{sect:wellspaced}

We consider $N$  homogenous conducting permeable objects of the form $(B_\alpha)^{(n)}=\alpha^{(n)} B^{(n)} +{\vec z}^{(n)}$~\footnote{Note no summation over $n$ is implied.} with Lipschitz boundaries where, for the $n$th object, $B^{(n)}$ denotes a corresponding unit sized object located at the origin, $\alpha^{(n)}$ denotes the object's size and ${\vec z}^{(n)}$ the object's translation from the origin.
The union of all objects is $ {\vec B}_{\vec \alpha} :=\bigcup_{n=1}^N (B_\alpha)^{(n)}$ where we use a bold subscript ${\vec \alpha}$ to denote the possibility that each object in the collection  can have a different size.
We also employ the same notation for the fields ${\vec E}_{\vec \alpha}$ and ${\vec H}_{\vec \alpha}$, which satisfy (\ref{eqn:eddyqns}).  An illustration of a typical configuration is shown in Figure~\ref{fig:multobjectsnotclose}. In this figure, there are $N=3$ objects, which are the spheres $(B_\alpha)^{(n)}=\alpha^{(n)} B^{(n)} +{\vec z}^{(n)}$, $n=1,2,3$, where, for the $n$th object, $\alpha^{(n)}$ is its size (here its radius) and ${\vec z}^{(n)}$ is its translation from the origin. In the presented case $B=B^{(1)}=B^{(2)}=B^{(3)}$ is a unit sphere located at the origin although, in practice, the objects do not need to have the same shape.
\begin{figure}
\begin{center}
\includegraphics[width=0.8\textwidth]{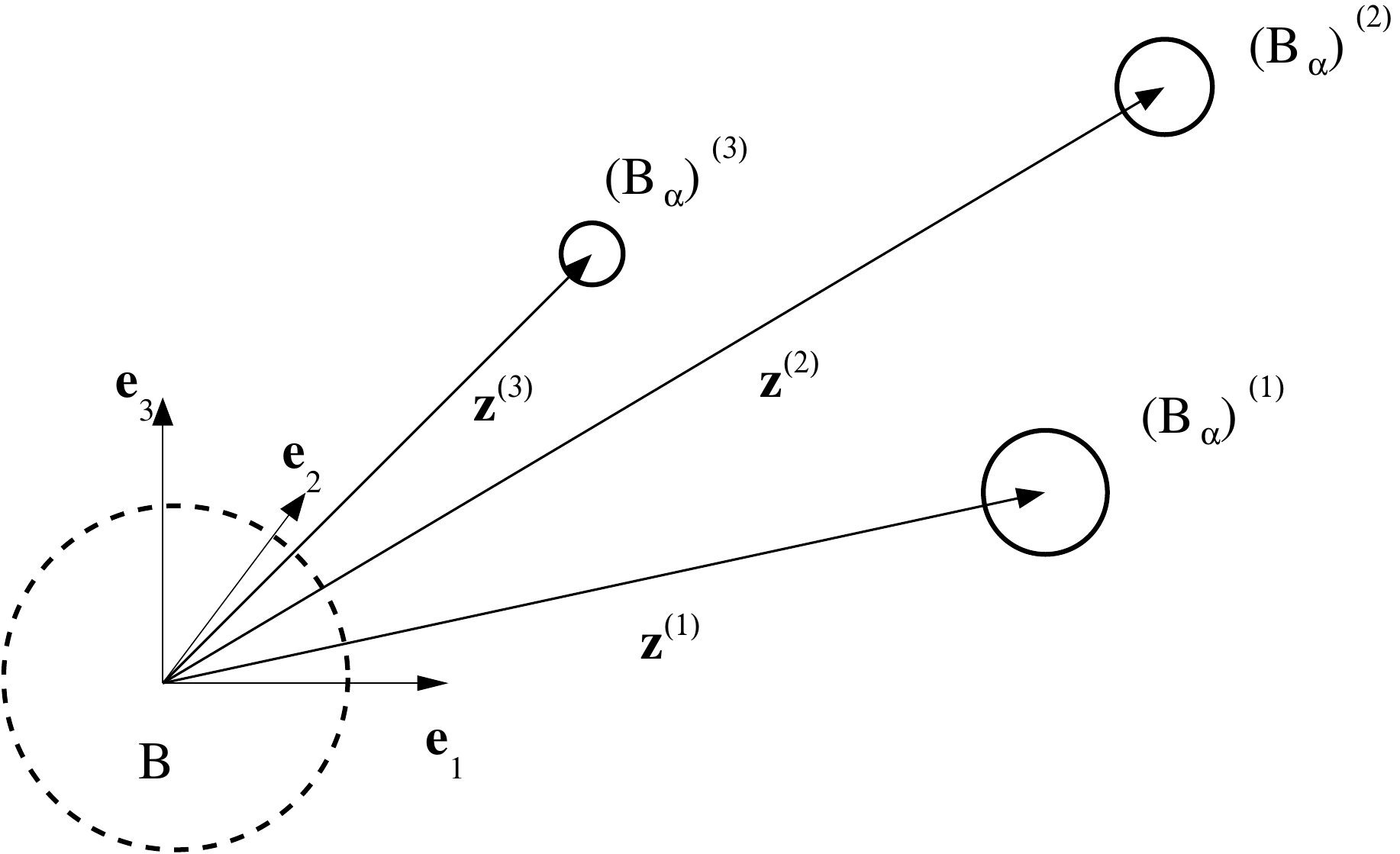}
\end{center}
\caption{Illustration of a typical situation of $N=3$ objects with ${\vec B}_{\vec \alpha}=\bigcup_{n=1}^N (B_\alpha)^{(n)} = \alpha^{(n)} B^{(n)} + {\vec z}^{(n)}$ such  that they are not closely spaced where each object  $  (B_\alpha)^{(n)}  $ is a sphere, $\alpha^{(n)}$ is the radius of the $n$th sphere, ${\vec z}^{(n)}$ describes the translation of the $n$th sphere from the origin and $B=B^{(1)}=B^{(2)}=B^{(3)}$ is a unit sphere positioned at the origin. }\label{fig:multobjectsnotclose}
\end{figure}

We generalise the definitions of $\mu_\alpha$ and $\sigma_\alpha$ previously stated in Section~\ref{sect:back} to
\begin{align}
\sigma_{\vec \alpha}= \left \{ \begin{array}{ll} \sigma_{*}^{(n)} & \text{in $(B_\alpha)^{(n)} $} \\
                                                             0 & \text{in ${\vec B}_{\vec \alpha}^c $}
                                                             \end{array},  \right . \qquad
\mu_{\vec \alpha}= \left \{ \begin{array}{ll} \mu_*^{(n)} & \text{in $(B_\alpha)^{(n)} $} \\
                                                             \mu_0 & \text{in ${\vec B}_{\vec \alpha}^c$}
                                                             \end{array} \right . \nonumber ,
\end{align}
where ${\vec B}_{\vec \alpha}^c:= {\mathbb R}^3\setminus \overline{{\vec B}_{\vec \alpha}}$
and set $\sigma_{\min} \le \sigma_*^{(n)} \le \sigma_{\max}$ and  $\mu_{\min} \le \mu_*^{(n)} \le \mu_{\max}$ for $n=1,\ldots, N$. We introduce $\nu_{\min} \le \nu^{(n)} := \omega \mu_0 \sigma_*^{(n)} (\alpha^{(n)})^2\le \nu_{\max}$ and set
 $\displaystyle \alpha_{\min}=\min_{n=1,\ldots,N} \alpha^{(n)}$, $\displaystyle \alpha_{\max}=\max_{n=1,\ldots,N} \alpha^{(n)}$
  and require that the parameters of the inclusions be such that
\begin{equation}
 \nu_{\max}  =O(1),  \nonumber
\end{equation}
which implies that $\nu^{(n)}=O(1)$.

The task is then to provide a low-cost description of  $({\vec H}_{\vec \alpha} - {\vec H}_0)({\vec x})$ for ${\vec x}$ away from ${\vec B}_{\vec \alpha}$. This is accomplished through the following result.

\begin{theorem} \label{thm:objectsnocloselyspaced}
For the arrangement  $ {\vec B}_{\vec \alpha}  $   of $N$ homogeneous conducting permeable objects $(B_\alpha)^{(n)} =  \alpha^{(n)} B^{(n)} +{\vec z}^{(n)}$ with  $\displaystyle \min_{n,m=1,\ldots,N, n\ne m}|\partial (B_\alpha)^{(n)} -\partial (B_{\alpha})^{(m)}  | \ge \alpha_{max}$ and parameters such that  $\nu^{(n)} =O(1)$ , the perturbed magnetic field at positions ${\vec x}$ away from ${\vec B}_{\vec \alpha}$ satisfies
\begin{align}
({\vec H}_{\vec \alpha} - {\vec H}_0)({\vec x})_i =&\sum_{n=1}^N ({\vec D}_{x}^2 G({\vec x},{\vec z}^{(n)}))_{ij}  ( {\mathcal M}[\alpha^{(n)} B^{(n)}])_{jk} ({\vec H}_0({\vec z}^{(n)}) )_k +( {\vec R}({\vec x}))_i ,  \label{eqn:asymp2}
\end{align}
where 
\begin{equation}
|{\vec R}({\vec x}| \le C \alpha_{\max}^4 \| {\vec H}_0 \|_{W^{2,\infty}({\vec B}_{\vec \alpha})}  , \nonumber
\end{equation}
uniformly in ${\vec x}$ in any compact set away from $ {\vec B}_{\vec \alpha}$.  The coefficients of the complex symmetric MPTs ${\mathcal M}[\alpha^{(n)} B^{(n)}] =- {\mathcal C}[\alpha^{(n)} B^{(n)}] + {\mathcal N}[\alpha^{(n)} B^{(n)}]$, $n=1,\ldots,N$,  can be computed independently for each of the objects $\alpha^{(n)}B^{(n)}$ using the expressions
\begin{subequations}  \label{eqn:mcheckmult} 
\begin{align}
( {\mathcal C} [\alpha^{(n)} B^{(n)}])_{jk} :=& -\frac{\im \nu^{(n)} (\alpha^{(n)})^3 }{4}{\vec e}_j \cdot \int_{B^{(n)}} {\vec \xi}^{(n)} \times ({\vec \theta}_k^{(n)} + {\vec e}_k \times {\vec \xi}^{(n)} ) \dif {\vec \xi}^{(n)},  \\
({\mathcal N} [\alpha^{(n)} B^{(n)}])_{jk} := &  (\alpha^{(n)})^3 \left ( 1- \frac{\mu_0}{\mu_*^{(n)}} \right ) \int_{B^{(n)}} \left (
{\vec e}_j \cdot {\vec e}_k + \frac{1}{2}  {\vec e}_j \cdot  \nabla_\xi \times {\vec \theta}_k^{(n)} \right ) \dif {\vec \xi}^{(n)}.
\end{align}
\end{subequations}
These, in turn, rely on the vectoral solutions ${\vec \theta}_k^{(n)} $, $k=1,2,3,$ to the transmission problem
\begin{subequations}
\begin{align}
\nabla_\xi \times (\mu_*^{(n)} )^{-1} \nabla_\xi \times {\vec \theta}_k ^{(n)}- \im \omega \sigma_*^{(n)} (\alpha^{(n)})^2 {\vec \theta }_k^{(n)} & = \im \omega \sigma_*^{(n)} (\alpha^{(n)})^2 {\vec e}_k \times {\vec \xi}^{(n)}  && \text{in $B^{(n)}$ } ,\\
\nabla_\xi \cdot {\vec \theta}_k ^{(n)} = 0 , \qquad \nabla_\xi \times \mu_0^{-1} \nabla_\xi \times {\vec \theta}_k ^{(n)} & = {\vec 0} && \text{in $(B^{(n)})^c $ }, \\
[{\vec n} \times {\vec \theta}_k^{(n)} ]_{\Gamma^{(n)}}  & = {\vec 0} && \text{on $\Gamma^{(n)}$} , \\
 [{\vec n} \times \mu^{-1} \nabla_\xi \times {\vec \theta}_k^{(n)} ]_{\Gamma^{(n)}} & = -2 [\mu^{-1 } ]_{\Gamma^{(n)}} {\vec n} \times {\vec e}_k  && \text{on $\Gamma^{(n)}$},\\
{\vec \theta}_k^{(n)} & = O( | {\vec \xi}^{(n)} |^{-1}) && \text{as $|{\vec \xi}^{(n)} | \to \infty$ },
\end{align}\label{eqn:transproblemthetar2}
\end{subequations}
where $( B^{(n)})^c : = {\mathbb R}^3 \setminus \overline{B^{(n)}}$, $ \Gamma^{(n)}:= \partial B^{(n)}$ and  ${\vec \xi}^{(n)}$ is measured from an origin chosen to be in $B^{(n)}$.
\end{theorem}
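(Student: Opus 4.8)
The plan is to deduce Theorem~\ref{thm:objectsnocloselyspaced} from the single-object asymptotic formula (\ref{eqn:asymp}) by combining an integral representation that decouples over the disjoint inclusions with a multiple-scattering (Neumann-series) argument showing that all cross-object interactions enter only at the order of the remainder. First I would set up the problem for the perturbation $\vec{e}:=\vec{E}_{\vec\alpha}-\vec{E}_0$, $\vec{h}:=\vec{H}_{\vec\alpha}-\vec{H}_0$, which satisfies the eddy-current system with coefficients $\sigma_{\vec\alpha},\mu_{\vec\alpha}$ and with data supported on $\vec{B}_{\vec\alpha}$ arising from the contrasts $\sigma_*^{(n)}$ and $[\mu^{-1}]_{\Gamma^{(n)}}$; well-posedness and the required a priori bounds follow from the standard $\vec{H}$- (or $\vec{A}$-)based variational theory of Ammari, Buffa and N\'ed\'elec~\cite{ammaribuffa2000}. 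For $\vec{x}$ away from $\vec{B}_{\vec\alpha}$ I would then write a Stratton--Chu/Green representation of $\vec{h}(\vec{x})$ as a boundary integral of the Cauchy data of $(\vec{e},\vec{h})$ over $\partial\vec{B}_{\vec\alpha}$ against the free-space dyadics; since $\vec{B}_{\vec\alpha}=\bigsqcup_{n=1}^N (B_\alpha)^{(n)}$ is a \emph{disjoint} union, $\partial\vec{B}_{\vec\alpha}=\bigsqcup_{n=1}^N \partial(B_\alpha)^{(n)}$ and the representation splits as $\vec{h}(\vec{x})=\sum_{n=1}^N \vec{h}^{(n)}(\vec{x})$, the $n$-th term being an integral over $\partial(B_\alpha)^{(n)}$ alone and hence depending on $(\vec{e},\vec{h})$ only through their restriction to a neighbourhood of $(B_\alpha)^{(n)}$.

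The next step is to localise and rescale about each object. Fixing $n$ and setting $\vec{\xi}^{(n)}=(\vec{x}-\vec{z}^{(n)})/\alpha^{(n)}$, the restriction of $(\vec{e},\vec{h})$ near $(B_\alpha)^{(n)}$ solves, to leading order, exactly the single-object transmission problem underlying (\ref{eqn:asymp})--(\ref{eqn:transproblemthetar}) for the unit object $B^{(n)}$, but driven by the \emph{local} incident field $\vec{H}^{(n)}_{\mathrm{inc}}:=\vec{H}_0+\sum_{m\ne n}\vec{h}^{(m)}$ instead of $\vec{H}_0$. Since $\nu^{(n)}=O(1)$ and $\vec{H}^{(n)}_{\mathrm{inc}}$ is analytic in $(B_\alpha)^{(n)}$ (the other objects' scattered fields solve homogeneous equations there), the known result (\ref{eqn:asymp}) applies object by object and gives $\vec{h}^{(n)}(\vec{x})_i=(\vec{D}_x^2 G(\vec{x},\vec{z}^{(n)}))_{ij}(\mathcal{M}[\alpha^{(n)}B^{(n)}])_{jk}(\vec{H}^{(n)}_{\mathrm{inc}}(\vec{z}^{(n)}))_k+O((\alpha^{(n)})^4)$, where the tensor $\mathcal{M}[\alpha^{(n)}B^{(n)}]$ and corrector $\vec{\theta}^{(n)}_k$ generated are precisely those of (\ref{eqn:mcheckmult}), (\ref{eqn:transproblemthetar2}), because those are whole-space problems on $B^{(n)}$ with no coupling to the other inclusions.

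I would then close the argument by estimating the interactions. The single-object dipole estimate, with bounds on derivatives of $G$, gives $\|\vec{h}^{(m)}\|_{W^{2,\infty}(U^{(n)})}\le C(\alpha^{(m)})^3\,\mathrm{dist}(\vec{z}^{(n)},\vec{z}^{(m)})^{-3}\,\|\vec{H}_0\|_{W^{2,\infty}(\vec{B}_{\vec\alpha})}$ on a neighbourhood $U^{(n)}$ of $(B_\alpha)^{(n)}$ that excludes the other objects; the separation hypothesis $\min_{n\ne m}|\partial(B_\alpha)^{(n)}-\partial(B_\alpha)^{(m)}|\ge\alpha_{\max}$ guarantees such a $U^{(n)}$ exists and bounds $\mathrm{dist}(\vec{z}^{(n)},\vec{z}^{(m)})$ below. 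Substituting $\vec{H}^{(n)}_{\mathrm{inc}}(\vec{z}^{(n)})=\vec{H}_0(\vec{z}^{(n)})+\sum_{m\ne n}\vec{h}^{(m)}(\vec{z}^{(n)})$ into the expression for $\vec{h}^{(n)}$ and iterating (a Neumann series in the number of scattering events), each multiple-scattering correction carries an extra factor $(\alpha^{(m)})^3$ against the $O((\alpha^{(n)})^3)$ propagator of object $n$, so all such corrections, together with the $N$ individual $O((\alpha^{(n)})^4)$ remainders, are absorbed into one $O(\alpha_{\max}^4\|\vec{H}_0\|_{W^{2,\infty}(\vec{B}_{\vec\alpha})})$ term for fixed $N$; uniformity in $\vec{x}$ on compact sets away from $\vec{B}_{\vec\alpha}$ follows from the explicit $|\vec{x}-\vec{z}^{(n)}|^{-3}$-type decay of $\vec{D}_x^2 G$. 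Collecting the leading terms yields (\ref{eqn:asymp2}).

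I expect the main obstacle to be the rigorous, uniform control of the multiple-scattering corrections: one must estimate the Cauchy data induced on each $\partial(B_\alpha)^{(n)}$ by the fields scattered from the other inclusions, propagate these estimates through the (rescaled) single-object transmission solvers with constants independent of the positions, and verify that the resulting iteration converges with the claimed rate — in particular tracking how the constants depend on the inter-object separations, which is where the hypothesis $\min_{n\ne m}|\partial(B_\alpha)^{(n)}-\partial(B_\alpha)^{(m)}|\ge\alpha_{\max}$ enters. The splitting of the representation formula over the disjoint union and the object-by-object identification of $\mathcal{M}[\alpha^{(n)}B^{(n)}]$ and $\vec{\theta}^{(n)}_k$ are, by contrast, essentially bookkeeping once the single-object theory is in hand.
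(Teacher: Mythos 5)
Your strategy is a Foldy--Lax / multiple-scattering argument: decompose the scattered field over the disjoint boundaries, treat each inclusion as a single-object scatterer driven by a local incident field $\vec{H}_0+\sum_{m\ne n}\vec{h}^{(m)}$, and resum the interactions as a Neumann series. This is genuinely different from the paper's route. The paper never sets up a self-consistent system or an iteration: it defines, for each object, a corrector ${\vec w}^{(n)}$ solving a single-object transmission problem driven only by the Taylor expansion ${\vec F}^{(n)}$ of the \emph{background} field about ${\vec z}^{(n)}$, proves one global variational energy estimate (Lemma~\ref{lemma:newlemma3_2}) in which the only new ingredient relative to the single-object case is the cross term $A_3=\sum_{m\ne n}(\nabla\times({\vec w}^{(m)}+{\vec \Phi}^{(m)}),\nabla\times{\vec v})_{(B_\alpha)^{(n)}}$, bounds that term directly by the far-field decay of each corrector away from its own object (Lemma~\ref{lemma:closelyspacedw} and Corollary~\ref{coll:closelyspacedw}), and then passes to the far field through the volume integral representation of Lemma~\ref{lemma:rev3_4} exactly as in the single-object case (Theorem~\ref{thm:rev3_2}). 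The interactions are thus absorbed in a single a priori estimate rather than resummed, and the leading term automatically carries $\vec{H}_0(\vec z^{(n)})$ rather than a local incident field.

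Two concrete gaps remain in your version. First, the step asserting that the restriction of $(\vec e,\vec h)$ near $(B_\alpha)^{(n)}$ solves, to leading order, the single-object transmission problem driven by $\vec{H}^{(n)}_{\mathrm{inc}}$ is the theorem in disguise: your $\vec{h}^{(n)}$ is a layer potential of the \emph{unknown exact} Cauchy data on $\partial(B_\alpha)^{(n)}$, and identifying it up to $O(\alpha_{\max}^4)$ with the single-object response to $\vec{H}^{(n)}_{\mathrm{inc}}$ requires uniform-in-$\alpha$ bounds on the rescaled transmission solvers and convergence of the resulting iteration --- which you flag as the main obstacle but do not supply; this is precisely the analytic content that the paper's energy estimates provide in its place. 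Second, the scaling of your interaction estimate does not close under the separation hypothesis as you use it: the bound $\|\vec{h}^{(m)}\|\le C(\alpha^{(m)})^3\,\mathrm{dist}(\vec z^{(n)},\vec z^{(m)})^{-3}\|\vec H_0\|$ is only small relative to $\vec{H}_0$ when the inter-object distances are bounded below by a fixed constant. If the separation is merely of order $\alpha_{\max}$, the field scattered by object $m$ is $O(1)$ at object $n$, the claimed ``extra factor $(\alpha^{(m)})^3$'' disappears, and the multiple-scattering corrections are of the same order as the leading term. The paper faces the same issue (Lemma~\ref{lemma:closelyspacedw} gives a factor $|\vec z^{(m)}-\vec z^{(n)}|^{-2}$) and resolves it by explicitly assuming $\min_{n\ne m}|\partial(B_\alpha)^{(n)}-\partial(B_\alpha)^{(m)}|>C>\alpha_{\max}$ with $C$ a fixed constant in Corollary~\ref{coll:closelyspacedw} and Theorem~\ref{thm:rev3_2}; your argument needs the same strengthened, explicitly stated hypothesis for the dipole-decay estimate and the Neumann series to converge at the claimed rate.
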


\begin{proof}
The result follows from  by using a tensor representation of the asymptotic formula in Theorem~\ref{thm:rev3_2}, which is an extension of Theorem 3.2 obtained in~\cite{ammarivolkov2013} for $N$ sufficiently well spaced objects. A tensor representation of this result leads to each of the $N$ objects being characterised by a rank 4 tensor. Then, by considering each object in turn and repeating the same arguments as in Theorem 3.1 in~\cite{ledgerlionheart2014}, which exploits the skew symmetries of the tensor coefficients, the result stated in (\ref{eqn:asymp2}) is obtained. The symmetry of  ${\mathcal M}[\alpha^{(n)} B^{(n)}]$ follows from repeating the arguments in Lemma 4.4 in~\cite{ledgerlionheart2014}.
\end{proof}

\begin{corollary} \label{corll:inhomn1}
For the case of $N=1$ then ${\vec B}_{\vec \alpha}$ becomes $B_\alpha$, ${\vec H}_{\vec \alpha}$ becomes ${\vec H}_\alpha$ and Theorem~\ref{thm:objectsnocloselyspaced} reduces to the case of a single homogenous object as obtained in~\cite{ammarivolkov2013,ledgerlionheart2014} and described in Section~\ref{sect:back}.
\end{corollary}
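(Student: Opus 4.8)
The plan is to specialise the statement of Theorem~\ref{thm:objectsnocloselyspaced} to $N=1$ and to check that every ingredient degenerates to the corresponding object of Section~\ref{sect:back}. First I would observe that, with $N=1$, the collection ${\vec B}_{\vec \alpha}=\bigcup_{n=1}^1 (B_\alpha)^{(n)}$ consists of the single inclusion $(B_\alpha)^{(1)}=\alpha^{(1)}B^{(1)}+{\vec z}^{(1)}$; identifying $\alpha:=\alpha^{(1)}$, $B:=B^{(1)}$ and ${\vec z}:={\vec z}^{(1)}$ gives ${\vec B}_{\vec \alpha}=B_\alpha$, and the per-object parameter bounds $\sigma_{\min}\le\sigma_*^{(n)}\le\sigma_{\max}$, $\mu_{\min}\le\mu_*^{(n)}\le\mu_{\max}$ collapse to the single fixed pair $(\sigma_*,\mu_*)$, so that $\nu^{(1)}=\nu=\sigma_*\mu_0\omega\alpha^2=O(1)$ recovers exactly the hypothesis of the single-object result. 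The separation condition $\min_{n,m=1,\ldots,N,\ n\ne m}|\partial (B_\alpha)^{(n)}-\partial (B_\alpha)^{(m)}|\ge\alpha_{\max}$ is imposed over the empty index set and is therefore vacuously true, so no geometric restriction remains; likewise ${\vec H}_{\vec \alpha}$ becomes ${\vec H}_\alpha$ and ${\vec E}_{\vec \alpha}$ becomes ${\vec E}_\alpha$.

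Next I would read off the three displayed objects. The sum in~(\ref{eqn:asymp2}) has a single term, which is precisely the leading-order term of~(\ref{eqn:asymp}), and the remainder estimate $|{\vec R}({\vec x})|\le C\alpha_{\max}^4\|{\vec H}_0\|_{W^{2,\infty}({\vec B}_{\vec \alpha})}$ reduces to $|{\vec R}({\vec x})|\le C\alpha^4\|{\vec H}_0\|_{W^{2,\infty}(B_\alpha)}$, matching the bound quoted after~(\ref{eqn:asymp}). The MPT formulae~(\ref{eqn:mcheckmult}) for ${\mathcal C}[\alpha^{(1)}B^{(1)}]$ and ${\mathcal N}[\alpha^{(1)}B^{(1)}]$ become verbatim~(\ref{eqn:mcheck}) for ${\mathcal C}[\alpha B]$ and ${\mathcal N}[\alpha B]$, and the transmission problem~(\ref{eqn:transproblemthetar2}) for ${\vec \theta}_k^{(1)}$ becomes verbatim~(\ref{eqn:transproblemthetar}) for ${\vec \theta}_k$, once the superscripts $(1)$ are dropped. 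Hence ${\mathcal M}[\alpha^{(1)}B^{(1)}]$ coincides with the tensor ${\mathcal M}[\alpha B]$ of Section~\ref{sect:back}.

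Since each of these identifications is purely notational, there is no genuine analytical obstacle here; the only thing requiring care is bookkeeping, namely confirming that the conventions of Section~\ref{sect:main} (the bold subscript ${\vec \alpha}$, the per-object material and size parameters, and the index-set definition of the separation constraint) degenerate consistently to those of Section~\ref{sect:back}, and that the constant $C$ appearing in the two remainder bounds may be taken to be the same. With that verification in place the corollary is immediate, and it also serves as a consistency check of Theorem~\ref{thm:objectsnocloselyspaced} against the single-object results of~\cite{ammarivolkov2013,ledgerlionheart2014}.
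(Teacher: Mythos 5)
Your proposal is correct and matches the paper's (implicit) argument: the corollary is a purely notational specialisation, and the paper offers no separate proof beyond asserting it. Your careful bookkeeping --- in particular noting that the separation condition becomes vacuous over the empty index set for $N=1$ and that (\ref{eqn:mcheckmult}) and (\ref{eqn:transproblemthetar2}) reduce verbatim to (\ref{eqn:mcheck}) and (\ref{eqn:transproblemthetar}) --- is exactly what is needed.
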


\subsection{Single Inhomogeneous Object}\label{sect:closelyspaced}
In this case,  ${\vec B}_\alpha := \bigcup_{n=1}^N B_\alpha^{(n)} =\alpha  \bigcup_{n=1}^N  B^{(n)} + {\vec z} = \alpha {\vec B} + {\vec z}$ describes a single object comprised of $N$ constitute parts, $B_{\alpha}^{(n)}$, such that there is a single common size parameter $\alpha$, the configuration ${\vec B}$ contains the origin, and ${\vec z}$ is a single translation, as illustrated in Figure~\ref{fig:inhom}. Notice that for the inhomogeneous case we use $B_\alpha^{(n)}$ rather than $(B_\alpha)^{(n)}$ as $\alpha$ is the same for all $n$ and we revert to the use of non-bold $\alpha$ subscripts for the fields ${\vec E}_{\vec \alpha}$ and ${\vec H}_{\alpha}$, which satisfy (\ref{eqn:eddyqns}).

 \begin{figure}
\begin{center}
\includegraphics[width=0.8\textwidth]{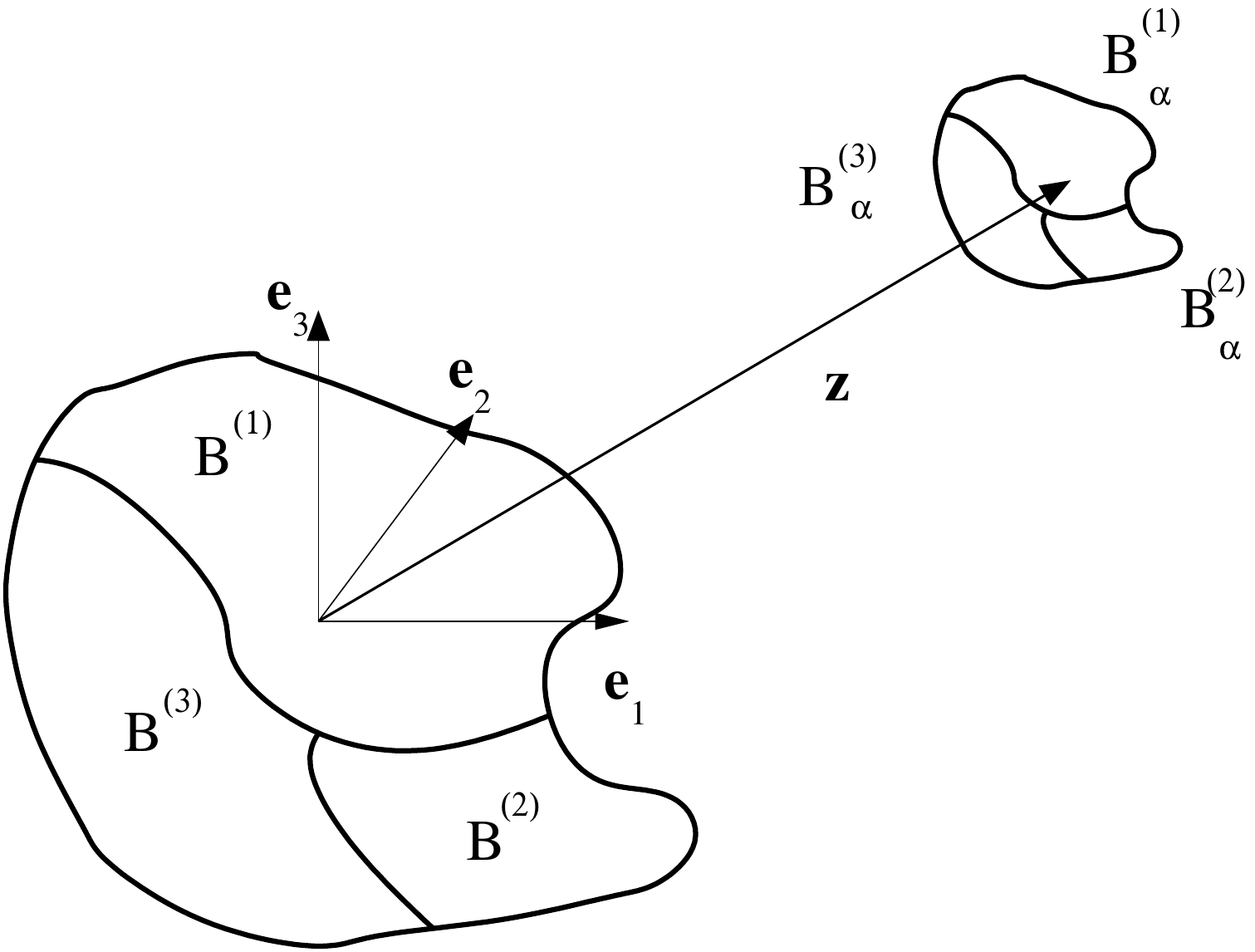}
\end{center}
\caption{Illustration of a typical situation of an inhomogeneous object consisting of $N=3$ subdomains such that the complete object is ${\vec B}_\alpha = \bigcup_{n=1}^N B_{\alpha}^{(n)} =  \alpha\bigcup_{n=1}^N B^{(n)}+{\vec z} = \alpha {\vec B} + {\vec z}$. }\label{fig:inhom}
\end{figure}

 The material parameters of the constitute parts of the object ${\vec B}_\alpha$ are
\begin{align}
\sigma_{ \alpha}= \left \{ \begin{array}{ll} \sigma_{*}^{(n)} & \text{in $B_\alpha^{(n)} $} \\
                                                             0 & \text{in ${\vec B}_\alpha^c $}
                                                             \end{array},  \right . \qquad
\mu_{ \alpha}= \left \{ \begin{array}{ll} \mu_*^{(n)} & \text{in $B_\alpha^{(n)} $} \\
                                                             \mu_0 & \text{in ${\vec B}_\alpha^c $}
                                                             \end{array} \right . \nonumber ,
\end{align}
where ${\vec B}_\alpha^c: = {\mathbb R}^3\setminus \overline{{\vec B}_{ \alpha}}$
and we drop the subscript $\alpha$ on $\mu$ and $\sigma$ when considering the object ${\vec B}$. We redefine $\nu_{\min} \le {\nu}^{(n)} := \omega \mu \sigma_*^{(n)} {\alpha}^2\le \nu_{\max}$ with the same requirements on $\nu_{\max}$ as before.

The task is then to provide a low-cost description of  $({\vec H}_{ \alpha} - {\vec H}_0)({\vec x})$ for ${\vec x}$ away from ${\vec B}_{ \alpha}$. This is accomplished through the following result.
 
 \begin{theorem}  \label{thm:objectscloselyspaced}
 For an inhomogeneous object  ${\vec B}_\alpha = \alpha {\vec B} + {\vec z}$ made up of $N$ constitute parts
 with parameters such that 
$\nu_{min} \le {\nu}^{(n)} \le \nu_{max}$  the perturbed magnetic field at positions ${\vec x}$ away from ${\vec B}_\alpha$ satisfies
\begin{align}
({\vec H}_\alpha - {\vec H}_0)({\vec x})_i =& ({\vec D}_{x}^2 G({\vec x}, {\vec z} ))_{ij}  \left ( {\mathcal M}\left [{\alpha} {\vec B}  \right ] \right )_{jk} ({\vec H}_0( {\vec z} ) )_k +( {\vec R}({\vec x}))_i ,  \label{eqn:asymp3}
\end{align}
where 
\begin{equation}
|{\vec R}({\vec x}| \le C \alpha^4 \| {\vec H}_0 \|_{W^{2,\infty}({\vec B}_\alpha)}  , \nonumber
\end{equation}
uniformly in ${\vec x}$ in any compact set away from $ {\vec B}_\alpha$. The coefficients of the complex symmetric MPT ${\mathcal M}\left [ {\alpha} {\vec B}\right ] =- {\mathcal C}\left [{\alpha} {\vec B} \right] + {\mathcal N}\left [{\alpha} {\vec B} \right ]$
 are given by
\begin{subequations}  \label{eqn:mcheckcup} 
\begin{align}
\left ( {\mathcal C} \left [ {\alpha} {\vec B } \right ] \right )_{jk} :=& -  \frac{\im {\alpha}^3 }{4} \sum_{n=1}^N  \nu^{(n)}  {\vec e}_j \cdot  \int_{{B}^{(n)}  }  {\vec \xi} \times ({\vec \theta}_k + {\vec e}_k \times {\vec \xi} ) \dif {\vec \xi},  \\
\left ({\mathcal N} \left [ {\alpha} {\vec B}   \right ] \right )_{jk} := & {\alpha}^3  \sum_{n=1}^N   \left ( 1- \frac{\mu_0}{\mu_*^{(n)}} \right )  \int_{{B}^{(n)}} \left (
{\vec e}_j \cdot {\vec e}_k + \frac{1}{2}  {\vec e}_j \cdot  \nabla_\xi \times {\vec \theta}_k \right ) \dif {\vec \xi}.
\end{align}
\end{subequations}
which, in turn, rely on the vectoral solutions ${\vec \theta}_k$, $k=1,2,3,$ to the transmission problem
\begin{subequations}
\begin{align}
\nabla_\xi \times \mu^{-1} \nabla_\xi \times {\vec \theta}_k - \im \omega \sigma {\alpha}^2 {\vec \theta }_k & = \im \omega \sigma {\alpha}^2 {\vec e}_k \times {\vec \xi}  && \text{in ${\vec B } $ } ,\\
\nabla_\xi \cdot {\vec \theta}_k  = 0 , \qquad \nabla_\xi \times \mu_0^{-1} \nabla_\xi \times {\vec \theta}_k  & = {\vec 0} && \text{in ${\vec B }^c := {\mathbb R}^3 \setminus \overline{\vec B}$ }, \\
[{\vec n} \times {\vec \theta}_k ]_{\Gamma } = {\vec 0},  \qquad [{\vec n} \times \mu^{-1} \nabla_\xi \times {\vec \theta}_k ]_{\Gamma } & = -2 [\mu^{-1 } ]_{\Gamma } {\vec n} \times {\vec e}_k  && \text{on $\Gamma $},\\
{\vec \theta}_k & = O( | {\vec \xi} |^{-1}) && \text{as $| {\vec \xi} | \to \infty$ },
\end{align}\label{eqn:transproblemthetar3}
\end{subequations}
where ${\vec \xi}$ is measured from the centre of ${\vec B}$ and, in this case, $\Gamma := \partial {\vec B} \cup \{ \partial B^{(n) } \cap \partial B^{ (n)}, n,m=1,\ldots, N, n \ne m \}$. 
\end{theorem}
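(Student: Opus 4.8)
The plan is to regard the inhomogeneous object ${\vec B}_\alpha = \alpha {\vec B} + {\vec z}$ as a \emph{single} small object whose conductivity $\sigma_\alpha$ and permeability $\mu_\alpha$ happen to be piecewise constant rather than constant, and to re-run the derivation of~\cite{ammarivolkov2013} together with the rank reduction of~\cite{ledgerlionheart2014} for this more general material law --- in close analogy with the proof of Theorem~\ref{thm:objectsnocloselyspaced}, except that here the $N$ parts cannot be decoupled and must instead be handled through a single transmission problem on the whole configuration ${\vec B}$ carrying internal interfaces. One rescales via ${\vec \xi} = ({\vec x} - {\vec z})/\alpha$, which maps ${\vec B}_\alpha$ onto the unit configuration ${\vec B}$, and represents $({\vec H}_\alpha - {\vec H}_0)$ through an integral formula based on the free space Laplace Green's function $G$ and an interior transmission solution. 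The key structural observation is that this derivation is driven by the exterior of the object and by its outer interface $\partial {\vec B}_\alpha$, so the subdomain partition of ${\vec B}$ enters only through the interior transmission problem, which is now posed with piecewise constant coefficients.

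First I would establish well-posedness of~(\ref{eqn:transproblemthetar3}). Its variational formulation, posed on divergence-free $H(\mathrm{curl})$ fields decaying like $O(|{\vec \xi}|^{-1})$, has sesquilinear form $a({\vec \theta}_k,{\vec v}) = \int \mu^{-1} \nabla_\xi \times {\vec \theta}_k \cdot \overline{\nabla_\xi \times {\vec v}} \,\dif {\vec \xi} - \im \omega \sigma \alpha^2 \int {\vec \theta}_k \cdot \overline{\vec v} \,\dif {\vec \xi}$, whose real part is coercive and whose imaginary part is bounded, uniformly, because $\mu$ is bounded above and below and $\sigma$ is bounded above; hence the same well-posedness theory as in the homogeneous case applies, and the internal transmission conditions $[{\vec n} \times {\vec \theta}_k]_\Gamma = {\vec 0}$, $[{\vec n} \times \mu^{-1} \nabla_\xi \times {\vec \theta}_k]_\Gamma = -2 [\mu^{-1}]_\Gamma {\vec n} \times {\vec e}_k$ across the interfaces $\partial B^{(n)} \cap \partial B^{(m)}$ arise as natural conditions on integration by parts.

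Next, repeating the layer potential expansion of~\cite{ammarivolkov2013} with these piecewise constant coefficients yields a leading order term of the form $({\vec D}_x^2 G({\vec x},{\vec z}))$ contracted with a rank~4 tensor built from ${\vec \theta}_k$ and the (analytic) background field ${\vec H}_0({\vec z})$, plus a remainder with $|{\vec R}({\vec x})| \le C \alpha^4 \| {\vec H}_0 \|_{W^{2,\infty}({\vec B}_\alpha)}$. Exactly as in~\cite{ledgerlionheart2014} and in the proof of Theorem~\ref{thm:objectsnocloselyspaced}, exploiting the skew symmetries of the rank~4 coefficients collapses this to the rank~2 form~(\ref{eqn:asymp3}), and the argument of Lemma~4.4 of~\cite{ledgerlionheart2014} for complex symmetry of ${\mathcal M}[\alpha {\vec B}]$ carries over because the transmission conditions across the internal interfaces respect the same integration-by-parts/reciprocity identity. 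Finally, since $\nu^{(n)}$ and $1 - \mu_0/\mu_*^{(n)}$ are constant on each $B^{(n)}$, the integrals over ${\vec B}$ defining ${\mathcal C}[\alpha {\vec B}]$ and ${\mathcal N}[\alpha {\vec B}]$ split as $\sum_{n=1}^N \int_{B^{(n)}}$, producing precisely~(\ref{eqn:mcheckcup}); the case of $N$ closely spaced but non-touching objects is then the subcase $\partial B^{(n)} \cap \partial B^{(m)} = \emptyset$ for all $n\ne m$, for which $\Gamma = \partial {\vec B}$ and the constituents interact only through the exterior.

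The step I expect to be the main obstacle is regularity: unlike a single homogeneous object, ${\vec \theta}_k$ is now only piecewise smooth, with possible edge and corner singularities where the internal interfaces meet one another or $\partial {\vec B}$, so one must verify that this weaker regularity still delivers the required bounds on the layer potential densities and, in particular, the $O(\alpha^4)$ remainder --- equivalently, that it is the analyticity of ${\vec H}_0$ in ${\vec B}_\alpha$, and not interior smoothness of ${\vec \theta}_k$, that controls the error estimate. Closely related is confirming that the coercivity, uniqueness and reciprocity arguments all survive the discontinuous coefficients and the extra interfaces; these are expected to be routine, but they do require care.
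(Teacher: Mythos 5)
Your proposal follows essentially the same route as the paper: the inhomogeneous object is treated as a single small inclusion with piecewise constant coefficients, a single Taylor expansion of the background field about ${\vec z}$ and a single rescaled transmission problem on ${\vec B}$ with internal interfaces replace the per-object constructions of Theorem~\ref{thm:objectsnocloselyspaced}, the integral representation yields a rank~4 leading term with $O(\alpha^4)$ remainder (Theorem~\ref{thm:rev3_2v2}), and the skew-symmetry reduction of~\cite{ledgerlionheart2014} collapses it to the rank~2 form, with the sum over $n$ in (\ref{eqn:mcheckcup}) arising because the material coefficients are constant on each $B^{(n)}$. This matches the paper's proof and its supporting results in Section~\ref{sect:proof2}.
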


\begin{proof}
The result follows from  by using a tensor representation of the asymptotic formula in Theorem~\ref{thm:rev3_2v2}, which is an extension of Theorem 3.2 obtained in~\cite{ammarivolkov2013} for an homogeneous object to the inhomogeneous case. Using a tensor representation of this result leads to the object being characterised in terms of a rank 4 tensor. Then, by repeating the same arguments as in Theorem 3.1 in ~\cite{ledgerlionheart2014}, which exploits the skew symmetries of the tensor coefficients, the result stated in (\ref{eqn:asymp3}) is obtained. The symmetry of  ${\mathcal M}[\alpha {\vec B} ]$ follows from repeating the arguments in Lemma 4.4 in~\cite{ledgerlionheart2014}.

\end{proof}

\begin{corollary} \label{corll:inhomn2}
For the case of $N=1$ then ${\vec B}_\alpha$ becomes $B_\alpha$ and Theorem~\ref{thm:objectscloselyspaced} reduces to the case of a single homogenous object as obtained in~\cite{ammarivolkov2013,ledgerlionheart2014} and described in Section~\ref{sect:back}.
\end{corollary}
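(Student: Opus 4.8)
The plan is to verify directly that each piece of the inhomogeneous apparatus in Theorem~\ref{thm:objectscloselyspaced} collapses to the corresponding single-object object of Section~\ref{sect:back} when the index range $n=1,\ldots,N$ degenerates to the single value $n=1$. This is a purely notational and definitional reduction, so the argument proceeds by checking, one at a time, the four ingredients: the geometry, the transmission problem, the tensor coefficients, and the asymptotic formula itself.

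First I would handle the geometry. With $N=1$ the union ${\vec B}_\alpha = \bigcup_{n=1}^N B_\alpha^{(n)}$ contains the single set $B_\alpha^{(1)}$, and since there is one common size $\alpha$ and one translation ${\vec z}$, the decomposition ${\vec B}_\alpha = \alpha {\vec B} + {\vec z}$ with ${\vec B} = B^{(1)}$ is precisely $B_\alpha = \alpha B + {\vec z}$ as in Section~\ref{sect:back}. Crucially, the interior interface set $\{\partial B^{(n)} \cap \partial B^{(m)} : n \ne m\}$ appearing in the definition of $\Gamma$ in~\eqref{eqn:transproblemthetar3} is empty when there is only one subdomain, so $\Gamma = \partial {\vec B}$ reduces to the single boundary $\Gamma = \partial B$. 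The piecewise material parameters $\sigma_\alpha$ and $\mu_\alpha$ likewise take the single values $\sigma_*^{(1)} = \sigma_*$ and $\mu_*^{(1)} = \mu_*$ inside, and $0$, $\mu_0$ outside, matching the definitions in Section~\ref{sect:back}. Consequently the transmission problem~\eqref{eqn:transproblemthetar3}, which carries a globally constant $\mu = \mu_*$ and $\sigma = \sigma_*$ on the single domain ${\vec B}=B$, becomes term-for-term identical to~\eqref{eqn:transproblemthetar}, so its solutions ${\vec \theta}_k$ coincide with those of the isolated-object problem.

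Next I would reduce the tensor coefficients. In~\eqref{eqn:mcheckcup} the sums $\sum_{n=1}^N$ each contain a single term with $\nu^{(1)} = \nu$, $\mu_*^{(1)} = \mu_*$ and $B^{(1)} = B$, so
\begin{align}
\left({\mathcal C}[\alpha {\vec B}]\right)_{jk} &= -\frac{\im \nu \alpha^3}{4}\,{\vec e}_j \cdot \int_B {\vec \xi} \times ({\vec \theta}_k + {\vec e}_k \times {\vec \xi})\,\dif {\vec \xi} = \left({\mathcal C}[\alpha B]\right)_{jk}, \nonumber
\end{align}
and identically ${\mathcal N}[\alpha {\vec B}] = {\mathcal N}[\alpha B]$, whence ${\mathcal M}[\alpha {\vec B}] = -{\mathcal C}[\alpha B] + {\mathcal N}[\alpha B] = {\mathcal M}[\alpha B]$, recovering~\eqref{eqn:mcheck}. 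Finally, inserting ${\vec z}$ for ${\vec z}^{(1)}$ and ${\mathcal M}[\alpha B]$ into~\eqref{eqn:asymp3}, and noting that ${\vec H}_\alpha$ now coincides with ${\vec H}_\alpha$ of the single-object setting, gives exactly~\eqref{eqn:asymp}, with the remainder bound $|{\vec R}| \le C\alpha^4 \|{\vec H}_0\|_{W^{2,\infty}({\vec B}_\alpha)}$ collapsing to the stated single-object estimate. There is no genuine analytic obstacle here; the only point requiring a sentence of care is the empty-interface observation that removes the internal jump conditions from $\Gamma$, since that is the one place where the inhomogeneous formulation carries structure absent from Section~\ref{sect:back}. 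Verifying that this causes no loss completes the reduction.
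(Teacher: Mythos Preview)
Your proposal is correct; the paper states this corollary without proof, treating the reduction as immediate, and your careful term-by-term verification (geometry, interface set, transmission problem, tensor sums, asymptotic formula and remainder) is precisely the direct check the paper leaves implicit. The only substantive observation---that the internal interface set is empty when $N=1$, so $\Gamma$ collapses to $\partial B$---is exactly the right thing to isolate, and nothing further is needed.
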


\begin{corollary} \label{corll:inhom}
Theorem~\ref{thm:objectscloselyspaced} also immediately applies to objects that are closely spaced and, in this case,
  ${\vec B}_\alpha =  \alpha {\vec B} + {\vec z}$
implies a single size parameter ${\alpha}$  and  a single translation ${\vec z} $ for the configuration ${\vec B}$.  An illustration of a typical configuration is shown in Figure~\ref{fig:multobjectclose}. In this figure, there are $N=3$ objects consisting of three spheres  configured such that they scale and translate together according to ${\alpha}$ and ${\vec z}$, respectively, and, in this case, ${\vec B}$ is the combined configuration of three (larger) spheres with different radii and with centres located away from the origin.
 \begin{figure}
\begin{center}
\includegraphics[width=0.8\textwidth]{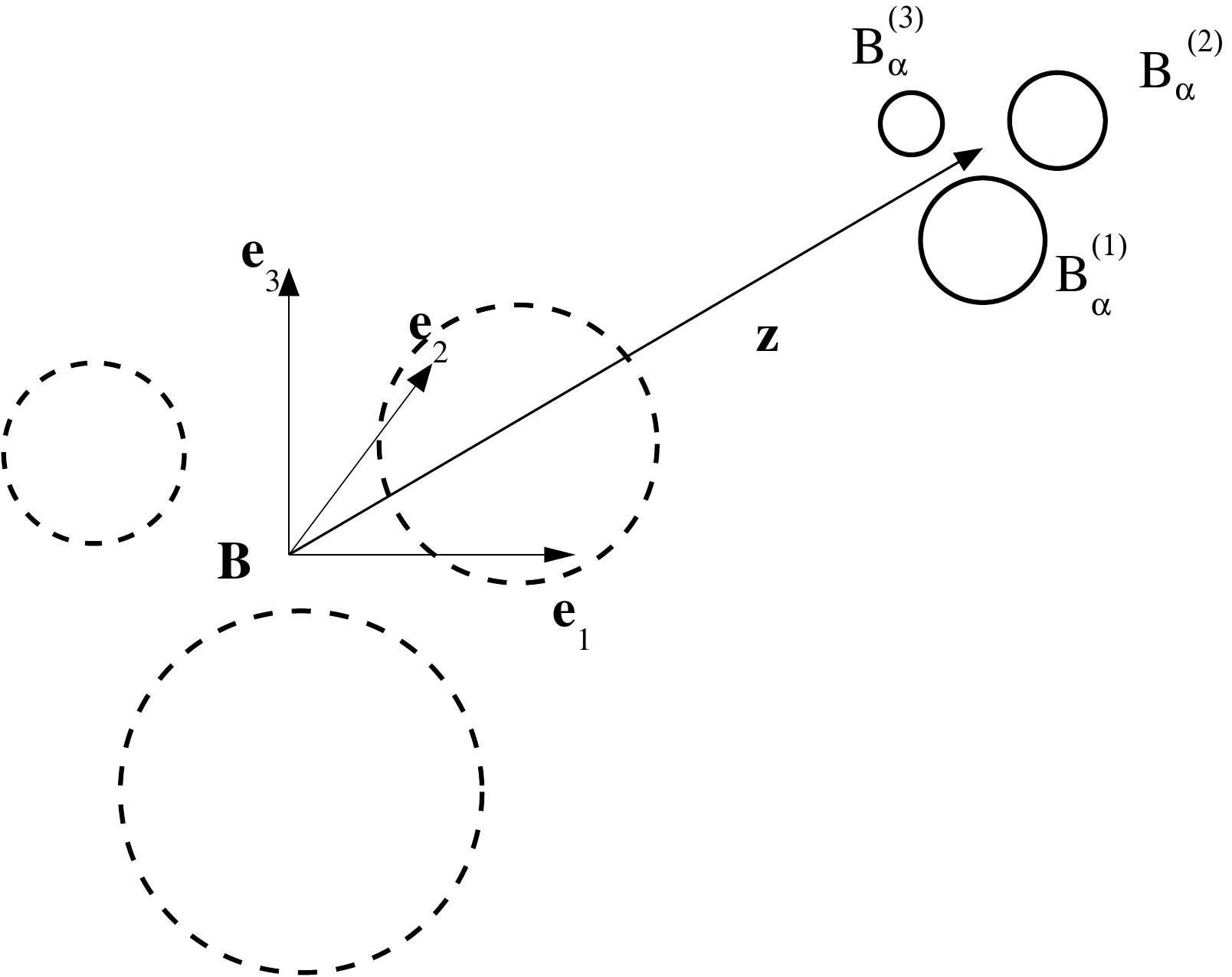}
\end{center}
\caption{Illustration of a typical situation of $N=3$ closely spaced objects of the form ${\vec B}_\alpha = \bigcup_{n=1}^N B_\alpha^{(n)} =\alpha \bigcup_{n=1}^N B^{(n)} +{\vec z}=  \alpha {\vec B} + {\vec z}$ where each object is a sphere, ${\alpha}$ is a single scaling parameter, ${\vec z}$ describes their translation of the configuration from the origin. }\label{fig:multobjectclose}
\end{figure}
\end{corollary}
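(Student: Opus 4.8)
The plan is to argue that Corollary~\ref{corll:inhom} needs no new analysis: the mathematical model underlying Theorem~\ref{thm:objectscloselyspaced} already covers the closely-spaced configuration, so the statement follows by reinterpreting the notation and checking that the one structural change is harmless. First I would isolate the hypotheses actually used in Theorem~\ref{thm:objectscloselyspaced}: (a) a single common scaling parameter $\alpha$; (b) a single common translation ${\vec z}$; (c) conductivity and permeability that are piecewise constant, taking the values $\sigma_*^{(n)},\mu_*^{(n)}$ on the $N$ Lipschitz subdomains $B^{(n)}$ of the unit configuration ${\vec B}$; and (d) the parameter balance $\nu_{\min}\le\nu^{(n)}\le\nu_{\max}$ with $\nu_{\max}=O(1)$. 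A collection of $N$ objects that scale and translate together, i.e. ${\vec B}_\alpha=\alpha{\vec B}+{\vec z}$ with ${\vec B}=\bigcup_{n=1}^N B^{(n)}$ and the $B^{(n)}$ now pairwise disjoint, satisfies exactly (a)--(d); the only difference from the inhomogeneous case is that the subdomains no longer touch.

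Second I would verify that this difference is immaterial to the proof chain. The proof of Theorem~\ref{thm:objectscloselyspaced} invokes Theorem~\ref{thm:rev3_2v2} (the inhomogeneous extension of the result of Ammari \emph{et al.}) followed by the rank-reduction and symmetry arguments of~\cite{ledgerlionheart2014}. None of these steps uses connectedness of ${\vec B}$ or the presence of shared interfaces: the scaling argument producing the $\alpha^3$ prefactors, the analyticity requirement on ${\vec H}_0$ inside ${\vec B}_\alpha$, and the $W^{2,\infty}({\vec B}_\alpha)$ remainder bound all make sense for an arbitrary finite union of Lipschitz subdomains contained in a fixed ball. In the transmission problem~(\ref{eqn:transproblemthetar3}) the interface set is $\Gamma:=\partial{\vec B}\cup\{\partial B^{(n)}\cap\partial B^{(m)}\}$; for disjoint $B^{(n)}$ the pairwise-intersection contributions are empty, so $\Gamma$ reduces to $\bigcup_{n=1}^N\partial B^{(n)}$, the transmission conditions decouple across the $N$ components, and the defining PDE, the piecewise coefficients, and the decay condition are unchanged. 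Consequently the formulae~(\ref{eqn:mcheckcup}) for ${\mathcal M}[\alpha{\vec B}]$ and its splitting into $-{\mathcal C}[\alpha{\vec B}]+{\mathcal N}[\alpha{\vec B}]$ hold verbatim, now summing contributions over the disjoint pieces.

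The one point that genuinely requires attention — and the closest thing to an obstacle — is confirming that Theorem~\ref{thm:rev3_2v2} was established under hypotheses generic enough to include the disjoint case. Since that theorem is already stated for a general configuration ${\vec B}$ assembled from $N$ constituent parts, and its proof rests only on the standard eddy-current scaling and integral-representation estimates rather than on any topological feature of ${\vec B}$, this is straightforward to check. I would therefore close the argument by noting that replacing ``constituent parts of a single object'' by ``closely spaced separate objects'' alters nothing in the derivation, the sole bookkeeping change being the (vacuous) removal of the pairwise boundary-intersection terms from $\Gamma$, so Theorem~\ref{thm:objectscloselyspaced} applies as stated.
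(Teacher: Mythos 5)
Your proposal is correct and matches the paper's treatment: the paper gives no separate proof of Corollary~\ref{corll:inhom}, regarding it as immediate because Theorem~\ref{thm:objectscloselyspaced} and its supporting results (Theorem~\ref{thm:rev3_2v2}, the transmission problem~(\ref{eqn:transproblemthetar3}) and the formulae~(\ref{eqn:mcheckcup})) are already stated for a general union ${\vec B}=\bigcup_{n=1}^N B^{(n)}$ with a single $\alpha$ and ${\vec z}$, and nothing in their derivation requires the subdomains to share interfaces. Your observation that for disjoint $B^{(n)}$ the set $\Gamma$ loses its pairwise-intersection terms and the transmission conditions decouple is exactly the (vacuous) bookkeeping the paper implicitly relies on.
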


\begin{remark}
The applicability of Theorem~\ref{thm:objectscloselyspaced} to closely spaced objects  is expected to be limited since, in order to compute the characterisation, prior knowledge of the multiple object configuration (ie location and orientation with respect to each other) is required, which, in practice, will not be the case.  The formula also requires that the objects be closely spaced as there is a single scaling parameter and single translation that describes the configuration, but prior knowledge of the location of the configuration is not required. Instead, this result is expected to be of more practical value in the description of inhomogeneous objects where the configuration of the different regions of an object will be known in advance.
\end{remark}

\begin{remark}
The translation invariance of the MPT coefficients described by Proposition 5.2 in~\cite{ammarivolkov2013b} and the tensor transformation rules described in the proof of Theorem 3.1
in~\cite{ledgerlionheart2014} carry over immediately to the rank 2 MPTs defined in (\ref{eqn:mcheckmult}) and (\ref{eqn:mcheckcup}).
\end{remark}

\section{Results for the Proof of Theorem~\ref{thm:objectsnocloselyspaced}} \label{sect:proof1}

\subsection{Elimination of the Current Source} \label{sect:elimcurrent}

Recall from ~\cite{ammarivolkov2013} that
 \begin{align*}
{\vec X}_{\vec \alpha} ({\mathbb R}^3) : =  &\left \{ {\vec u} : \frac{\vec u}{\sqrt{ 1+ |{\vec x}|^2 }}\in L^2({\mathbb R}^3 )^3 , \nabla \times {\vec u} \in  L^2({\mathbb R}^3 )^3, \nabla \cdot {\vec u} =0 \text{ in ${\vec B}_{\vec \alpha}^c$} \right \} ,\nonumber \\
\tilde{\vec X}_{\vec \alpha} ({\mathbb R}^3) : = &  \left \{ {\vec u} : {\vec u} \in {\vec X}_{\vec \alpha} ({\mathbb R}^3) , \ \int_{\Gamma_\alpha} {\vec u} \cdot {\vec n} |_+ \dif {\vec x} = 0 \right \} \nonumber ,
\end{align*}
and the weak solution for the interaction field is : Find ${\vec E}_{\vec \alpha} \in \tilde{\vec X}_{\vec \alpha}$ such that
\begin{equation*}
a_\alpha ({\vec E}_{\vec \alpha} , {\vec v}) = ({\vec J}_0,{\vec v})_{{\mathbb R}^3} = ({\vec J}_0,{\vec v})_{\hbox{supp(${\vec J}_0$)}}  \qquad \forall {\vec v} \in \tilde{\vec X}_{\vec \alpha},
\end{equation*}
 where $(\cdot,\cdot)_\Omega$ denotes the standard $L^2$ inner product over $\Omega$. In a departure from~\cite{ammarivolkov2013}, we have, for multiple objects, that
\begin{align*}
a({\vec u},{\vec v}):= & (\mu_0^{-1} \nabla \times {\vec u}, \nabla \times {\vec v})_{{\vec B}_{\vec \alpha}^c } + ( \mu_{\vec \alpha}^{-1} \nabla \times {\vec u}, \nabla \times {\vec v})_{{\vec B}_{\vec \alpha}} \nonumber \\
& - \im \omega (\sigma_{\vec \alpha} {\vec u},{\vec v})_{{\vec B}_{\vec \alpha}} .
\end{align*}
Noting that the weak solution for the background field is: Find  ${\vec E}_0 \in \tilde{\vec X}_{\vec \alpha}$ such that
\begin{equation*}
 (\mu_0^{-1} \nabla \times {\vec E}_0 ,\nabla \times  {\vec v})_{{\mathbb R}^3} =  ({\vec J}_0,{\vec v})_{\hbox{supp(${\vec J}_0$)}} \qquad \forall {\vec v} \in \tilde{\vec X}_{\vec \alpha},
\end{equation*}
we can write:  Find ${\vec E}_{\vec \alpha} \in \tilde{\vec X}_{\vec \alpha}$ such that
\begin{equation*}
a ( {\vec E}_{\vec \alpha} , {\vec v}) = (\mu_0^{-1} \nabla \times {\vec E}_0 ,\nabla \times  {\vec v})_{{\mathbb R}^3}\qquad \forall {\vec v} \in \tilde{\vec X}_{\vec \alpha} ,
\end{equation*}
which eliminates the current source. We also obtain that
\begin{align}
(\mu_0^{-1} &  \nabla \times ({\vec E}_{\vec \alpha} - {\vec E}_0), \nabla \times {\vec v})_{{\vec B}_{\vec \alpha}^c }
 +   ( \mu_{\vec \alpha}^{-1}    \nabla \times ({\vec E}_{\vec \alpha} - {\vec E}_0), \nabla \times {\vec v})_{{\vec B}_{\vec \alpha}}  \nonumber \\
&  -\im \omega ( \sigma_{\vec \alpha} ( {\vec E}_{\vec \alpha}- {\vec E}_0),{\vec v})_{{\vec B}_{\vec \alpha}} =
( ((\mu_{\vec \alpha}^{-1}- \mu_0) \nabla \times {\vec E}_0,\nabla \times {\vec v})_{{\vec B}_{\vec \alpha}} \nonumber \\
& + \im \omega ( \sigma_{\vec \alpha} {\vec E}_0, {\vec v})_{{\vec B}_{\vec \alpha}} 
\label{eqn:elimcurrt1}.
 \end{align}

\subsection{Energy Estimates}
In~\cite{ammarivolkov2013} a vector field ${\vec F}({\vec x})$ was introduced such that its curl is equal  to the first two terms of a Taylor's series expansion of $\nabla \times {\vec E}_0$ about ${\vec z}$ for $|{\vec x}-{\vec z}| \to 0$ for the case of a single object $B_\alpha$. This was possible as the current source ${\vec J}_0$ is supported away from the object and so ${\vec H}_0({\vec x})=\frac{1}{\im \omega \mu_0} \nabla \times {\vec E}_0({\vec x})$ is analytic where the expansion is applied. We extend this to the multiple object case by requiring that ${\vec J}_0$ be supported away from ${\vec B}_{\vec \alpha}$ and introduce the following for $n=1,\ldots,N$
\begin{align*}
{\vec F}^{(n)} ({\vec x}) = & \frac{1}{2} (\nabla_{z} \times {\vec E}_0({\vec z})) ({\vec z}^{(n)}) \times ({\vec x}-{\vec z}^{(n)}) \nonumber \\ &+ \frac{1}{3} {\vec D}_{z} ( \nabla_z \times {\vec E}_0({\vec z}))({\vec z}^{(n)}) ({\vec x}-{\vec z}^{(n)})  \times   ({\vec x}-{\vec z}^{(n)}), \\
\nabla \times {\vec F}^{(n)} ({\vec x}) = & (\nabla_{z} \times {\vec E}_0({\vec z})) ({\vec z}^{(n)}) + {\vec D}_{z} ( \nabla_z \times {\vec E}_0({\vec z})) ({\vec z}^{(n)}) ({\vec x}-{\vec z}^{(n)}) ,
\end{align*}
so that
\begin{align*}
 {\vec F}^{(n)} ({\vec x}) = & \frac{\im \omega \mu_0}{2} {\vec H}_0 ({\vec z}^{(n)}) \times ({\vec x}-{\vec z}^{(n)}) + \frac{\im \omega \mu_0}{3} {\vec D}_{z} ( {\vec H}_0 ({\vec z}))({\vec z}^{(n)}) ({\vec x}-{\vec z}^{(n)})  \times   ({\vec x}-{\vec z}^{(n)}) , \\
\nabla \times {\vec F}^{(n)} ({\vec x}) = &\im \omega \mu_0  \left ( {\vec H}_0 ({\vec z}^{(n)})+ {\vec D}_{z} ( {\vec H}_0({\vec z})) ({\vec z}^{(n)}) ({\vec x}-{\vec z}^{(n)}) \right ) .
\end{align*}
In other words, $ \nabla \times {\vec F}^{(n)} ({\vec x})$ is the first two terms in a Taylor series of $\im \omega \mu_0{\vec H}_0({\vec x})$ about ${\vec z}^{(n)}$ as $|{\vec x}-{\vec z}^{(n)}|\to0$ and so
\begin{align}
\| \im \omega \mu_0 {\vec H}_0 ({\vec x}) - \nabla \times {\vec F}^{(n)} \|_{L^\infty\left ((B_\alpha)^{(n)} \right )} \le & C(\alpha^{(n)})^2 \| \nabla \times {\vec E}_0 \|_{W^{2,\infty}\left ((B_\alpha)^{(n)} \right )} , \nonumber  \\
\| \im \omega \mu_0 {\vec H}_0 ({\vec x}) - \nabla \times {\vec F}^{(n)} \|_{L^2 \left ((B_\alpha)^{(n)} \right )} \le & C (\alpha^{(n)})^{\frac{3}{2}} \| \im \omega \mu_0 {\vec H}_0 ({\vec x}) - \nabla \times {\vec F}^{(n)} \|_{L^\infty \left ((B_\alpha)^{(n)} \right )}\nonumber \\ 
\le & C (\alpha^{(n)} )^{\frac{7}{2}} \| \nabla \times {\vec E}_0 \|_{W^{2,\infty}\left ((B_\alpha)^{(n)} \right )} \label{eqn:taylorbackfd} .
\end{align}
where here and in the following $C$ denotes a generic constant unless otherwise indicated.

\begin{remark}
Higher order Taylor series could be considered (as previously in~\cite{ledgerlionheart2017} for the case of a single object) and lead to a more accurate representation of the field in terms of GMPTs. However, in order for such a representation to apply, there will be further implications in the allowable distance between the objects. 
\end{remark}

The introduction of ${\vec F}^{(n)}({\vec x})$ motivates the introduction of the following problem: Find ${\vec w}^{(n)}\in \tilde{\vec X}_{\vec \alpha}$ such that
\begin{align}
(\mu_0^{-1} & \nabla \times {\vec w}^{(n)}, \nabla \times {\vec v})_{(( B_\alpha)^{(n)})^c }+((\mu_*^{(n)})^{-1} \nabla \times {\vec w}^{(n)}, \nabla \times {\vec v})_{(B_\alpha)^{(n)}}  -\im \omega (\sigma_*^{(n)} {\vec w}^{(n)}, {\vec v})_{(B_\alpha)^{(n)}} \nonumber \\
&= ((\mu_0^{-1}- (\mu_*^{(n)})^{-1} ) \nabla \times {\vec F}^{(n)} ,\nabla \times {\vec v})_{(B_\alpha)^{(n)}} + \im \omega ( \sigma_*^{(n)} {\vec F}^{(n)} , {\vec v})_{(B_\alpha)^{(n)}} \qquad \forall {\vec v} \in \tilde{\vec X}_{\vec \alpha} \label{eqn:wmnweakform} ,
\end{align}
where $(( B_\alpha)^{(n)})^c:={\mathbb R}^3 \setminus \overline{(B_\alpha)^{(n)}} $.
By the addition of such problems, we have 
\begin{align}
&\left (\mu_0^{-1} \nabla \times {\vec w} , \nabla \times {\vec v} \right )_{{\vec B}_{\vec \alpha}^c }+
(\mu_{\vec \alpha}^{-1} \nabla \times {\vec w}_{\vec \alpha}, \nabla \times {\vec v})_{{\vec  B}_{\vec \alpha}}  -\im \omega (\sigma_{\vec \alpha} {\vec w}_{\vec \alpha}, {\vec v})_{{\vec  B}_{\vec \alpha}}   \nonumber \\
&+\sum_{n,m=1}^N (\mu_0^{-1} \nabla \times {\vec w}^{(m)} (1-\delta_{mn}), \nabla \times {\vec v})_{(B_\alpha)^{(n)}}
=  ((\mu_0^{-1}- (\mu_{\vec \alpha}^{-1} ) \nabla \times {\vec F}_{\vec \alpha} ,\nabla \times {\vec v})_{{\vec  B}_{\vec \alpha}} \nonumber \\
& + \im \omega ( \sigma_{\vec \alpha} {\vec F}_{\vec \alpha} , {\vec v})_{{\vec  B}_{\vec \alpha}}  \label{eqn:elimcurrt2} ,
\end{align}
where ${\vec w}:= \sum_{n=1}^N  {\vec w}^{(n)}$, ${\vec w}_{\vec \alpha} = {\vec w}^{(n)}$ in $(B_\alpha)^{(n)}$ and ${\vec F}_{\vec \alpha} = {\vec F}^{(n)}$ in $(B_\alpha)^{(n)}$.

We also remark that, associated with (\ref{eqn:wmnweakform}), is the strong form
\begin{subequations} \label{eqn:wnstrongform}
\begin{align}
\nabla \times  (\mu_*^{(n)}) ^{-1} \nabla \times {\vec w}^{(n)} - \im \omega \sigma_*^{(n)}  {\vec w}^{(n)} = &  \im \omega \sigma_*^{(n)} {\vec F}^{(n)} && \text{in $(B_\alpha)^{(n)}$} , \\
\nabla \times \mu_0^{-1} \nabla \times {\vec w}^{(n)} = & {\vec 0} &&\text{in $(( B_\alpha)^{(n)})^c$},  \\
\nabla \cdot {\vec w}^{(n)} = & 0 &&\text{in $(( B_\alpha)^{(n)})^c$} , \\
\left [ {\vec n} \times {\vec w}^{(n)}  \right ]_{(\Gamma_{\alpha})^{(n)}}  = &  {\vec 0} &&\text{on $(\Gamma_{\alpha})^{(n)}:= \partial (B_\alpha)^{(n)}$ } , \\
 \left [ {\vec n} \times \mu^{-1} \nabla \times {\vec w}^{(n)} \right ]_{(\Gamma_{\alpha})^{(n)}} = & {} && \nonumber \\
 - (\mu_0^{-1} - (\mu_*^{(n)})^{-1} ) &{\vec n} \times \nabla \times {\vec F}^{(n)}  &&\text{on $(\Gamma_{\alpha})^{(n)}$ },  \\
 {\vec w}^{(n)} =& O( | {\vec x}|^{-1} ) && \text{as $|{\vec x}| \to \infty $} , 
\end{align}
 \end{subequations}
which follows from using
\begin{align} 
(\mu_0^{-1}- (\mu_*^{(n)})^{-1})  ( \nabla \times {\vec F}^{(n)} , \nabla \times {\vec v})_{(B_\alpha)^{(n)}}  =  &
 ( \mu_0^{-1} - (\mu_*^{(n)})^{-1})  \int_{(\Gamma_{\alpha})^{(n)}} \nabla \times {\vec F}^{(n)} \times {\vec n}  \cdot \overline{\vec v} \dif {\vec x} \nonumber \\
=& \int_{(\Gamma_{\alpha})^{(n)}} \left [ \mu^{-1} \nabla \times {\vec F}^{(n)} \times {\vec n} \right  ]_{(\Gamma_{\alpha})^{(n)}} \cdot \overline{\vec v} \dif {\vec x} \nonumber .
\end{align}

\begin{lemma} \label{lemma:closelyspacedw}
For objects $(B_\alpha)^{(n)}$ and $(B_{\alpha})^{(m)}$ with $n \ne m$ we have that
\begin{align*}
\| \nabla \times {\vec w}^{(n)} \|_{L^2\left ((B_{\alpha})^{(m)} \right )} 
\le C \frac{\alpha_{\max}^{\frac{7}{2}} }{ |{\vec z}^{(m)} - {\vec z}^{(n)}  |^2}  \| \nabla \times {\vec E}_0 \|_{W^{2,\infty} \left ((B_\alpha)^{(n)} \cup (B_{\alpha})^{(m)} \right ) 
} .
\end{align*}
\end{lemma}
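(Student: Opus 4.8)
The plan is to estimate $\nabla \times \vec{w}^{(n)}$ away from $(B_\alpha)^{(n)}$, i.e.\ on the distant object $(B_\alpha)^{(m)}$, by exploiting the fact that $\vec{w}^{(n)}$ solves the exterior curl-curl equation (\ref{eqn:wnstrongform}b)--(\ref{eqn:wnstrongform}c) in $((B_\alpha)^{(n)})^c$ with decay at infinity, so it admits a representation as a (magnetic-dipole-type) field generated by a density supported on $(\Gamma_\alpha)^{(n)}$. First I would record an energy estimate for $\vec{w}^{(n)}$ globally: testing the weak form (\ref{eqn:wmnweakform}) with $\vec{v} = \vec{w}^{(n)}$, using the bound $\|\nabla\times\vec F^{(n)}\|_{L^2((B_\alpha)^{(n)})} \le C(\alpha^{(n)})^{5/2}\|\nabla\times\vec E_0\|_{W^{1,\infty}}$ and $\|\vec F^{(n)}\|_{L^2((B_\alpha)^{(n)})}\le C(\alpha^{(n)})^{5/2}\|\nabla\times\vec E_0\|_{W^{1,\infty}}$ (these follow from the explicit form of $\vec F^{(n)}$, since each term carries a factor $|\vec x - \vec z^{(n)}|$ or its square, hence $\alpha^{(n)}$ or $(\alpha^{(n)})^2$ on the object, times the volume factor $(\alpha^{(n)})^{3/2}$), together with coercivity of the sesquilinear form and a Cauchy--Schwarz/Friedrichs argument in $\tilde{\vec X}_{\vec\alpha}$, yields $\|\nabla\times\vec w^{(n)}\|_{L^2(\mathbb R^3)} \le C (\alpha^{(n)})^{7/2}\|\nabla\times\vec E_0\|_{W^{2,\infty}((B_\alpha)^{(n)})}$; the extra power comes from the $(\mu_0^{-1}-(\mu_*^{(n)})^{-1})$ and $\omega\sigma_*^{(n)}$ factors being $O(1)$ under the scaling assumptions, and matching against the known single-object estimate in~\cite{ammarivolkov2013}.

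Next I would convert this global $L^2$ control into a pointwise/interior decay estimate on $(B_\alpha)^{(m)}$. Since $\nabla\times\vec w^{(n)}$ is curl-free and divergence-free in the exterior region (away from its own source), it is harmonic there componentwise, so it is smooth in the annular region separating the two objects; by interior elliptic (or mean-value/Caccioppoli) estimates for harmonic functions, its value at a point of $(B_\alpha)^{(m)}$ is controlled by its $L^2$ norm over a ball of radius comparable to $\mathrm{dist}((B_\alpha)^{(m)},(B_\alpha)^{(n)}) \approx |\vec z^{(m)}-\vec z^{(n)}|$ centred away from $(B_\alpha)^{(n)}$, divided by the appropriate power of that distance. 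More precisely, writing $\vec w^{(n)}$ via the layer-potential representation with a density on $(\Gamma_\alpha)^{(n)}$ whose norm is controlled by $\|\nabla\times\vec w^{(n)}\|$ on a neighbourhood of the object (a Stratton--Chu / single-layer representation for the exterior Maxwell-type problem), the kernel $\vec D_x^2 G(\vec x,\vec y)$ decays like $|\vec x - \vec y|^{-3}$ while the surface measure contributes $(\alpha^{(n)})^2$; alternatively, and more cleanly, $\nabla\times\vec w^{(n)}$ itself behaves like a dipole field $\sim |\vec x-\vec z^{(n)}|^{-3}$ times a dipole moment bounded by a volume-type quantity. Tracking the powers of $\alpha$ against the $|\vec z^{(m)}-\vec z^{(n)}|^{-2}$ that the statement advertises, I expect the decay to enter as $|\vec z^{(m)}-\vec z^{(n)}|^{-2}$ once one also accounts for the $L^2$ integration over $(B_\alpha)^{(m)}$ (which brings a further $(\alpha^{(m)})^{3/2} \le \alpha_{\max}^{3/2}$); combining the $\alpha^{7/2}$ from the energy estimate with these volume/kernel factors and replacing all $\alpha^{(n)},\alpha^{(m)}$ by $\alpha_{\max}$ gives the claimed $C\,\alpha_{\max}^{7/2}\,|\vec z^{(m)}-\vec z^{(n)}|^{-2}\,\|\nabla\times\vec E_0\|_{W^{2,\infty}((B_\alpha)^{(n)}\cup(B_\alpha)^{(m)})}$.

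The main obstacle I anticipate is making the decay rate in the distance rigorous with the correct exponent: one must be careful that the representation of $\nabla\times\vec w^{(n)}$ in the exterior only sees the lowest surviving multipole and that the separation hypothesis $\min_{n\ne m}|\partial(B_\alpha)^{(n)}-\partial(B_\alpha)^{(m)}|\ge\alpha_{\max}$ is exactly what is needed so that the ball used in the interior estimate both avoids $(B_\alpha)^{(n)}$ and has radius comparable to $|\vec z^{(m)}-\vec z^{(n)}|$ (this requires $|\vec z^{(m)}-\vec z^{(n)}|$ and the boundary-to-boundary distance to be equivalent up to $\alpha_{\max}$, which holds under the hypothesis). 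A secondary technical point is handling the $\tilde{\vec X}_{\vec\alpha}$ divergence constraint and the flux condition when restricting/extending test functions in the energy estimate, but this is routine and parallels~\cite{ammarivolkov2013,ledgerlionheart2014}. I would close by remarking that the estimate is stated with $\alpha_{\max}^{7/2}$ for uniformity across the collection, and that summing such bounds over $m\ne n$ (using the separation assumption to bound $\sum_m |\vec z^{(m)}-\vec z^{(n)}|^{-2}$) is what ultimately feeds into the proof of Theorem~\ref{thm:objectsnocloselyspaced}.
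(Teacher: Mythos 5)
Your overall plan---control $\nabla\times\vec w^{(n)}$ on the far object by exploiting the decay of $\vec w^{(n)}$ away from $(B_\alpha)^{(n)}$, then pick up a volume factor $(\alpha^{(m)})^{3/2}$ from the $L^2$ norm over $(B_\alpha)^{(m)}$---is the right shape, and is in spirit what the paper does. But the quantitative bookkeeping, which is the entire content of the lemma, does not go through as you have written it. First, your global energy estimate $\|\nabla\times\vec w^{(n)}\|_{L^2(\mathbb R^3)}\le C(\alpha^{(n)})^{7/2}\|\nabla\times\vec E_0\|_{W^{2,\infty}}$ is false: the leading term of $\nabla\times\vec F^{(n)}$ is the non-vanishing constant $\im\omega\mu_0\vec H_0(\vec z^{(n)})$, so $\|\nabla\times\vec F^{(n)}\|_{L^2((B_\alpha)^{(n)})}\sim(\alpha^{(n)})^{3/2}$ (not $(\alpha^{(n)})^{5/2}$ as you claim), and the energy norm of $\vec w^{(n)}$ scales like $(\alpha^{(n)})^{3/2}$. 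This is seen immediately from the rescaling $\vec w^{(n)}(\vec x)=\alpha^{(n)}\vec w_0^{(n)}\bigl((\vec x-\vec z^{(n)})/\alpha^{(n)}\bigr)$ with $\vec w_0^{(n)}=O(1)$. Second, you assert a dipole decay $|\vec x-\vec z^{(n)}|^{-3}$ for $\nabla\times\vec w^{(n)}$ but then need $|\vec z^{(m)}-\vec z^{(n)}|^{-2}$ in the conclusion; with the inputs as you state them ($\alpha^{7/2}$ from energy, $\alpha^{3/2}$ from volume, $d^{-3}$ from the kernel) the powers do not combine to $\alpha_{\max}^{7/2}d^{-2}$, and the final step of your argument is an assertion ("tracking the powers \ldots gives the claimed \ldots") rather than a computation.

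The paper's actual argument is much shorter and fixes exactly these exponents. Write $\vec w^{(n)}(\vec x)=\alpha^{(n)}\vec w_0^{(n)}(\vec\xi^{(n)})$ with $\vec\xi^{(n)}=(\vec x-\vec z^{(n)})/\alpha^{(n)}$, so that $\nabla_x\times\vec w^{(n)}(\vec x)=\nabla_\xi\times\vec w_0^{(n)}(\vec\xi^{(n)})$, and note that $\vec w_0^{(n)}$ solves the unit-scale transmission problem with an $O(1)$ source (since $\nu^{(n)}=O(1)$ and $(\alpha^{(n)})^{-1}\vec F^{(n)}(\vec z^{(n)}+\alpha^{(n)}\vec\xi^{(n)})=O(1)$ times the background field). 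The radiation condition gives $|\vec w_0^{(n)}|\le C|\vec\xi^{(n)}|^{-1}\|\nabla\times\vec E_0\|_{W^{2,\infty}}$ and hence $|\nabla_\xi\times\vec w_0^{(n)}|\le C|\vec\xi^{(n)}|^{-2}\|\nabla\times\vec E_0\|_{W^{2,\infty}}$ for large $|\vec\xi^{(n)}|$ --- an $r^{-2}$, not $r^{-3}$, decay. Changing variables in the $L^2$ norm over $(B_\alpha)^{(m)}$ then yields $(\alpha^{(m)})^{3/2}$ from the volume and $|(\vec z^{(m)}-\vec z^{(n)})/\alpha^{(n)}|^{-2}=(\alpha^{(n)})^2|\vec z^{(m)}-\vec z^{(n)}|^{-2}$ from the decay, and the product $(\alpha^{(m)})^{3/2}(\alpha^{(n)})^2\le\alpha_{\max}^{7/2}$ is precisely where the stated exponent comes from. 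No global energy estimate, layer-potential representation, or interior elliptic estimate is needed.
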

\begin{proof}
Introducing ${\vec \xi}^{(n)} = \frac{{\vec x} - {\vec z}^{(n)}}{\alpha^{(n)}}$, which, without loss of generality, we assume the origin to be in $B^{(n)}$. We set ${\vec w}^{(n)} ({\vec x}) = \alpha^{(n)} {\vec w}_{0}^{(n)}\left ( \frac{{\vec x} - {\vec z}^{(n)}}{\alpha^{(n)}} \right )  = \alpha^{(n)} {\vec w}_{0}^{(n)} ({\vec \xi}^{(n)})$ and so $\nabla_ x \times {\vec w}^{(n)} ({\vec x}) = \nabla_\xi \times {\vec w}_0^{(n)} ( {\vec \xi}^{(n)}) =   \nabla_\xi \times {\vec w}_0^{(n)} \left (\frac{{\vec x} - {\vec z}^{(n)}}{\alpha^{(n)}} \right )$. Note that ${\vec w}_0^{(n)}({\vec \xi}^{(n)})$ satisfies
\begin{subequations} \label{eqn:w0nstrongform}
\begin{align}
\nabla_\xi  \times  (\mu_*^{(n)}) ^{-1} \nabla_\xi \times {\vec w}_0^{(n)} - \im \omega \sigma_*^{(n)}  {\vec w}_0^{(n)} = &  \im \omega \sigma_*^{(n)} (\alpha^{(n)})^2  && {} \nonumber  \\
[ (\alpha^{(n)})^{-1} {\vec F}^{(n)} &( {\vec z}^{(n)} + \alpha^{(n)} {\vec \xi}^{(n)} ) ] && \text{in $B^{(n)}$} , \\
\nabla_\xi \times \mu_0^{-1} \nabla_\xi \times {\vec w}_0^{(n)} = & {\vec 0} &&\text{in $ (B^{(n)})^c $} , \\
\nabla_\xi \cdot {\vec w}_0^{(n)} = & 0 &&\text{in $ (B^{(n)})^c $} , \\
\left [ {\vec n} \times {\vec w}_0^{(n)}  \right ]_{\Gamma^{(n)}}  = &  {\vec 0} &&\text{on $\Gamma^{(n)} $ }  , \\
 \left [ {\vec n} \times \mu^{-1} \nabla_\xi \times {\vec w}_0^{(n)} \right ]_{\Gamma^{(n)}} = &  -(\mu_0^{-1} - (\mu_*^{(n)})^{-1} )&& {} \nonumber \\
  {\vec n} \times \nabla_\xi \times & {\vec F}^{(n)} ( {\vec z}^{(n)} + \alpha^{(n)} {\vec \xi}^{(n)}  ) &&\text{on $\Gamma^{(n)}$ } , \\
 {\vec w}_0^{(n)} =& O( | {\vec \xi}^{(n)}|^{-1} ) && \text{as $|{\vec \xi}^{(n)}| \to \infty $} .
\end{align}
 \end{subequations}
From the above we have that $|{\vec w}_0^{(n)}| \le C |{\vec \xi}^{(n)}|^{-1} \| \nabla \times {\vec E}_0 \|_{W^{2,\infty}\left ((B_\alpha)^{(n)} \right )}$ for sufficiently large $|{\vec \xi}^{(n)}|$ and so we estimate that $|\nabla_\xi \times {\vec w}_0^{(n)}| \le C |{\vec \xi}^{(n)}|^{-2} \| \nabla \times {\vec E}_0 \|_{W^{2,\infty}\left ((B_\alpha)^{(n)} \right )}$ for the same case.  Thus, for $m \ne n$,
\begin{align}
\| \nabla \times {\vec w}^{(n)} \|_{L^2(B_\alpha^{(m)} )}  =&  \left (( \alpha^{(m)})^3 \int_{B^{(m)}} \left | \nabla_x \times {\vec w}^{(n)}  ( \alpha^{(m)}  {\vec \xi}^{(m)} + {\vec z}^{(m)} )  \right |^2 \dif {\vec \xi}^{(m)} \right )^{1/2} \nonumber \\
 =& \left (( \alpha^{(m)})^3 \int_{B^{(m)}} \left | \nabla_\xi \times {\vec w}_0^{(n)}  \left (\frac{ \alpha^{(m)}  {\vec \xi}^{(m)} + {\vec z}^{(m)} - {\vec z}^{(n)}}{\alpha^{(n)}}  \right )  \right |^2 \dif {\vec \xi}^{(m)} \right )^{1/2} \nonumber \\
 \le & C (\alpha^{(m)})^{\frac{3}{2}}  \left | \frac{{\vec z}^{(m)} - {\vec z}^{(n)}}{\alpha^{(n)}}   \right |^{-2}  \| \nabla \times {\vec E}_0 \|_{W^{2,\infty} \left ((B_\alpha)^{(n)}  \right )} \nonumber \\
 \le & C \frac{ \alpha_{\max}^{\frac{7}{2}}} {\left  | {\vec z}^{(m)} - {\vec z}^{(n)} \right |^{2}  } \| \nabla \times {\vec E}_0 \|_{W^{2,\infty} \left ((B_\alpha)^{(n)} \cup (B_{\alpha})^{(m)} \right)} \nonumber ,
\end{align}
where we have used  $ \| \nabla \times {\vec E}_0 \|_{W^{2,\infty} \left ((B_\alpha)^{(n)} \right  )} \le \| \nabla \times {\vec E}_0 \|_{W^{2,\infty} \left ((B_\alpha)^{(n)} \cup (B_{\alpha})^{(m)} \right)}$.

\end{proof}
\begin{corollary}  \label{coll:closelyspacedw}
Given the description $(B_\alpha)^{(n)} = \alpha^{(n)} B^{(n)} + {\vec z}^{(n)}$, we are free to configure $B^{(n)}$ in different ways provided that the origin lies at a
point in $B^{(n)}$ (similarly with $(B_\alpha)^{(m)} = \alpha ^{(m)} B^{(m)} + {\vec z}^{(m)}$) . Thus $|{\vec z}^{(m)}- {\vec z}^{(n)}|$ will be smallest when the origin lies in the boundaries of the objects, as illustrated in Figure~\ref{fig:coll:closelyspacedw}. 
 \begin{figure}
 \begin{center}
 \includegraphics[width=0.8\textwidth]{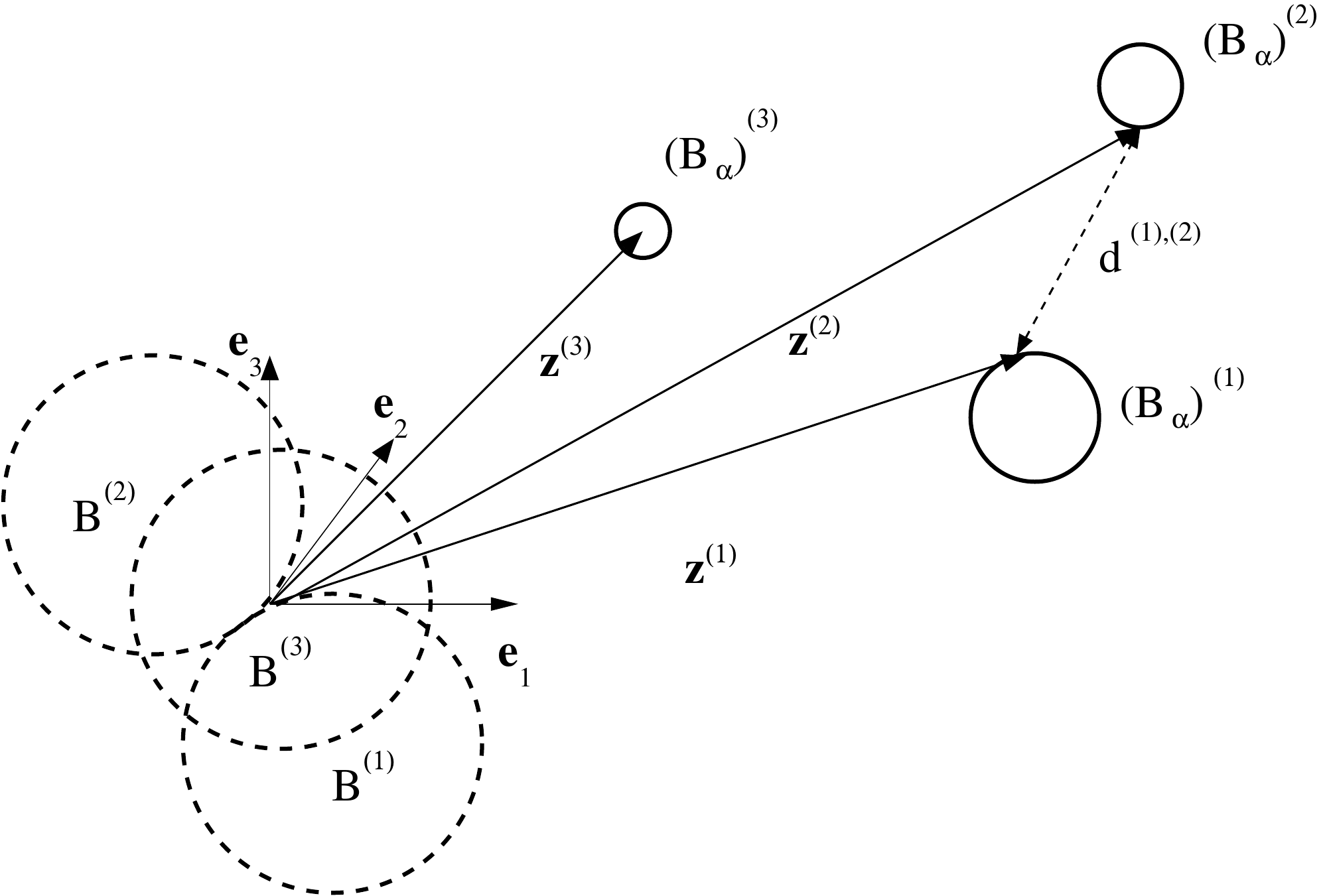}
\end{center}
\caption{Illustration to show how each $B^{(n)}$ can be configured differently provided that the origin lies within the object. Consequently $d^{(1),(2)} = |{\vec z}^{(1)}- {\vec z}^{(2)} |$ will be minimum when the objects $B^{(1)}$ and $B^{(2)}$ are configured such that the origin is a suitable point on the boundaries of these objects.} \label{fig:coll:closelyspacedw}
 \end{figure}
  Requiring that $\displaystyle |{\vec z}^{(m)}- {\vec z}^{(n)} |= \min_{n,m=1,\ldots,N, n\ne m}|\partial (B_\alpha)^{(n)} -\partial (B_\alpha)^{(m)}| > C > \alpha_{\max}$ then Lemma~\ref{lemma:closelyspacedw} implies that
 \begin{align}
\|\nabla \times {\vec w}^{(n)} \|_{L^2(B_\alpha^{(m)})} \le C {\alpha_{\max}^{\frac{7}{2}}} \| \nabla \times {\vec E}_0 \|_{W^{2,\infty} \left ((B_\alpha)^{(n)} \cup (B_{\alpha})^{(m)}\right )
} . \nonumber
\end{align}
 \end{corollary}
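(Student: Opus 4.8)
The plan is to deduce the estimate directly from Lemma~\ref{lemma:closelyspacedw} once the effect of the translation freedom has been made precise. First I would record the underlying geometry: replacing $B^{(n)}$ by $B^{(n)} - {\vec a}$ and ${\vec z}^{(n)}$ by ${\vec z}^{(n)} + \alpha^{(n)} {\vec a}$ leaves the physical inclusion $(B_\alpha)^{(n)} = \alpha^{(n)} B^{(n)} + {\vec z}^{(n)}$ unchanged as a subset of ${\mathbb R}^3$, and the relocated origin lies in $B^{(n)}$ precisely when ${\vec a} \in B^{(n)}$. Hence the admissible positions of ${\vec z}^{(n)}$ fill out $(B_\alpha)^{(n)}$, and likewise those of ${\vec z}^{(m)}$ fill out $(B_\alpha)^{(m)}$; since the two solid inclusions are bounded and disjoint, a point of $(B_\alpha)^{(n)}$ nearest to $(B_\alpha)^{(m)}$ lies on $\partial (B_\alpha)^{(n)}$, and symmetrically, so for every admissible choice
\[
|{\vec z}^{(m)} - {\vec z}^{(n)}| \ge \mathrm{dist}\big( (B_\alpha)^{(n)}, (B_\alpha)^{(m)} \big) = \min_{n \ne m} \big| \partial (B_\alpha)^{(n)} - \partial (B_\alpha)^{(m)} \big| ,
\]
with the lower bound approached by placing the two origins at a pair of nearest boundary points. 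This is the extremal configuration named in the statement; any other admissible placement only enlarges $|{\vec z}^{(m)} - {\vec z}^{(n)}|$ and hence, by Lemma~\ref{lemma:closelyspacedw}, only sharpens the conclusion, so it is enough to treat this case.

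Second, I would invoke the separation hypothesis, i.e. that in this extremal configuration $|{\vec z}^{(m)} - {\vec z}^{(n)}|$ exceeds a fixed constant $C$ with $C > \alpha_{\max}$. Then $|{\vec z}^{(m)} - {\vec z}^{(n)}|^{-2} \le C^{-2}$, and feeding this into the bound supplied by Lemma~\ref{lemma:closelyspacedw},
\[
\| \nabla \times {\vec w}^{(n)} \|_{L^2((B_\alpha)^{(m)})} \le C \frac{\alpha_{\max}^{7/2}}{|{\vec z}^{(m)} - {\vec z}^{(n)}|^2} \| \nabla \times {\vec E}_0 \|_{W^{2,\infty}((B_\alpha)^{(n)} \cup (B_\alpha)^{(m)})} ,
\]
and absorbing the harmless factor $C^{-2}$ into the generic constant gives the asserted estimate $\| \nabla \times {\vec w}^{(n)} \|_{L^2((B_\alpha)^{(m)})} \le C \alpha_{\max}^{7/2} \| \nabla \times {\vec E}_0 \|_{W^{2,\infty}((B_\alpha)^{(n)} \cup (B_\alpha)^{(m)})}$.

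The one point that needs care --- and what I expect to be the main obstacle --- is verifying that the far-field decay estimate underlying Lemma~\ref{lemma:closelyspacedw} really does apply here. That estimate, $|\nabla_\xi \times {\vec w}_0^{(n)}({\vec \xi}^{(n)})| \le C |{\vec \xi}^{(n)}|^{-2} \| \nabla \times {\vec E}_0 \|_{W^{2,\infty}((B_\alpha)^{(n)})}$, holds only once $|{\vec \xi}^{(n)}|$ exceeds some threshold, whereas in Lemma~\ref{lemma:closelyspacedw} it is evaluated at ${\vec \xi}^{(n)} = (\alpha^{(m)} {\vec \xi}^{(m)} + {\vec z}^{(m)} - {\vec z}^{(n)})/\alpha^{(n)}$ with ${\vec \xi}^{(m)} \in B^{(m)}$, whose magnitude is of order $|{\vec z}^{(m)} - {\vec z}^{(n)}|/\alpha^{(n)} \ge C/\alpha_{\max}$. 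It is precisely the strict inequality $C > \alpha_{\max}$ --- taking, if necessary, $C$ to be a suitable fixed multiple of the threshold --- that keeps $|{\vec \xi}^{(n)}|$ uniformly large enough over all pairs $n \ne m$ and over all admissible configurations, so that the decay estimate, and with it Lemma~\ref{lemma:closelyspacedw}, applies. Once that is in place the corollary follows from the two substitutions above, with no additional computation.
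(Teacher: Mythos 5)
Your argument is correct and is essentially the paper's own (the corollary is proved there simply by substituting the separation hypothesis $|{\vec z}^{(m)}-{\vec z}^{(n)}|>C$ into the bound of Lemma~\ref{lemma:closelyspacedw} and absorbing $C^{-2}$ into the generic constant). Your additional observations --- that the admissible positions of ${\vec z}^{(n)}$ sweep out $(B_\alpha)^{(n)}$ so the minimum of $|{\vec z}^{(m)}-{\vec z}^{(n)}|$ is attained at nearest boundary points, and that $C>\alpha_{\max}$ is what keeps $|{\vec \xi}^{(n)}|$ in the regime where the far-field decay of ${\vec w}_0^{(n)}$ is valid --- are exactly the points the paper leaves implicit, and you have filled them in correctly.
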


The following Lemma extends Ammari's {\it et al}'s  Lemma 3.2~\cite{ammarivolkov2013}  to the case of $N$ multiple objects, when they are sufficiently well spaced.
\begin{lemma} \label{lemma:newlemma3_2}
Provided that $\displaystyle \min_{n,m=1,\ldots,N, n\ne m}|\partial (B_\alpha)^{(n)} -\partial (B_{\alpha})^{(m)} | \ge {\alpha}_{\max} $, there exists a constant $C$ such that
\begin{align*}
\| \nabla \times( {\vec E}_{\vec \alpha} - {\vec E}_0-{\vec w}^{(n)} ) \|_{L^2 \left ((B_\alpha)^{(n)} \right )} \le & C ( \nu_{\max} + |1- \mu_{r,\max}^{-1}| )\alpha_{\max}^{\frac{7}{2} } \| \nabla \times {\vec E}_0 \|_{W^{2,\infty}({\vec B}_{\vec \alpha})} ,\\
\| ( {\vec E}_{\vec \alpha} - {\vec E}_0-({\vec w}^{(n)} +{\vec \Phi}^{(n)} )) \|_{L^2\left ((B_\alpha)^{(n)} \right )} \le & C (\nu_{\max} + |1- \mu_{r,\max}^{-1}| )\alpha_{\max}^{\frac{9}{2} } \| \nabla \times {\vec E}_0 \|_{W^{2,\infty}({\vec B}_{\vec \alpha} )} ,
\end{align*}
for $n=1,\ldots,N$.
\end{lemma}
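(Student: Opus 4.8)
The plan is to mirror the single-object argument of Ammari {\it et al.}~\cite{ammarivolkov2013} (their Lemma 3.2), localised to the $n$th inclusion, while absorbing the cross-interactions between distinct objects using Lemma~\ref{lemma:closelyspacedw} and Corollary~\ref{coll:closelyspacedw}. First I would subtract, from the coupled identity~(\ref{eqn:elimcurrt1}), the sum of the auxiliary problems~(\ref{eqn:wmnweakform}) written in the form~(\ref{eqn:elimcurrt2}), so that the error field ${\vec e}_{\vec \alpha} := {\vec E}_{\vec \alpha} - {\vec E}_0 - {\vec w}$ satisfies a weak equation whose right-hand side splits into three groups: (i) the local Taylor-remainder terms $((\mu_{\vec\alpha}^{-1}-\mu_0^{-1})(\nabla\times{\vec E}_0 - \nabla\times{\vec F}_{\vec\alpha}),\nabla\times{\vec v})_{{\vec B}_{\vec\alpha}}$ and $\im\omega(\sigma_{\vec\alpha}({\vec E}_0-{\vec F}_{\vec\alpha}),{\vec v})_{{\vec B}_{\vec\alpha}}$, controlled by~(\ref{eqn:taylorbackfd}); and (ii) the off-diagonal coupling sum $\sum_{n\ne m}(\mu_0^{-1}\nabla\times{\vec w}^{(m)},\nabla\times{\vec v})_{(B_\alpha)^{(n)}}$, which is exactly what Lemma~\ref{lemma:closelyspacedw}/Corollary~\ref{coll:closelyspacedw} bound by $C\alpha_{\max}^{7/2}\|\nabla\times{\vec E}_0\|_{W^{2,\infty}}$ once the separation hypothesis $\min_{n\ne m}|\partial(B_\alpha)^{(n)}-\partial(B_\alpha)^{(m)}|\ge\alpha_{\max}$ is invoked.

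Next I would test with ${\vec v}={\vec e}_{\vec\alpha}$ and use coercivity of the sesquilinear form on $\tilde{\vec X}_{\vec\alpha}$: taking real and imaginary parts gives, in the standard way, a lower bound of the form $c(\mu_{r,\max}^{-1},\nu_{\min})(\|\nabla\times{\vec e}_{\vec\alpha}\|_{L^2({\mathbb R}^3)}^2 + \omega\|\sigma_{\vec\alpha}^{1/2}{\vec e}_{\vec\alpha}\|_{L^2({\vec B}_{\vec\alpha})}^2)$. Bounding the right-hand side by Cauchy--Schwarz, inserting~(\ref{eqn:taylorbackfd}) for the Taylor terms (scaling as $\alpha^{7/2}$ in $L^2$) and Corollary~\ref{coll:closelyspacedw} for the coupling terms (also $\alpha_{\max}^{7/2}$), and dividing through, yields a global estimate $\|\nabla\times{\vec e}_{\vec\alpha}\|_{L^2({\mathbb R}^3)}\le C(\nu_{\max}+|1-\mu_{r,\max}^{-1}|)\alpha_{\max}^{7/2}\|\nabla\times{\vec E}_0\|_{W^{2,\infty}({\vec B}_{\vec\alpha})}$. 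Restricting the $L^2$ norm to $(B_\alpha)^{(n)}$ gives the first displayed inequality. For the second inequality one introduces the correctors ${\vec \Phi}^{(n)}$ (the gradient parts matching the divergence/decay conditions, as in Ammari {\it et al.}) and uses a Poincar\'e--Friedrichs / Sobolev-type inequality on the scaled domain $B^{(n)}$ together with the decay ${\vec w}^{(n)}=O(|{\vec\xi}^{(n)}|^{-1})$: passing to the reference configuration via ${\vec\xi}^{(n)}=({\vec x}-{\vec z}^{(n)})/\alpha^{(n)}$ costs one extra power of $\alpha^{(n)}$ relative to the curl estimate, upgrading $\alpha_{\max}^{7/2}$ to $\alpha_{\max}^{9/2}$.

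The main obstacle I anticipate is making the off-diagonal bound genuinely uniform: the coupling term $(\mu_0^{-1}\nabla\times{\vec w}^{(m)},\nabla\times{\vec e}_{\vec\alpha})_{(B_\alpha)^{(n)}}$ involves ${\vec w}^{(m)}$, which itself depends on the still-unknown error through the coupled system, so one must argue that the full field ${\vec w}=\sum_m{\vec w}^{(m)}$ is a priori bounded (by energy estimates on~(\ref{eqn:wmnweakform}) analogous to the single-object case, giving $\|\nabla\times{\vec w}^{(m)}\|_{L^2({\mathbb R}^3)}\le C\alpha_{\max}^{5/2}\|\nabla\times{\vec E}_0\|_{W^{2,\infty}}$) before Lemma~\ref{lemma:closelyspacedw} can be applied with a fixed constant. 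A secondary technical point is tracking that every constant is independent of $N$, which requires that the separation condition prevents the $\sum_{n\ne m}|{\vec z}^{(m)}-{\vec z}^{(n)}|^{-2}$ contributions from accumulating — here one uses that the objects have disjoint supports of size $\ge\alpha_{\max}$, so at most $O(1)$ neighbours contribute at each scale, and the geometric decay in~(\ref{eqn:taylorbackfd}) and Corollary~\ref{coll:closelyspacedw} keeps the total sum bounded by a constant times $\alpha_{\max}^{7/2}$.
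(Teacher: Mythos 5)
Your overall architecture matches the paper's proof: subtract the superposition of the auxiliary problems (\ref{eqn:elimcurrt2}) from (\ref{eqn:elimcurrt1}), test with the error itself, bound the local Taylor-remainder contributions and absorb the off-diagonal coupling $\sum_{n\ne m}(\mu_0^{-1}\nabla\times{\vec w}^{(m)},\nabla\times{\vec v})_{(B_\alpha)^{(n)}}$ via Corollary~\ref{coll:closelyspacedw} under the separation hypothesis, then trade a curl norm for an $L^2$ norm at the cost of one power of $\alpha$. That is exactly the route taken in Section~\ref{sect:proof1}. One remark: your anticipated obstacle about ${\vec w}^{(m)}$ ``depending on the still-unknown error through the coupled system'' is unfounded --- each ${\vec w}^{(m)}$ solves the decoupled problem (\ref{eqn:wmnweakform}) whose data is built from ${\vec E}_0$ alone, so Lemma~\ref{lemma:closelyspacedw} applies to it directly with no bootstrapping.

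There is, however, one genuine gap: your treatment of the conductivity term. You write the right-hand side contribution as $\im\omega(\sigma_{\vec\alpha}({\vec E}_0-{\vec F}_{\vec\alpha}),{\vec v})_{{\vec B}_{\vec\alpha}}$ and claim it is ``controlled by (\ref{eqn:taylorbackfd})'', deferring the correctors ${\vec\Phi}^{(n)}$ to the second inequality. But (\ref{eqn:taylorbackfd}) bounds $\|\nabla\times({\vec E}_0-{\vec F}^{(n)})\|$, not $\|{\vec E}_0-{\vec F}^{(n)}\|$: only the \emph{curl} of ${\vec F}^{(n)}$ is a Taylor approximant (of $\im\omega\mu_0{\vec H}_0$), so ${\vec E}_0-{\vec F}^{(n)}$ itself is generically $O(1)$ on the object and $\|{\vec E}_0-{\vec F}^{(n)}\|_{L^2((B_\alpha)^{(n)})}=O(\alpha^{3/2})$. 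Combined with $\|{\vec v}\|_{L^2}\le C\alpha\|\nabla\times{\vec v}\|$ this yields only $A_2\le C\nu_{\max}\alpha^{1/2}\|\nabla\times{\vec v}\|$, which destroys the claimed $\alpha_{\max}^{7/2}$ rate in the \emph{first} inequality. The gradient correctors ${\vec\Phi}^{(n)}$ must therefore be introduced at the outset, exactly so that the conductivity term becomes $\im\omega(\sigma_{\vec\alpha}({\vec E}_0+{\vec\Phi}_{\vec\alpha}-{\vec F}_{\vec\alpha}),{\vec v})$; since ${\vec E}_0+{\vec\Phi}^{(n)}-{\vec F}^{(n)}$ is divergence-free with vanishing normal trace on $(\Gamma_\alpha)^{(n)}$ and has curl equal to $\nabla\times({\vec E}_0-{\vec F}^{(n)})$, the div--curl estimate (\ref{e0pphi0nmfn}) gives $\|{\vec E}_0+{\vec\Phi}^{(n)}-{\vec F}^{(n)}\|_{L^2}\le C(\alpha^{(n)})^{9/2}\|\nabla\times{\vec E}_0\|_{W^{2,\infty}}$ and hence the correct $A_2\le C\nu_{\max}\alpha_{\max}^{7/2}\|\nabla\times{\vec E}_0\|_{W^{2,\infty}}\|\nabla\times{\vec v}\|$. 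Because ${\vec\Phi}^{(n)}$ is a gradient it does not alter any curl term, so working with the error ${\vec E}_{\vec\alpha}-{\vec E}_0-({\vec w}+{\vec\Phi})$ throughout costs nothing and delivers both inequalities at once, which is how the paper proceeds.
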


\begin{proof}
We start by proceeding along the lines presented in~\cite{ammarivolkov2013} and introduce \\
${\vec \Phi}^{(n)} = \left \{ \begin{array}{ll} \nabla \phi_0^{(n)} & \text{in $(B_\alpha)^{(n)}$} \\
\nabla \tilde{\phi}_0^{(n)} & \text{in ${((B_\alpha)^{(n)}})^c$} \end{array} \right .$ where
\begin{align*}
-\Delta \phi_0^{(n)} =&  - \nabla \cdot {\vec F}^{(n)} && \text{in $(B_\alpha)^{(n)} $} ,\\
-\partial_{\vec n} \phi_0^{(n)} = & ( {\vec E}_0({\vec x}) - {\vec F}^{(n)} ({\vec x})) \cdot {\vec n} && \text{on $(\Gamma_{\alpha})^{(n)}$}  , \\
\int_{(B_\alpha)^{(n)}} \phi_{\alpha^{(n)}} \dif {\vec x} = & 0 ,
\end{align*}
with $\tilde{\phi}_0^{(n)}$ being the solution of an exterior problem in an analogous way to $\tilde{\phi}_0$ in ~\cite{ammarivolkov2013}. Using (\ref{eqn:elimcurrt1}) and (\ref{eqn:elimcurrt2}) (and after multiplying by $\mu_0$) we can deduce that
\begin{align}
&A:= \left (\nabla \times  ( {\vec E}_{\vec \alpha} - {\vec E}_0 -   ({\vec w} +{\vec \Phi} )), \nabla \times {\vec v} \right )_{{\vec B}_{\vec \alpha}^c }+\nonumber\\
&  (\mu_0 \mu_{\vec \alpha}^{-1} \nabla \times  ({\vec E}_{\vec \alpha} -{\vec E}_0 - ({\vec w}_{\vec \alpha}+{\vec \Phi}_{\vec \alpha})), \nabla \times {\vec v})_{{\vec B}_{\vec \alpha}}   \nonumber \\
& -\im \omega\mu_0  (\sigma_{\vec \alpha}({\vec E}_{\vec \alpha} - {\vec E}_0 - ( {\vec w}_{\vec \alpha}+ {\vec \Phi}_{\vec \alpha})), {\vec v})_{{\vec B}_{\vec \alpha}}  \nonumber \\
&=  \mu_0((\mu_0^{-1}- \mu_{\vec \alpha}^{-1} ) \nabla \times ({\vec E}_0 -{\vec F}_{\vec \alpha}) ,\nabla \times {\vec v})_{{\vec B}_{\vec \alpha}} \nonumber \\
& + \im \omega \mu_0 ( \sigma_{\vec \alpha} ({\vec E}_0 +{\vec \Phi}_{\vec \alpha}- {\vec F}_{\vec \alpha}) , {\vec v})_{{\vec B}_{\vec \alpha}}  \nonumber\\
&+\sum_{n,m=1}^N ( \nabla \times ({\vec w}^{(m)}+{\vec \Phi}^{(m)}), \nabla \times {\vec v})_{(B_\alpha)^{(n)}}  (1-\delta_{mn})
\qquad  \forall {\vec v} \in \tilde{X}_\alpha \label{eqn:sum terms} ,
\end{align}
where ${\vec \Phi} = \sum_{n=1}^N {\vec \Phi}^{(n)}$ and ${\vec \Phi}_{\vec \alpha}= {\vec \Phi}^{(n)} $ in $(B_\alpha)^{(n)}$.
Choosing ${\vec v}= {\vec E}_{\vec \alpha}- {\vec E}_0 -   ({\vec w}_{\vec \alpha}+{\vec \Phi}_{\vec \alpha}))$ then we have that
\begin{align}
 \| \nabla \times ( {\vec E}_{\vec \alpha} - {\vec E}_0 - ({\vec w}^{(n)} + {\vec \Phi}^{(n)} ) ) \|_{L^2(  B_\alpha^{ (n) } )}^2  & \le  \| \nabla \times ( {\vec E}_{\vec \alpha} - {\vec E}_0 - ({\vec w}_{\vec \alpha} + {\vec \Phi}_{\vec \alpha} ) ) \|_{L^2( {\vec B}_{\vec \alpha} )}^2 \nonumber \\
 &  \le|A| \nonumber .
\end{align}
Also, by application of the Cauchy-Schwartz inequality, we can check that 
\begin{align}
 |A| \le A_1 +A_2 +A_3 , \label{eqn:sum terms2}
\end{align}
where
\begin{align}
A_1 := &\left | \mu_0 \left ( (\mu_0^{-1} - \mu_{\vec \alpha}^{-1}) (\nabla\times( {\vec E}_0 -{\vec F}_{\vec \alpha})) , \nabla \times {\vec v} \right )_{{\vec B}_{\vec \alpha}}
 \right | \nonumber \\
 \le &C  \max_{n=1,\cdots,N} | 1- (\mu_r^{(n)})^{-1} | \|    \nabla \times ({\vec E}_0 -{\vec F}_{\vec \alpha}))\|_{L^2( {\vec B}_{\vec \alpha} )}  \| \nabla \times {\vec v}\|_{L^2( {\vec B}_{\vec \alpha})  } \nonumber \\
 \le &C   |1- \mu_{r,\max}^{-1} | \left ( \sum_{n=1}^N \|    \nabla \times ({\vec E}_0 -{\vec F}^{(n)})\|_{L^2( B_\alpha^{(n)})}^2 \right )^{1/2}  \| \nabla \times {\vec v}\|_{L^2( {\vec B}_{\vec \alpha} )} \nonumber \\
\le & C  | 1- \mu_{r,\max}^{-1}| \alpha_{\max} ^{\frac{7}{2}} \| \nabla \times {\vec E}_0 \|_{W^{2,\infty}({\vec B}_{\vec \alpha}  )}     \| \nabla \times {\vec v}\|_{L^2( {\vec B}_{\vec \alpha} )} \label{eqn:a1}, \\
A_2 : = &  \left |  \omega \mu_0   (\sigma_*^{(n)}( {\vec E}_0 +{\vec \Phi}_{\vec \alpha}- {\vec F}_{\vec \alpha}),{\vec v}) _{{\vec B}_{\vec \alpha}} \right | \nonumber \\
\le  & C  \omega\mu_0 \sigma_{\max}  \left (  \sum_{n=1}^N \left (   (\alpha^{(n)}) \| \nabla \times ({\vec E}_0 - {\vec F}^{(n)} ) \|_{L^2((B_\alpha)^{(n)})} \right )^2 \right)^{1/2} \alpha_{\max} \| \nabla \times {\vec v} \|_{L^2( {\vec B}_{\vec \alpha})} \nonumber \\
\le & C  \nu_{\max} \alpha_{\max}^\frac{7}{2}   \left (   \sum_{n=1}^N \left (    \| \nabla \times {\vec E}_0   \|_{W^{2,\infty} ((B_\alpha)^{(n)})} \right )^2 \right )^{1/2} \| \nabla \times {\vec v} \|_{L^2(  {\vec B}_{\vec \alpha} )} \nonumber\\
\le & C     \nu_{\max}  \alpha_{\max}^\frac{7}{2}  \| \nabla \times {\vec E}_0   \|_{W^{2,\infty} ( {\vec B}_{\vec \alpha})} \| \nabla \times {\vec v} \|_{L^2(  {\vec B}_{\vec \alpha} )}  \label{eqn:a2} , \\ 
A_3  := & \left | \sum_{n,m=1}^N ( \nabla \times ({\vec w}^{(m)}+{\vec \Phi}^{(m)} ), \nabla \times {\vec v})_{B_\alpha^{(m)}}  (1-\delta_{mn}) \right | \nonumber \\
\le & C  \left ( \sum_{n,m=1}^N ( 1- \delta_{mn} )  \| \nabla \times {\vec w}^{(m)} \|_{L^2\left ((B_\alpha)^{(n)}\right  )} \right )  \| \nabla \times {\vec v} \|_{L^2({\vec B}_{\vec \alpha}  ) } .
\end{align}
To bound $A_1$ and $A_2$ we have used (\ref{eqn:taylorbackfd}), 
\begin{align}
\| {\vec E}_0 +{\vec \Phi}^{(n)} - {\vec F}^{(n)} \|_{L^2\left ((B_\alpha)^{(n)} \right  )} \le &C \alpha^{(n)} \|  \nabla \times ( {\vec E}_0 - {\vec F}^{(n)} ) \|_{L^2 \left ((B_\alpha)^{(n)} \right )} \nonumber \\
 \le & C ( \alpha^{(n)})^{\frac{9}{2}} \| \nabla \times {\vec E}_0 \|_{W^{2,\infty}\left ((B_\alpha)^{(n)} \right )} \label{e0pphi0nmfn} ,
\end{align}
and applied  similar arguments to~\cite{ammarivolkov2013}. 
The terms  $A_3$ does not appear in the single object case and dictates the minimum spacing for which the bound holds. Requiring that $\displaystyle |{\vec z}^{(m)}- {\vec z}^{(n)} |= \min_{n,m=1,\ldots,N, n\ne m}|\partial B^{(n)} -\partial B^{(m)}| > C > \alpha_{\max}$ and applying Corollary~\ref{coll:closelyspacedw}
then
\begin{align}
A_3  \le  C\alpha_{\max}^{\frac{7}{2}} \| \nabla \times {\vec E}_0   \|_{W^{2,\infty} ({\vec B}_{\vec \alpha}) }  \| \nabla \times {\vec v} \|_{L^2( {\vec B}_{\vec \alpha} )} \label{eqn:a3} .
\end{align}
Using (\ref{eqn:a1}), (\ref{eqn:a2}) and \ref{eqn:a3}) in (\ref{eqn:sum terms2}) we find that
\begin{align}
\| \nabla \times ( {\vec E}_{\vec \alpha} - {\vec E}_0 - ({\vec w}^{(n)} + {\vec \Phi}^{(n)} ) ) \|_{L^2( B_\alpha^{ (n) } )} \le & \| \nabla \times ( {\vec E}_{\vec \alpha} - {\vec E}_0 - ({\vec w}_{\vec \alpha} + {\vec \Phi}_{\vec \alpha} ) ) \|_{L^2(  {\vec B}_{\vec \alpha} )} \nonumber \\
\le & C \left ( \nu_{max} + | 1- \mu_{r,\max}^{-1} | \right )  \alpha_{\max}^{\frac{7}{2}} \| \nabla \times {\vec E}_0   \|_{W^{2,\infty}(  {\vec B}_{\vec \alpha} )},
\nonumber
\end{align}
and, by additionally using $ \| {\vec E}_{\vec \alpha} - {\vec E}_0 - ({\vec w}^{(n)} + {\vec \Phi}^{(n)} )  \|_{L^2( B_\alpha^{ (n) } )} \le \alpha^{(n)} \| \nabla \times ( {\vec E}_{\vec \alpha} - {\vec E}_0 - ({\vec w}^{(n)} + {\vec \Phi}^{(n)} ) ) \|_{L^2( B_\alpha^{ (n) } )} \le \alpha_{\max} \| \nabla \times ( {\vec E}_{\vec \alpha} - {\vec E}_0 - ({\vec w}^{(n)} + {\vec \Phi}^{(n)} ) ) \|_{L^2( B_\alpha^{ (n) } )} $, this completes the proof.
\end{proof}

By recalling the definition of ${\vec w}_0^{(n)} ({\vec \xi})$ stated in Lemma~\ref{lemma:closelyspacedw},  
Ammari's {\it et al}'s  Theorem 3.1~\cite{ammarivolkov2013} in the case of multiple sufficiently well spaced objects becomes
\begin{theorem} \label{thm:rev3_1}
Provided that $\displaystyle \min_{n,m=1,\ldots,N, n\ne m}|\partial (B_\alpha)^{(n)} -\partial (B_\alpha)^{(m)} | \ge {\alpha}_{\max} $
there exists a constant $C$ such that
\begin{align*}
\left \| \nabla \times \left ( {\vec E}_{\vec \alpha} - {\vec E}_0-\alpha^{(n)} {\vec w}_0^{(n)} \left ( \frac{{\vec x} - {\vec z}^{(n)} }{ \alpha^{(n)}} \right ) \right  ) \right \|_{L^2\left ((B_\alpha)^{(n)}\right )} \le & 
 C ( \nu_{\max} + |1- \mu_{r,\max}^{-1}| ) \nonumber \\
& \alpha_{\max}^{\frac{7}{2} } \| \nabla \times {\vec E}_0 \|_{W^{2,\infty}(    {\vec B}_{\vec \alpha}  )} ,\\
\left \|  {\vec E}_{\vec \alpha} - {\vec E}_0-\left (\alpha^{(n)} {\vec w}_0^{(n)} \left ( \frac{{\vec x} - {\vec z}^{(n)} }{ \alpha^{(n)}} \right ) +{\vec \Phi}^{(n)} \right ) \right  \|_{L^2\left ((B_\alpha)^{(n)}\right )} \le & C (\nu_{\max} + |1- \mu_{r,\max}^{-1}| ) \nonumber \\
& \alpha_{\max}^{\frac{9}{2} } \| \nabla \times {\vec E}_0 \|_{W^{2,\infty}( {\vec B}_{\vec \alpha} )} .
\end{align*}
\end{theorem}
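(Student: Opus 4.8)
The plan is to recognise that Theorem~\ref{thm:rev3_1} is nothing more than a rescaled restatement of Lemma~\ref{lemma:newlemma3_2}, so that essentially no new estimate is required. First I would recall the change of variables used in the proof of Lemma~\ref{lemma:closelyspacedw}: for each $n$ one sets ${\vec \xi}^{(n)} = ({\vec x}-{\vec z}^{(n)})/\alpha^{(n)}$ and defines the rescaled field ${\vec w}_0^{(n)}$ through ${\vec w}^{(n)}({\vec x}) = \alpha^{(n)}\,{\vec w}_0^{(n)}\!\left( ({\vec x}-{\vec z}^{(n)})/\alpha^{(n)} \right)$, where ${\vec w}_0^{(n)}$ solves the unit-domain transmission problem (\ref{eqn:w0nstrongform}). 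This identity holds pointwise on all of ${\mathbb R}^3$, so the quantity $\alpha^{(n)} {\vec w}_0^{(n)}(({\vec x}-{\vec z}^{(n)})/\alpha^{(n)})$ appearing in the statement of the theorem is, by definition, exactly ${\vec w}^{(n)}({\vec x})$.

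Next I would substitute this identity into the two conclusions of Lemma~\ref{lemma:newlemma3_2}. The first bound there, $\| \nabla \times ( {\vec E}_{\vec \alpha} - {\vec E}_0 - {\vec w}^{(n)} ) \|_{L^2((B_\alpha)^{(n)})} \le C(\nu_{\max} + |1-\mu_{r,\max}^{-1}|)\alpha_{\max}^{7/2}\| \nabla\times{\vec E}_0 \|_{W^{2,\infty}({\vec B}_{\vec \alpha})}$, becomes verbatim the first estimate of the theorem once ${\vec w}^{(n)}$ is written in the rescaled form, and the second bound of the lemma becomes the second estimate of the theorem in the same way. Since both the norms and the fields in the two statements refer to the physical domain $(B_\alpha)^{(n)}$ — no change of integration variable is performed at this stage — no Jacobian factors appear, and the constants and powers of $\alpha_{\max}$ are left untouched. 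The separation hypothesis $\min_{n,m=1,\ldots,N,\,n\ne m}|\partial(B_\alpha)^{(n)}-\partial(B_\alpha)^{(m)}| \ge \alpha_{\max}$ is carried over unchanged from Lemma~\ref{lemma:newlemma3_2}; it is precisely what keeps the interaction term $A_3$ (bounded via Corollary~\ref{coll:closelyspacedw}) of order $\alpha_{\max}^{7/2}$.

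I do not expect any genuine obstacle: the substantive work — the energy estimate controlling the coupling between distinct inclusions — has already been carried out in Lemma~\ref{lemma:newlemma3_2}, and the present theorem only repackages its conclusion in the scaled variables ${\vec w}_0^{(n)}$ that are convenient for the subsequent tensor representation in Theorem~\ref{thm:rev3_2}. The one point worth stressing in the write-up is that ${\vec w}_0^{(n)}$ depends on the reference shape $B^{(n)}$, on the material contrasts, and on $\nu^{(n)}$, but not on the translation ${\vec z}^{(n)}$, which is exactly why this rescaled form is the useful one for isolating the position-independent polarizability information.
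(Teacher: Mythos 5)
Your proposal is correct and matches the paper's own (one-line) proof: the theorem is just Lemma~\ref{lemma:newlemma3_2} with ${\vec w}^{(n)}({\vec x})$ rewritten via its defining identity ${\vec w}^{(n)}({\vec x}) = \alpha^{(n)}{\vec w}_0^{(n)}\left(\frac{{\vec x}-{\vec z}^{(n)}}{\alpha^{(n)}}\right)$ from the proof of Lemma~\ref{lemma:closelyspacedw}, with the separation hypothesis carried over unchanged. Your observations that no Jacobian factors arise (the norms stay on the physical domain) and that the substantive work lives in the earlier energy estimate are both accurate.
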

\begin{proof}
The result immediately follows from Lemma~\ref{lemma:newlemma3_2} and the definition of ${\vec w}_0^{(n)}$.
\end{proof}

The expressions for $\alpha^{(n)}  {\vec F}^{(n)} ({\vec z}^{(n)} + \alpha^{(n)} {\vec \xi}^{(n)})$ and ${\vec w}_0({\vec \xi}^{(n)})$ are obtained by extending in an obvious way the expressions in
given in (3.13) and (3.14) in ~\cite{ammarivolkov2013} where the latter is now written in terms of $({\vec H}_0({\vec z}^{(n)}))_i {\vec \theta}_i^{(n)}({\vec \xi}^{(n)})$ as well as $({\vec D}_z({\vec H}_0({\vec z}))( {\vec z}^{(n)}))_{ij} {\vec \psi}_{ij}^{(n)}({\vec \xi}^{(n)})$ where ${\vec \theta}_i^{(n)}({\vec \xi}^{(n)})$ and ${\vec \psi}_{ij}^{(n)}({\vec \xi}^{(n)})$ satisfy the transmission problems
\begin{subequations} \label{eqn:thetatp}
\begin{align}
\nabla_\xi  \times  (\mu_*^{(n)}) ^{-1} \nabla_\xi \times {\vec \theta}_i^{(n)} - \im \omega \sigma_*^{(n)}  (\alpha^{(n)})^2   {\vec \theta}_i^{(n)} = &  \im \omega \sigma_*^{(n)} (\alpha^{(n)})^2  {\vec e}_i \times {\vec \xi}^{(n)} && \text{in $B^{(n)}$} , \\
\nabla_\xi \times \mu_0^{-1} \nabla_\xi \times {\vec \theta}_i^{(n)} = & {\vec 0} &&\text{in $(B^{(n)})^c $} , \\
\nabla_\xi \cdot {\vec \theta}_i^{(n)} = & 0 &&\text{in $(B^{(n)})^c $} , \\
\left [ {\vec n} \times {\vec \theta}_i^{(n)}  \right ]_{\Gamma^{(n)}}  = &  {\vec 0} &&\text{on $\Gamma^{(n)}$ } , \\
 \left [ {\vec n} \times \mu^{-1} \nabla_\xi \times {\vec \theta}_i^{(n)} \right ]_{\Gamma^{(n)}} = &  -2 [\mu^{-1}]_{\Gamma^{(n)}} {\vec n} \times {\vec e}_i && \text{on $\Gamma^{(n)}$ } , \\
 {\vec \theta}_i^{(n)} =& O( | {\vec \xi}^{(n)}|^{-1} ) && \text{as $|{\vec \xi}^{(n)}| \to \infty $} ,
\end{align}
 \end{subequations}
and
\begin{subequations} \label{eqn:psitp}
\begin{align}
\nabla_\xi  \times  (\mu_*^{(n)}) ^{-1} \nabla_\xi \times {\vec \psi}_{ij}^{(n)} - \im \omega \sigma_*^{(n)}  (\alpha^{(n)})^2   {\vec \psi}_{ij}^{(n)} = &  \im \omega \sigma_*^{(n)} (\alpha^{(n)})^2  \xi_j^{(n)} {\vec e}_i \times {\vec \xi}^{(n)} && \text{in $B^{(n)}$} , \\
\nabla_\xi \times \mu_0^{-1} \nabla_\xi \times {\vec \psi}_{ij}^{(n)} = & {\vec 0} &&\text{in $(B^{(n)})^c $} , \\
\nabla_\xi \cdot {\vec \psi}_{ij}^{(n)} = & 0 &&\text{in $( B^{(n)})^c $} , \\
\left [ {\vec n} \times {\vec \psi}_{ij}^{(n)}  \right ]_{\Gamma^{(n)}}  = &  {\vec 0} &&\text{on $\Gamma^{(n)}$ } , \\
 \left [ {\vec n} \times \mu^{-1} \nabla_\xi \times {\vec \psi}_{ij}^{(n)} \right ]_{\Gamma^{(n)}} = &  -3 [\mu^{-1}]_{\Gamma^{(n)}} \xi_j^{(n)} {\vec n} \times {\vec e}_i && \text{on $\Gamma^{(n)}$ } , \\
 {\vec \psi}_{ij}^{(n)} =& O( | {\vec \xi}^{(n)}|^{-1} ) && \text{as $|{\vec \xi}^{(n)}| \to \infty $} .
\end{align}
 \end{subequations}
The properties of ${\vec \theta}_i^{(n)}({\vec \xi}^{(n)})$ and ${\vec \psi}_{ij}^{(n)}({\vec \xi}^{(n)})$ are analogues to the single object case presented in~\cite{ammarivolkov2013}.

\subsection{Integral Representation Formulae}  \label{sect:intrepf}
Repeating the proof of Lemma 3.3 in ~\cite{ammarivolkov2013}  for the multiple object case, it extends in an obvious way to
\begin{lemma}
Let $D= D^{(1)}\cup D^{(2)} \cup \ldots \cup D^{(N)}$ be the union of $N$ bounded domains each with Lipschitz boundaries $\Gamma_D^{(n)}$ whose outer normal is ${\vec n}$. For any ${\vec E}\in{\vec H}_{-1} (  \hbox{\emph{ curl}}; {\mathbb R}^3 \setminus \overline{D})$ satisfying  $\nabla \times \nabla \times {\vec E} = {\vec 0}$, $\nabla \cdot {\vec E} = 0$ in ${\mathbb R}^3 \setminus \overline{D}$, we have, for any ${\vec x} \in {\mathbb R}^3 \setminus \overline{D}$
\begin{align*}
{\vec E}({\vec x}) = \sum_{n=1}^N  &\left ( - \nabla_x \times \int_{\Gamma_D^{(n)}} ( {\vec E}({\vec y}) \times {\vec n} ) G({\vec x},{\vec y}) \dif {\vec y} - \int_{\Gamma_D^{(n)}} \nabla_y \times ( {\vec E}({\vec y}) \times {\vec n}) G ({\vec x}, {\vec y}) \dif {\vec y} \right . \nonumber \\
& \left . - \nabla_x  \int_{\Gamma_D^{(n)}} ({\vec E}({\vec y}) \cdot {\vec n} )  G ({\vec x}, {\vec y}) \dif {\vec y} \right ).
\end{align*}
\end{lemma}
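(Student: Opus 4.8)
The statement is a Stratton--Chu / Green-type integral representation for the exterior region $\Omega := {\mathbb R}^3 \setminus \overline{D}$, and the plan is to obtain it essentially for free from the single-domain version, Lemma 3.3 of~\cite{ammarivolkov2013}. The key observation is that $D = \bigcup_{n=1}^N D^{(n)}$, with the $D^{(n)}$ disjoint, is itself a bounded open set with Lipschitz boundary $\partial D = \bigcup_{n=1}^N \Gamma_D^{(n)}$, so that $\Omega$ is precisely the complement of a single bounded Lipschitz domain, and ${\vec E}$ satisfies there exactly the hypotheses of that lemma: ${\vec E}\in{\vec H}_{-1}(\mathrm{curl};\Omega)$ with $\nabla\times\nabla\times{\vec E}={\vec 0}$ and $\nabla\cdot{\vec E}=0$ in $\Omega$.

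Granting this, the proof is a single application of the single-domain formula followed by bookkeeping. Applying it on $\Omega$ produces exactly the three boundary integrals in the statement but with $\partial D$ in place of $\Gamma_D^{(n)}$ and without the sum; since integration over a disjoint union is additive and the operators $\nabla_x$ and $\nabla_x\times$ commute with the finite sum, each of the three terms splits as $\sum_{n=1}^N \int_{\Gamma_D^{(n)}}(\cdots)$, which is the claimed formula. If one prefers a self-contained derivation rather than a citation, the route is the standard one: start from the componentwise Newtonian-potential identity ${\vec E} = \nabla\times\nabla\times(G \ast {\vec E}) - \nabla\nabla\cdot(G \ast {\vec E})$, apply Green's second identity for the scalar Laplacian componentwise on the truncated region $\Omega_R := \Omega \cap B_R$, and discard the volume contributions by means of $\Delta {\vec E} = \nabla\nabla\cdot{\vec E} - \nabla\times\nabla\times{\vec E} = {\vec 0}$ in $\Omega$. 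The integral over the outer sphere $\partial B_R$ tends to zero as $R \to \infty$ because ${\vec E} \in {\vec H}_{-1}(\mathrm{curl})$ forces the decays ${\vec E} = O(|{\vec x}|^{-1})$ and $\nabla {\vec E} = O(|{\vec x}|^{-2})$, and the boundary integrals over $\partial D$ that remain are rearranged into the three listed terms using the usual vector surface identities relating ${\vec n}\times({\vec E}\times{\vec n})$, ${\vec n}\cdot{\vec E}$ and tangential derivatives of $G$, as in the proof of Lemma 3.3 of~\cite{ammarivolkov2013}; the decomposition into a sum over $n$ is then automatic from $\partial D = \bigcup_n \Gamma_D^{(n)}$.

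The one place where genuine care is needed, and it is the main though mild obstacle, is the low boundary regularity: because $\partial D$ is merely Lipschitz, the traces ${\vec E}\times{\vec n}$, $(\nabla\times{\vec E})|_{\partial D}$ and ${\vec E}\cdot{\vec n}$ belong only to appropriate dual trace spaces, so the surface integrals in the statement must be interpreted as the corresponding duality pairings. This turns out to be harmless since, for fixed ${\vec x}$ in the open set $\Omega$, the kernel $G({\vec x},\cdot)$ and its derivatives are smooth up to $\partial D$, making the pairings well defined and the resulting field smooth in $\Omega$. The secondary point, the decay at infinity needed to kill the $\partial B_R$ contribution, is exactly what is built into the weighted space ${\vec H}_{-1}(\mathrm{curl})$. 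Hence the multiple-object case is, as the surrounding text says, an obvious extension: the only genuinely new ingredient beyond the proof of Lemma 3.3 of~\cite{ammarivolkov2013} is the additivity of surface integration over the components $\Gamma_D^{(n)}$.
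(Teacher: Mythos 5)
Your proposal is correct and follows essentially the same route as the paper, which simply observes that the proof of Lemma 3.3 of Ammari \emph{et al.} carries over to the multiple-object case, the boundary integral over $\partial D=\bigcup_n\Gamma_D^{(n)}$ splitting additively into the sum over components. Your additional remarks on the Stratton--Chu derivation, the decay at infinity, and the interpretation of the traces on a Lipschitz boundary are consistent with (and elaborate on) what the cited single-object proof already contains.
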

In a similar way, repeating the proof of their Lemma 3.4  for multiple objects it extends in an obvious way to
\begin{lemma} \label{lemma:rev3_4}
Let $\tilde{\vec H}_{\vec \alpha} = {\vec H}_{\vec \alpha} - {\vec H}_0$. Then for ${\vec x} \in {\vec B}_{\vec \alpha}^c $
\begin{align*}
( {\vec H}_{\vec \alpha} - {\vec H}_0 ) ({\vec x}) = & \sum_{n=1}^N \left ( \int_{(B_\alpha)^{(n)} } \nabla_x G({\vec x},{\vec y}) \times \nabla_y \times  \tilde{\vec H}_{\vec \alpha} ({\vec y} ) \dif {\vec y}\right . \nonumber \\
&\left .+ \left ( 1 - \frac{\mu_*^{(n)}}{\mu_0 } \right )\int_{(B_\alpha)^{(n)}}   ( {\vec H}_{\vec \alpha} ({\vec y}) \cdot \nabla_y )  \nabla_x G({\vec x},{\vec y}) \dif {\vec y} \right ) .
\end{align*}

\end{lemma}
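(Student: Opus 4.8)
The plan is to obtain the formula as an exact consequence of the integral representation in the preceding lemma (the $N$-object extension of Lemma~3.3 of~\cite{ammarivolkov2013}), combined with the eddy current transmission conditions across the object boundaries and the field equations~(\ref{eqn:eddyqns}) satisfied inside each inclusion. Note that no spacing hypothesis on the $(B_\alpha)^{(n)}$ enters here: the identity is exact for any finite union of Lipschitz objects, and the separation condition is only needed later for the energy estimates.

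First I would check that $\tilde{\vec H}_{\vec\alpha}:={\vec H}_{\vec\alpha}-{\vec H}_0$ is admissible for that lemma on $D={\vec B}_{\vec\alpha}$. Since the support of ${\vec J}_0$ lies away from ${\vec B}_{\vec\alpha}$, the background field obeys $\nabla\times\nabla\times{\vec H}_0={\vec 0}$, $\nabla\cdot{\vec H}_0=0$ on a neighbourhood of ${\vec B}_{\vec\alpha}$ and is smooth across each $(\Gamma_\alpha)^{(n)}:=\partial(B_\alpha)^{(n)}$, while in ${\vec B}_{\vec\alpha}^c$ one also has $\nabla\times{\vec H}_{\vec\alpha}={\vec 0}$; hence $\tilde{\vec H}_{\vec\alpha}$ is curl-curl-free, divergence-free and $O(|{\vec x}|^{-1})$ in ${\vec B}_{\vec\alpha}^c$. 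Applying the preceding lemma with boundary pieces $(\Gamma_\alpha)^{(n)}$ then writes $\tilde{\vec H}_{\vec\alpha}({\vec x})$, for ${\vec x}\in{\vec B}_{\vec\alpha}^c$, as a sum over $n$ of three surface integrals on $(\Gamma_\alpha)^{(n)}$ carrying the \emph{exterior} traces $\tilde{\vec H}_{\vec\alpha}\times{\vec n}|_+$, $\nabla_y\times(\tilde{\vec H}_{\vec\alpha}\times{\vec n})|_+$ and $\tilde{\vec H}_{\vec\alpha}\cdot{\vec n}|_+$.

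Second, I would convert these exterior traces to interior ones using the transmission conditions on $(\Gamma_\alpha)^{(n)}$, namely continuity of ${\vec n}\times{\vec H}_{\vec\alpha}$ and of ${\vec n}\cdot(\mu_{\vec\alpha}{\vec H}_{\vec\alpha})$ --- the latter giving ${\vec H}_{\vec\alpha}\cdot{\vec n}|_+=(\mu_*^{(n)}/\mu_0)\,{\vec H}_{\vec\alpha}\cdot{\vec n}|_-$ --- together with the continuity of ${\vec H}_0$. Each boundary term then splits into a piece carrying interior traces of ${\vec H}_{\vec\alpha}$ and a piece carrying traces of ${\vec H}_0$. The ${\vec H}_0$-pieces are precisely the boundary terms of the interior representation of the curl-curl-free, divergence-free field ${\vec H}_0$ on $(B_\alpha)^{(n)}$, which vanishes identically because ${\vec x}\notin(B_\alpha)^{(n)}$, so only the ${\vec H}_{\vec\alpha}$-pieces survive; because the only non-trivial transmission condition is the one on the normal flux, a factor $(1-\mu_*^{(n)}/\mu_0)$ is left in front of the normal-trace contribution. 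For the remaining ${\vec H}_{\vec\alpha}$-pieces I would reverse the representation identity on $(B_\alpha)^{(n)}$: there ${\vec H}_{\vec\alpha}$ satisfies $\nabla_y\times{\vec H}_{\vec\alpha}=\sigma_*^{(n)}{\vec E}_{\vec\alpha}$, $\nabla_y\cdot{\vec H}_{\vec\alpha}=0$, whence $\nabla_y\times\nabla_y\times{\vec H}_{\vec\alpha}=\im\omega\mu_*^{(n)}\sigma_*^{(n)}{\vec H}_{\vec\alpha}\ne{\vec 0}$, and the interior representation of ${\vec H}_{\vec\alpha}$ (again zero on the left, since ${\vec x}\notin(B_\alpha)^{(n)}$) trades the surface integrals for a volume integral; one further integration by parts, using $\nabla_yG=-\nabla_xG$, brings it to the form $\int_{(B_\alpha)^{(n)}}\nabla_xG({\vec x},{\vec y})\times(\nabla_y\times{\vec H}_{\vec\alpha})\,\dif{\vec y}$, while the normal-trace term, via $({\vec H}_{\vec\alpha}\cdot\nabla_y)\nabla_xG=\nabla_y\cdot({\vec H}_{\vec\alpha}\otimes\nabla_xG)$ and $\nabla_y\cdot{\vec H}_{\vec\alpha}=0$, becomes $\int_{(B_\alpha)^{(n)}}({\vec H}_{\vec\alpha}\cdot\nabla_y)\nabla_xG\,\dif{\vec y}$. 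Finally, since $\nabla\times{\vec H}_0={\vec J}_0={\vec 0}$ on ${\vec B}_{\vec\alpha}$ one may replace $\nabla_y\times{\vec H}_{\vec\alpha}$ by $\nabla_y\times\tilde{\vec H}_{\vec\alpha}$ in the first volume integral, and summing over $n=1,\dots,N$ yields the stated identity.

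The step I expect to be the main obstacle is the bookkeeping of the boundary integrals in the previous paragraph: one must verify that the surface terms produced by the two integrations by parts on each $(B_\alpha)^{(n)}$, those coming from the (vanishing) interior representation of ${\vec H}_0$, and those already present in the exterior representation of $\tilde{\vec H}_{\vec\alpha}$ cancel exactly, leaving only the two volume integrals and the precise prefactor $(1-\mu_*^{(n)}/\mu_0)$. This is entirely the vector calculus of Maxwell-type Green representations and can be carried out term by term exactly as in the single-object proof of Lemma~3.4 of~\cite{ammarivolkov2013}, the one new feature being that the inclusions enter additively, so each $(B_\alpha)^{(n)}$ is handled independently.
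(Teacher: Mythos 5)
Your proposal is correct and takes essentially the same route as the paper, which simply observes that the proof of Lemma~3.4 of Ammari \emph{et al.} extends additively to several inclusions --- i.e.\ exactly the per-object application of the exterior representation formula, the transmission conditions on ${\vec n}\times{\vec H}_{\vec\alpha}$ and ${\vec n}\cdot(\mu_{\vec\alpha}{\vec H}_{\vec\alpha})$, and the vanishing interior Green identities that you spell out. The only (harmless) imprecision is the assertion that $\nabla\times{\vec H}_{\vec\alpha}={\vec 0}$ throughout ${\vec B}_{\vec\alpha}^c$, which fails on $\mathrm{supp}({\vec J}_0)$; the relevant fact, $\nabla\times\tilde{\vec H}_{\vec\alpha}={\vec 0}$ there, still holds because the source contributions to ${\vec H}_{\vec\alpha}$ and ${\vec H}_0$ cancel.
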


\subsection{Asymptotic Formulae}
Theorem 3.2 in~\cite{ammarivolkov2013} presents the leading order term in asymptotic expansion for $({\vec H}_\alpha -{\vec H}_0)({\vec x})$ for a single inclusion $B_\alpha$ as $\alpha \to 0$. In the case of multiple objects that are sufficiently well spaced this extends to
\begin{theorem} \label{thm:rev3_2}
For a collection of $N$ objects such that $\nu^{(n)}$ is order one, $\alpha^{(n)}$ is small and $\min_{n,m=1,\ldots,N, n\ne m}|\partial (B_\alpha)^{(n)} -\partial (B_\alpha)^{(m)}| > C > \alpha_{\max}$ then for ${\vec x}$ away from ${\vec B}_{\vec \alpha}  $ we have
\begin{align}
({\vec H}_{\vec \alpha} &- {\vec H}_0)({\vec x}) = \sum_{n=1}^N \left (  - \frac{ \im \nu^{(n)} \alpha^{(n)} }{2} \sum_{i=1}^3 ( {\vec H}_0({\vec z}^{(n)}))_i \int_{B^{(n)}}
{\vec D}_x^2 G( {\vec x}, {\vec z}^{(n)}) {\vec \xi}^{(n)}  \times ( {\vec \theta}_i^{(n)} + {\vec e}_i \times {\vec \xi}^{(n)} ) \dif {\vec \xi}^{(n)} \right . \nonumber  \\
&+ \left .(\alpha^{(n)})^3 \left ( 1 - \frac{\mu_0}{\mu_*^{(n)}} \right ) \sum_{i=1}^3  ( {\vec H}_0({\vec z}^{(n)}))_i  {\vec D}_x^2 G({\vec x}, {\vec z}^{(n)} )  \int_{B^{(n)} } 
\left ( {\vec e}_i + \frac{1}{2} \nabla \times {\vec \theta}_i^{(n)}  \right ) \dif {\vec \xi}^{(n)} \right ) \nonumber \\
 &+ {\vec R}({\vec x}) \label{eqn:mainasym} ,
 \end{align}
 where ${\vec \theta}_i^{(n)}$ is the solution of (\ref{eqn:thetatp}) and 
 \begin{equation}
 |{\vec R}({\vec x}) | \le C \alpha_{\max}^4 \| {\vec H}_0 \|_{W^{2, \infty}( {\vec B}_{\vec \alpha} )} ,
 \end{equation}
 uniformly in ${\vec x}$ in any compact set away from ${\vec B}_{\vec \alpha}$. 
\end{theorem}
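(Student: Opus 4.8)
The plan is to extend the proof of Theorem~3.2 of~\cite{ammarivolkov2013} to the collection ${\vec B}_{\vec \alpha}$, starting from the multiple-object representation formula in Lemma~\ref{lemma:rev3_4} and controlling the genuinely new inter-object contributions through the separation hypothesis. Since ${\vec J}_0$ is supported away from ${\vec B}_{\vec \alpha}$ and the background is non-conducting, $\nabla_y \times \tilde{\vec H}_{\vec \alpha} = \sigma_{\vec \alpha} {\vec E}_{\vec \alpha}$, which equals $\sigma_*^{(n)} {\vec E}_{\vec \alpha}$ on $(B_\alpha)^{(n)}$; in the permeability-contrast integral of Lemma~\ref{lemma:rev3_4} I would split ${\vec H}_{\vec \alpha} = {\vec H}_0 + \tilde{\vec H}_{\vec \alpha}$ and use ${\vec H}_{\vec \alpha} = (\im \omega \mu_*^{(n)})^{-1} \nabla \times {\vec E}_{\vec \alpha}$ inside $(B_\alpha)^{(n)}$. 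This rewrites $({\vec H}_{\vec \alpha} - {\vec H}_0)({\vec x})$ as a sum over $n$ of volume integrals over $(B_\alpha)^{(n)}$ of $\nabla_x G({\vec x},{\vec y})$, or its first ${\vec y}$-derivative, paired with ${\vec E}_{\vec \alpha}({\vec y})$ and with ${\vec H}_0({\vec y})$.

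On each $(B_\alpha)^{(n)}$ I would then insert the sharp field approximation of Theorem~\ref{thm:rev3_1}, namely ${\vec E}_{\vec \alpha} = {\vec E}_0 + \alpha^{(n)} {\vec w}_0^{(n)}(({\vec x} - {\vec z}^{(n)})/\alpha^{(n)}) + {\vec \Phi}^{(n)}$ with an $L^2((B_\alpha)^{(n)})$ error of order $\alpha_{\max}^{9/2} \| \nabla \times {\vec E}_0 \|_{W^{2,\infty}({\vec B}_{\vec \alpha})}$; Taylor-expand $\nabla_x G({\vec x},{\vec y})$, ${\vec E}_0({\vec y})$ and ${\vec H}_0({\vec y})$ about ${\vec y} = {\vec z}^{(n)}$, retaining only the first one or two terms; change variables ${\vec y} = {\vec z}^{(n)} + \alpha^{(n)} {\vec \xi}^{(n)}$; and substitute the explicit forms of $\alpha^{(n)} {\vec F}^{(n)}({\vec z}^{(n)} + \alpha^{(n)} {\vec \xi}^{(n)})$ and of ${\vec w}_0^{(n)}$ in terms of the cell problems (\ref{eqn:thetatp}) and (\ref{eqn:psitp}), the ${\vec \psi}_{ij}^{(n)}$ pieces passing into the remainder. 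Carrying out the same bookkeeping as for a single object in~\cite{ammarivolkov2013} --- which in particular shows that the gradient field ${\vec \Phi}^{(n)}$ and the constant part of ${\vec E}_0$ contribute only to the remainder --- the surviving leading-order terms are exactly the two $n$-sums in (\ref{eqn:mainasym}), one coming from $\nabla_y \times \tilde{\vec H}_{\vec \alpha}$ with the $\nu^{(n)}$ weight and the other from the permeability-contrast integral of Lemma~\ref{lemma:rev3_4}, both involving only ${\vec \theta}_i^{(n)}$ and ${\vec H}_0({\vec z}^{(n)})$.

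The remainder then collects: (i) the Taylor truncation errors for $G$, ${\vec E}_0$ and ${\vec H}_0$, each carrying an extra power of $\alpha^{(n)}$ and hence of order $\alpha_{\max}^4 \| {\vec H}_0 \|_{W^{2,\infty}({\vec B}_{\vec \alpha})}$; (ii) the ${\vec \psi}_{ij}^{(n)}$ contributions, likewise of order $\alpha_{\max}^4$; (iii) the $L^2$ field-approximation error from Theorem~\ref{thm:rev3_1}; and (iv), the genuinely new ingredient, the cross terms in which the field ${\vec w}^{(m)}$ radiated by object $m$ is sampled on $(B_\alpha)^{(n)}$ for $m \ne n$. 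Step (iv) is the main obstacle, and it is precisely the place where the spacing assumption enters: here I would invoke Lemma~\ref{lemma:closelyspacedw} and Corollary~\ref{coll:closelyspacedw}, which, under $\min_{n \ne m} |\partial (B_\alpha)^{(n)} - \partial (B_\alpha)^{(m)}| > C > \alpha_{\max}$, bound $\| \nabla \times {\vec w}^{(n)} \|_{L^2((B_\alpha)^{(m)})}$ by $C \alpha_{\max}^{7/2} \| \nabla \times {\vec E}_0 \|_{W^{2,\infty}}$; combined with the extra factor $\alpha^{(m)}$ from the volume of $(B_\alpha)^{(m)}$ and the boundedness of $\nabla_x G({\vec x}, {\vec z}^{(n)})$ and ${\vec D}_x^2 G({\vec x}, {\vec z}^{(n)})$ for ${\vec x}$ in a compact set away from ${\vec B}_{\vec \alpha}$, these terms are again of order $\alpha_{\max}^4$. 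Summing the $N$ contributions, and noting that all ${\vec x}$-dependence has been localised into $\nabla_x G({\vec x}, {\vec z}^{(n)})$ and ${\vec D}_x^2 G({\vec x}, {\vec z}^{(n)})$, yields the stated estimate uniformly on compact sets away from ${\vec B}_{\vec \alpha}$.
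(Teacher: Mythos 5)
Your proposal follows essentially the same route as the paper: it starts from the multiple-object representation formula of Lemma~\ref{lemma:rev3_4}, treats each $(B_\alpha)^{(n)}$ in turn using the field approximation of Theorem~\ref{thm:rev3_1} together with the estimate (\ref{e0pphi0nmfn}), and repeats the single-object bookkeeping of Ammari \emph{et al.} with the inter-object contributions controlled by Lemma~\ref{lemma:closelyspacedw} and Corollary~\ref{coll:closelyspacedw} under the separation hypothesis. The only (harmless) organisational difference is that you count the cross terms $\|\nabla\times{\vec w}^{(m)}\|_{L^2((B_\alpha)^{(n)})}$ as a separate remainder contribution in this proof, whereas in the paper they are already absorbed into the error bound of Theorem~\ref{thm:rev3_1} via the $A_3$ term of Lemma~\ref{lemma:newlemma3_2}.
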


\begin{proof}
The proof uses as its starting point Lemma~\ref{lemma:rev3_4} and considers each object $(B_\alpha)^{(n)}$  in turn. It applies very similar arguments to the proof of  Theorem 3.2 in ~\cite{ammarivolkov2013}  except Theorem~\ref{thm:rev3_1} is used in place of their Theorem 3.1, (\ref{e0pphi0nmfn}) is used in place of their (3.6) and note that
\begin{align}
 \sigma_*^{(n)} \int_{(B_\alpha)^{(n)}} \nabla_x G({\vec x},{\vec z}^{(n)}) \times \left ( {\vec F}^{(n)} ( {\vec y}) + \alpha^{(n)}  {\vec w}_0^{(n)}  \left ( \frac{{\vec y}-{\vec z}^{(n)} }{\alpha^{(n)}} \right ) \right ) \dif {\vec y}={\vec 0} ,
\end{align}
by integration by parts. Furthermore, to recover the negative sign in the first term in (\ref{eqn:mainasym}), we have used 
\begin{equation}
\nabla_x G({\vec x},\alpha^{(n)}{\vec \xi}^{(n)} + {\vec z}^{(n)})  =  \nabla_x G({\vec x},{\vec z}^{(n)})  - \alpha^{(n)} {\vec D}_x^2 G({\vec x},{\vec z}^{(n)}) {\vec \xi} ^{(n)}+ O((\alpha^{(n)})^2 ) ,
\end{equation}
as $\alpha^{(n)}\to 0$.
Theorem 3.2 in~\cite{ammarivolkov2013} mistakingly uses $ \nabla_x G({\vec x},\alpha{\vec \xi} + {\vec z})  =  \nabla_x G( {\vec x},{\vec z} )  + \alpha {\vec D}_x^2 G({\vec x},{\vec z}) {\vec \xi} + O(\alpha^2)$ as $\alpha \to 0$, which leads to the wrong sign in their first term, as previously reported for the single homogenous object case in~\cite{ledgerlionheart2014}.
\end{proof}

\section{Results for the Proof of Theorem~\ref{thm:objectscloselyspaced}} \label{sect:proof2}
Recall that in this case the object is inhomogeneous and is arranged as ${\vec B}_\alpha := \bigcup_{n=1}^N B_\alpha^{(n)} =\alpha  \bigcup_{n=1}^N  B^{(n)} + {\vec z}= \alpha {\vec B} +{\vec z}$ 
 where  ${\alpha}$ is a single small scaling parameter and ${\vec z}$
a single translation.

\subsection{Elimination of the Current Source} 
The results presented in Section~\ref{sect:elimcurrent} hold also in the case when the object is inhomogeneous except the subscript ${\vec \alpha}$ is replaced by $\alpha$.
\subsection{Energy Estimates}
For an inhomogeneous object, we proceed along similar lines as~\cite{ammarivolkov2013}  and introduce a single vector field $ {\vec F} ({\vec x})$ whose curl is such that it is equal to the first two terms of  a Taylor series of $\im \omega \mu_0 {\vec H}_0({\vec x})$ expanded about $ {\vec z}$ as $|{\vec x}- {\vec z}| \to 0$
\begin{align*}
{\vec F} ({\vec x}) = & \frac{\im \omega \mu_0}{2} {\vec H}_0 ( {\vec z}) \times ({\vec x}-{\vec z}) + \frac{\im \omega \mu_0}{3} {\vec D}_{z} ( {\vec H}_0 ({\vec z})) ({\vec x}-{\vec z})  \times   ({\vec x}-{\vec z}) , \\
\nabla \times {\vec F} ({\vec x}) = &\im \omega \mu_0  \left ( {\vec H}_0 ( {\vec z})+ {\vec D}_{z} ( {\vec H}_0({\vec z})) ( {\vec z}) ({\vec x}- {\vec z}) \right ),
\end{align*}
so that
\begin{align}
\| \im \omega \mu_0 {\vec H}_0 ({\vec x}) - \nabla \times {\vec F}  \|_{L^\infty( {\vec B}_\alpha )} \le & C  {\alpha}^2 \| \nabla \times {\vec E}_0 \|_{W^{2,\infty}({\vec B}_\alpha  )}  ,  \nonumber \\
\| \im \omega \mu_0 {\vec H}_0 ({\vec x}) - \nabla \times {\vec F}  \|_{L^2({\vec B}_\alpha )} \le & C {\alpha}^{\frac{3}{2}} \| \im \omega \mu_0 {\vec H}_0 ({\vec x}) - \nabla \times {\vec F}  \|_{L^\infty ({\vec B}_\alpha ) }   \nonumber \\ 
\le & C {\alpha}  ^{\frac{7}{2}} \| \nabla \times {\vec E}_0 \|_{W^{2,\infty}({\vec B}_\alpha )} . \label{eqn:taylorbackfd2}
\end{align}
The introduction of $ {\vec F} ({\vec x})$ motivates the introduction of the following problem: Find $ {\vec w} \in \tilde{\vec X}_\alpha$ such that
\begin{align}
(\mu_0^{-1} & \nabla \times {\vec w} , \nabla \times {\vec v})_{{\vec B}_\alpha^c  }+(\mu_{\alpha}^{-1} \nabla \times {\vec w} , \nabla \times {\vec v})_{{\vec B}_\alpha}  -\im \omega (\sigma_{\alpha}  {\vec w} , {\vec v})_{{\vec B}_\alpha} \nonumber \\
&= ((\mu_0^{-1}- (\mu_*^{(n)})^{-1} ) \nabla \times {\vec F} ,\nabla \times {\vec v})_{{\vec B}_\alpha } + \im \omega ( \sigma_*^{(n)} {\vec F} , {\vec v})_{{\vec B}_\alpha} \qquad \forall {\vec v} \in \tilde{\vec X}_\alpha \label{eqn:wmnweakform2} .
\end{align}

The following Lemma extends Ammari's {\it et al}'s  Lemma 3.2~\cite{ammarivolkov2013}  to the case of an inhomogeneous object.
\begin{lemma} \label{lemma:newlemma3_2v2}
For an inhomogeneous object  ${\vec B}_\alpha $ , there exists a constant $C$ such that
\begin{align}
\| \nabla \times( {\vec E}_\alpha - {\vec E}_0- {\vec w} ) \|_{L^2(  B_\alpha^{(m)})} \le & C ( \nu_{\max} + |1- \mu_{r,\max}^{-1}| ) {\alpha}^{\frac{7}{2} } \| \nabla \times {\vec E}_0 \|_{W^{2,\infty}( {\vec B}_\alpha )}\\
\| ( {\vec E}_\alpha - {\vec E}_0-( {\vec w} + {\vec \Phi}  )) \|_{L^2(  B_\alpha^{(m)})} \le & C (\nu_{\max} + |1- \mu_{r,\max}^{-1}| ) {\alpha}^{\frac{9}{2} } \| \nabla \times {\vec E}_0 \|_{W^{2,\infty}({\vec B}_\alpha )}
\end{align}
for $m=1,\ldots,N$.
\end{lemma}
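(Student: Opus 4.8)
The plan is to follow the proof of Lemma~\ref{lemma:newlemma3_2} almost verbatim, the essential simplification being that an inhomogeneous object carries a single common scaling $\alpha$ and a single translation ${\vec z}$, so there is only one Taylor correction field ${\vec F}$ (the one introduced before (\ref{eqn:taylorbackfd2})), one auxiliary field ${\vec w}$ (the solution of (\ref{eqn:wmnweakform2})) and one gradient correction ${\vec \Phi}$. In particular the inter-part coupling term --- the term $A_3$ in the proof of Lemma~\ref{lemma:newlemma3_2}, which there dictates the minimum object spacing --- is absent here, so no hypothesis on the arrangement of the constitute parts enters and the estimate follows directly. First I would introduce ${\vec \Phi}=\nabla\phi_0$ in ${\vec B}_\alpha$ and ${\vec \Phi}=\nabla\tilde{\phi}_0$ in ${\vec B}_\alpha^c$, where $-\Delta\phi_0=-\nabla\cdot{\vec F}$ in ${\vec B}_\alpha$, $-\partial_{\vec n}\phi_0=({\vec E}_0-{\vec F})\cdot{\vec n}$ on $\partial{\vec B}_\alpha$ (normalised to zero mean), and $\tilde{\phi}_0$ solves the matching exterior problem, exactly as for $\phi_0^{(n)}$ in the proof of Lemma~\ref{lemma:newlemma3_2}. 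This choice makes ${\vec E}_0+{\vec \Phi}-{\vec F}$ divergence free in ${\vec B}_\alpha$ with vanishing normal trace on $\partial{\vec B}_\alpha$, while $\nabla\times{\vec \Phi}={\vec 0}$ in ${\vec B}_\alpha$ (so that $\nabla\times({\vec E}_\alpha-{\vec E}_0-{\vec w})=\nabla\times({\vec E}_\alpha-{\vec E}_0-({\vec w}+{\vec \Phi}))$ there) and $\nabla\times({\vec E}_0+{\vec \Phi}-{\vec F})=\nabla\times({\vec E}_0-{\vec F})$.

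Next, subtracting (\ref{eqn:wmnweakform2}) from the identity for ${\vec E}_\alpha-{\vec E}_0$ (i.e. (\ref{eqn:elimcurrt1}) with the subscript ${\vec \alpha}$ replaced by $\alpha$, as noted in Section~\ref{sect:proof2}) and multiplying by $\mu_0$, I would obtain, for arbitrary ${\vec v}\in\tilde{\vec X}_\alpha$, an identity $A=A_1'+A_2'$ in which $A=(\nabla\times({\vec E}_\alpha-{\vec E}_0-({\vec w}+{\vec \Phi})),\nabla\times{\vec v})_{{\vec B}_\alpha^c}+(\mu_0\mu_\alpha^{-1}\nabla\times({\vec E}_\alpha-{\vec E}_0-({\vec w}+{\vec \Phi})),\nabla\times{\vec v})_{{\vec B}_\alpha}-\im\omega\mu_0(\sigma_\alpha({\vec E}_\alpha-{\vec E}_0-({\vec w}+{\vec \Phi})),{\vec v})_{{\vec B}_\alpha}$ and the right-hand side consists only of the volume terms $A_1':=\mu_0((\mu_0^{-1}-\mu_\alpha^{-1})\nabla\times({\vec E}_0-{\vec F}),\nabla\times{\vec v})_{{\vec B}_\alpha}$ and $A_2':=\im\omega\mu_0(\sigma_\alpha({\vec E}_0+{\vec \Phi}-{\vec F}),{\vec v})_{{\vec B}_\alpha}$; there is no $A_3$-type term. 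Choosing ${\vec v}={\vec E}_\alpha-{\vec E}_0-({\vec w}+{\vec \Phi})$ and using that $A$ is bounded below, uniformly in the configuration, by $c\,\|\nabla\times({\vec E}_\alpha-{\vec E}_0-({\vec w}+{\vec \Phi}))\|_{L^2({\vec B}_\alpha)}^2$ --- the form is coercive because $\mu_{\min}\le\mu_*^{(n)}\le\mu_{\max}$, $\sigma_*^{(n)}\ge0$ and $\nabla\cdot{\vec v}=0$ in ${\vec B}_\alpha^c$, and the internal material interfaces enter only weakly --- gives $\|\nabla\times({\vec E}_\alpha-{\vec E}_0-({\vec w}+{\vec \Phi}))\|_{L^2({\vec B}_\alpha)}^2\le|A|\le|A_1'|+|A_2'|$.

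To close the estimate I would bound $|A_1'|$ by Cauchy--Schwarz, $|1-\mu_{r,\max}^{-1}|$ and (\ref{eqn:taylorbackfd2}), giving $|A_1'|\le C|1-\mu_{r,\max}^{-1}|\alpha^{7/2}\|\nabla\times{\vec E}_0\|_{W^{2,\infty}({\vec B}_\alpha)}\|\nabla\times{\vec v}\|_{L^2({\vec B}_\alpha)}$; and $|A_2'|$ by the Poincar\'e-type estimate $\|{\vec v}\|_{L^2({\vec B}_\alpha)}\le C\alpha\|\nabla\times{\vec v}\|_{L^2({\vec B}_\alpha)}$ (valid since $\nabla\cdot{\vec v}=0$ in ${\vec B}_\alpha^c$, exactly as at the end of the proof of Lemma~\ref{lemma:newlemma3_2}), together with the inhomogeneous analogue of (\ref{e0pphi0nmfn}), namely $\|{\vec E}_0+{\vec \Phi}-{\vec F}\|_{L^2({\vec B}_\alpha)}\le C\alpha\|\nabla\times({\vec E}_0-{\vec F})\|_{L^2({\vec B}_\alpha)}\le C\alpha^{9/2}\|\nabla\times{\vec E}_0\|_{W^{2,\infty}({\vec B}_\alpha)}$, and the definition of $\nu^{(n)}$ (so that $\omega\mu_0\sigma_{\max}\alpha^2\le C\nu_{\max}$), giving $|A_2'|\le C\nu_{\max}\alpha^{7/2}\|\nabla\times{\vec E}_0\|_{W^{2,\infty}({\vec B}_\alpha)}\|\nabla\times{\vec v}\|_{L^2({\vec B}_\alpha)}$. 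Dividing through by $\|\nabla\times{\vec v}\|_{L^2({\vec B}_\alpha)}$ and using $\nabla\times{\vec \Phi}={\vec 0}$ in ${\vec B}_\alpha$ yields the first stated inequality over all of ${\vec B}_\alpha$, hence over each $B_\alpha^{(m)}\subseteq{\vec B}_\alpha$; a further use of the Poincar\'e-type estimate, $\|{\vec E}_\alpha-{\vec E}_0-({\vec w}+{\vec \Phi})\|_{L^2({\vec B}_\alpha)}\le C\alpha\|\nabla\times({\vec E}_\alpha-{\vec E}_0-({\vec w}+{\vec \Phi}))\|_{L^2({\vec B}_\alpha)}$, then gives the second.

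I expect the main obstacle to be the gain of one power of $\alpha$ in $\|{\vec E}_0+{\vec \Phi}-{\vec F}\|_{L^2({\vec B}_\alpha)}$ (and in the two inequalities $\|{\vec v}\|\le C\alpha\|\nabla\times{\vec v}\|$): one has to check that the gradient correction ${\vec \Phi}$ indeed turns ${\vec E}_0-{\vec F}$ into a divergence-free field with vanishing normal trace on $\partial{\vec B}_\alpha$, and that the resulting Poincar\'e--Friedrichs estimate has a constant that scales correctly under ${\vec x}\mapsto\alpha{\vec x}+{\vec z}$ --- this is the same ingredient that underlies~\cite{ammarivolkov2013}, and it requires ${\vec B}$ to be a fixed connected Lipschitz domain, so that the constant is independent of $\alpha$ and of the particular (possibly multiply connected) arrangement of the sub-parts. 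A secondary point, which I expect to be routine, is to confirm that the internal interfaces $\partial B^{(n)}\cap\partial B^{(m)}$, across which $\mu_\alpha$ and $\sigma_\alpha$ jump, neither spoil the coercivity of the sesquilinear form nor the solvability of the problems for $\phi_0$ and ${\vec w}$: everything is posed weakly on ${\vec B}_\alpha$, so these interfaces contribute nothing beyond the jump conditions already present in (\ref{eqn:transproblemthetar3}).
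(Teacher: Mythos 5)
Your proposal is correct and follows essentially the same route as the paper's proof: the same gradient correction ${\vec \Phi}$ built from $\phi_0$ and $\tilde{\phi}_0$, the same variational identity with only the two volume terms on the right-hand side (the paper likewise has no $A_3$-type coupling term here), the same use of (\ref{eqn:taylorbackfd2}) and of the estimate $\|{\vec E}_0+{\vec \Phi}-{\vec F}\|_{L^2({\vec B}_\alpha)}\le C\alpha^{9/2}\|\nabla\times{\vec E}_0\|_{W^{2,\infty}({\vec B}_\alpha)}$, which is exactly the paper's (\ref{eqn:e0phimf}), and the same final restriction from ${\vec B}_\alpha$ to each $B_\alpha^{(m)}$. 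You simply spell out the intermediate steps that the paper delegates to ``similar steps to~\cite{ammarivolkov2013}''.
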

\begin{proof}
Here we introduce ${\vec \Phi} = \left \{ \begin{array}{ll} \nabla {\phi}_0 & \text{in ${\vec B}_\alpha $} \\
\nabla \tilde{\phi}_0^{(n)} & \text{in ${\vec B}_\alpha^c $} \end{array} \right .$ where
\begin{align*}
-\Delta {\phi}_0 =&  - \nabla \cdot {\vec F} && \text{in $ {\vec B}_\alpha  $} , \\
-\partial_{\vec n} {\phi}_0 = & ( {\vec E}_0({\vec x}) - {{\vec F}} ({\vec x})) \cdot {\vec n} && \text{on $\partial {\vec B}_\alpha $} , \\
\int_{B_\alpha } {\phi}_0 \dif {\vec x} = & 0 ,
\end{align*}
with $\tilde{\phi}_0$ being the solution of an exterior problem in an analogous way to ~\cite{ammarivolkov2013}. Then, by writing
\begin{align}
&\left (\nabla \times {\vec E}_\alpha - {\vec E}_0 -  ( {\vec w}+ {\vec \Phi} ), \nabla \times {\vec v} \right )_{{\vec B}_\alpha^c }
-\im \omega\mu_0  (\sigma_\alpha ({\vec E}_\alpha - {\vec E}_0 - ( {\vec w} + {\vec \Phi} ), {\vec v})_{{\vec B}_\alpha }
\nonumber \\
&+ (\mu_0 \mu_\alpha^{-1} \nabla \times  ({\vec E}_\alpha -{\vec E}_0 - ({\vec w}+{\vec \Phi} ), \nabla \times {\vec v})_{{\vec B}_\alpha }   \nonumber \\
&=  \mu_0((\mu_0^{-1}- \mu_\alpha^{-1} ) \nabla \times ({\vec E}_0 - {\vec F} ) ,\nabla \times {\vec v})_{{\vec B}_\alpha }  + \im \omega \mu_0 ( \sigma_\alpha ({\vec E}_0 +{\vec \Phi} - {\vec F}  , {\vec v})_{{\vec B}_\alpha } , \nonumber
\end{align}
and proceeding with similar steps to~\cite{ammarivolkov2013}, where $B_\alpha$ is replaced by ${\vec B}_\alpha $, we have
\begin{align}
\| {\vec E}_0 + {\vec \Phi} - {\vec F} \|_{L^2( B_\alpha^{(m)})} & \le 
\| {\vec E}_0 + {\vec \Phi} - {\vec F} \|_{L^2({\vec B}_\alpha )} \nonumber \\
&\le 
C \alpha  \| \nabla \times ( {\vec E}_0 - {\vec F} ) \|_{L^2({\vec B}_\alpha  )}  \le C \alpha^{9/2}
\| \nabla \times {\vec E}_0 \|_{W^{2,\infty} ( {\vec B}_\alpha )}  \label{eqn:e0phimf} ,
\end{align}
for $m=1,\ldots,N$ and
\begin{align*}
\| \nabla \times( {\vec E}_\alpha - {\vec E}_0- {\vec w} ) \|_{L^2({\vec B}_\alpha )} \le & C ( \nu_{\max} + |1- \mu_{r,\max}^{-1}| ) {\alpha}^{\frac{7}{2} } \| \nabla \times {\vec E}_0 \|_{W^{2,\infty}({\vec B}_\alpha  )} ,\\
\| ( {\vec E}_\alpha - {\vec E}_0-( {\vec w} + {\vec \Phi}  )) \|_{L^2({\vec B}_\alpha  )} \le & C (\nu_{\max} + |1- \mu_{r,\max}^{-1}| ) {\alpha}^{\frac{9}{2} } \| \nabla \times {\vec E}_0 \|_{W^{2,\infty}( {\vec B}_\alpha  )} .
\end{align*}
Finally, we use $\| \nabla \times( {\vec E}_\alpha - {\vec E}_0- {\vec w} ) \|_{L^2( {\vec B}_\alpha  )} \le \| \nabla \times( {\vec E}_\alpha - {\vec E}_0- {\vec w} ) \|_{L^2({\vec B}_\alpha  )}$ and $\| ( {\vec E}_\alpha - {\vec E}_0-( {\vec w} + {\vec \Phi}  )) \|_{L^2( B_\alpha^{(n)})}\le \| ( {\vec E}_\alpha - {\vec E}_0-( {\vec w} + {\vec \Phi}  )) \|_{L^2({\vec B}_\alpha )}$, which holds for $n=1,\ldots,N$.
\end{proof}

Introducing, $ {\vec w} ({\vec x}) = {\alpha} {{\vec w}}_{0} \left ( \frac{{\vec x} -  {{\vec z}} }{ {\alpha} } \right )  ={\alpha} {\vec w}_{0} ({\vec \xi})$ so that $\nabla_ x \times {\vec w} ({\vec x}) = \nabla_\xi \times {\vec w}_0 ({\vec \xi} ) =   \nabla_\xi \times {\vec w}_0  \left (\frac{{\vec x} - {\vec z}}{{\alpha}} \right )$ we find that ${\vec w}_0 ({\vec \xi} )$ satisfies
\begin{subequations} \label{eqn:w0nstrongformv2}
\begin{align}
\nabla_\xi  \times  \mu ^{-1} \nabla_\xi \times {\vec w}_0 - \im \omega \sigma  {\vec w}_0 = &  \im \omega \sigma {\alpha}^2  && {} \nonumber  \\
&( {\alpha}^{-1} {\vec F} ( {\vec z} + {\alpha}  {\vec \xi} )) && \text{in ${\vec B}$} ,  \\
\nabla_\xi \times \mu_0^{-1} \nabla_\xi \times {\vec w}_0 = & {\vec 0} &&\text{in ${\vec B}^c$} , \\
\nabla_\xi \cdot {{\vec w}}_0  = & 0 &&\text{in ${\vec B}^c $} , \\
\left [ {\vec n} \times {\vec w}_0   \right ]_{\Gamma }  = &  {\vec 0} &&\text{on $\Gamma$ } , \\
 \left [ {\vec n} \times \mu^{-1} \nabla_\xi \times {\vec w}_0  \right ]_{\Gamma} = &  -[ \mu^{-1} ]_\Gamma && {} \nonumber \\
 & {\vec n} \times \nabla \times {\vec F}  ( {\vec z} + {\alpha} {\vec \xi}   ) &&\text{on $\Gamma$ } , \\
 {\vec w}_0 =& O( | {\vec \xi} |^{-1} ) && \text{as $| {\vec \xi} | \to \infty $} .
\end{align}
 \end{subequations}
where, for an inhomogeneous object, $\Gamma := \partial {\vec B} \cup \{ \partial B^{(n) } \cap \partial B^{ (n)}, n,m=1,\ldots, N, n \ne m \}$.

In this case, Ammari {\it et al}'s  Theorem 3.1~\cite{ammarivolkov2013} becomes
\begin{theorem} \label{thm:rev3_1v2}
There exists a constant $C$ such that
\begin{align*}
\left \| \nabla \times \left ( {\vec E}_\alpha - {\vec E}_0-{\alpha} {\vec w}_0 \left ( \frac{{\vec x} - {\vec z}  }{ {\alpha}} \right ) \right  ) \right \|_{L^2(B_\alpha^{(m)})} \le & 
 C ( \nu_{\max} + |1- \mu_{r,\max}^{-1}| ) \nonumber \\
& {\alpha}^{\frac{7}{2} } \| \nabla \times {\vec E}_0 \|_{W^{2,\infty}( {\vec B}_\alpha )} , \\
\left \|  {\vec E}_\alpha - {\vec E}_0-\left ( {\alpha}  {\vec w}_0  \left ( \frac{{\vec x} -{\vec z} }{ {\alpha} }\right ) +{\vec \Phi} \right ) \right  \|_{L^2( B_\alpha^{(m)})} \le & C (\nu_{\max} + |1- \mu_{r,\max}^{-1}| ) \nonumber \\
& {\alpha}^{\frac{9}{2} } \| \nabla \times {\vec E}_0 \|_{W^{2,\infty}( {\vec B}_\alpha )} ,
\end{align*}
for $m=1,\ldots,N$,
which holds for an inhomogeneous object ${\vec B}_\alpha$.
\end{theorem}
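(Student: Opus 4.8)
The plan is to recognise Theorem~\ref{thm:rev3_1v2} as nothing more than a rescaled restatement of Lemma~\ref{lemma:newlemma3_2v2}, so that the analytic work has already been done and only a change of variables remains. First I would recall that the auxiliary field ${\vec w}$ solving the weak problem (\ref{eqn:wmnweakform2}) has been identified, via the substitution ${\vec w}({\vec x}) = {\alpha}\, {\vec w}_0(({\vec x}-{\vec z})/{\alpha})$ introduced immediately before the theorem, with the rescaling of the field ${\vec w}_0({\vec \xi})$ governed by the transmission problem (\ref{eqn:w0nstrongformv2}), where the interface set $\Gamma$ now comprises both the outer boundary $\partial {\vec B}$ and the internal interfaces $\partial B^{(n)}\cap\partial B^{(m)}$, $n\ne m$. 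The crucial algebraic fact is that this scaling is curl-preserving: since ${\vec \xi} = ({\vec x}-{\vec z})/{\alpha}$ one has $\nabla_x \times {\vec w}({\vec x}) = \nabla_\xi \times {\vec w}_0({\vec \xi})$ with no surviving power of ${\alpha}$, while in the $L^2$ estimate the factor ${\alpha}$ in ${\vec w} = {\alpha}{\vec w}_0$ is exactly what reconciles the two exponents ($7/2$ for the curl, $9/2$ for the field).

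Second, I would substitute ${\vec w}({\vec x}) = {\alpha}\,{\vec w}_0(({\vec x}-{\vec z})/{\alpha})$ directly into the two bounds of Lemma~\ref{lemma:newlemma3_2v2}. The first bound, $\|\nabla\times({\vec E}_\alpha-{\vec E}_0-{\vec w})\|_{L^2(B_\alpha^{(m)})}\le C(\nu_{\max}+|1-\mu_{r,\max}^{-1}|){\alpha}^{7/2}\|\nabla\times{\vec E}_0\|_{W^{2,\infty}({\vec B}_\alpha)}$, becomes the first assertion of the theorem once $\nabla\times{\vec w}$ is rewritten as $\nabla_\xi\times{\vec w}_0(({\vec x}-{\vec z})/{\alpha})$. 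The second bound, involving ${\vec w}+{\vec \Phi}$ with the potential correction ${\vec \Phi}$ (which is already expressed in the physical variable ${\vec x}$ and requires no rescaling), likewise turns into the second assertion, using also the estimate (\ref{eqn:e0phimf}) that was proved inside Lemma~\ref{lemma:newlemma3_2v2}. Since that lemma is already stated for every constituent part $B_\alpha^{(m)}$, $m=1,\ldots,N$ --- having first been obtained over all of ${\vec B}_\alpha$ and then restricted --- no further localisation argument is needed, and the proof is complete in a single line in the spirit of the proof of Theorem~\ref{thm:rev3_1}.

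The only point requiring genuine care, rather than bookkeeping, is the verification that (\ref{eqn:w0nstrongformv2}) is indeed the correct strong form of the rescaled ${\vec w}$. For an inhomogeneous object the coefficient $\mu$ jumps not only across $\partial{\vec B}$ but also across each internal interface $\partial B^{(n)}\cap\partial B^{(m)}$, so one must check that the transmission condition $[{\vec n}\times\mu^{-1}\nabla_\xi\times{\vec w}_0]_\Gamma = -[\mu^{-1}]_\Gamma\,{\vec n}\times\nabla\times{\vec F}$ captures all of these jumps simultaneously, with $[\mu^{-1}]_\Gamma$ vanishing on any portion of $\Gamma$ where the two adjacent materials coincide. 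This is precisely where the argument departs from the single-object derivation of~\cite{ammarivolkov2013}; once this identification is secured, Theorem~\ref{thm:rev3_1v2} follows immediately from Lemma~\ref{lemma:newlemma3_2v2} and the definition of ${\vec w}_0$, with no new estimates required.
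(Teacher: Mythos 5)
Your proposal is correct and is essentially the paper's own argument: the authors prove Theorem~\ref{thm:rev3_1v2} in one line by observing that it follows immediately from Lemma~\ref{lemma:newlemma3_2v2} together with the definition ${\vec w}({\vec x})={\alpha}\,{\vec w}_0(({\vec x}-{\vec z})/{\alpha})$, exactly as you describe. Your additional check that the transmission conditions in (\ref{eqn:w0nstrongformv2}) account for the internal interfaces is a sensible (and correct) elaboration of what the paper leaves implicit.
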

\begin{proof}
The result immediately follows from Lemma~\ref{lemma:newlemma3_2v2} and the definition of ${\vec w}_0$.
\end{proof}

The expressions for ${\alpha} {\vec F}( {\vec z} + {\alpha} {\vec \xi} )$ and ${\vec w}_0( {\vec \xi} )$ are identical to (3.13) and (3.14) stated in~\cite{ammarivolkov2013} where $ {\vec \theta}_i ( {\vec \xi})$ and ${\vec \psi}_{ij}( {\vec \xi})$ now satisfy the transmission problems
\begin{subequations} \label{eqn:thetatpv2}
\begin{align}
\nabla_\xi  \times  \mu ^{-1} \nabla_\xi \times {\vec \theta}_i - \im \omega \sigma {\alpha}^2   {\vec \theta}_i = &  \im \omega \sigma {\alpha}^2  {\vec e}_i \times {\vec \xi} && \text{in ${\vec B}$} , \\
\nabla_\xi \times \mu_0^{-1} \nabla_\xi \times {\vec \theta}_i = & {\vec 0} &&\text{in $ {\vec B}^c$} , \\
\nabla_\xi \cdot {\vec \theta}_i = & 0 &&\text{in ${\vec B}^c$} , \\
\left [ {\vec n} \times {\vec \theta}_i  \right ]_{\Gamma}  = &  {\vec 0} &&\text{on $\Gamma$ } , \\
 \left [ {\vec n} \times \mu^{-1} \nabla_\xi \times {\vec \theta}_i  \right ]_{\Gamma} = &  -2 [\mu^{-1}]_{\Gamma} {\vec n} \times {\vec e}_i && \text{on $\Gamma$ } , \\
 {\vec \theta}_i  =& O( | {\vec \xi} |^{-1} ) && \text{as $|{\vec \xi} | \to \infty $}  ,
\end{align}
 \end{subequations}
and
\begin{subequations} \label{eqn:psitpv2}
\begin{align}
\nabla_\xi  \times  \mu ^{-1} \nabla_\xi \times {\vec \psi}_{ij} - \im \omega \sigma  {\alpha}^2  {\vec \psi}_{ij} = &  \im \omega \sigma {\alpha}^2  {\xi}_j {\vec e}_i \times {\vec \xi}  && \text{in ${\vec B}$}  , \\
\nabla_\xi \times \mu_0^{-1} \nabla_\xi \times {\vec \psi}_{ij} = & {\vec 0} &&\text{in ${\vec B}^c $} , \\
\nabla_\xi \cdot {\vec \psi}_{ij} = & 0 &&\text{in ${\vec B}^c $} , \\
\left [ {\vec n} \times {\vec \psi}_{ij}   \right ]_{\Gamma}  = &  {\vec 0} &&\text{on $\Gamma$ } ,  \\
 \left [ {\vec n} \times \mu^{-1} \nabla \times {\vec \psi}_{ij}  \right ]_{\Gamma} = &  -3 [\mu^{-1}]_{\Gamma} {\xi}_j {\vec n} \times {\vec e}_i && \text{on $\Gamma$ } , \\
{\vec \psi}_{ij}  =& O( | {\vec \xi} |^{-1} ) && \text{as $|{\vec \xi} | \to \infty $} .
\end{align}
 \end{subequations}
The properties of ${\vec \theta}_i ( {\vec \xi} )$ and ${\vec \psi}_{ij} ({\vec \xi} )$ are analogues to the homogeneous object case presented in~\cite{ammarivolkov2013}.

\subsection{Integral Representation Formulae}  
The integral representation formulae presented in Section~\ref{sect:intrepf} only require $(B_\alpha)^{(n)}$ to be replaced by $B_\alpha^{(n)}$ and ${\vec H}_{\vec \alpha}$ to be replaced by ${\vec H}_\alpha$ for an inhomogeneous object.

\subsection{Asymptotic Formulae}
Theorem 3.2 in~\cite{ammarivolkov2013} presents the leading order term in asymptotic expansion for $({\vec H}_\alpha -{\vec H}_0)({\vec x})$ for a single homogenous inclusion $B_\alpha= \alpha B + {\vec z}$ as $\alpha \to 0$. In the case of an inhomogeneous inclusion this becomes
 \begin{theorem} \label{thm:rev3_2v2}
For an inhomogeneous object  ${\vec B}_\alpha$ 
such that ${\nu}^{(n)}$ is order one and ${\alpha}$ is small then for ${\vec x}$ away from ${\vec B}_\alpha $, we have
\begin{align}
({\vec H}_\alpha - {\vec H}_0)({\vec x}) =  &  - \frac{ \im  {\alpha}  }{2} \sum_{i=1}^3 ( {\vec H}_0({\vec z}))_i \sum_{n=1}^N {\nu}^{(n)} \int_{ {B}^{(n)} }
{\vec D}_x^2 G( {\vec x}, {\vec z} ) {\vec \xi}  \times ( {\vec \theta}_i + {\vec e}_i \times {\vec \xi}  ) \dif {\vec \xi} \nonumber  \\
&+ 
{\alpha}^3 \sum_{i=1}^3  ( {\vec H}_0({\vec z} ))_i  {\vec D}_x^2 G({\vec x}, {\vec z} ) \sum_{n=1}^N \left ( 1 - \frac{\mu_0}{\mu_*^{(n)}} \right )
 \int_{{B}^{(n)}  }  \left ( {\vec e}_i + \frac{1}{2} \nabla \times {\vec \theta}_i \right ) \dif {\vec \xi}  \nonumber \\
 &+ {\vec R}({\vec x}) \label{eqn:mainasymv2} ,
 \end{align}
 where $ {\vec \theta}_i$ is the solution of (\ref{eqn:thetatpv2}) and 
 \begin{equation}
 |{\vec R}({\vec x}) | \le C {\alpha}^4 \| {\vec H}_0 \|_{W^{2, \infty}( {\vec B}_\alpha )} ,
 \end{equation}
 uniformly in ${\vec x}$ in any compact set away from $ {\vec B}_\alpha$. 
\end{theorem}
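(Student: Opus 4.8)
The plan is to mirror, almost line for line, the proof of Theorem~\ref{thm:rev3_2} given above, the only structural change being that there is now a single scaling parameter $\alpha$ and a single translation ${\vec z}$ shared by all $N$ constitutive pieces $B_\alpha^{(n)}$, hence a single corrector field ${\vec w}$ rather than a family ${\vec w}^{(n)}$. In particular no spacing hypothesis and no analogue of Corollary~\ref{coll:closelyspacedw} are needed, since the cross terms ``$A_3$'' that forced that hypothesis in the well-spaced case do not arise. I would start from the integral representation of Lemma~\ref{lemma:rev3_4} in the form valid for an inhomogeneous object (i.e.\ with $(B_\alpha)^{(n)}$ replaced by $B_\alpha^{(n)}$ and ${\vec H}_{\vec\alpha}$ by ${\vec H}_\alpha$), which exhibits $({\vec H}_\alpha - {\vec H}_0)({\vec x})$ as a sum over the pieces of a ``conductive'' contribution $\int_{B_\alpha^{(n)}}\nabla_x G({\vec x},{\vec y})\times\nabla_y\times\tilde{\vec H}_\alpha({\vec y})\,\dif{\vec y}$ and a ``magnetic'' contribution $\left(1-\tfrac{\mu_*^{(n)}}{\mu_0}\right)\int_{B_\alpha^{(n)}}({\vec H}_\alpha({\vec y})\cdot\nabla_y)\nabla_x G({\vec x},{\vec y})\,\dif{\vec y}$. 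Throughout I would rescale by ${\vec\xi}=({\vec x}-{\vec z})/\alpha$, so that $\dif{\vec y}=\alpha^3\dif{\vec\xi}$ and each $B_\alpha^{(n)}$ maps to the unit-scale piece $B^{(n)}$, and Taylor expand the Green's function about ${\vec z}$ as $\nabla_x G({\vec x},\alpha{\vec\xi}+{\vec z})=\nabla_x G({\vec x},{\vec z})-\alpha{\vec D}_x^2 G({\vec x},{\vec z}){\vec\xi}+O(\alpha^2)$ (the minus sign being exactly the one whose absence produced the sign error in Theorem~3.2 of~\cite{ammarivolkov2013}).

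For the conductive contribution, inside $B_\alpha^{(n)}$ one has $\nabla_y\times\tilde{\vec H}_\alpha = \sigma_\alpha{\vec E}_\alpha = \sigma_*^{(n)}{\vec E}_\alpha$, since ${\vec J}_0$ is supported away from ${\vec B}_\alpha$ and the background conductivity vanishes there (cf.\ (\ref{eqn:eddyqns})). I would replace ${\vec E}_\alpha$ by ${\vec E}_0+\alpha{\vec w}_0(({\vec x}-{\vec z})/\alpha)+{\vec\Phi}$ using Theorem~\ref{thm:rev3_1v2}, and then ${\vec E}_0+{\vec\Phi}$ by ${\vec F}$ using (\ref{eqn:e0phimf}) and (\ref{eqn:taylorbackfd2}); the contribution of the zeroth-order Taylor term $\nabla_x G({\vec x},{\vec z})$ acting on $\sigma_*^{(n)}({\vec F}({\vec y})+\alpha{\vec w}_0(\cdots))$ vanishes by integration by parts, exactly as in the proof of Theorem~\ref{thm:rev3_2}, so the leading survivor is the first-order term $-\alpha{\vec D}_x^2 G({\vec x},{\vec z}){\vec\xi}$. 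Substituting the expansions of $\alpha^{-1}{\vec F}({\vec z}+\alpha{\vec\xi})$ and ${\vec w}_0({\vec\xi})$ from (3.13)--(3.14) of~\cite{ammarivolkov2013} (the latter written in terms of $({\vec H}_0({\vec z}))_i{\vec\theta}_i({\vec\xi})$, with ${\vec\theta}_i$ solving (\ref{eqn:thetatpv2}) and the ${\vec\psi}_{ij}$ terms of higher order) and keeping the leading piece yields the first term of (\ref{eqn:mainasymv2}): the combination ${\vec\theta}_i+{\vec e}_i\times{\vec\xi}$ comes from ${\vec w}_0$ together with the linear part of $\alpha^{-1}{\vec F}$, and the inhomogeneity manifests only through the factors $\nu^{(n)}$ that are pulled inside the sum $\sum_n\int_{B^{(n)}}$ because $\sigma$ is piecewise constant.

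For the magnetic contribution I would instead write, inside $B_\alpha^{(n)}$, ${\vec H}_\alpha=\tfrac{1}{\im\omega\mu_*^{(n)}}\nabla\times{\vec E}_\alpha\approx\tfrac{\mu_0}{\mu_*^{(n)}}{\vec H}_0({\vec z})+\tfrac{1}{\im\omega\mu_*^{(n)}}\nabla_\xi\times{\vec w}_0({\vec\xi})$ (the correction to the Taylor expansion of $\nabla\times{\vec F}$ being absorbed in the remainder). Acting with $({\vec H}_\alpha\cdot\nabla_y)$ on the expansion of $\nabla_x G$ gives, at leading order, $-{\vec D}_x^2 G({\vec x},{\vec z}){\vec H}_\alpha$, so after rescaling and multiplying by $(1-\mu_*^{(n)}/\mu_0)$ the $\mu_*^{(n)}/\mu_0$ weighting converts to $1-\mu_0/\mu_*^{(n)}$ and one obtains the second term of (\ref{eqn:mainasymv2}), with the $\tfrac12\nabla\times{\vec\theta}_i$ coming from $\tfrac{1}{\im\omega\mu_*^{(n)}}\nabla_\xi\times{\vec w}_0$. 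The only genuinely new point, relative to the homogeneous case, is that ${\vec\theta}_i$ now solves (\ref{eqn:thetatpv2}) on the union ${\vec B}$ with $\Gamma$ including the internal interfaces between the $B^{(n)}$; the jump conditions and the attendant integration by parts must therefore be read off interface by interface, but the jump relation $[{\vec n}\times\mu^{-1}\nabla_\xi\times{\vec\theta}_i]_\Gamma=-2[\mu^{-1}]_\Gamma{\vec n}\times{\vec e}_i$ holds on the internal interfaces in the same way as on $\partial{\vec B}$, so nothing changes in substance.

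Finally, the remainder ${\vec R}({\vec x})$ collects the $L^2$ field errors of order $\alpha^{7/2}$ and $\alpha^{9/2}$ from Theorem~\ref{thm:rev3_1v2} and Lemma~\ref{lemma:newlemma3_2v2}, the Taylor remainders of order $\alpha^{7/2}$ from (\ref{eqn:taylorbackfd2}), and the $O(\alpha^2)$ tail of the Green's-function expansion; weighting these against the $\alpha^3$ volume factor and against the boundedness of $\nabla_x G$ and its derivatives on compact sets away from ${\vec B}_\alpha$ (and trading $\|\nabla\times{\vec E}_0\|_{W^{2,\infty}}$ for $\|{\vec H}_0\|_{W^{2,\infty}}$ up to the constant $\im\omega\mu_0$) gives $|{\vec R}({\vec x})|\le C\alpha^4\|{\vec H}_0\|_{W^{2,\infty}({\vec B}_\alpha)}$, uniformly on such sets. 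I expect the only delicate bookkeeping to be the handling of the internal interfaces in the magnetic term; everything else is a transcription of the single-object argument of~\cite{ammarivolkov2013} and of the proof of Theorem~\ref{thm:rev3_2} above, with the material constants moved inside the sum over subdomains and, crucially, with no spacing hypothesis required since there is a single corrector ${\vec w}$.
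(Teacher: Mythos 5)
Your proposal follows the same route as the paper's proof: start from the integral representation of Lemma~\ref{lemma:rev3_4} adapted to the inhomogeneous object, treat each region $B_\alpha^{(n)}$ in turn, substitute Theorem~\ref{thm:rev3_1v2} and (\ref{eqn:e0phimf}) for Ammari \emph{et al.}'s Theorem~3.1 and their (3.6), and Taylor expand $\nabla_x G$ with the corrected negative sign; the treatment of the magnetic term, the internal interfaces, and the remainder bookkeeping all match. The one imprecision worth flagging is the integration-by-parts step: you assert that $\sigma_*^{(n)}\int_{B_\alpha^{(n)}}\nabla_x G({\vec x},{\vec z})\times({\vec F}+\alpha{\vec w}_0)\,\dif{\vec y}$ vanishes for each $n$ separately, ``exactly as in the proof of Theorem~\ref{thm:rev3_2}''. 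In the well-spaced case that per-object vanishing is legitimate because each ${\vec w}_0^{(n)}$ solves a transmission problem on a single connected piece; for an inhomogeneous object the effective current $\sigma_\alpha({\vec F}+\alpha{\vec w}_0)$ is a curl only on the whole of ${\vec B}_\alpha$, and its flux across the internal interfaces means the individual integrals over the $B_\alpha^{(n)}$ need not vanish --- only their sum over $n$ does, which is precisely the identity the paper records (``by summing contributions''). Since $\nabla_x G({\vec x},{\vec z})$ is a common constant factor that can be pulled out of the sum over pieces, this costs nothing, but it is in fact the second genuinely new point of the inhomogeneous case alongside the modified interface set $\Gamma$ for ${\vec \theta}_i$ in (\ref{eqn:thetatpv2}), and your write-up should state the cancellation at the level of the sum rather than piece by piece.
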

\begin{proof}
The proof uses as its starting point Lemma~\ref{lemma:rev3_4} and considers each region $B_\alpha^{(n)}$ in turn. It applies similar arguments to the proof of their Theorem 3.2 except that our Theorem~\ref{thm:rev3_1v2} is used in place of their Theorem 3.1 and our (\ref{eqn:e0phimf}) instead of their (36).  Furthermore, note that by summing contributions, we have that
\begin{align}
\sum_{n=1}^N \sigma_*^{(n)} \int_{B_\alpha^{(n)}} \nabla_x G({\vec x},{\vec z}) \times \left ( {\vec F}({\vec y}) + \alpha {\vec w}_0 \left ( \frac{{\vec y}-{\vec z}}{\alpha} \right ) \right ) \dif {\vec y}={\vec 0},
\end{align}
by application of  integration by parts and, in a similar manner to the proof of Theorem~\ref{thm:rev3_2}, we use
\begin{equation}
\nabla_x G({\vec x},\alpha{\vec \xi} + {\vec z})  =  \nabla_x G({\vec x},{\vec z})  - \alpha {\vec D}_x^2 G({\vec x},{\vec z}) {\vec \xi} + O(\alpha^2),
\end{equation}
to give the correct negative sign in the first term of (\ref{eqn:mainasymv2}).
\end{proof}

\section{Numerical Examples and Algorithms for Object Localisation and Identification} \label{sect:num}
In this section we consider an illustrative numerical application of the asymptotic formulae (\ref{eqn:asymp2}) and (\ref{eqn:asymp3}), numerical examples of the frequency spectra of the MPT coefficients and propose algorithms for multiple object localisation and inhomogeneous object identification  as extensions of those in~\cite{ammarivolkov2013b}.
\subsection{Numerical Illustration of Asymptotic Formulae for $({\vec H}_\alpha - {\vec H}_0)({\vec x})$}
To illustrate the results in Theorems~\ref{thm:objectsnocloselyspaced} and~\ref{thm:objectscloselyspaced}, comparisons of $({\vec H}_\alpha - {\vec H}_0)({\vec x})$~\footnote{We use  $({\vec H}_{ \alpha}- {\vec H}_0)({\vec x})$ instead of  $({\vec H}_{\vec \alpha}- {\vec H}_0)({\vec x})$  for Theorem~\ref{thm:objectsnocloselyspaced} throughout this section as the examples  with multiple objects presented have the same object size.} will be  undertaken with a finite element method (FEM) solver~\cite{ledgerzaglmayr2010}  for multiple objects and for inhomogeneous objects. 
 We first show comparisons for two spheres, then comparisons for two tetrahedra followed by comparisons for an inhomogeneous parallelepiped.
\subsubsection{Two Spheres} \label{sect:twosph}
We first consider the situation of two spheres $(B_\alpha)^{(1)}$ and $(B_\alpha)^{(2)}$. These objects are defined as
 \begin{align*}
(B_\alpha)^{(1)} : =&\left \{{\vec x}:  \left (x_1 -\frac{d\alpha^{(1)}}{2}-\alpha^{(1)} \right )^2 + x_2^2 +x_3^2 =( \alpha^{(1)})^2 \right \}, \\
(B_\alpha)^{(2)} : = &\left \{{\vec x}:  \left  (x_1 +\frac{d\alpha^{(2)}}{2}+\alpha^{(2)} \right )^2 + x_2^2 +x_3^2 =( \alpha^{(2)})^2 \right \},
\end{align*}
which means that the radii of the objects are $\alpha^{(1)}$ and $\alpha^{(2)}$, respectively. Setting $B=B^{(1)}=B^{(2)}$ to be a sphere of unit radius placed at the origin then
 \begin{align*}
{\vec z}^{(1)} = &\left (-\frac{d\alpha^{(1)}}{2}-\alpha^{(1)} \right ) {\vec e}_1 +0{\vec e}_2+0{\vec e}_3 ,\qquad
{\vec z}^{(2)} = & \left (\frac{d\alpha^{(2)}}{2}+\alpha^{(2)} \right ){\vec e}_1 +0{\vec e}_2+0{\vec e}_3 ,
\end{align*}
are  the location of the centroids of the physical objects $B_\alpha^{(1)}$ and  $B_\alpha^{(2)}$, respectively.
Thus, the objects $(B_\alpha)^{(n)}$, $n=1,2,$ are centered about the origin with $\min | \partial (B_\alpha)^{(1)} - \partial (B_\alpha)^{(2)} | = \alpha d$.
 The material properties of the spheres are $\sigma_*^{(1)}=\sigma_*^{(2)} = 5.66 \times 10^7 \text{S/m}$, $\mu_*^{(1)}=\mu_*^{(2)}=\mu_0$, we use $\omega =133.5 \text{rad/s}$ and the object sizes are chosen as $\alpha=\alpha^{(1)}=\alpha^{(2)}=0.01 \text{m}$ and hence ${\mathcal M}[\alpha^{(1)} B^{(1)}]={\mathcal M}[\alpha^{(2)} B^{(2)}]$, independent of their separation, which will be used in Theorem~\ref{thm:objectsnocloselyspaced}. For closely spaced objects we expect Theorem~\ref{thm:objectscloselyspaced} to be applicable and in this case we set
\begin{align*}
{\vec B}=\bigcup_{n=1}^2 B^{(n)}  =&  \left  \{ {\vec x}: \left (x_1 -\frac{d}{2}-1 \right )^2 + x_2^2 +x_3^2 =1 \right \}\cup \\
					&   \left  \{ {\vec x}: \left (x_1 +\frac{d}{2}+1 \right )^2 + x_2^2 +x_3^2 =1 \right \},
\end{align*}
and ${\vec z}={\vec 0}$. Note that in this case, ${\mathcal M}\left [{\alpha}{\vec B} \right ]$ must be recomputed for each new $d$.

Comparisons of $ ({\vec H}_\alpha-{\vec H}_0)({\vec x})$ obtained from the asymptotic formulae (\ref{eqn:asymp2}) and (\ref{eqn:asymp3}) in Theorems~\ref{thm:objectsnocloselyspaced} and~\ref{thm:objectscloselyspaced} as well as a full FEM solution are made in Figure~\ref{fig:femtwospheres} for $d=0.2$ and $d=2$ along three different coordinates axes. To ensure the tensor coefficients were calculated accurately, a $p=3$ edge element discretisation and an unstructured mesh of $6\,581$ tetrahedra is used for computing  
${\mathcal M}[\alpha^{(1)} B^{(1)}]={\mathcal M}[\alpha^{(2)} B^{(2)}]$ and meshes of $8\,950$ and $11\,940$ 
unstructured tetrahedral elements are used for computing ${\mathcal M}\left [{\alpha}{\vec B} \right ]$ for $d=0.2$ and $d=2$, respectively. In addition, curved elements with a quadratic geometry resolution are used for representing the curved surfaces of the spheres. For these, and all subsequent examples, the artificial truncation boundary was set to be $100|B|$. To ensure an accurate representation of $({\vec H}_\alpha-{\vec H}_0)({\vec x})$ for the FEM solver, the same discretisation, suitably scaled, as used for ${\mathcal M}\left [{\alpha}{\vec B} \right ]$ is employed.

For the closely spaced objects, with $d=0.2$, we observe good agreement between Theorem~\ref{thm:objectscloselyspaced} and the FEM solution in Figure~\ref{fig:femtwospheres}, with all three results tending to the same result for sufficiently large $|{\vec x}|$. The improvement for larger $|{\vec x}|$ is expected as the asymptotic formulae (\ref{eqn:asymp2}) and (\ref{eqn:asymp3}) are valid for ${\vec x}$ away from ${\vec B}_{\vec \alpha} \equiv {\vec B}_\alpha$. For objects positioned further apart, with $d=2$, we observe that the agreement between Theorem~\ref{thm:objectsnocloselyspaced} and the FEM solution is best. This agrees with what our theory predicts, since, for $d=2$, $\min | \partial (B_\alpha)^{(1)} - \partial (B_\alpha)^{(2)} | = 2 \alpha > \alpha_{\max}$ and so this theorem applies.

\begin{figure}
\begin{center}
$\begin{array}{cc}
\includegraphics[width=0.5\textwidth]{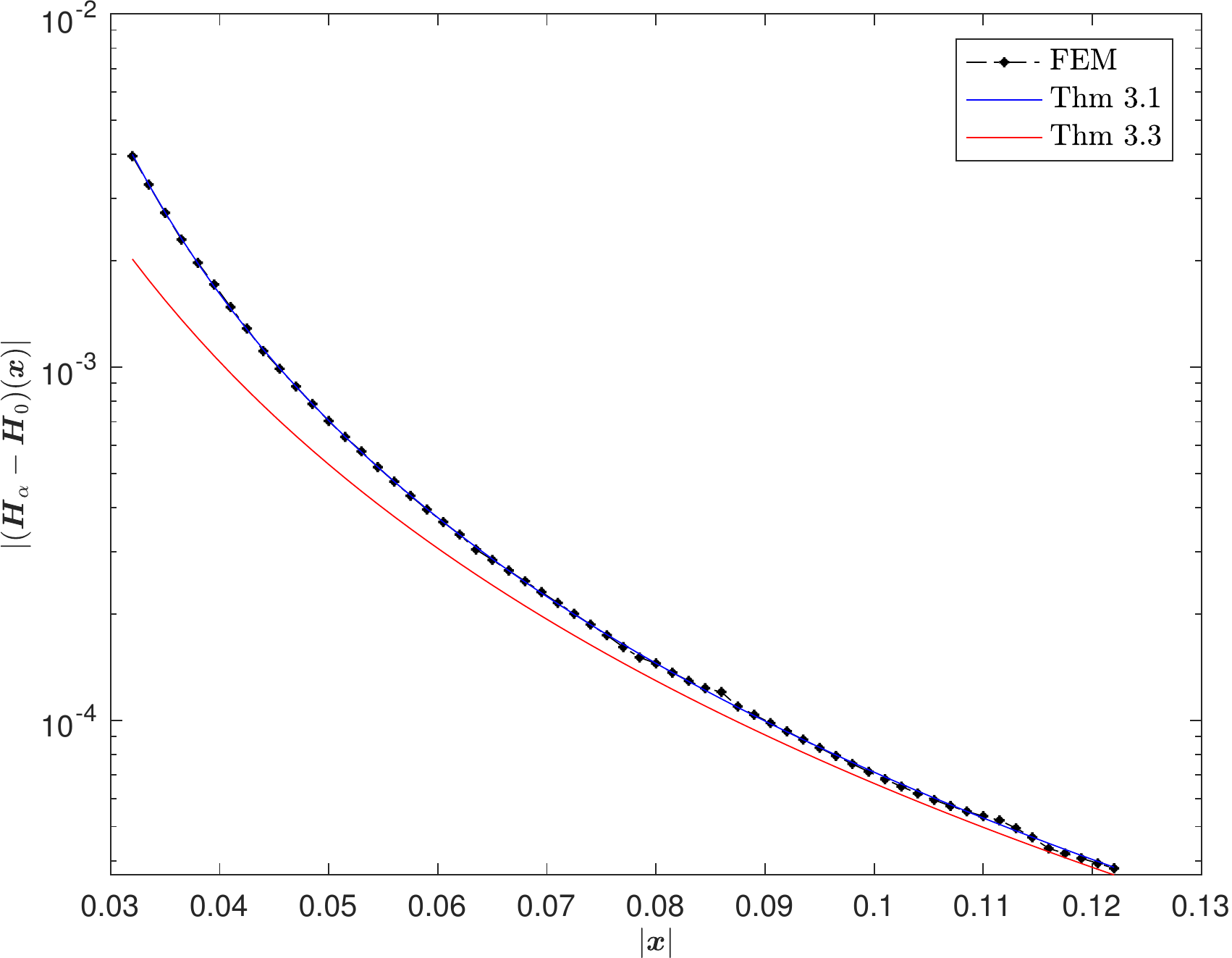} &
\includegraphics[width=0.5\textwidth]{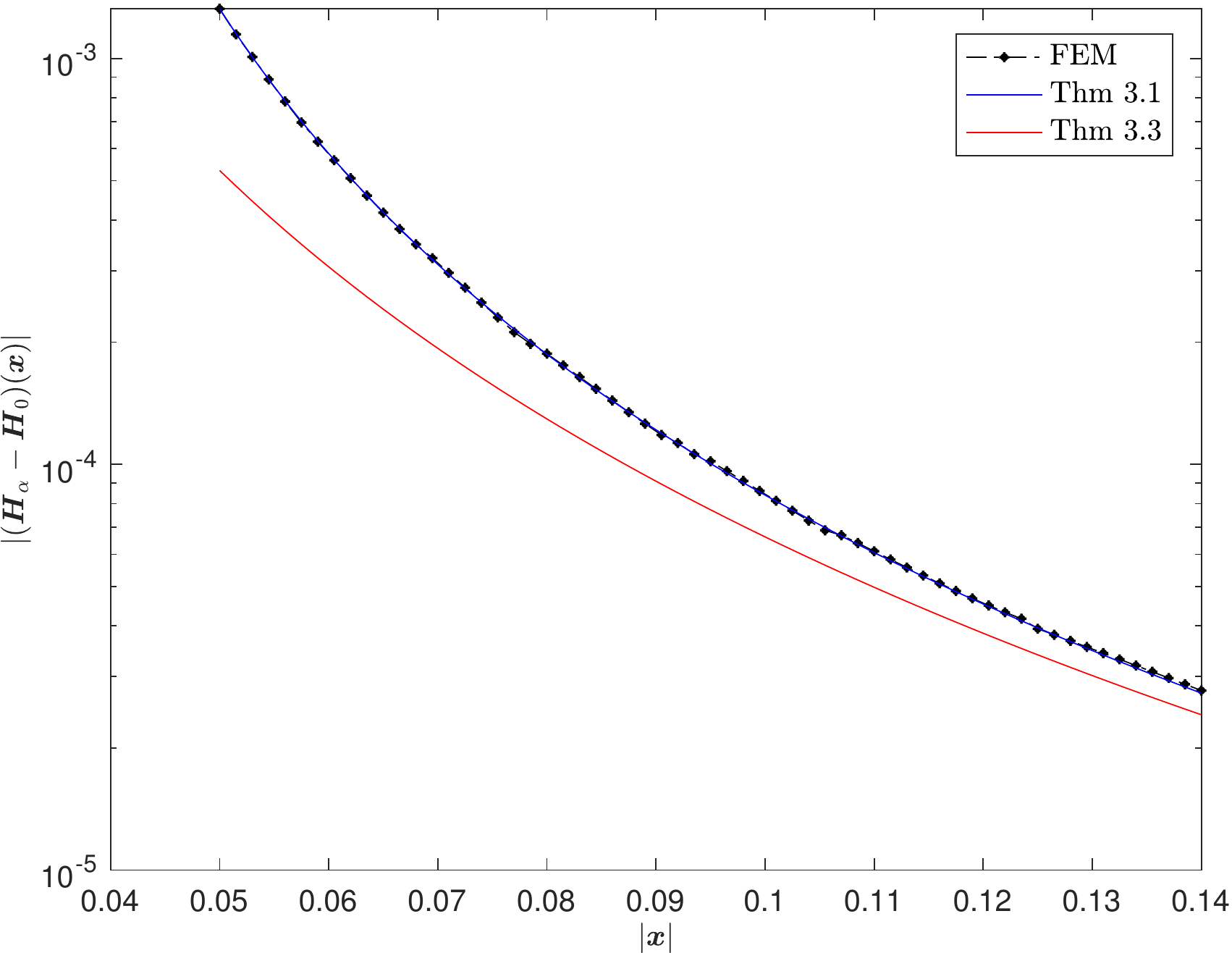} \\
d=0.2, {\vec x}= x_1 {\vec e}_1  & d=2, {\vec x}= x_1 {\vec e}_1 \\
\includegraphics[width=0.5\textwidth]{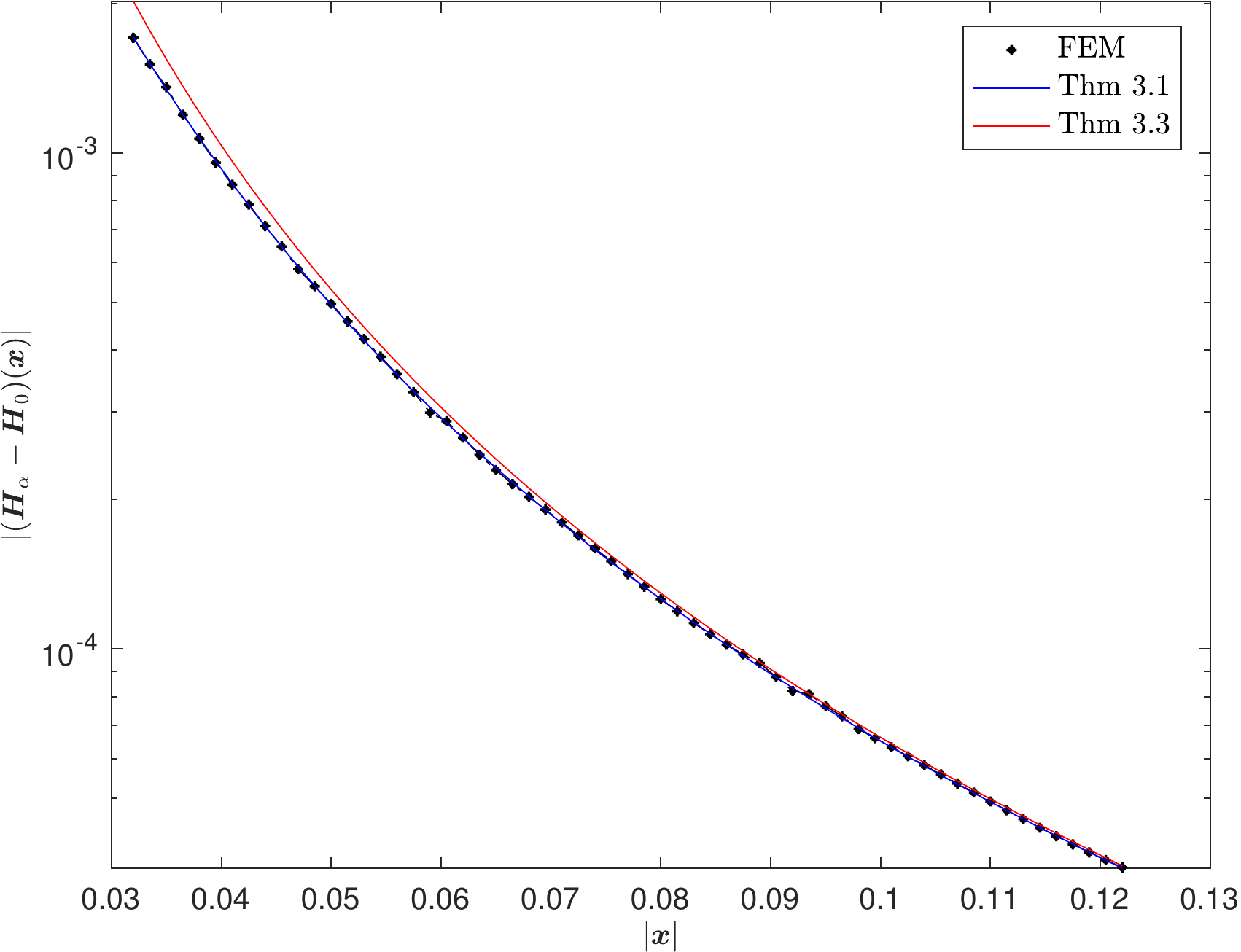} &
\includegraphics[width=0.5\textwidth]{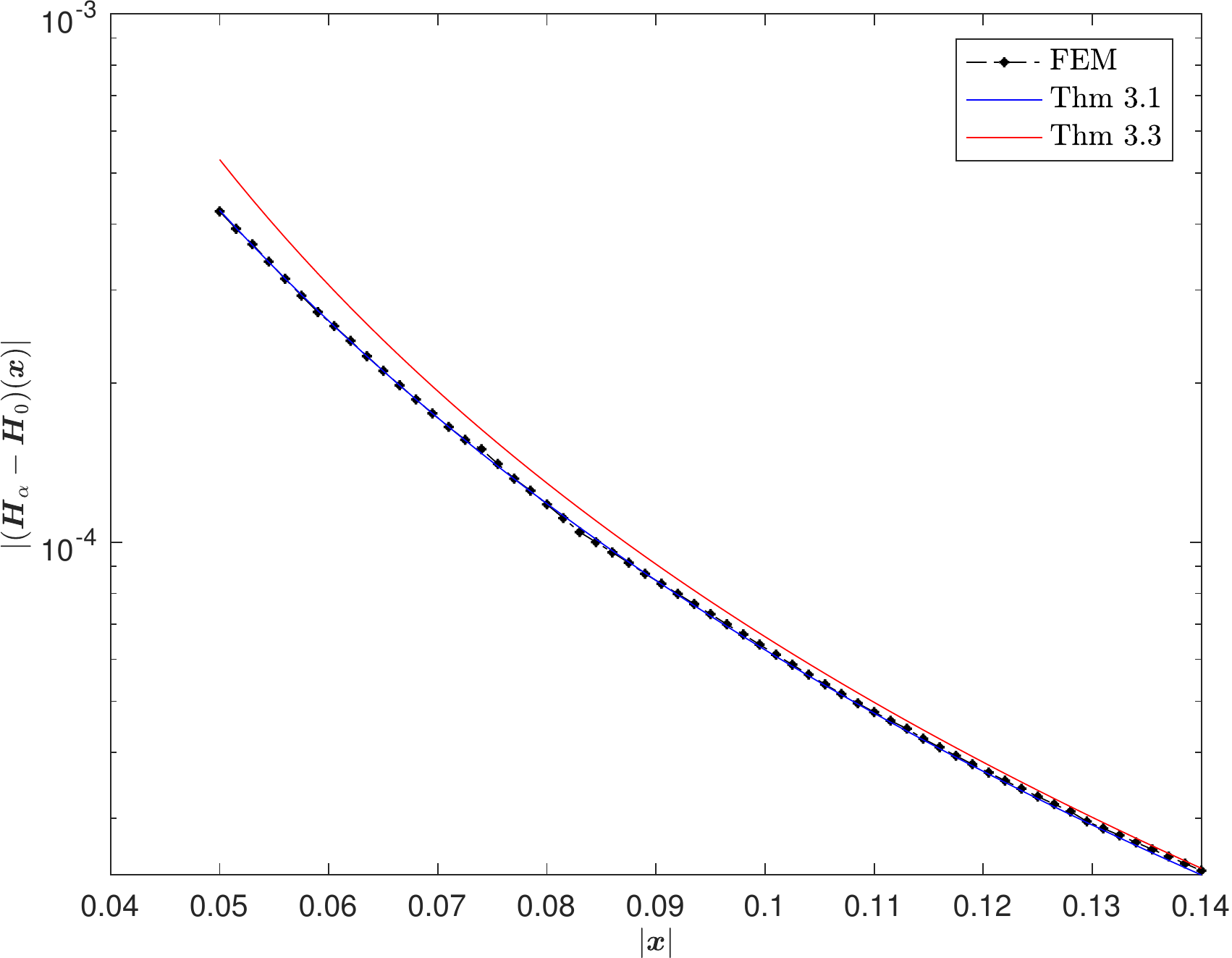} \\
d=0.2, {\vec x}= x_2 {\vec e}_2 & d=2, {\vec x}= x_2 {\vec e}_2 \\
\includegraphics[width=0.5\textwidth]{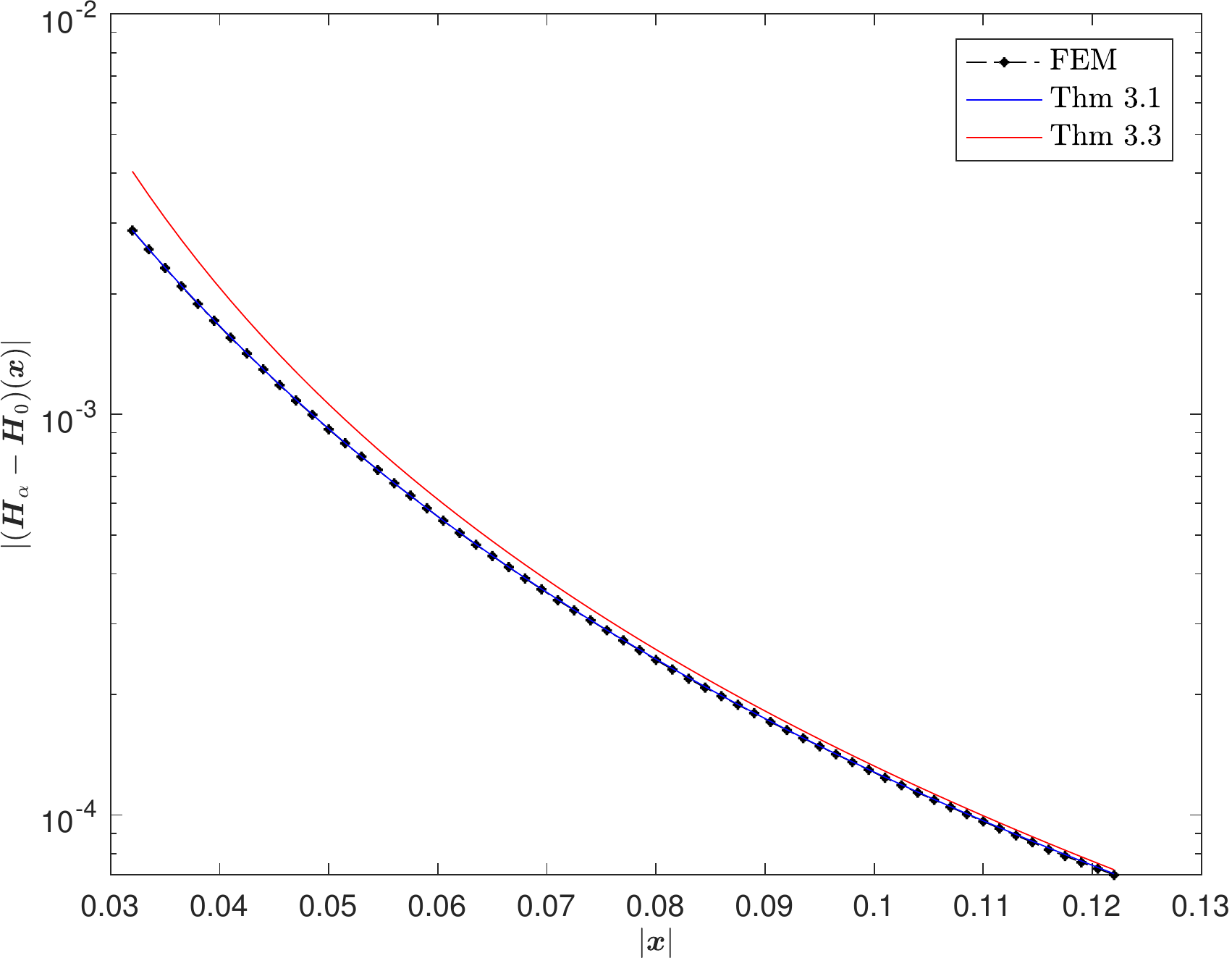} &
\includegraphics[width=0.5\textwidth]{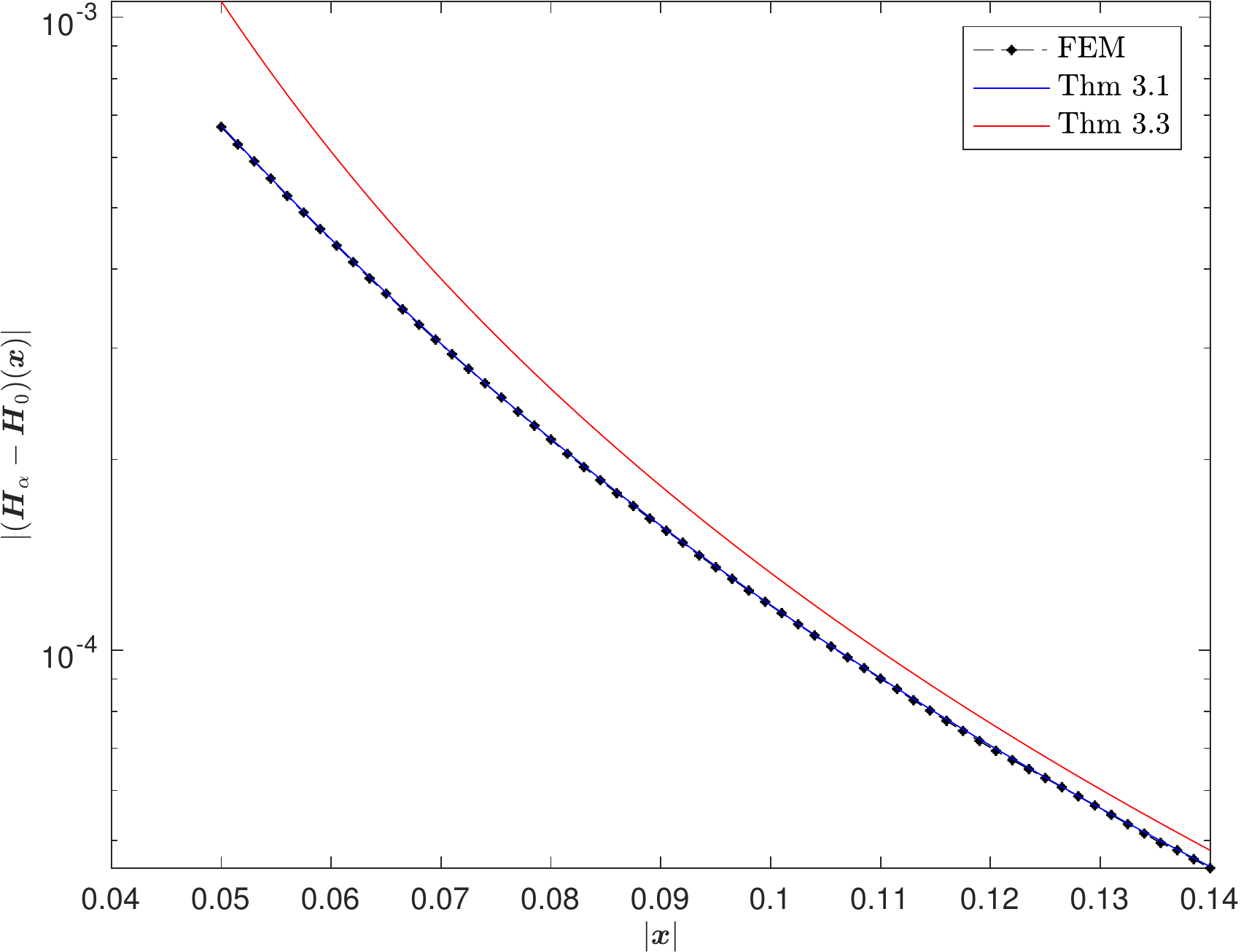} \\
d=0.2, {\vec x}= x_3 {\vec e}_3 & d=2, {\vec x}= x_3 {\vec e}_3
\end{array}$ 
\end{center}
\caption{Comparison of $({\vec H}_\alpha-{\vec H}_0)({\vec x})$ using the asymptotic expansions (\ref{eqn:asymp2})  and (\ref{eqn:asymp3})  in 
Theorems \ref{thm:objectsnocloselyspaced} and \ref{thm:objectsnocloselyspaced} as well as
a FEM solution: along the three coordinate axes for two spheres with different separations $\alpha d$.} \label{fig:femtwospheres}
\end{figure}

\subsubsection{Two Tetrahedra}
Next, we consider the case of two tetrahedra where the physical objects $(B_\alpha)^{(1)}$ and $(B_\alpha)^{(2)}$ are chosen as the tetrahedra with vertices $  (-1-\frac{d}{2}, -\frac{3}{8}, -\frac{1}{4})$, 
$  (-\frac{d}{2}, -\frac{3}{8}, -\frac{1}{4})$, $  (-\frac{d}{2}, \frac{5}{8}, -\frac{1}{4})$, $  (-\frac{d}{2}, \frac{1}{8}, \frac{3}{4})$ and
 $ (\frac{d}{2}, -\frac{3}{8}, -\frac{1}{4})$, 
$  (1+\frac{d}{2}, -\frac{3}{8}, -\frac{1}{4})$, $  (\frac{d}{2}, \frac{5}{8}, -\frac{1}{4})$, $ (\frac{d}{2}, \frac{1}{8}, \frac{3}{4})$, scaled by $\alpha^{(1)}$ and $\alpha^{(2)}$ respectively. Thus, the objects $(B_\alpha)^{(n)}$, $n=1,2,$ are centered about the origin with $\min | \partial (B_\alpha)^{(1)} - \partial (B_\alpha)^{(2)} | = \alpha d$ and we determine $B^{(n)}$ from $(B_\alpha)^{(n)} = \alpha^{(n)} B^{(n)} + {\vec z}^{(n)}$  by setting
\begin{equation*}
{\vec z}^{(1)} = -\alpha^{(1)}\left ( \frac{1}{4} + \frac{d}{2} \right) {\vec e}_1,\qquad {\vec z}^{(2)} = \alpha^{(2)}\left ( \frac{1}{4} + \frac{d}{2} \right) {\vec e}_1,
\end{equation*}
such that the centroid of $B^{(n)}$ lies at the origin.
A typical illustration of the two tetrahedra is shown in Figure \ref{fig:twotetrahedra:case2}. 
\begin{figure}[!htbp]
\centering
    \includegraphics[scale=0.2]{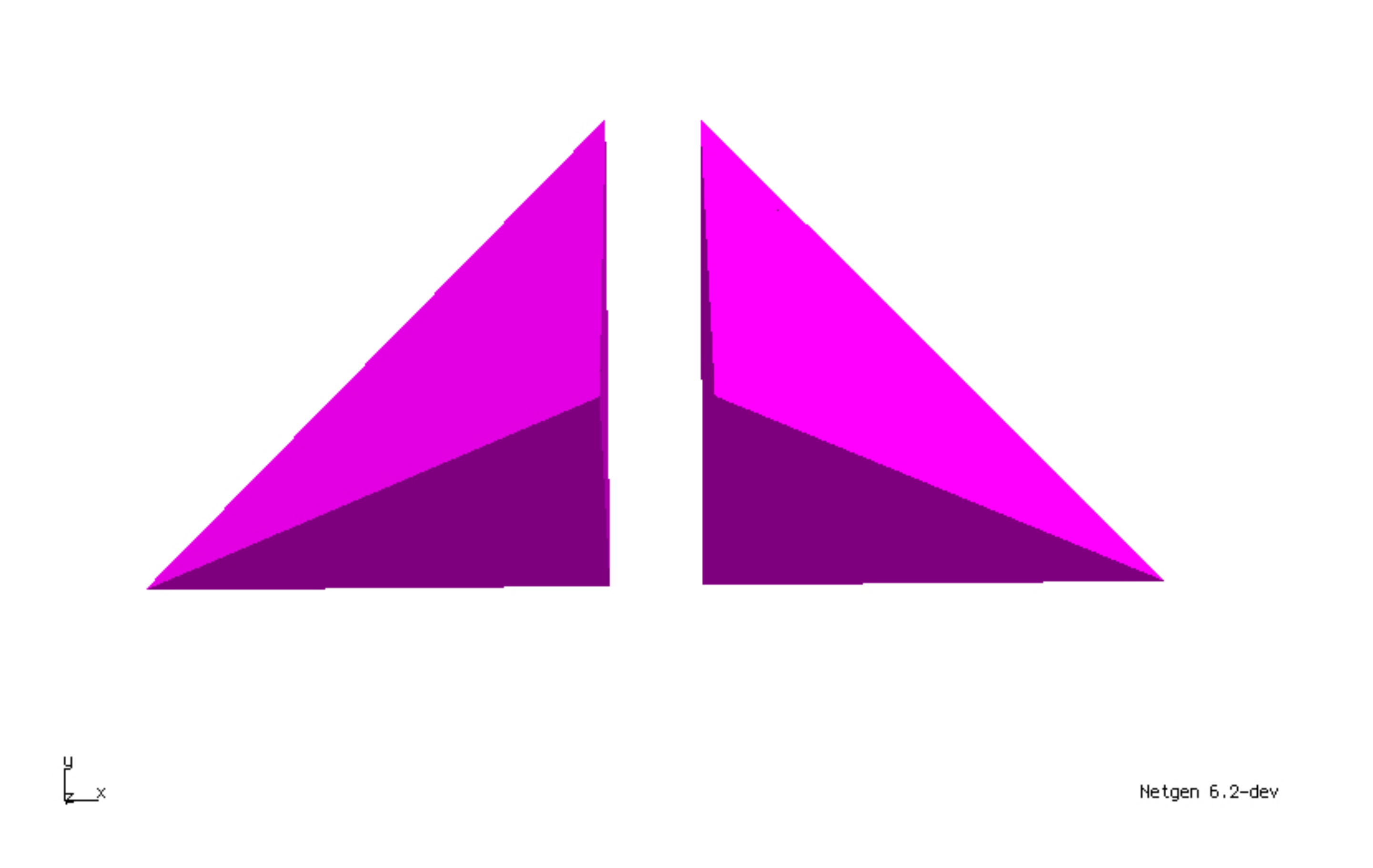}
\caption{Two tetrahedra $(B_\alpha)^{(1)}$ and $(B_\alpha)^{(2)}$ with $\min | \partial (B_\alpha)^{(1)} - \partial (B_\alpha)^{(2)} | =  \alpha d$.}
\label{fig:twotetrahedra:case2}
\end{figure}
The sizes and materials of $(B_\alpha)^{(1)}$ and $(B_\alpha)^{(2)}$ are both the same, as in the previous section, but ${\mathcal M}[\alpha^{(1)} B^{(1)}] \ne{\mathcal M}[\alpha^{(2)} B^{(2)}]$ due to their different shapes, although the MPTs are independent of $d$. However, note that  $(B_\alpha)^{(2)} = \alpha^{(2)} R^x((B_\alpha)^{(1)}) / \alpha^{(1)} $ and $ B^{(2)} = M^x(B^{(1)}) $, where 
\begin{equation*}
M^x =\left ( \begin{array}{ccc} -1 & 0 & 0\\
0 & 1 & 0 \\
0 & 0 & 1 \end{array} \right ), 
\end{equation*}
and since $\alpha=\alpha^{(1)}=\alpha^{(2)}$ the tensor coefficients transform as
\begin{equation}
({\mathcal M}[\alpha^{(2)} B^{(2)} ])_{ij} = (M^x)_{ip} (M^x)_{jq} ( {\mathcal M}[\alpha^{(1)} B^{(1)}])_{pq} . \label{eqn:transtet}
\end{equation}
For ${\vec B}= B^{(1)}\cup B^{(2)} $ we instead choose $B^{(1)}= (B_\alpha)^{(1)}/\alpha^{(1)}$, $B^{(2)}= (B_\alpha)^{(2)} /\alpha^{(2)}$ and set ${\vec z}={\vec 0}$.

Comparisons of  $ ({\vec H}_\alpha-{\vec H}_0)({\vec x})$ for this case are made in Figure~\ref{fig:femtwotet} for $d=0.2$ and $d=2$ along three different coordinates axes. To ensure the tensor coefficients are calculated accurately, a $p=3$ edge element discretisation and unstructured meshes of $15\,617$ and $15\,488$  tetrahedra are used for computing  
${\mathcal M}[\alpha^{(1)} B^{(1)}]$ and ${\mathcal M}[\alpha^{(2)} B^{(2)}]$~\footnote{${\mathcal M}[\alpha^{(2)} B^{(2)}]$ could be alternatively obtained from ${\mathcal M}[\alpha^{(1)} B^{(1)}]$ by applying (\ref{eqn:transtet}).}, respectively, and meshes of $15\,837$ and $22\,045$ unstructured tetrahedral elements are used  for computing ${\mathcal M}\left [{\alpha}{\vec B} \right ]$ for $d=0.2$ and $d=2$, respectively.  To ensure an accurate representation of $({\vec H}_\alpha-{\vec H}_0)({\vec x})$ for the FEM solver, the same discretisation, suitably scaled, as used for ${\mathcal M}\left [{\alpha}{\vec B} \right ]$ is employed.

\begin{figure}
\begin{center}
$\begin{array}{cc}
\includegraphics[width=0.5\textwidth]{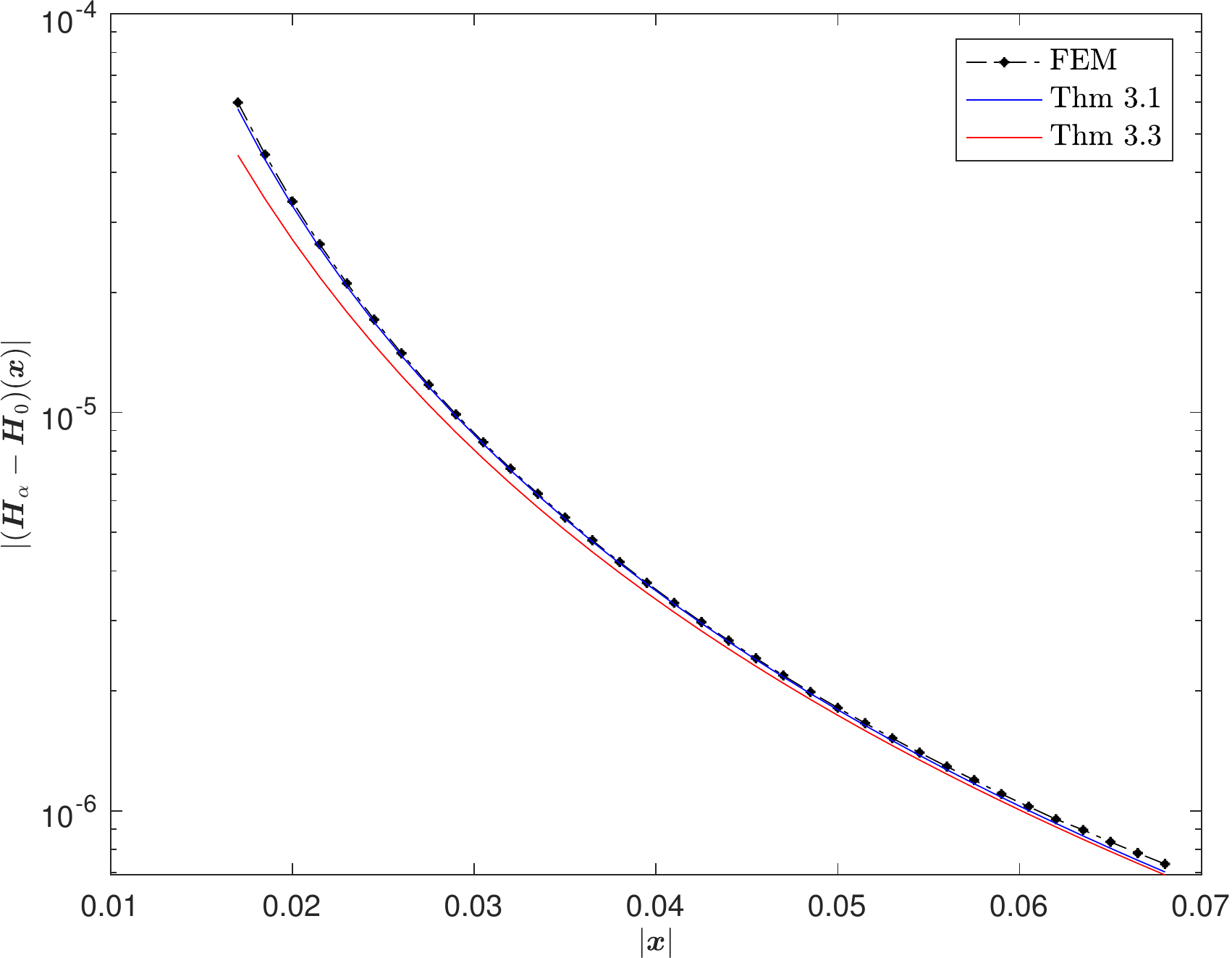} &
\includegraphics[width=0.5\textwidth]{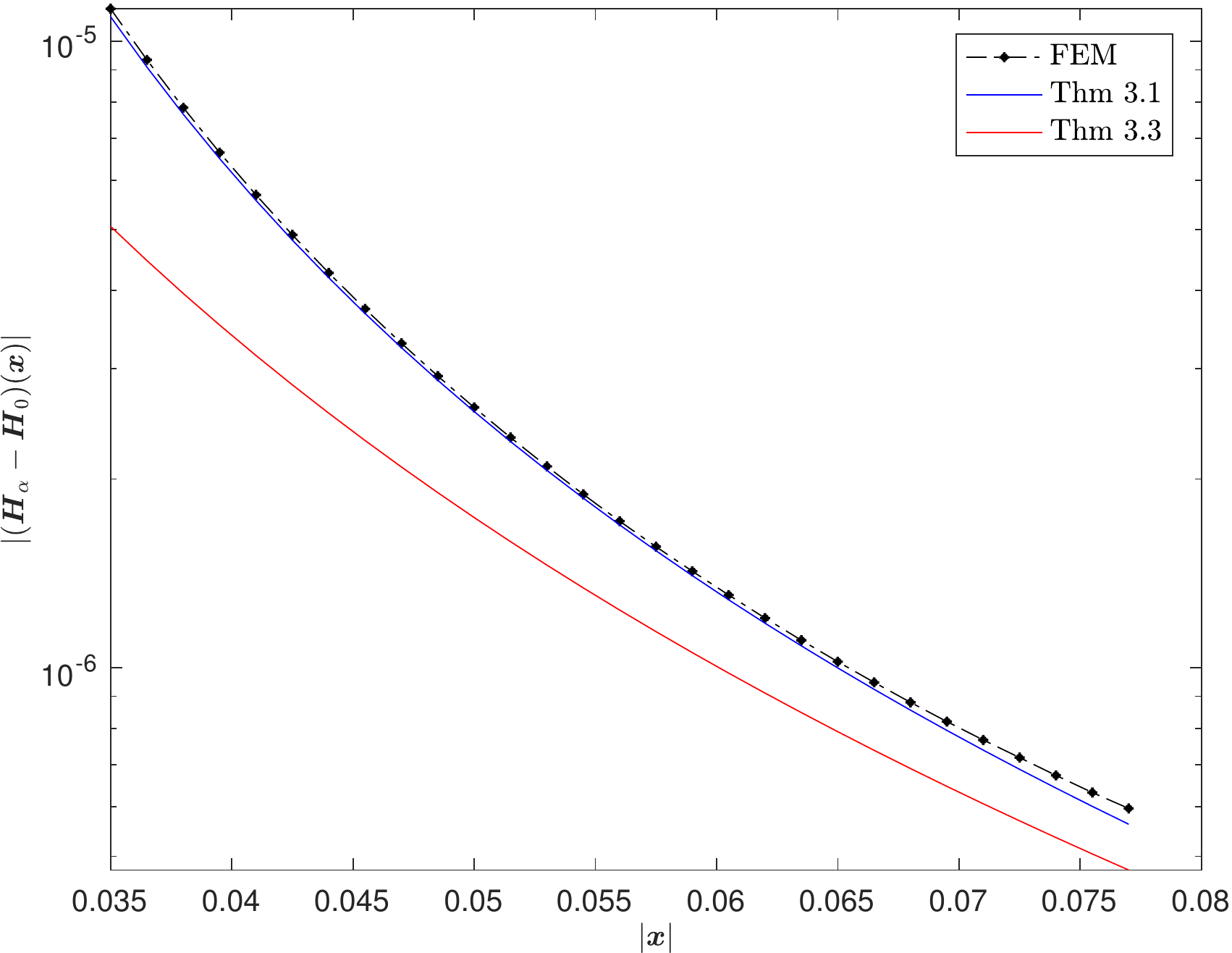} \\
d=0.2, {\vec x}= x_1 {\vec e}_1  & d=2, {\vec x}= x_1 {\vec e}_1 \\
\includegraphics[width=0.5\textwidth]{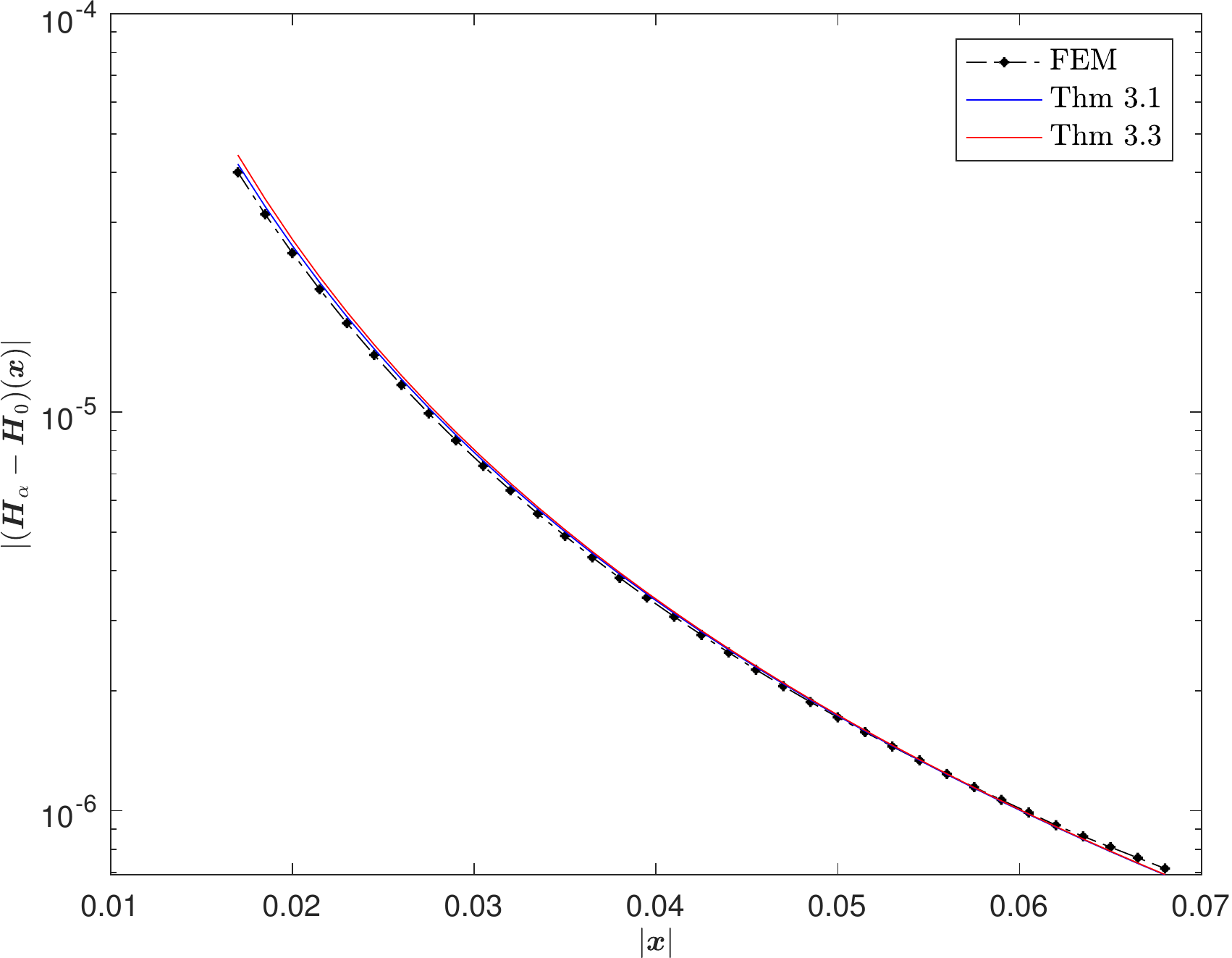} &
\includegraphics[width=0.5\textwidth]{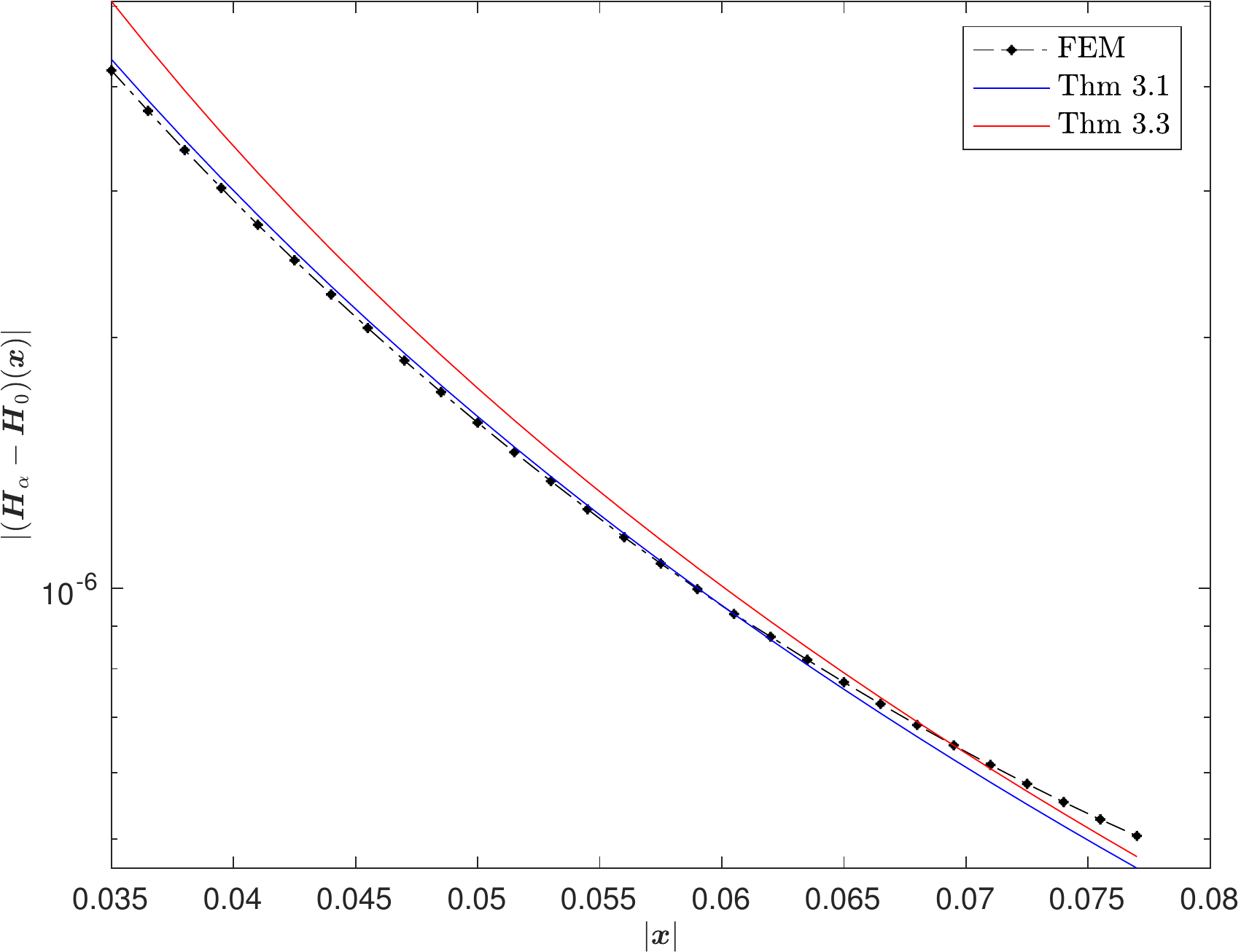} \\
d=0.2, {\vec x}= x_2 {\vec e}_2  & d=2, {\vec x}= x_2 {\vec e}_2 \\
\includegraphics[width=0.5\textwidth]{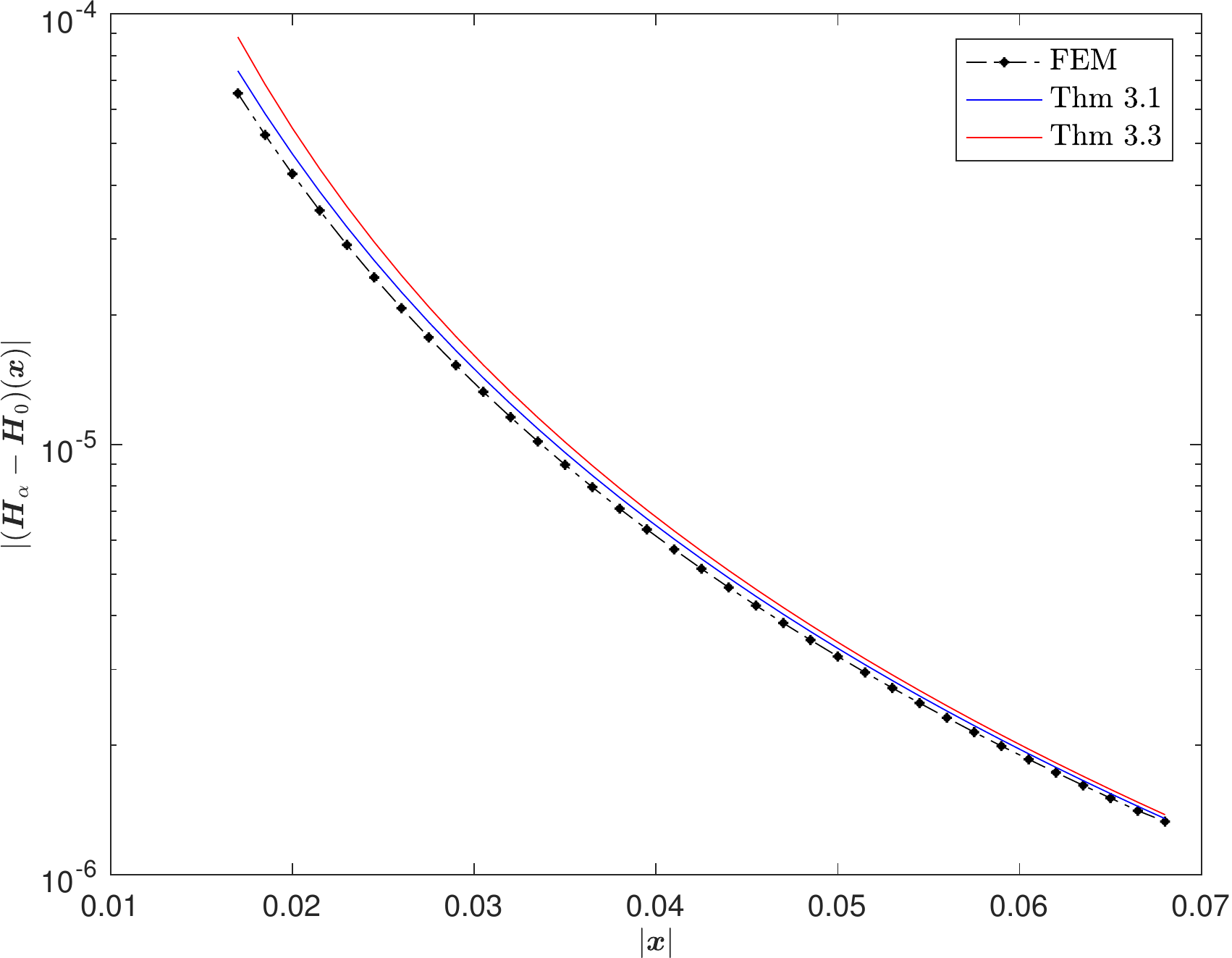} &
\includegraphics[width=0.5\textwidth]{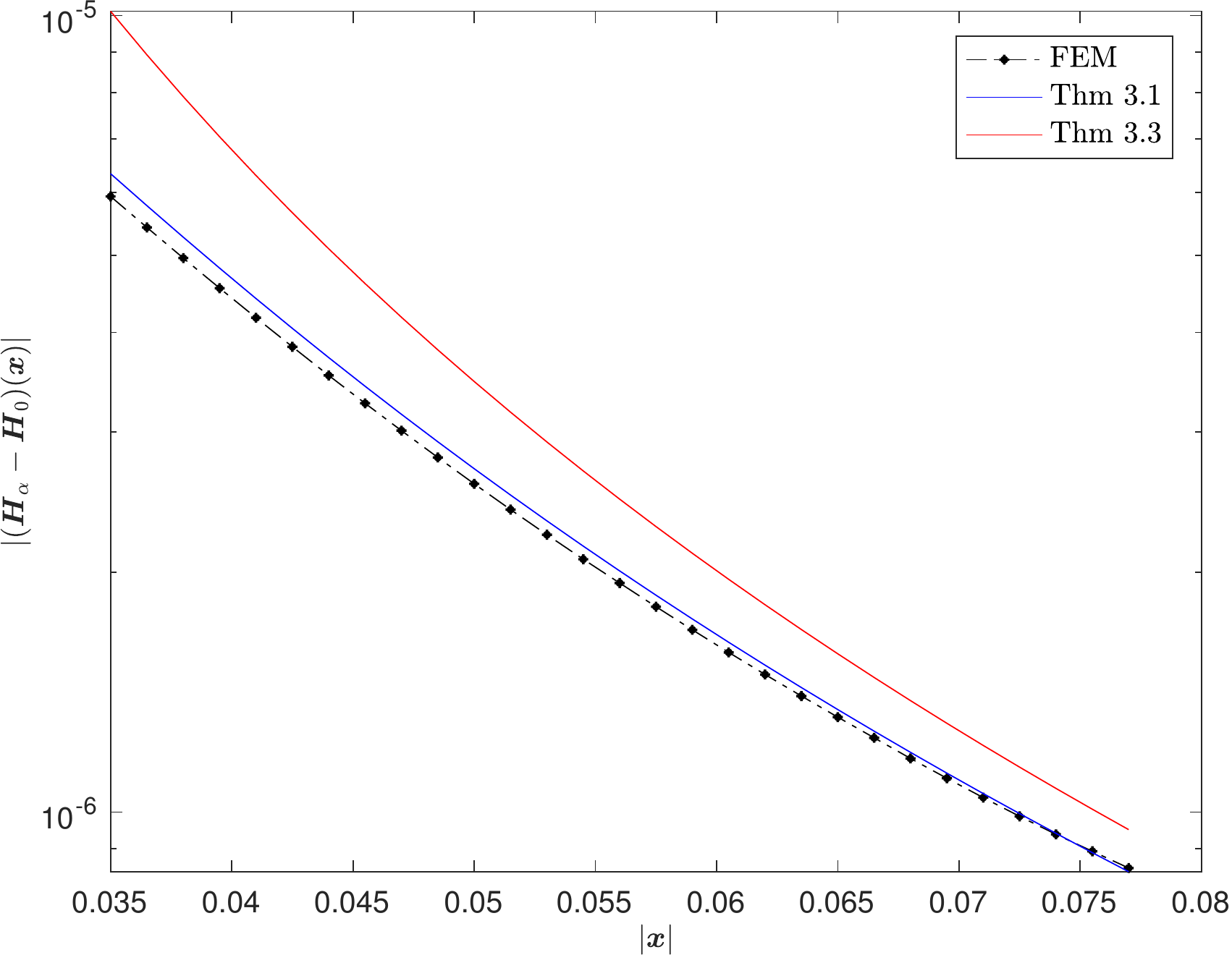} \\
d=0.2, {\vec x}= x_3 {\vec e}_3  & d=2, {\vec x}= x_3 {\vec e}_3 \\
\end{array}$ 
\end{center}
\caption{Comparison of $({\vec H}_\alpha-{\vec H}_0)({\vec x})$ using the asymptotic expansions (\ref{eqn:asymp2})  and (\ref{eqn:asymp3})  in 
Theorems \ref{thm:objectsnocloselyspaced} and \ref{thm:objectscloselyspaced} as well as
 a FEM solution: along the three coordinate axes for two tetrahedra with different separations $\alpha d$.} \label{fig:femtwotet}
\end{figure}

As in Section~\ref{sect:twosph}, we observe good agreement between Theorem~\ref{thm:objectscloselyspaced}  and the FEM solution for the closely spaced objects in Figure~\ref{fig:femtwotet}, with all three results tending to the same result for sufficiently large $|{\vec x}|$. For objects positioned further apart, with $d=2$, we observe that the agreement between Theorem~\ref{thm:objectsnocloselyspaced} and the FEM solution is again best, which again agrees with what our theory predicts, since, for $d=2$, $\min | \partial (B_\alpha)^{(1)} - \partial (B_\alpha)^{(2)} | = 2 \alpha > \alpha_{\max}$ and so this theorem applies.

\subsubsection{Inhomogeneous Parallelepiped}~\label{sect:pip}
In this section, an inhomogeneous parallelepiped ${\vec B}_{ \alpha} = B_\alpha^{(1)} \cup B_\alpha^{(2)} =\alpha ( B^{(1)} \cup B^{(2)} ) = \alpha {\vec B}$ with
\begin{align}
B^{(1)}  = [ -1, 0] \times [0, 1] \times [0, 1], \qquad
B^{(2)}  = [ 0, 1 ] \times [0, 1] \times [0, 1] ,\nonumber
\end{align} 
is considered. The material parameters of $(B_\alpha)^{(1)}$ and $(B_\alpha)^{(2)}$ are $\mu_*^{(1)} = \mu_0$, $\sigma_*^{(1)} = 7.37 \times 10^6 \text{S/m}$, and
$\mu_*^{(2)} = 5.5 \mu_0$, $\sigma_*^{(1)} = 1 \times 10^6 \text{S/m}$, respectively.

Comparisons of $ ({\vec H}_\alpha-{\vec H}_0)({\vec x})$ obtained from  using the asymptotic expansion  (\ref{eqn:asymp3}) in Theorem~\ref{thm:objectscloselyspaced}
and
a full FEM solution are made in Figure~\ref{fig:fempip} along three different coordinates axes. To ensure the tensor coefficients are calculated accurately, a $p=3$ edge element discretisation and an unstructured mesh of $13\,121$ tetrahedra are used for computing  
 ${\mathcal M}\left [{\alpha}{\vec B} \right ]$.  To ensure an accurate representation of $({\vec H}_\alpha-{\vec H}_0)({\vec x})$ for the FEM solver, the same discretisation, suitably scaled, as used for ${\mathcal M}\left [{\alpha}{\vec B} \right ]$ is employed.
We observe a good agreement between Theorem~\ref{thm:objectscloselyspaced}  and the FEM solution for sufficiently large $|{\vec x}|$.
\begin{figure}
\begin{center}
$\begin{array}{cc}
\includegraphics[width=0.5\textwidth]{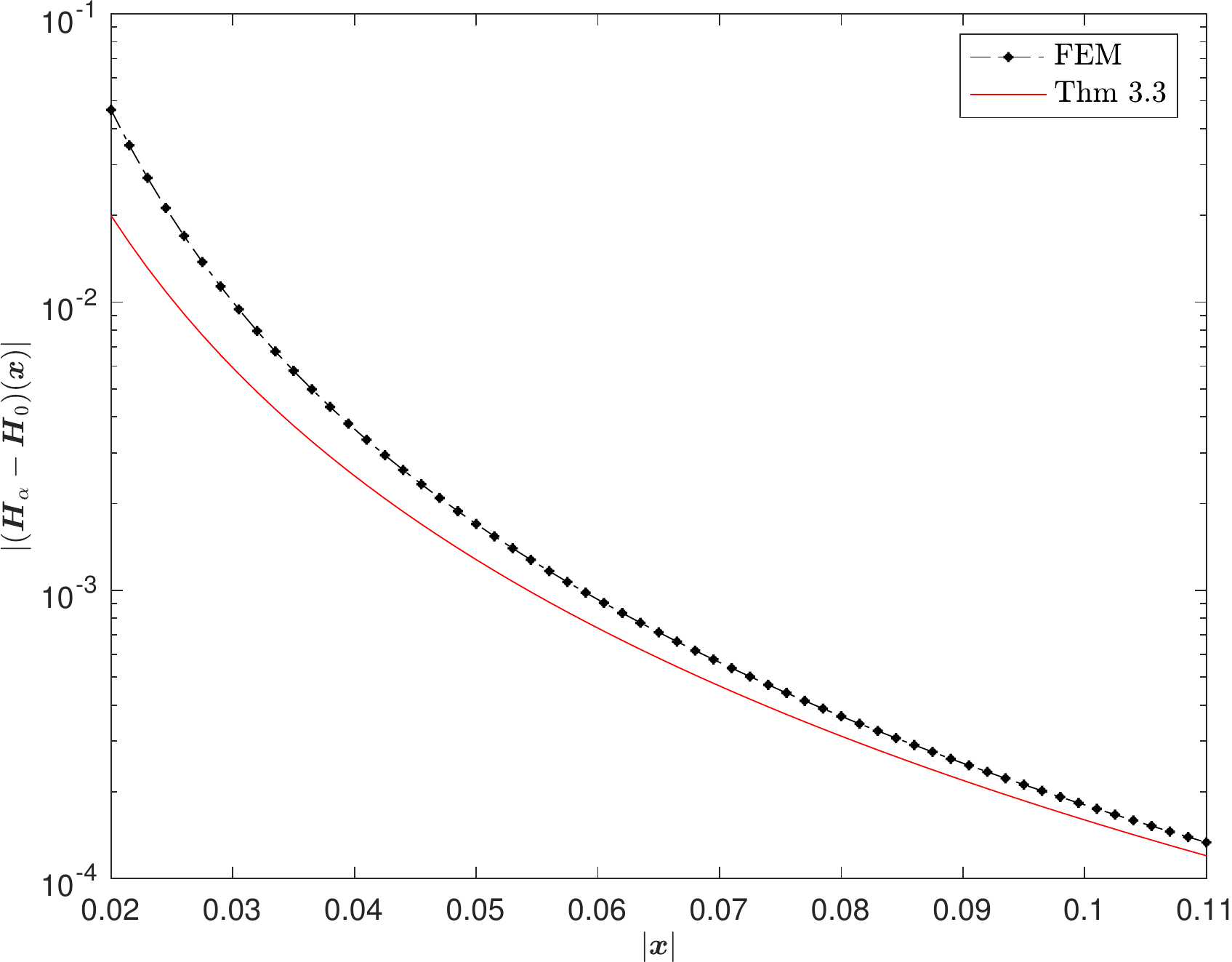} &
\includegraphics[width=0.5\textwidth]{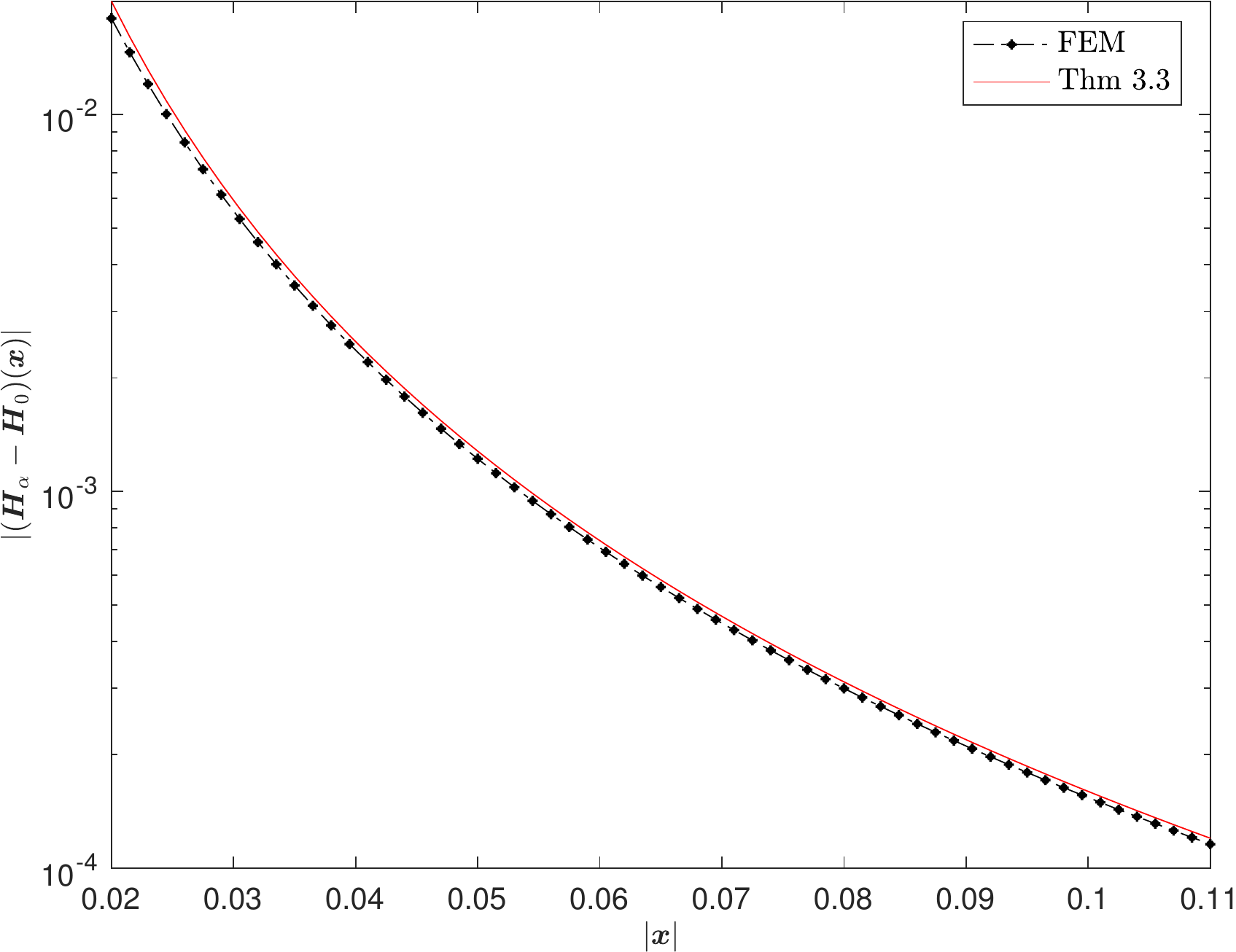} \\
{\vec x}= x_1 {\vec e}_1  &  {\vec x}= x_2 {\vec e}_2 \\
 \end{array}$\\
 $\begin{array}{c}
\includegraphics[width=0.5\textwidth]{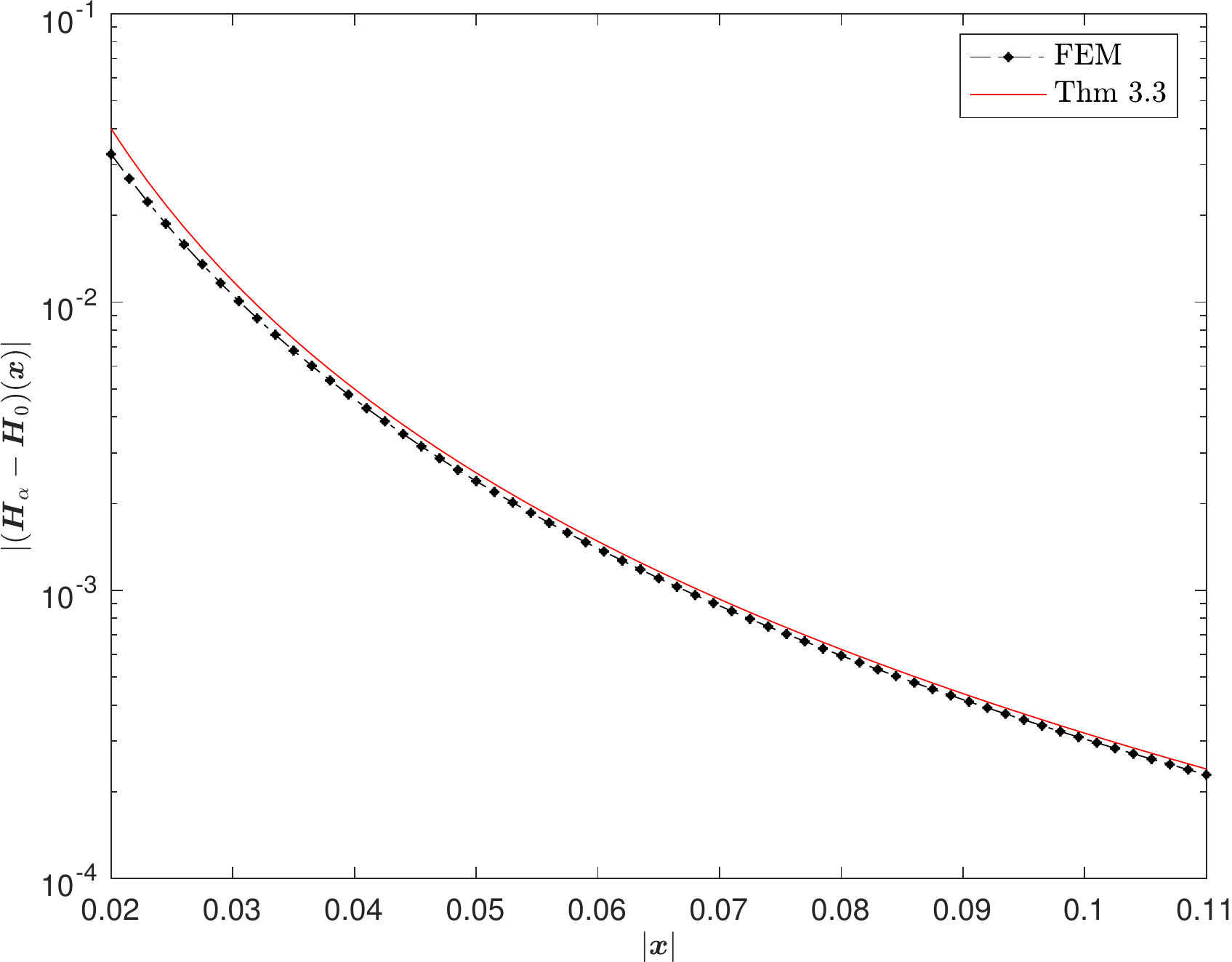} \\
{\vec x}= x_3 {\vec e}_3
\end{array}$ 
\end{center}
\caption{Comparison of $({\vec H}_\alpha-{\vec H}_0)({\vec x})$ using the asymptotic expansion (\ref{eqn:asymp3}) in Theorem~\ref{thm:objectscloselyspaced} and a FEM solution: along the three coordinate axes for an inhomogeneous parallelepiped.} \label{fig:fempip}
\end{figure}

\subsection{Frequency Spectra} \label{sect:freq}
The frequency response of the coefficients of ${\mathcal M}[\alpha { B}]$ for a range of single homogeneous objects has been presented in~\cite{ledgerlionheart2016,ledgerlionheart2018} where the real part was observed to be sigmoid with respect to $\log \omega$ and the imaginary part had a distinctive single maxima. Rather than consider the coefficients, it is in fact better to split ${\mathcal M}[\alpha { B}]$ in to the  real part $\text{Re}({\mathcal M} [\alpha { B}] )$ and an imaginary part  $\text{Im}({\mathcal M} [\alpha { B}] )$, which are both real symmetric rank 2 tensors, and to compute the eigenvalues of these. Indeed, many of the objects previously considered had rotational and/or reflection symmetries such that the eigenvalues coincide with the real and imaginary parts of the diagonal coefficients. 

A theoretical investigation of  ${\mathcal R}[\alpha B] =\text{Re}({\mathcal M}[\alpha B]- {\mathcal N}^0[\alpha B])$ and ${\mathcal I}[\alpha B]= \text{Im}({\mathcal M} [\alpha B] -  {\mathcal N}^0[\alpha B])= \text{Im}({\mathcal M} [\alpha B] )$, where ${\mathcal N}^0[\alpha B]$ corresponds to the real symmetric rank 2 tensor describing the limiting response in the case of $\omega \to 0$, and agrees with the P\'oyla-Sze\"o tensor for a homogenous permeable object,  has been undertaken in~\cite{ledgerlionheartfeq2018}. In this we prove results on the eigenvalues of these tensors.

Now considering ${\mathcal M}[\alpha {\vec B}]$
for an inhomogeneous object ${\vec B}_\alpha$, the coefficients of ${\mathcal N}^0[\alpha {\vec B}]$ are given by
\begin{align}
{\mathcal N}_{ij}^0 [\alpha {\vec B}] & := \frac{\alpha^3}{2} \sum_{n=1}^N  \left ( 1- \frac{\mu_0}{\mu_*^{(n)}} \right ) \int_{B^{(n)}}  \left (
  {\vec e}_i \cdot  \nabla \times {\vec \theta}_j^{(0)} \right ) \dif {\vec \xi} \label{eqn:definen0}.
\end{align}
where
\begin{subequations}
\begin{align} \label{eqn:transproblem0}
\nabla \times \mu^{-1} \nabla \times {\vec \theta}_i^{(0)} & ={\vec 0}  &&  \hbox{in $ {\mathbb R}^3 $}  , \\
\nabla \cdot {\vec \theta}_i^{(0)} &= 0  && \hbox{in ${\mathbb R}^3$} , \\
\left [ {\vec \theta} _i ^{(0)} \times {\vec n} \right ]_\Gamma   &= {\vec 0}   && \hbox{on $\Gamma$} ,\\ 
\,  \left [   \mu^{-1}   \nabla  \times {\vec \theta}_i^{(0)}  \times {\vec n} \right ]_\Gamma  & = 
{\vec 0}  &&
\hbox{on $\Gamma$} ,\\ 
{\vec \theta}_i^{(0)} ( {\vec \xi})- {\vec e}_i \times {\vec \xi} & = O(|{\vec \xi} |^{-1})  && \hbox{as $|{\vec \xi}| \to \infty$} ,
\end{align} 
\end{subequations}
and we have shown that for $0 \le \omega < \infty $, the eigenvalues of $  {\mathcal R} [\alpha {\vec B}]=
\text{Re}({\mathcal M}[\alpha {\vec B}]- {\mathcal N}^0[\alpha {\vec B}])$ and ${\mathcal I}[\alpha {\vec B}]= \text{Im}({\mathcal M} [\alpha {\vec B}] -  {\mathcal N}^0[\alpha {\vec B}])= \text{Im}({\mathcal M} [\alpha {\vec B}] )$ have the properties $\lambda(  {\mathcal R} [\alpha {\vec B}]  ) \le 0$ and $ \lambda( {\mathcal I }[\alpha {\vec B}]  )\ge 0$  (this also applies to a homogenous objects where ${\vec B}_\alpha$ reduces to $B_\alpha$)~\cite{ledgerlionheartfeq2018}.

To illustrate how the behaviour of $\lambda({\mathcal R} [\alpha {\vec B}] )$ and $\lambda( {\mathcal I } [\alpha {\vec B}] )$ changes for an inhomogeneous object, we consider the 
geometry of the parallelepiped described in Section~\ref{sect:pip} placed at the origin
 so that ${\vec B}_\alpha =B_\alpha^{(1)} \cup B_\alpha^{(2)} =  \alpha ( B^{(1)} \cup B^{(2)}) = \alpha {\vec B}  $  with  $\alpha=0.01\text{m}$. Note that, although $B_\alpha^{(1)}$ and $B_\alpha^{(2)}$ have different properties, the object ${\vec B}$ still reflectional symmetries in the ${\vec e}_1$ and ${\vec e}_3$ axes and a $\pi/2$ rotational symmetry about ${\vec e}_1$ so that the independent coefficients of ${\mathcal M}[\alpha {\vec B}]$ are ${\mathcal M}_{11}$ and ${\mathcal M}_{22} = {\mathcal M}_{33}$ (and hence  ${\mathcal R}_{11}$, ${\mathcal R}_{22}= {\mathcal R}_{33}$ are the independent coefficients of ${\mathcal R} [\alpha {\vec B}]$ and    ${\mathcal I}_{11}$, ${\mathcal I}_{22}= {\mathcal I}_{33}$ are the independent coefficients of ${\mathcal I}[\alpha {\vec B}]$).
In Figure~\ref{fig:twocubesconstrastsigma}, we show the computed results for $\lambda({\mathcal R} [\alpha {\vec B}]  )$ and $\lambda( {\mathcal I} [\alpha {\vec B}] )$ for the case where $\sigma_*^{(2)} = 100\sigma_*^{(1)}= 1 \times 10^8 \text{S/m}$ and $\mu_*^{(1)}=\mu_*^{(2)} =\mu_0$
 and, in Figure~\ref{fig:twocubesconstrastmu}, we show the corresponding result for $\sigma_*^{(1)}= \sigma_*^{(2)}= 1 \times 10^6 \text{S/m}$ and $\mu_*^{(2)} = 10 \mu_*^{(1)} = 10 \mu_0$. For this we use similar discretisations to those stated previously. In the former case ${\mathcal N}^0[\alpha {\vec B}]$ vanishes, but not in the latter case.
 
 \begin{figure}
 \begin{center}
 $\begin{array}{cc}
 \includegraphics[width=0.5\textwidth]{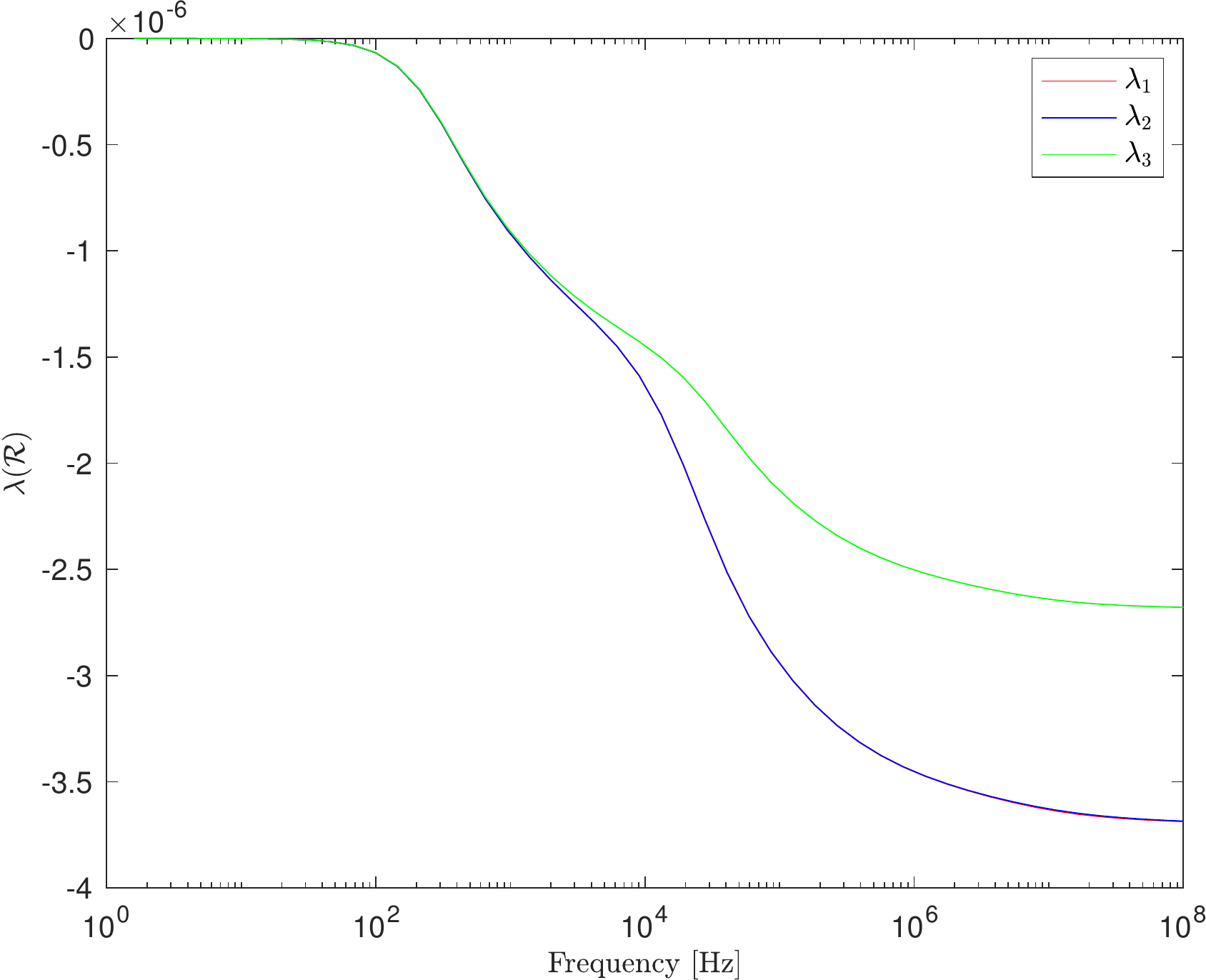} &
 \includegraphics[width=0.5\textwidth]{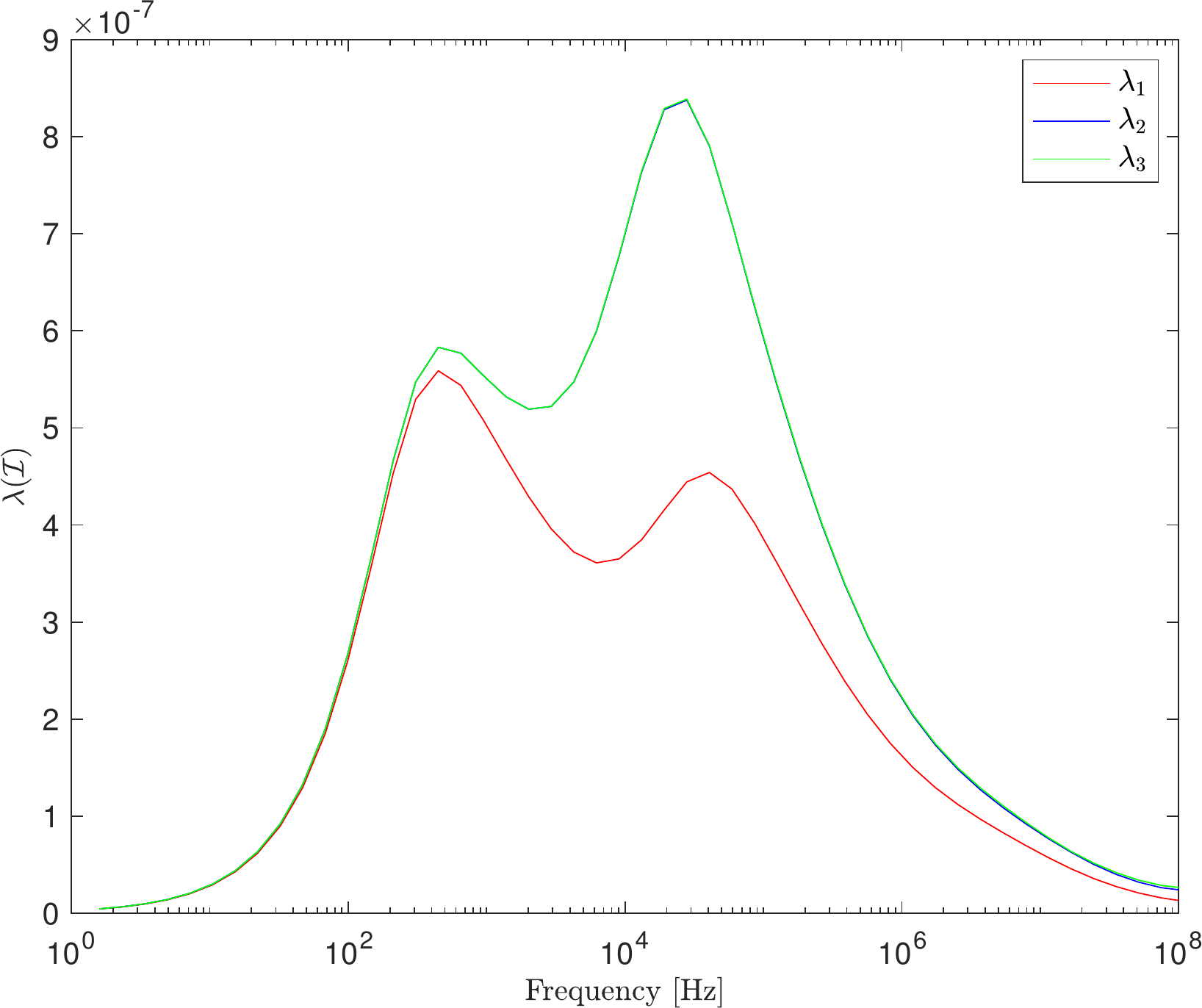} \\
 \lambda( {\mathcal R}[\alpha {\vec B}] ) & \lambda( {\mathcal I}[\alpha {\vec B}] )
 \end{array}$
 \end{center}
 \caption{Frequency dependence of the eigenvalues of $ {\mathcal R}[\alpha {\vec B}]$ and ${\mathcal I}[\alpha {\vec B}] $:  inhomogeneous object parallelepiped  up of two cubes with $\sigma_*^{(2)} = 100\sigma_*^{(1)}= 1 \times 10^8 \text{S/m}$ and $\mu_*^{(1)}=\mu_*^{(2)} =\mu_0$} \label{fig:twocubesconstrastsigma}
 \end{figure}
 
  \begin{figure}
 \begin{center}
 $\begin{array}{cc}
 \includegraphics[width=0.5\textwidth]{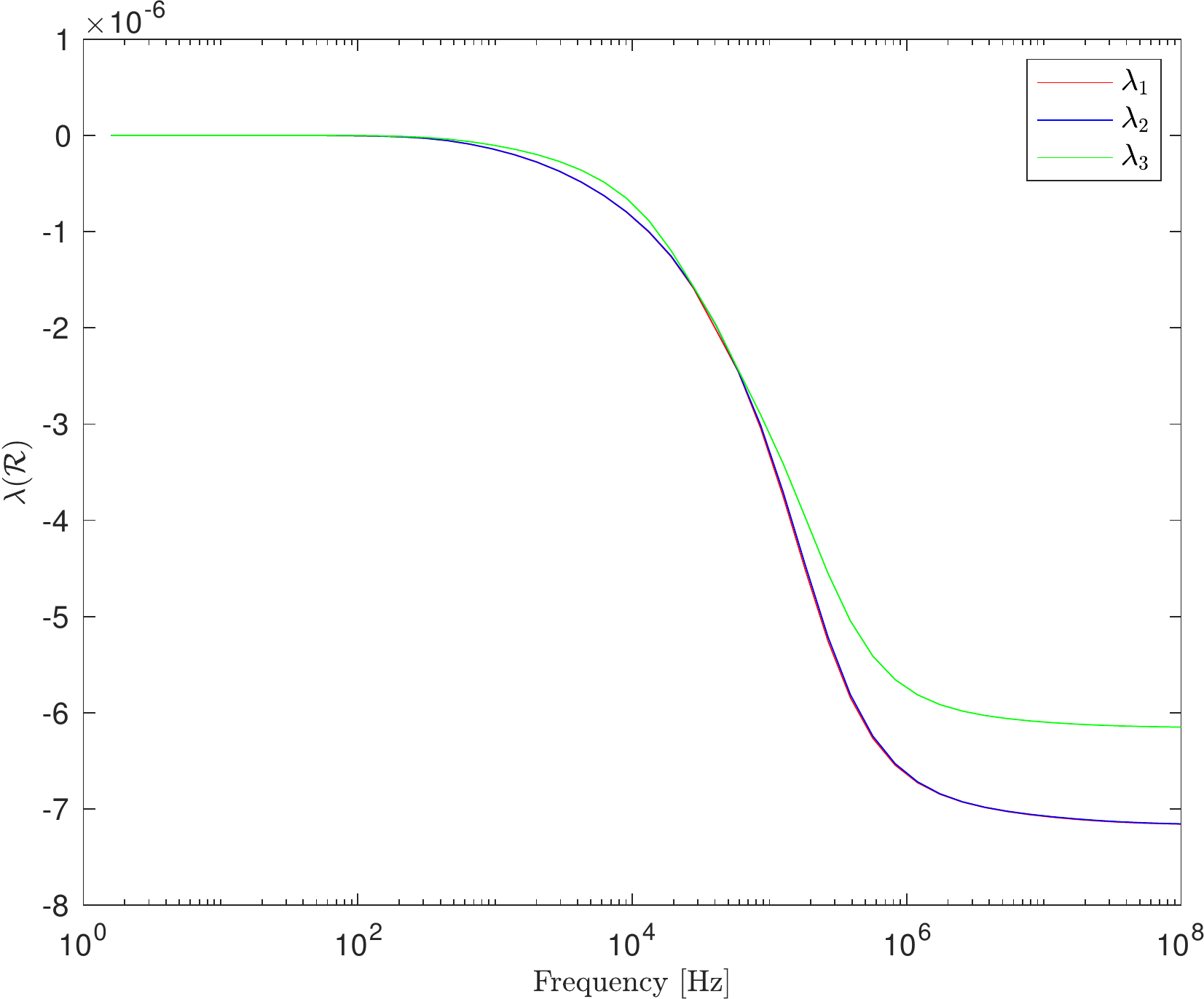} &
 \includegraphics[width=0.5\textwidth]{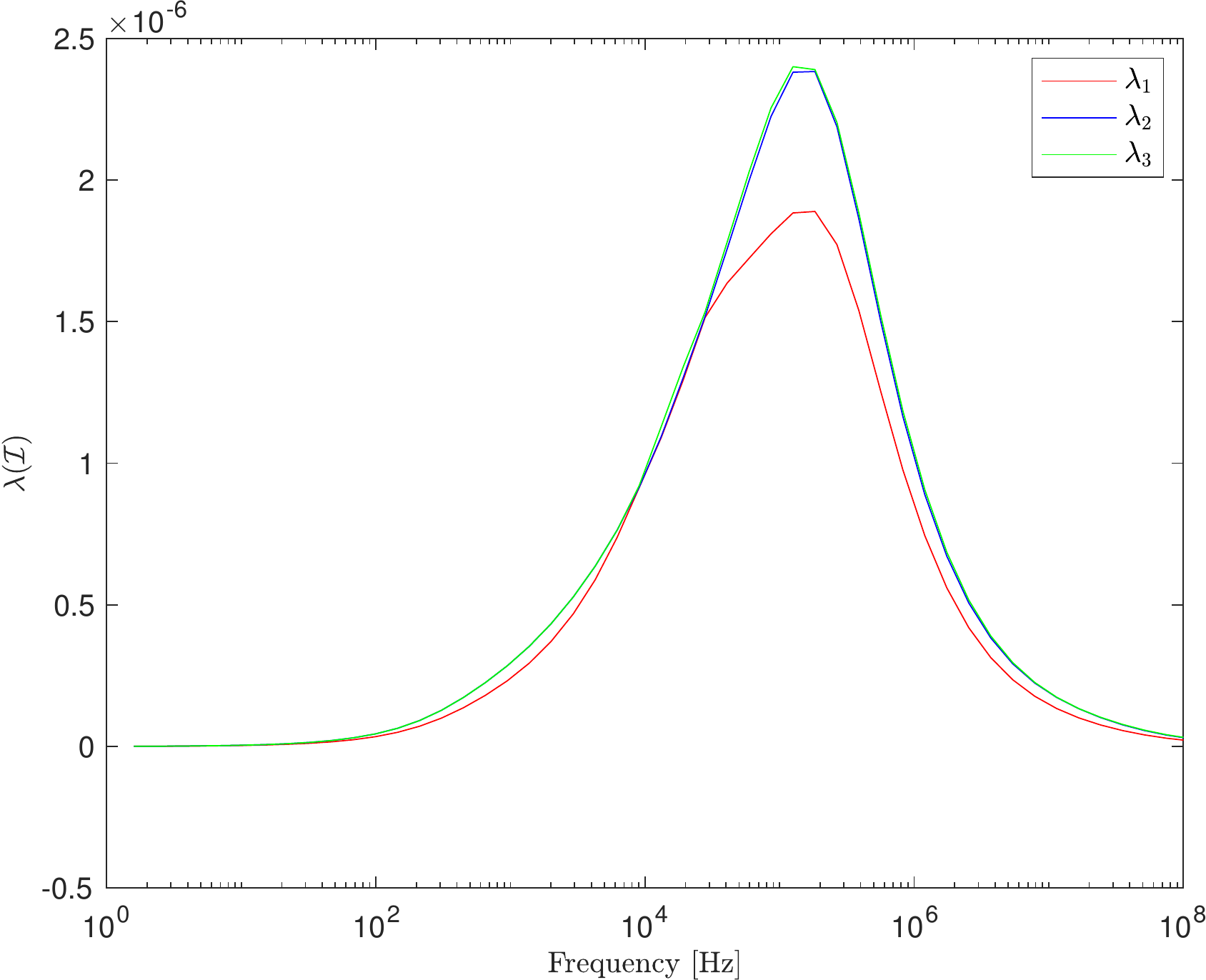} \\
\lambda( {\mathcal R} [\alpha {\vec B}]) & \lambda( {\mathcal I}[\alpha {\vec B}] )
  \end{array}$
 \end{center}
 \caption{Frequency dependence of the eigenvalues of $ {\mathcal R}[\alpha {\vec B}]$ and $ {\mathcal I}[\alpha {\vec B}]$:  inhomogeneous parallelepiped made up of two cubes with $\sigma_*^{(2)} = \sigma_*^{(1)}= 1 \times 10^6 \text{S/m}$ and $\mu_*^{(2)}=100\mu_*^{(1)} =100\mu_0$} \label{fig:twocubesconstrastmu}
 \end{figure}
 
 We observe, in Figure~\ref{fig:twocubesconstrastsigma}, that although $ \lambda_i( {\mathcal R} [\alpha{\vec  B}] )$, $i=1,\ldots,3$ are still monotonically decreasing with $\log f $, it is no longer sigmoid for an inhomogeneous object with varying $\sigma$ and constant $\mu$ and has multiple non--stationary inflection points. Furthermore, 
rather than a single maxima,  $ \lambda_i( {\mathcal I} [\alpha {\vec B}])$, $i=1,\ldots,3$ has two distinct local maxima.
 However, the results shown in Figure~\ref{fig:twocubesconstrastmu} illustrate for an inhomogeneous object with varying $\mu$ and constant $\sigma$, $ \lambda_i( {\mathcal R}[\alpha {\vec B}] )$, $i=1,\ldots,3$, that the behaviour is quite different and, in this case, $ \lambda_i( {\mathcal R} [\alpha {\vec B}])$, $i=1,\ldots,3$ is still sigmoid and the curves for   $ \lambda_i( {\mathcal I}[\alpha {\vec B}] )$, $i=1,\ldots,3$ still have a single maxima.
In the limiting case of $\omega \to 0 $, $\lambda_i(\text{Re} ( {\mathcal M} [\alpha {\vec B}] ))  \to \lambda_i(\text{Re} ( {\mathcal N}^0[\alpha {\vec B}]  ))  $, $i=1,\ldots,3$ and, for the latter case with a contrast in $\mu$, the behaviour is as shown in Figure~\ref{fig:twocubesconstrastmurem}, which is quite different to a homogenous object of the same size. 
  \begin{figure}
 \begin{center}
 $\begin{array}{c}
 \includegraphics[width=0.5\textwidth]{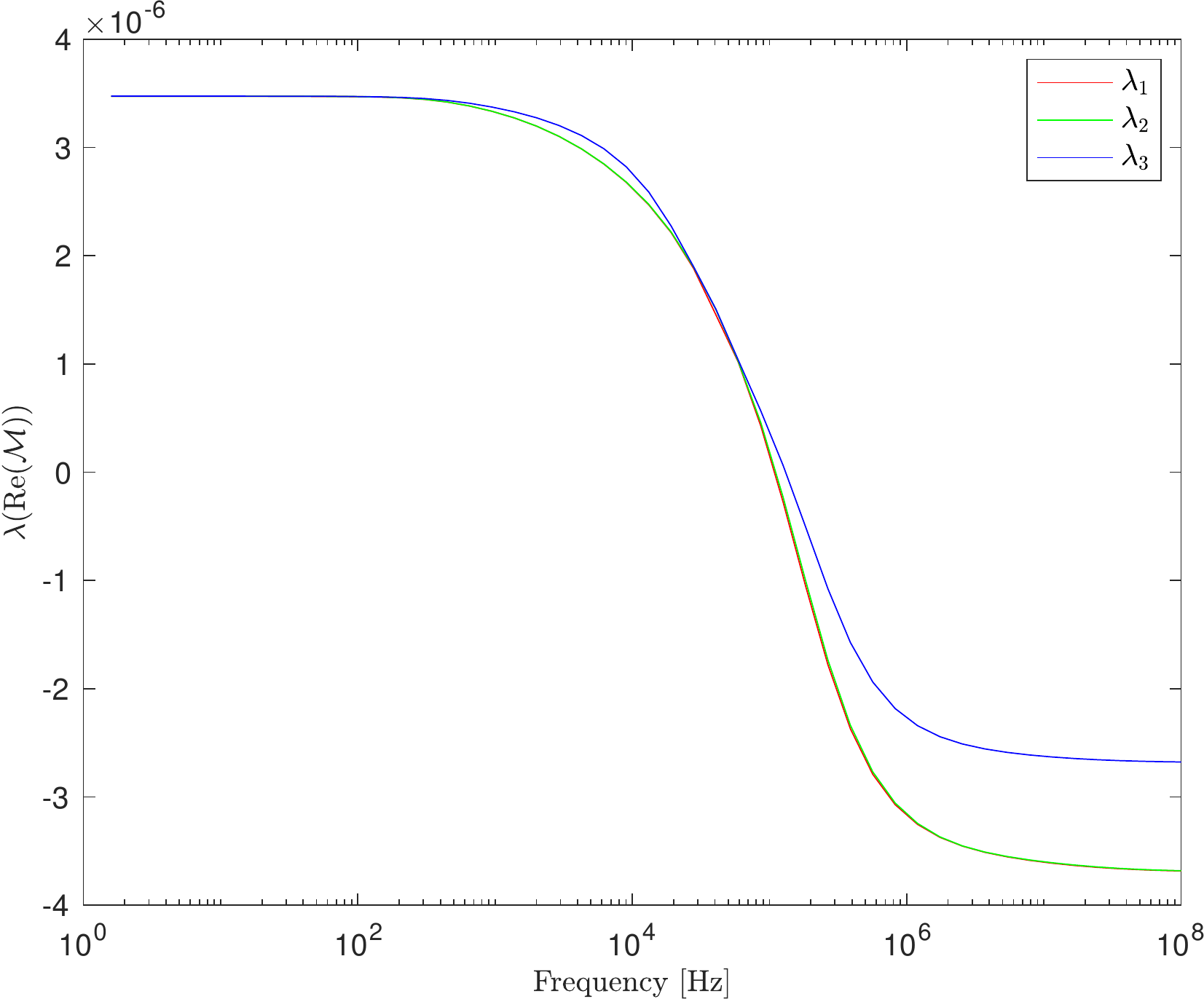} \\
\lambda ( \text{Re} ( {\mathcal M}[\alpha {\vec B}]))  
\end{array}$
\end{center}
 \caption{Frequency dependence of the eigenvalues of $ \text{Re} ( {\mathcal M}[\alpha {\vec B}])$  :  inhomogeneous parallelepipedmade up of two cubes with $\sigma_*^{(2)} = \sigma_*^{(1)}= 1 \times 10^6 \text{S/m}$ and $\mu_*^{(2)}=100\mu_*^{(1)} =100\mu_0$} \label{fig:twocubesconstrastmurem}
\end{figure}

To investigate the behaviour of inhomogeneous objects still further, we next consider the inhomogeneous parallelepiped ${\vec B}_\alpha = B_\alpha^{(1)} \cup B_\alpha^{(2)} \cup B_\alpha^{(3)} = \alpha ( B^{(1)} \cup B^{(2)}\cup B^{(3)} ) =\alpha {\vec B}$ with
\begin{align}
B^{(1)}  = [ -3/2, -1/2] \times [0, 1] \times [0, 1],  \qquad
B^{(2)}  = [ -1/2, 1/2 ] \times [0, 1] \times [0, 1], \nonumber \\
 B^{(3)}=[ 1/2, 3/2] \times [0, 1] \times [0, 1],  \qquad \qquad \qquad \nonumber
\end{align} 
and $\alpha=0.01\text{m}$. To compute ${\mathcal M}[\alpha {\vec B}]$, an unstructured mesh of 15~109 tetrahedral elements is generated and $p=4$ elements  employed. The independent coefficients of ${\mathcal M}[\alpha {\vec B}] $ are again ${\mathcal M}_{11}$ and ${\mathcal M}_{22} = {\mathcal M}_{33}$.

In Figure~\ref{fig:threecubesconstrastsigma}, we show $\lambda({\mathcal R} [\alpha {\vec B}])$ and $\lambda( {\mathcal I}[\alpha {\vec B}]  )$ for the case where $\sigma_*^{(3)}=100 \sigma_*^{(2)}= 10^4 \sigma_*^{(1)} =1 \times 10^8 \text{S/m}$ and $\mu_*^{(1)} =  \mu_*^{(2)} =\mu_*^{(3)} = \mu_0$
 and, in Figure~\ref{fig:threecubesconstrastmu}, we show the corresponding result for $\sigma_*^{(1)}= \sigma_*^{(2)}=  \sigma_*^{(1)} =1 \times 10^6 \text{S/m}$ and $\mu_*^{(3)} =  10 \mu_*^{(2)} =100 \mu_*^{(3)} = 100 \mu_0$. In the former case ${\mathcal N}^0 [\alpha {\vec B}] $ vanishes, but not in the latter case.
 
 \begin{figure}
 \begin{center}
 $\begin{array}{cc}
 \includegraphics[width=0.5\textwidth]{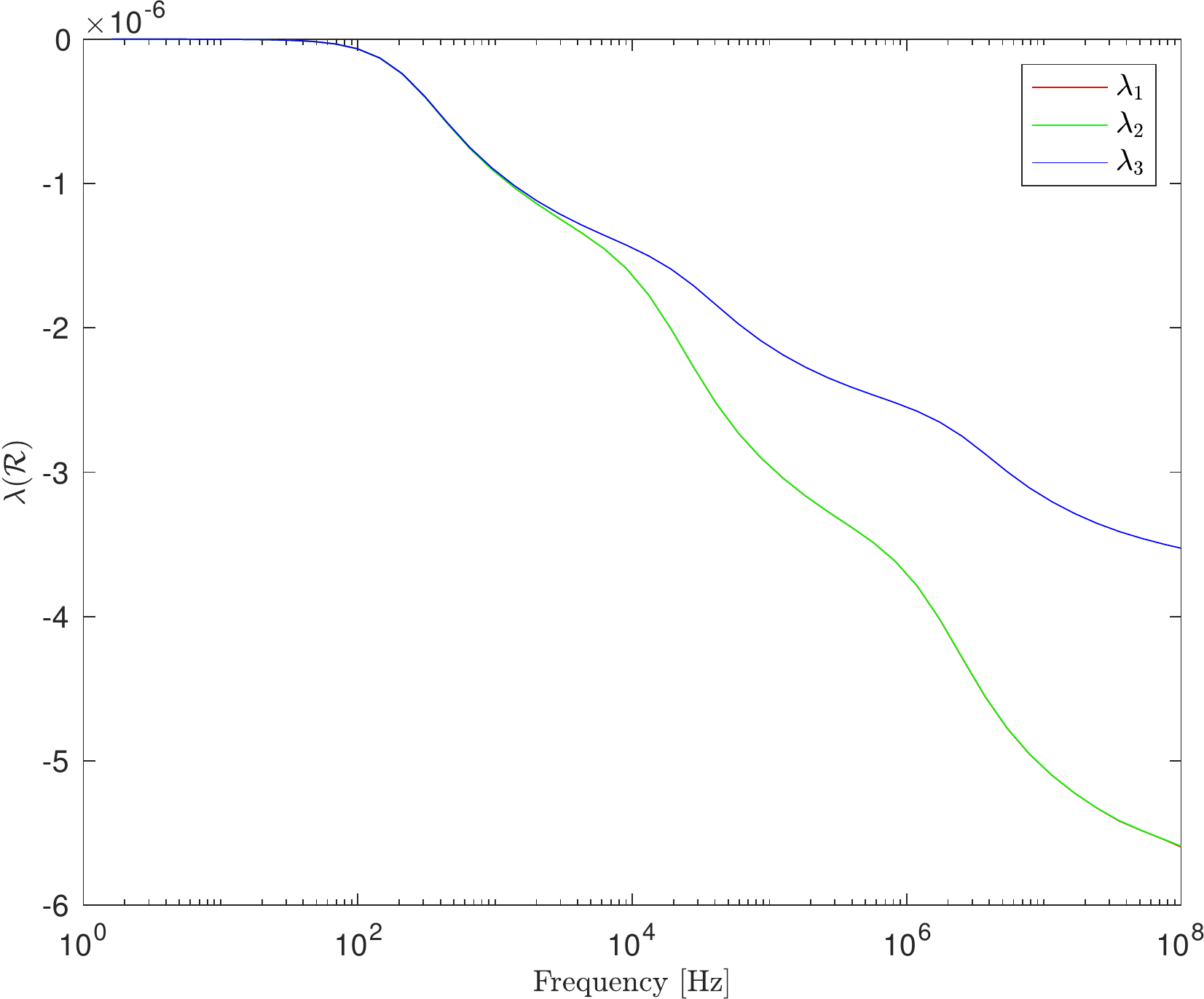} &
 \includegraphics[width=0.5\textwidth]{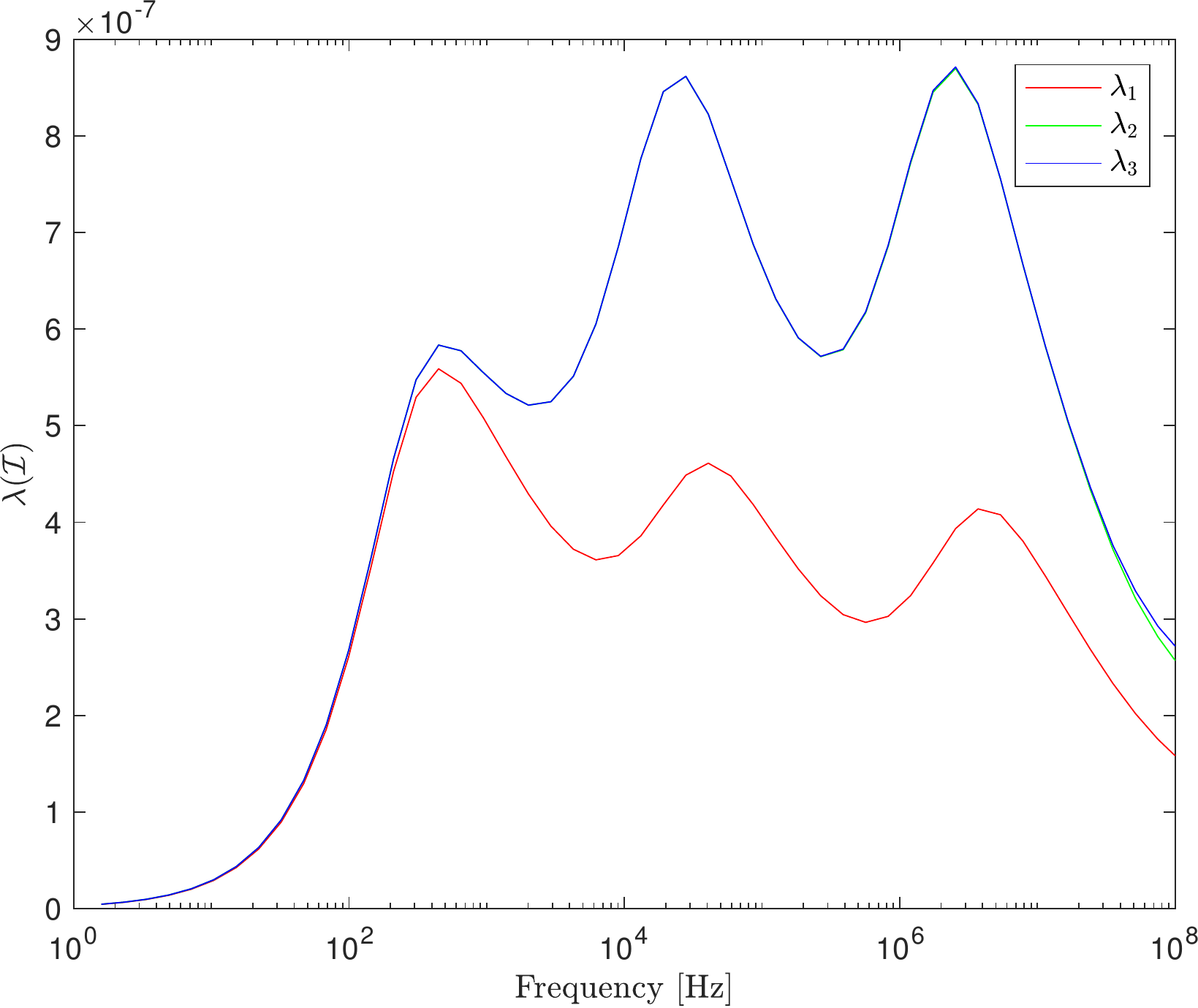} \\
 \lambda( {\mathcal R} [\alpha {\vec B}] ) & \lambda( {\mathcal I}[\alpha {\vec B}] )
 \end{array}$
 \end{center}
 \caption{Frequency dependence of the eigenvalues of $ {\mathcal R}[\alpha {\vec B}]$ and ${\mathcal I}[\alpha {\vec B}] $:  inhomogeneous parallelepiped made up of three cubes with $\sigma_*^{(3)}=100 \sigma_*^{(2)}= 10^4 \sigma_*^{(1)} =1 \times 10^8 \text{S/m}$ and $\mu_*^{(1)} =  \mu_*^{(2)} =\mu_*^{(3)} = \mu_0$} \label{fig:threecubesconstrastsigma}
 \end{figure}
 
 \begin{figure}
 \begin{center}
 $\begin{array}{cc}
 \includegraphics[width=0.5\textwidth]{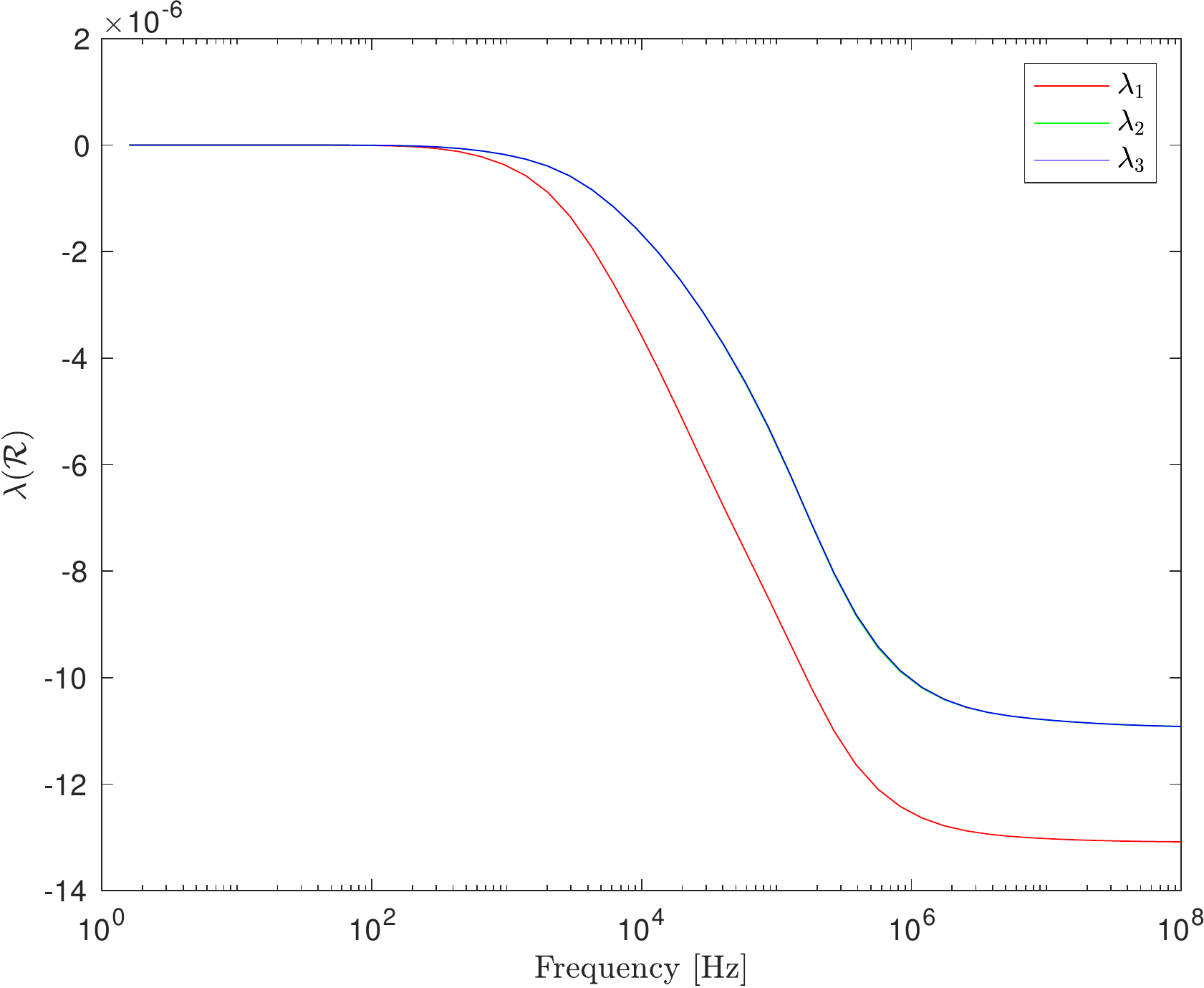} &
 \includegraphics[width=0.5\textwidth]{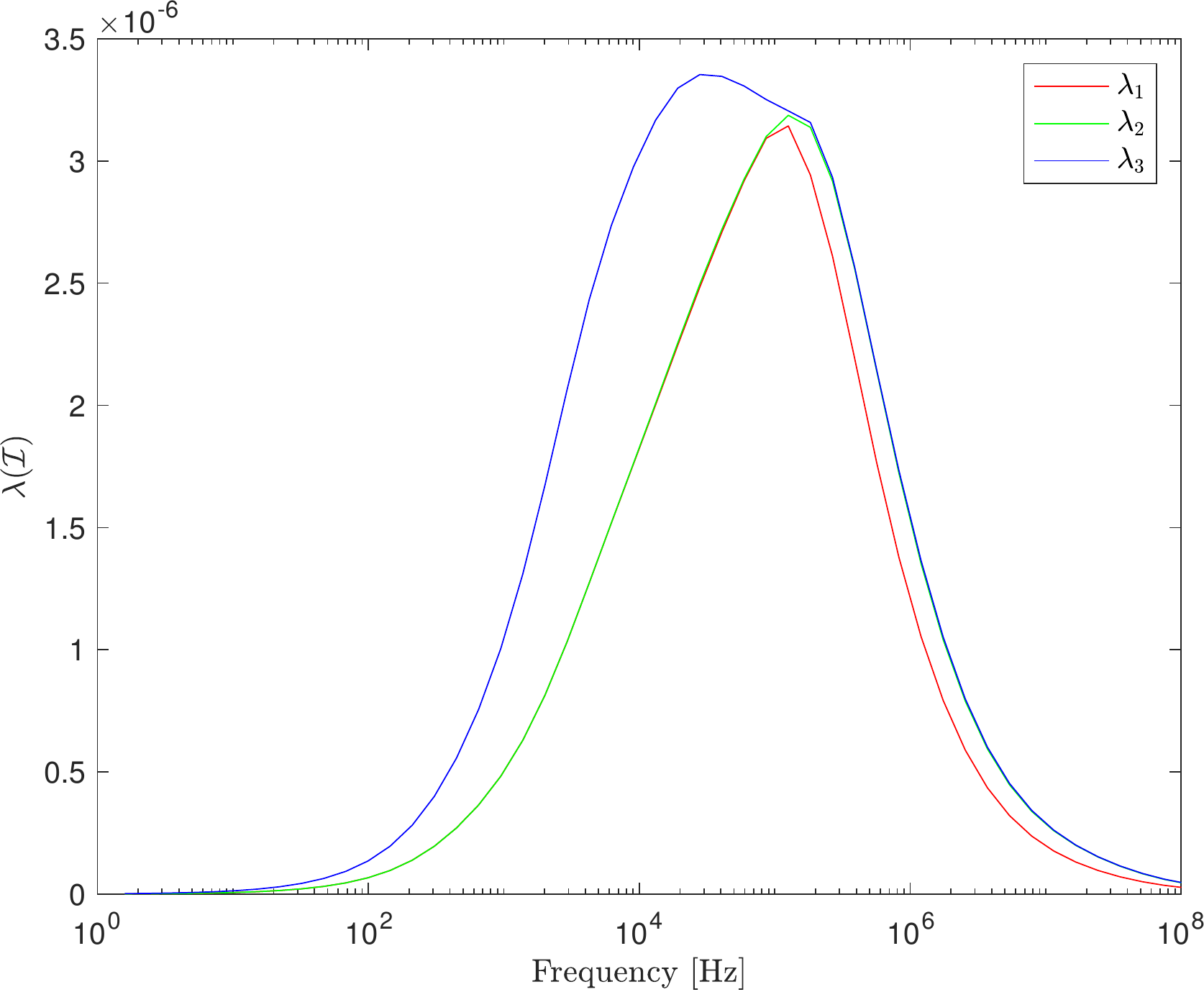} \\
 \lambda( {\mathcal R} [\alpha {\vec B}]) & \lambda( {\mathcal I} [\alpha {\vec B}])
 \end{array}$
 \end{center}
 \caption{Frequency dependence of the eigenvalues of $ {\mathcal R}[\alpha {\vec B}]$ and ${\mathcal I}[\alpha {\vec B}] $:  inhomogeneous parallelepiped made up of three cubes with $\sigma_*^{(1)}= \sigma_*^{(2)}=  \sigma_*^{(1)} =1 \times 10^6 \text{S/m}$ and $\mu_*^{(3)} =  10 \mu_*^{(2)} =100 \mu_*^{(3)} = 100 \mu_0$} \label{fig:threecubesconstrastmu}
 \end{figure}
 
 We observe, in Figure~\ref{fig:threecubesconstrastsigma}, that  $ \lambda_i( {\mathcal R} [\alpha{\vec  B}])$, $i=1,\ldots,3$ is still monotonically decreasing with multiple non-stationary points of inflection and $ \lambda_i( {\mathcal I}[\alpha B] )$, $i=1,\ldots,3$ now has three distinct local maxima.  In Figure~\ref{fig:threecubesconstrastmu}, we see that  $ \lambda_i( {\mathcal R}[\alpha {\vec B}] )$, $i=1,\ldots,3$ is sigmoid and $ \lambda_i( {\mathcal I} [\alpha {\vec B}])$, $i=1,\ldots,3$ has only a single maxima. Unlike, Figure~\ref{fig:twocubesconstrastmurem}, we see in Figure~\ref{fig:threecubesconstrastmurem} that the low frequency response of $\lambda_i(\text{Re}( M [\alpha {\vec B}] ))$, $i=1,\ldots,3$ are different. This is probably due to the fact that the chosen contrasts in $\mu$ imply that one of the three cubes no longer has a dominant effect over the other two.
 
   \begin{figure}
 \begin{center}
 $\begin{array}{c}
 \includegraphics[width=0.5\textwidth]{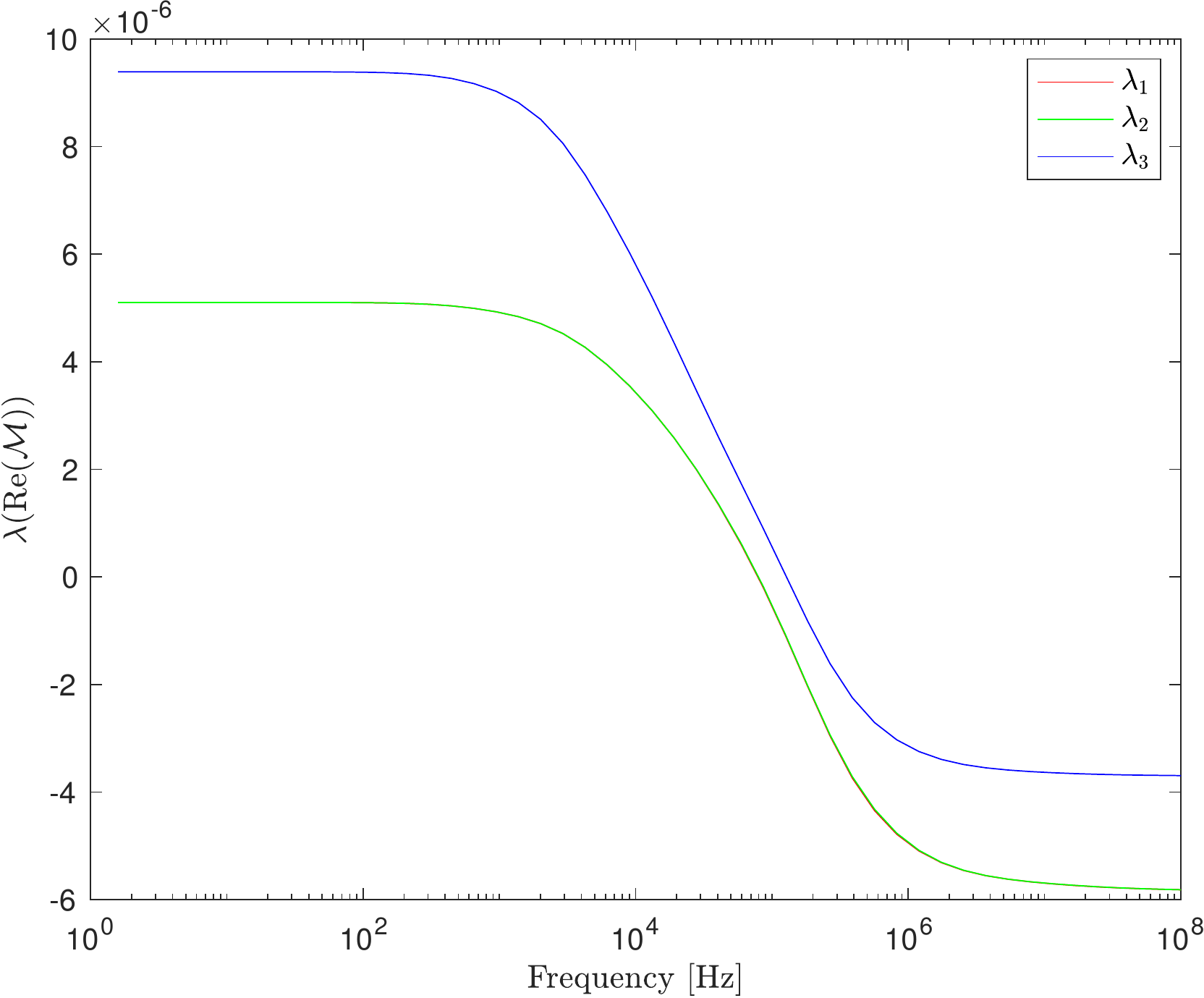} \\
\lambda ( \text{Re} ( {\mathcal M}[\alpha B] ))  
\end{array}$
\end{center}
 \caption{Frequency dependence of the eigenvalues of $ \text{Re} ( {\mathcal M}[\alpha {\vec B}])$  :  inhomogeneous parallelepiped made up of three cubes  with $\sigma_*^{(1)}= \sigma_*^{(2)}=  \sigma_*^{(1)} =1 \times 10^6 \text{S/m}$ and $\mu_*^{(3)} =  10 \mu_*^{(2)} =100 \mu_*^{(3)} = 100 \mu_0$} \label{fig:threecubesconstrastmurem}
\end{figure}

 \begin{remark}
The results shown in Figures~\ref{fig:twocubesconstrastsigma} and ~\ref{fig:threecubesconstrastsigma} indicate that the number of points of non-stationary inflection in  $\lambda_i({\mathcal R}[\alpha {\vec B}]  )$ and the number of local maxima in $\lambda_i( {\mathcal I}[\alpha {\vec B}] )$  can potentially be used to determine an upper bound on the number of regions with varying $\sigma$ that make up an inhomogeneous object ${\vec B}_\alpha$. Note, that contrasts in $\sigma$ between the regions making up the inhomogeneous object have deliberately chosen as large in these examples and we acknowledge that, for small contrasts, detecting such peaks would be more challenging.
\end{remark}
  
 \subsection{Object Localisation}

The approach described by Ammari, Chen, Chen, Volkov and Wang~\cite{ammarivolkov2013b} for a single object localisation using multistatic measurements simplifies given our object characterisation in terms of rank 2 MPT for a single homogenous object and also easily extends to inhomogeneous and multiple objects.  Following~\cite{ammarivolkov2013b}, we assume that there are $K$ receivers at locations ${\vec r}^{(k)}$, $k=1,\ldots,K$, which are associated with small measurement coils with dipole moment ${\vec q}$, and $L$ sources at locations ${\vec s}^{(\ell)}$, $\ell=1,\ldots,L$, which are associated with small exciting coils each with dipole moment ${\vec p}$. Then, by measuring the field perturbation described by Theorem~\ref{thm:objectsnocloselyspaced} for $N_{\text{target}}=N$ objects in the direction ${\vec q}$, this gives rise to the $k$, $\ell$th entry of the multistatic response matrix as
\begin{align}
A_{k \ell } = \sum_{n=1}^{N_{\text{target}}}  ({\vec D}^2G({\vec r}^{(k)} , {\vec z}^{(n)} ) {\vec q}) \cdot ( {\mathcal M}[\alpha^{(n)} B^{(n)} ] ({\vec D}^2G({\vec z}^{(n)} ,{\vec s}^{(\ell)}){\vec p}  )) + R_{k \ell},
\nonumber
\end{align}
where, for the purpose of the following, we arrange the coefficients of the rank 2 tensors ${\mathcal M}[\alpha ^{(n)} B^{(n)}]$ as $3 \times 3$ matrices~\footnote{For (multiple) inhomogeneous objects we replace ${\mathcal M}[\alpha^{(n)} B^{(n)}]$ here and in the following by  ${\mathcal M}[\alpha^{(n)} {\vec B}^{(n)}]$ where $\alpha^{(n)}$ becomes the size of the $n$th inhomogeneous object with configuration ${\vec B}^{(n)}$}.
Assuming that the data is corrupted by measurement noise and is sampled using Hadamard's technique, as in\cite{ammarivolkov2013b}, then the MSR matrix can be written in the form
\begin{align}
A= & \sum_{n=1}^{N_{\text{target}}} U^{(n)} (  {\mathcal M}[\alpha^{(n)} B^{(n)}] V^{(n)}) + R +\frac{ S_{\text{noise}} }{\sqrt{M} } \tilde{W} \nonumber  \\
 = & U {\frak M} V +  R +\frac{ S_{\text{noise}}}{\sqrt{M} } \tilde{W} , \nonumber 
\end{align}
where $\tilde{W} = \frac{1}{\sqrt{2}} ( W + \im W) $ and $W$ is a $K\times L$ matrix with independent and identical Gaussian entries with zero mean and unit variance and $S_{\text{noise}}$ is a positive constant. In addition, $U$ is a matrix of size $ K \times 3N_{\text{target}}$ 
\begin{align}
U = \left ( \begin{array}{ccc} U^{(1)} & \cdots & U^{(N)} \end{array} \right ) \nonumber ,
\end{align}
and $U^{(n)}$ is an $K \times 3$ matrix 
\begin{equation}
U^{(n)} = \left ( \begin{array}{ccc} ({\vec D}^2G({\vec r}_1,{\vec z}^{(n)}){\vec q})_1 & \cdots & ({\vec D}^2G({\vec r}_1,{\vec z}^{(n)}){\vec q})_3 \\
\vdots & \vdots & \vdots \\
({\vec D}^2G({\vec r}_K,{\vec z}^{(n)}){\vec q})_1 & \cdots & ({\vec D}^2G({\vec r}_K,{\vec z}^{(n)}){\vec q})_3 \end{array} \right ) , \nonumber
\end{equation}
The matrix ${\frak M}$ is of size $ 3N_{\text{target}} \times 3N_{\text{target}}$ and is block diagonal in the form
\begin{equation}
{\frak M}= \text{diag} ({\mathcal M} [\alpha^{(1)} B^{(1)}], \cdots,{\mathcal M}[\alpha^{(n)} B^{(n)}] ) ,  \nonumber
\end{equation}
and the matrix $V$ is of dimension $3N_{\text{target}} \times L$  with
\begin{align}
V = \left ( \begin{array}{ccc} V^{(1)} & \cdots V^{(N)} \end{array} \right ) \nonumber ,
\end{align}
where $V^{(n)}$ is the $ 3 \times L$ matrix
\begin{equation}
 V^{(n)}= \left ( \begin{array}{ccc}
 {\vec D}^2G({\vec z}^{(n)}, {\vec s}^{(1)} ){\vec p}& \cdots & {\vec D}^2G({\vec z}^{(n)}, {\vec s}^{(M)}) {\vec p}) \end{array} \right ) .
\end{equation} 

Proceeding in a similar manner to~\cite{ammarivolkov2013b}, and defining the linear operator $L: {\mathbb C}^{3N_{\text{target}} \times 3N_{\text{target}}} \to {\mathbb C}^{K \times L}$ as
\begin{equation}
L( {\frak M} ) = U {\frak M}V ,
\end{equation}
then, by dropping the higher order term, the MSR matrix can be approximated as
\begin{equation}
A\approx A_0+  \frac{ S_{\text{noise}} }{\sqrt{M} } W= L( {\frak M}) +  \frac{ S_{\text{noise}} }{\sqrt{M} } W \nonumber .
\end{equation}
The MUSIC algorithm can then be used to localise the location of the multiple arbitrary shaped targets by using the same imaging functional as proposed in ~\cite{ammarivolkov2013b}
\begin{align}
I_{MU} ( {\vec z}^s) = \left ( \frac{1}{\sum_{i=1}^3 \| { P ( {\vec D}^2 G({\vec z}^s,{\vec s}^{(1)} ){\vec p}\cdot {\vec e}_i ,  \cdots , {\vec D}^2G({\vec z}^s,{\vec s}^{(L)}){\vec p}\cdot {\vec e}_i ) \|^2 }} \right )^{1/2} ,
\end{align}
where $P $ is the orthogonal projection onto the right null space of $L({\frak M} )$.

\begin{proposition}
Suppose that $U{\frak M}$  has full rank. Then $L({\frak M})$ has $3N_{\text{\emph{ target}}}$ non-zero singular values. Furthermore, $I_{MU} $ will have $N_{\text{target}}$ peaks at the object locations ${\vec z} = {\vec z}^s$.
\end{proposition}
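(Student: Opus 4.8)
The plan is to follow the MUSIC argument of Ammari, Chen, Chen, Volkov and Wang~\cite{ammarivolkov2013b}, adapted to the block structure produced by the $N_{\text{target}}=N$ rank~2 tensors. For a point ${\vec z}$ write the source vectors
\[
{\vec g}_i({\vec z}) := \bigl( {\vec D}^2 G({\vec z},{\vec s}^{(1)}){\vec p}\cdot{\vec e}_i, \ \ldots , \ {\vec D}^2 G({\vec z},{\vec s}^{(L)}){\vec p}\cdot{\vec e}_i \bigr)\in{\mathbb C}^L, \qquad i=1,2,3 .
\]
The observation on which everything rests is that the three rows of the block $V^{(n)}$ are exactly ${\vec g}_1({\vec z}^{(n)}),{\vec g}_2({\vec z}^{(n)}),{\vec g}_3({\vec z}^{(n)})$, so that the row space of $V$ equals $\mathcal{G}:=\mathrm{span}\{{\vec g}_i({\vec z}^{(n)}): i=1,2,3,\ n=1,\ldots,N\}$, while the vectors fed into $I_{MU}$ are precisely ${\vec g}_1({\vec z}^s),{\vec g}_2({\vec z}^s),{\vec g}_3({\vec z}^s)$. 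Here $U$ and $V$ are real, since $G$ is the real Laplace Green's function, so no complex-conjugation subtlety arises; only ${\frak M}$ is complex.

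First I would pin down the singular values. The hypothesis that $U{\frak M}\in{\mathbb C}^{K\times 3N}$ has full rank forces $K\ge 3N$, forces ${\frak M}$ to be invertible (consistent with the non-degeneracy of the MPTs), and makes $U{\frak M}$ injective. Together with the standard, and here implicitly assumed, requirement that the transmitter array be rich enough that $V\in{\mathbb R}^{3N\times L}$ has full row rank $3N$ — equivalently $\dim\mathcal{G}=3N$ — the product $L({\frak M})=U{\frak M}V$ satisfies $\mathrm{range}(L({\frak M}))=U{\frak M}(\mathrm{range}(V))=U{\frak M}({\mathbb C}^{3N})$, which has dimension $3N$; hence $L({\frak M})$ has exactly $3N$ nonzero singular values and its right null space $\mathcal{N}:=\ker L({\frak M})=\ker V\subset{\mathbb C}^L$ has dimension $L-3N$. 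Moreover, since $U{\frak M}$ is injective, $\mathcal{N}^{\perp}=\mathrm{range}\bigl(L({\frak M})^{H}\bigr)=\mathrm{range}(V^{T})=\mathcal{G}$ (viewed now inside ${\mathbb C}^L$); this is the MUSIC signal subspace.

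Next I would read off the imaging functional. With $P$ the orthogonal projection onto $\mathcal{N}=\mathcal{G}^{\perp}$, one has
\[
I_{MU}({\vec z}^s)^{-2} = \sum_{i=1}^3 \| P\,{\vec g}_i({\vec z}^s) \|^2 ,
\]
so $I_{MU}({\vec z}^s)=+\infty$ precisely when ${\vec g}_1({\vec z}^s),{\vec g}_2({\vec z}^s),{\vec g}_3({\vec z}^s)$ all lie in $\mathcal{G}$. If ${\vec z}^s={\vec z}^{(m)}$ for some $m$, then each ${\vec g}_i({\vec z}^s)$ is a row of $V^{(m)}$, hence lies in $\mathcal{G}$, so $P{\vec g}_i({\vec z}^s)=0$ and $I_{MU}$ has a (in this noise-free model, infinite) peak there; this yields the $N$ peaks at the object locations. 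For the converse — absence of spurious peaks — I would show that for ${\vec z}^s\notin\{{\vec z}^{(1)},\ldots,{\vec z}^{(N)}\}$ the vectors ${\vec g}_i({\vec z}^s)$ cannot all lie in $\mathcal{G}$: a relation ${\vec g}_i({\vec z}^s)=\sum_{j,n}c_{jn}{\vec g}_j({\vec z}^{(n)})$ holding at every receiver ${\vec r}^{(k)}$ would, for a sufficiently rich and well-distributed receiver array, force the corresponding identity of real-analytic dipole fields on an open set, by unique continuation; but ${\vec D}^2G(\cdot,{\vec z}^s){\vec p}$ has a non-removable singularity at ${\vec z}^s$ whereas the fields ${\vec D}^2G(\cdot,{\vec z}^{(n)}){\vec p}$ are regular in a neighbourhood of ${\vec z}^s\ne{\vec z}^{(n)}$, a contradiction. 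This is the multiple-object analogue of the identifiability step in~\cite{ammarivolkov2013b}.

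The main obstacle is exactly this last non-degeneracy step: it is false for an arbitrary finite set of sources and receivers, so a fully rigorous statement needs an aperture or density hypothesis on $\{{\vec r}^{(k)}\}$ and $\{{\vec s}^{(\ell)}\}$ (or passage to a continuum limit), precisely the setting in which the single-object result of~\cite{ammarivolkov2013b} is established. Granting that, everything else is linear algebra resting on the two full-rank hypotheses and on the identification $\mathcal{N}^{\perp}=\mathcal{G}=\mathrm{span}\{{\vec g}_i({\vec z}^{(n)})\}$.
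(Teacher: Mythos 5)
The paper states this proposition without any proof at all --- it is offered as an immediate carry-over of the single-object MUSIC result of Ammari \emph{et al.}, so there is no argument of the authors' to compare against line by line; what you have written is essentially the proof the paper is implicitly invoking, and it is correct. The linear algebra is sound: injectivity of $U{\frak M}$ gives $\mathrm{rank}(U{\frak M}V)=\mathrm{rank}(V)$ and identifies the right null space of $L({\frak M})$ with $\ker V$, so the signal subspace is the row space of $V$, and since the three rows of each block $V^{(n)}$ are exactly the test vectors ${\vec g}_i({\vec z}^{(n)})$ entering $I_{MU}$, the projection $P$ annihilates them and the functional blows up at ${\vec z}^s={\vec z}^{(n)}$. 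Your two caveats are genuine and worth stating: the count of $3N_{\text{target}}$ non-zero singular values requires $V$ to have full row rank (the stated hypothesis on $U{\frak M}$ alone only bounds the rank above), and the absence of spurious peaks requires a richness condition on the receiver/transmitter arrays --- for a finite array one can only argue via the singularity of ${\vec D}^2G(\cdot,{\vec z}^s){\vec p}$ after passing to the continuum or assuming sufficient aperture, exactly as in the single-object case. These are omissions in the proposition as stated rather than gaps in your argument; flagging them explicitly, as you do, is the right call. The only stylistic difference from what the paper presumably intends is that you prove the stronger ``exactly $N_{\text{target}}$ peaks'' reading (existence plus no spurious peaks), whereas the weaker reading (peaks exist at the object locations) needs only the first, purely algebraic half of your argument.
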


The ability to recover the $N_{\text{target}}$ objects will depend on a number of factors :
\begin{enumerate}
\item The number and locations of the measurement and excitor pairs. In practice the number of each will be limited to powers of $4$ for practical reasons~\cite{ammarivolkov2013}.
\item The noise level, which we define as the reciprocal of the signal to noise ratio in terms of the $n+3(n-1)$ th singular value of $A_0$ (ordered as $S_1(A_0) > S_2(A_0) \ldots $
\begin{equation}
\text{noise level} = \text{SNR}^{-1} =\left ( \frac{S_{n+3(n-1)}(A_0)}{S_{\text{noise}}} \right )^{-1} .
\end{equation}
In~\cite{ammarivolkov2013} and~\cite{ammarivolkov2013b}  the SNR was based instead on the largest singular value $S_1(A_0)$.
\item The frequency of excitation.
\end{enumerate}

\begin{remark}
From the examination of the frequency dependence of the coefficients of ${\mathcal M}[\alpha^{(n)} B^{(n)}]$  we have seen that the real and imaginary parts for different objects $(B_\alpha)^{(n)} = \alpha^{(n)} B^{(n)} + {\vec z}^{(n)}$ are different. Moreover, in general, their imaginary components exhibit resonance behaviour at different (possibly multiple) frequencies.  Consequently, different objects, in general, correspond to different singular values of $A_0$. The presence of multiple objects with the same shape and size, but with different locations, will result in multiplicities of the singular values (in the absence of noise). If only a single frequency is considered, and $S_{\text{\emph{noise}}} $ is chosen based on the largest singular value $S_1(A_0)$, then it will generate a $W$ with Gaussian statistics that are associated with only one of the objects possibly present. If the singular values associated with the other objects are much smaller than  $S_1(A_0)$  it may be difficult to detect the other objects present. In particular, to locate those objects with smaller MPT coefficients  (and hence smaller singular values) at that frequency under consideration.
\end{remark}

We explore this through the following experiment. We simulate excitations and measurements taken at regular intervals on the plane $[-1,1] \times [-1,1] \times \{ 0 \}$ such that $M=K=256$. The dipole moments are chosen as ${\vec p}={\vec q}={\vec e}_3$ so that the plane of all measurement and excitation coils are parallel to this horizontal surface. With these measurements, the location identification of a coin $B_\alpha^{(1)}$ of radius $0.01125\text{m}$ and thickness $3.15 \times 10^{-3}\text{m}$ with $\sigma_*^{(1)}=15.9 \times 10^6 \text{S/m} $ and $\mu_*^{(1)}=\mu_0$ and a tetrahedron $B_\alpha^{(2)}$ with vertices $(5.77\times10^{-3},0,0) \text{m}$, $(-2.88,5,0)\times 10^{-3} \text{m}$, $(-2.88,-5,0)\times 10{-3} \text{m}$ and $(0,0,-8.16 \times10^{-3})\text{m}$ and material properties $\sigma_*^{(2)}=4.5 \times 10^6 \text{S/m} $ and $\mu_*^{(2)}  = 1.5 \mu_0$ will be considered. The true locations of these objects are assumed to be ${\vec z}^{(1)}=0.1{\vec e}_1 +0.1{\vec e}_2 -0.5{\vec e}_3$ and ${\vec z}^{(2)}=-0.3{\vec e}_1 +0.3{\vec e}_2 -0.5{\vec e}_3$, respectively.
To perform the imaging, noise is added to the simulated $A_0$ to create $A$ and the image functional $I_{MU}$ is evaluated for different positions ${\vec z}^s$. To do this, we compute $P = {\mathbb I}_M - W_S W_S^*$ where $W_S$ are the first $3N$ singular vectors of $A$, which are chosen based on the magnitudes of the singular values and thereby allows us to also predict the number of objects $N$ present.  

 We first consider location identification at a frequency $f=1 \times 10^5\text{Hz}$, which is close to the resonance peaks for the two objects, and consider the singular values of $A_0$ and $A$ in Figure~\ref{fig:singularvalues}. At this frequency, $S_n(A_0)$, $n=1,2,3$ are associated with the coin and $S_n(A_0)$, $n=4,5,6$ with the tetrahedron. Without noise, $A= A_0$ and the 6 {\em physical} singular values are clearly distinguished, but, by considering a noise level of $1\%$ so that $S_{\text{noise}}=0.01 S_1(A_0)$, it is no longer possible to distinguish $S_n(A)$, $n=4,5,6$ from  the noisy singular values. On the other hand, by setting $S_{\text{noise}}=0.01 S_4(A_0)$, or even $S_{\text{noise}}=0.1 S_4(A_0)$, we can distinguish all 6 singular values from the noise. This means that with $S_{\text{noise}}=0.01 S_1(A_0)$ we expect to only locate the coin, but with  $S_{\text{noise}}=0.01 S_4(A_0), 0.1 S_4(A_0)$  we expect to find both objects.
\begin{figure}
\begin{center}
$\begin{array}{cc}
\includegraphics[width=0.5\textwidth]{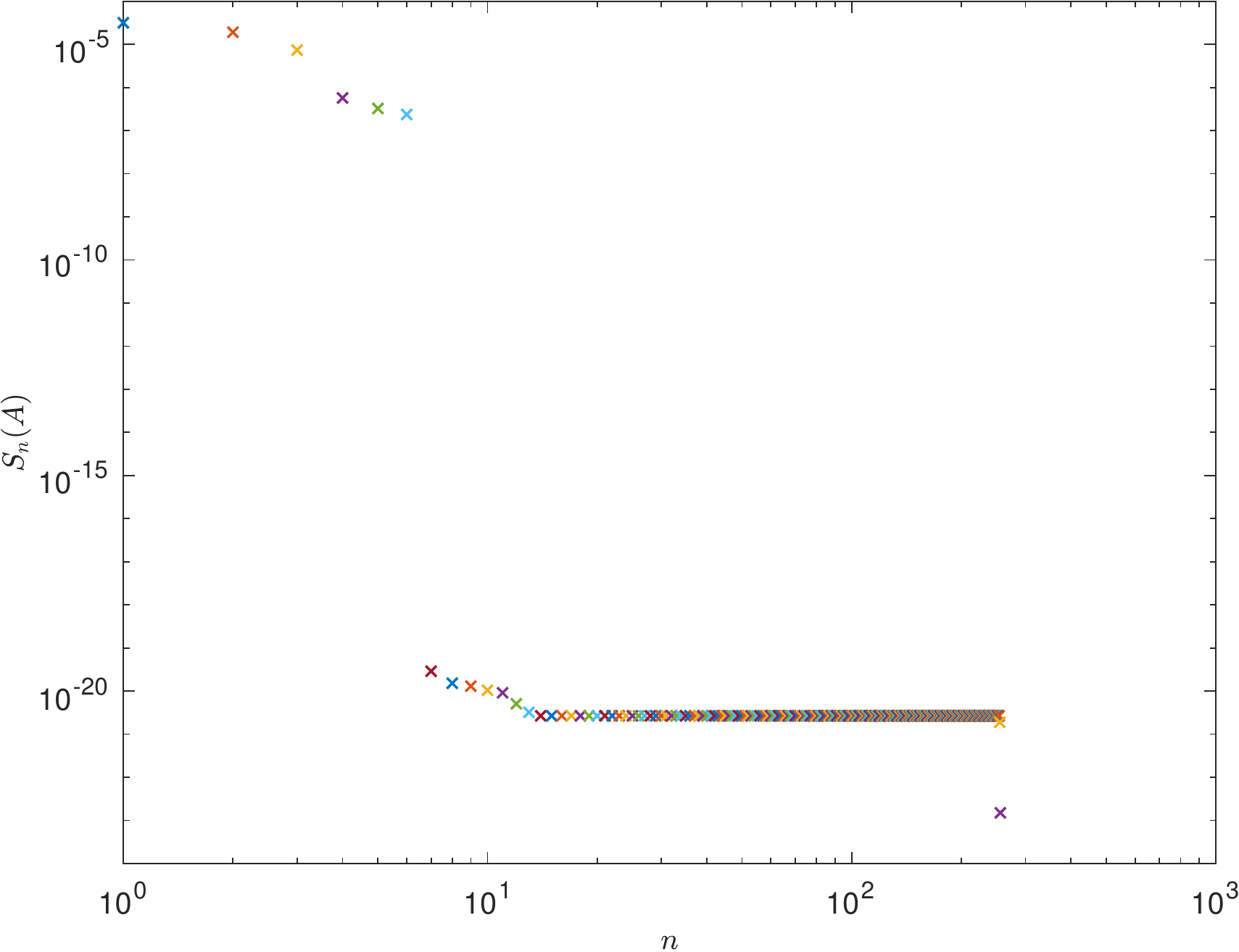} & 
\includegraphics[width=0.5\textwidth]{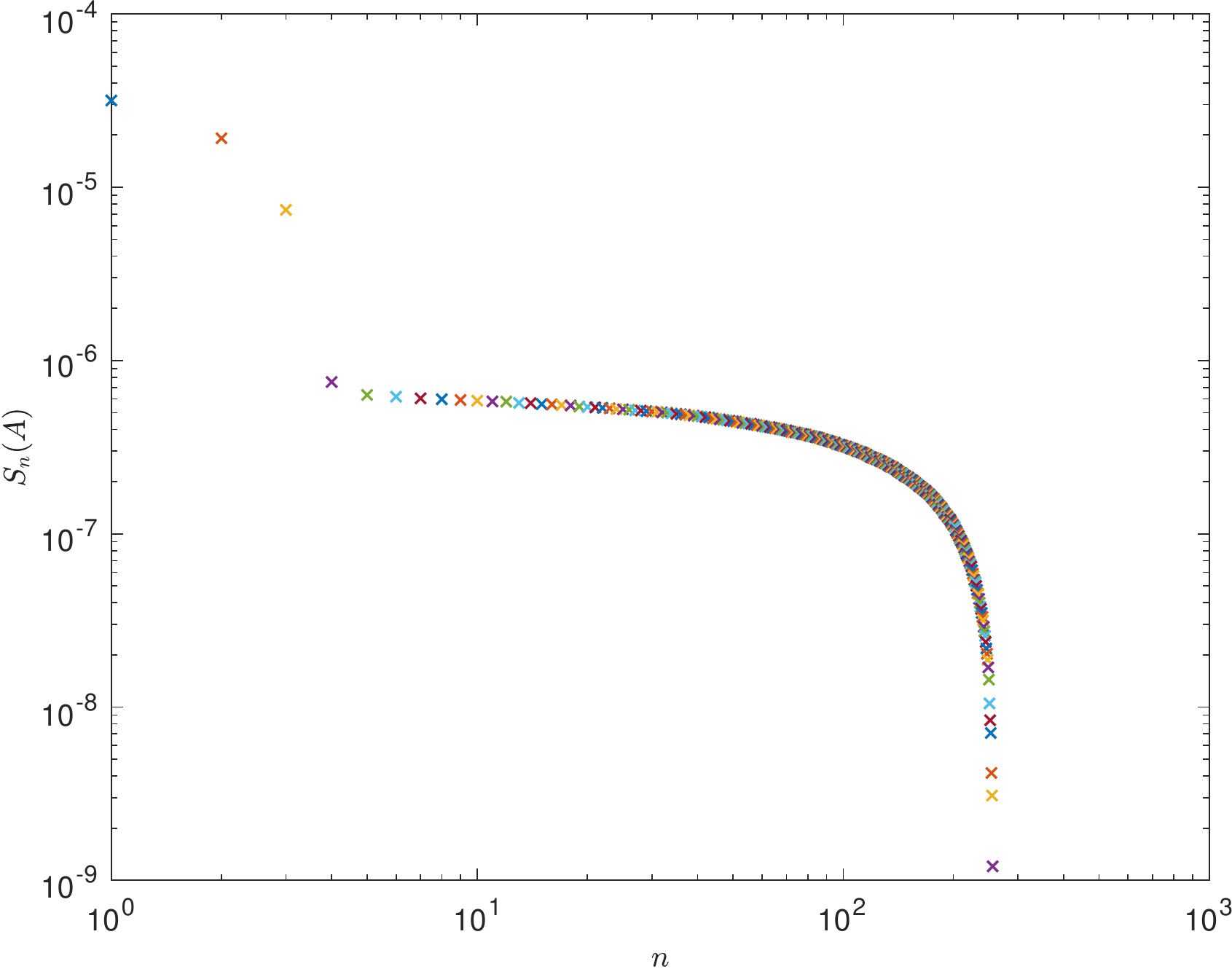} \\
S_{\text{noise}}=0 & S_{\text{noise}}=0.01 S_1(A_0) \\
\includegraphics[width=0.5\textwidth]{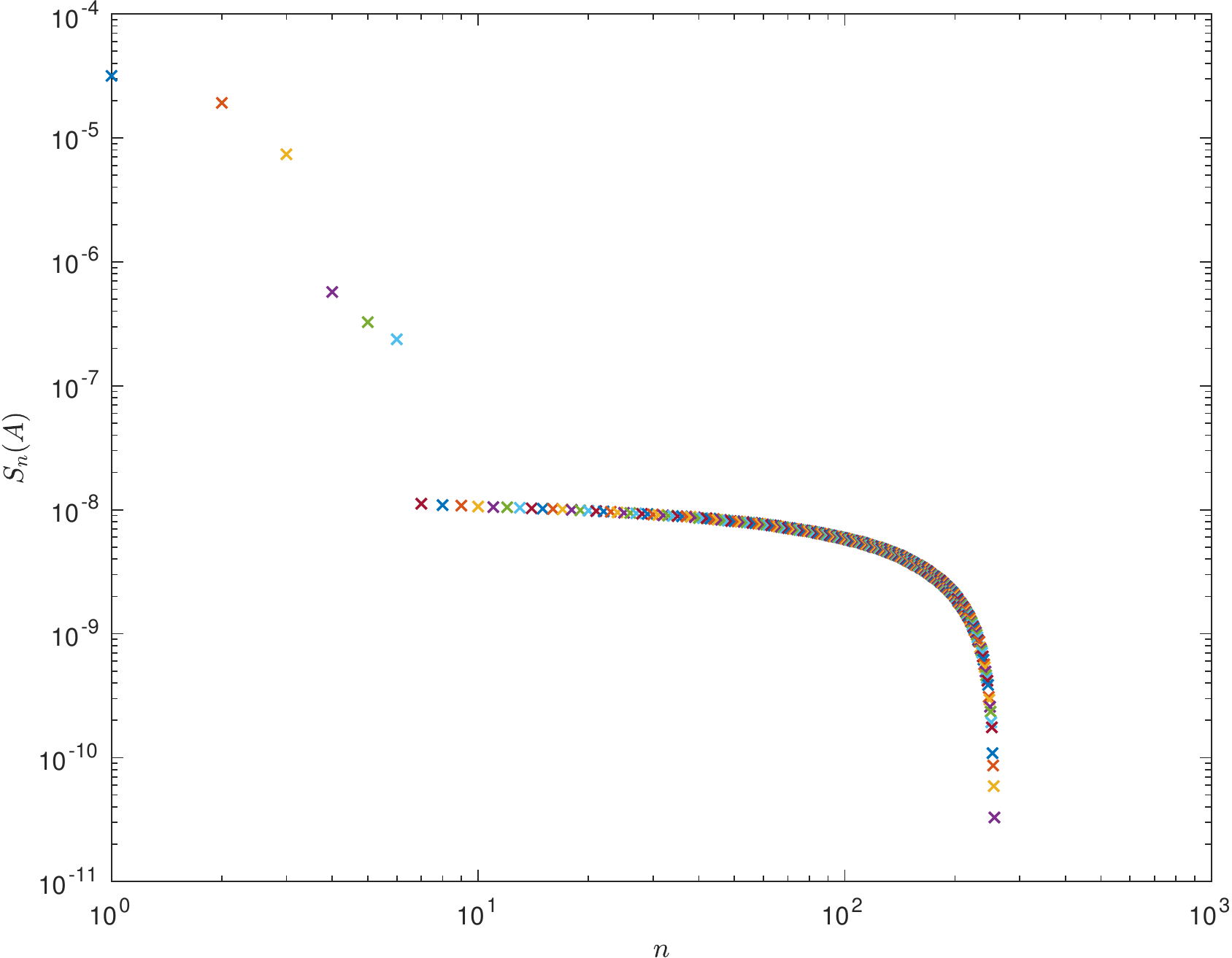} &
\includegraphics[width=0.5\textwidth]{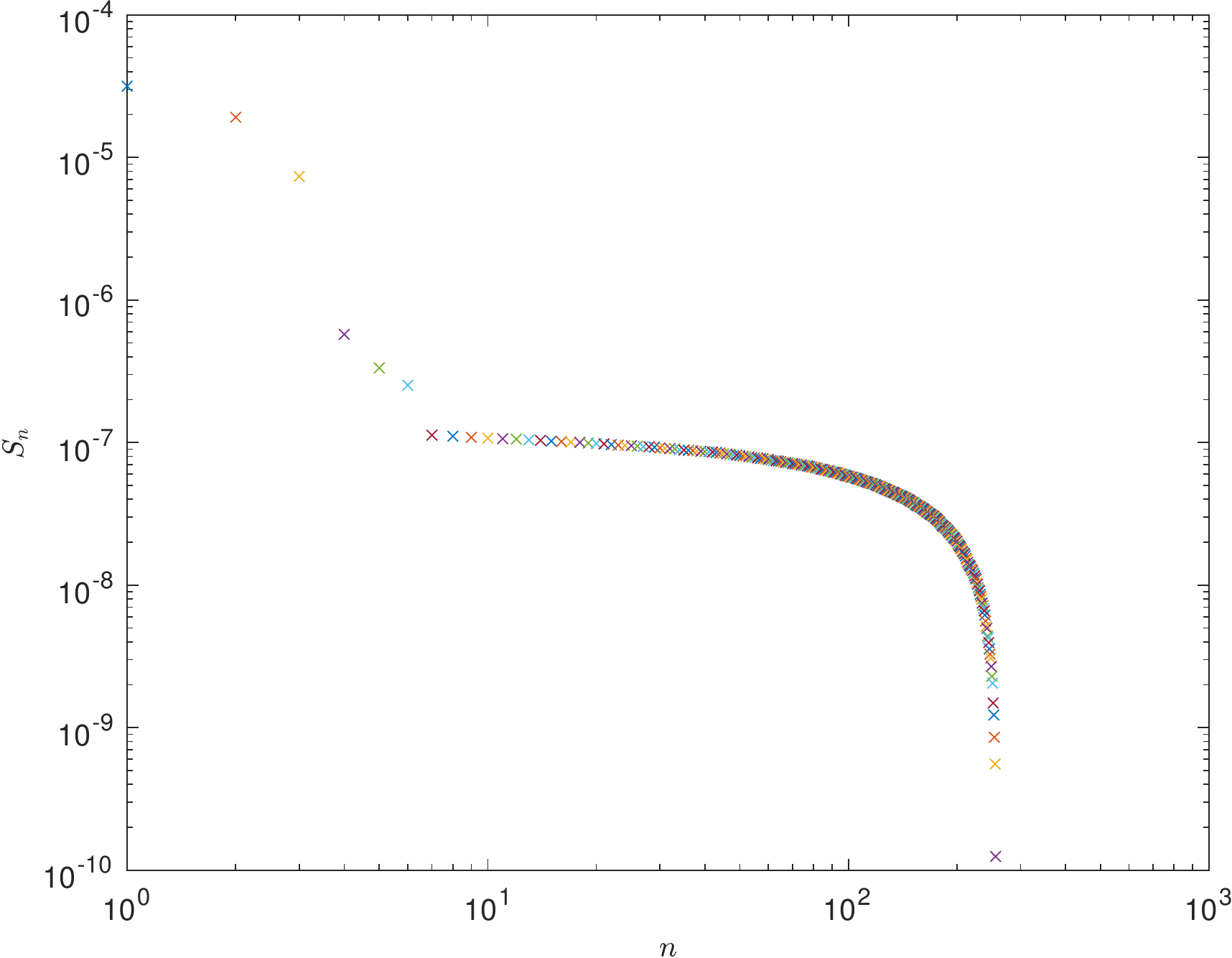} \\
 S_{\text{noise}}=0.01 S_4(A_0) & S_{\text{noise}}=0.1 S_4(A_0)
\end{array}$
\end{center}
\caption{Singular values $S_n(A)$: Evaluated for different levels of noise for identifying a coin and tetrahedron at $f=1 \times 10^5\text{Hz}$.} \label{fig:singularvalues}
\end{figure}
This is confirmed in Figure~\ref{fig:imfunf1_15e5} where we plot $I_{MU}$ on the plane $-0.5{\vec e}_3$. We observe that for $S_{\text{noise}}=0.01 S_1(A_0)$ we can only locate the coin, for $S_{\text{noise}}=0.01 S_4(A_0)$ we can locate both the coin and the tetrahedron and even by increasing the noise level to $10\%$ and setting  $S_{\text{noise}}=0.1 S_4(A_0)  $ both objects can still be identified.
\begin{figure}
\begin{center}
$\begin{array}{cc}
\includegraphics[width=0.5\textwidth]{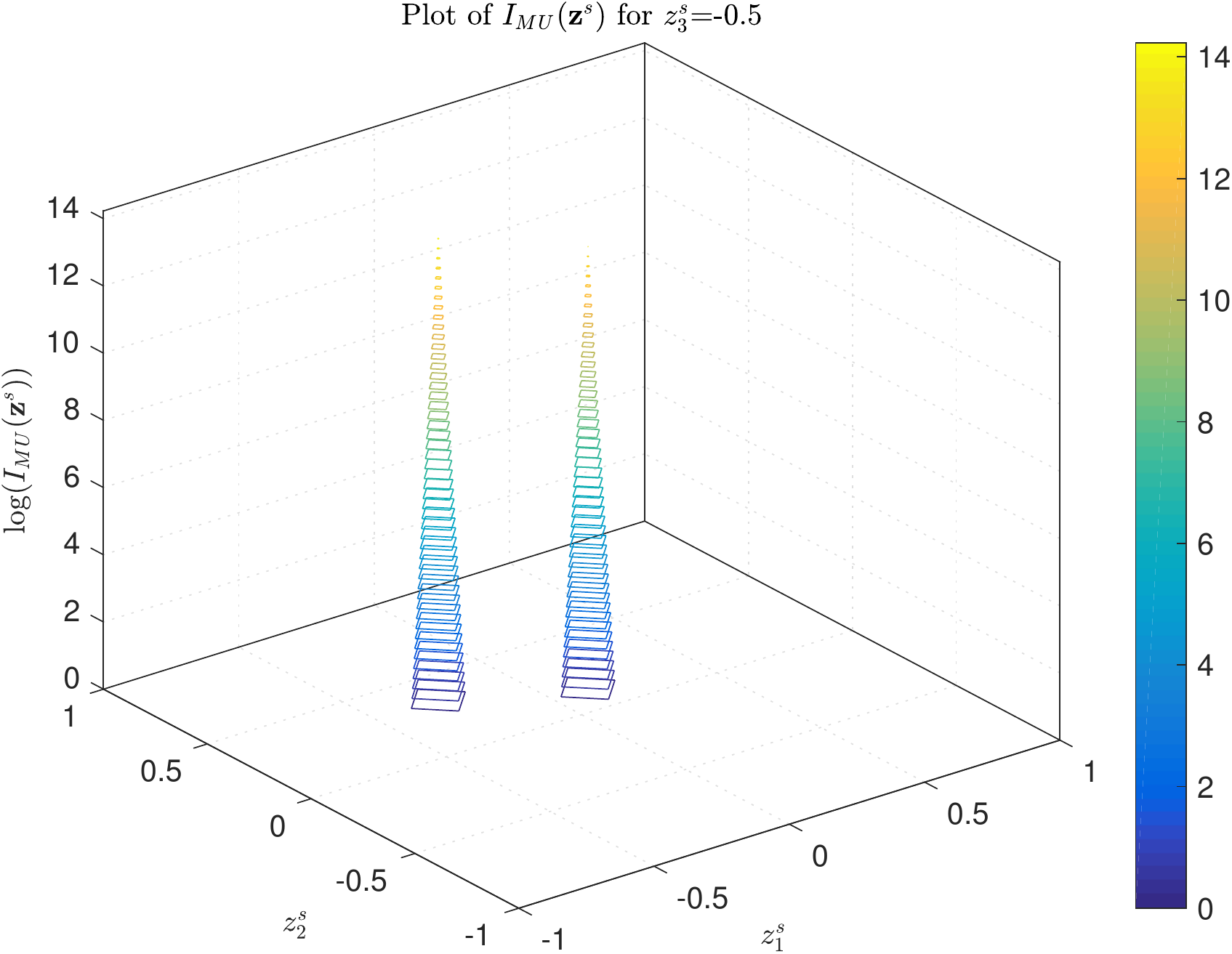} &
\includegraphics[width=0.5\textwidth]{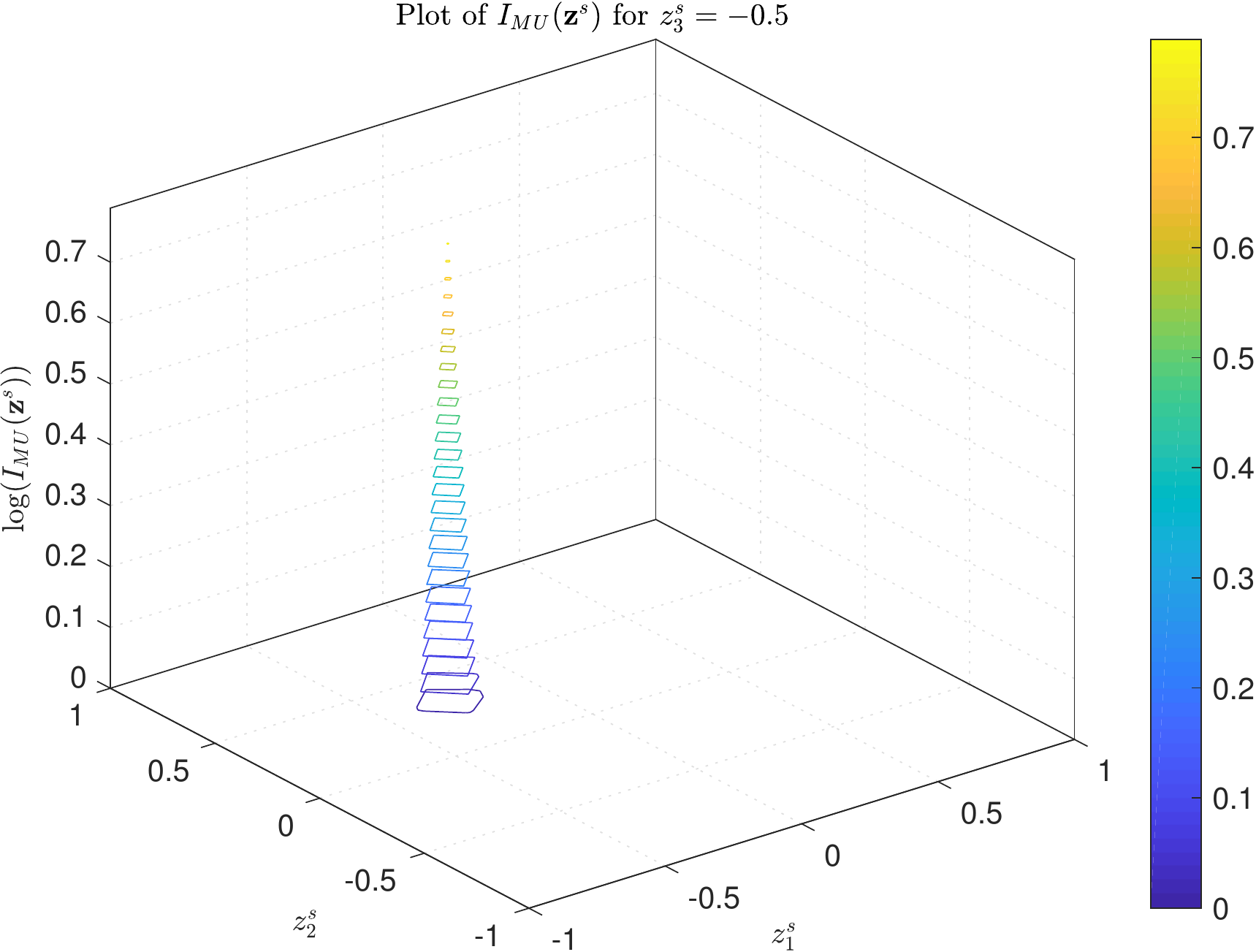} \\
S_{\text{noise}}=0 & S_{\text{noise}}=0.01 S_1(A_0)  \\
\includegraphics[width=0.5\textwidth]{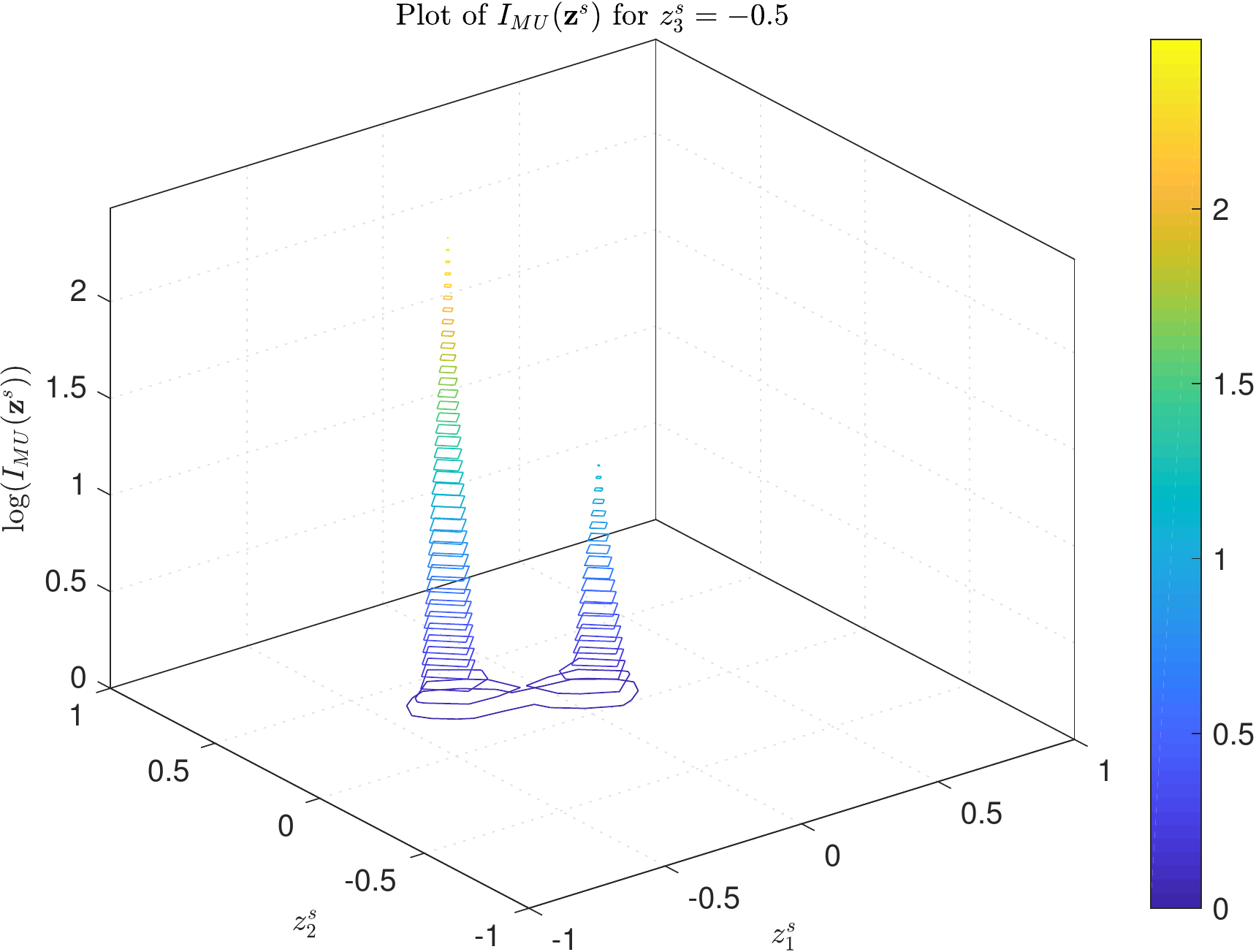} &
\includegraphics[width=0.5\textwidth]{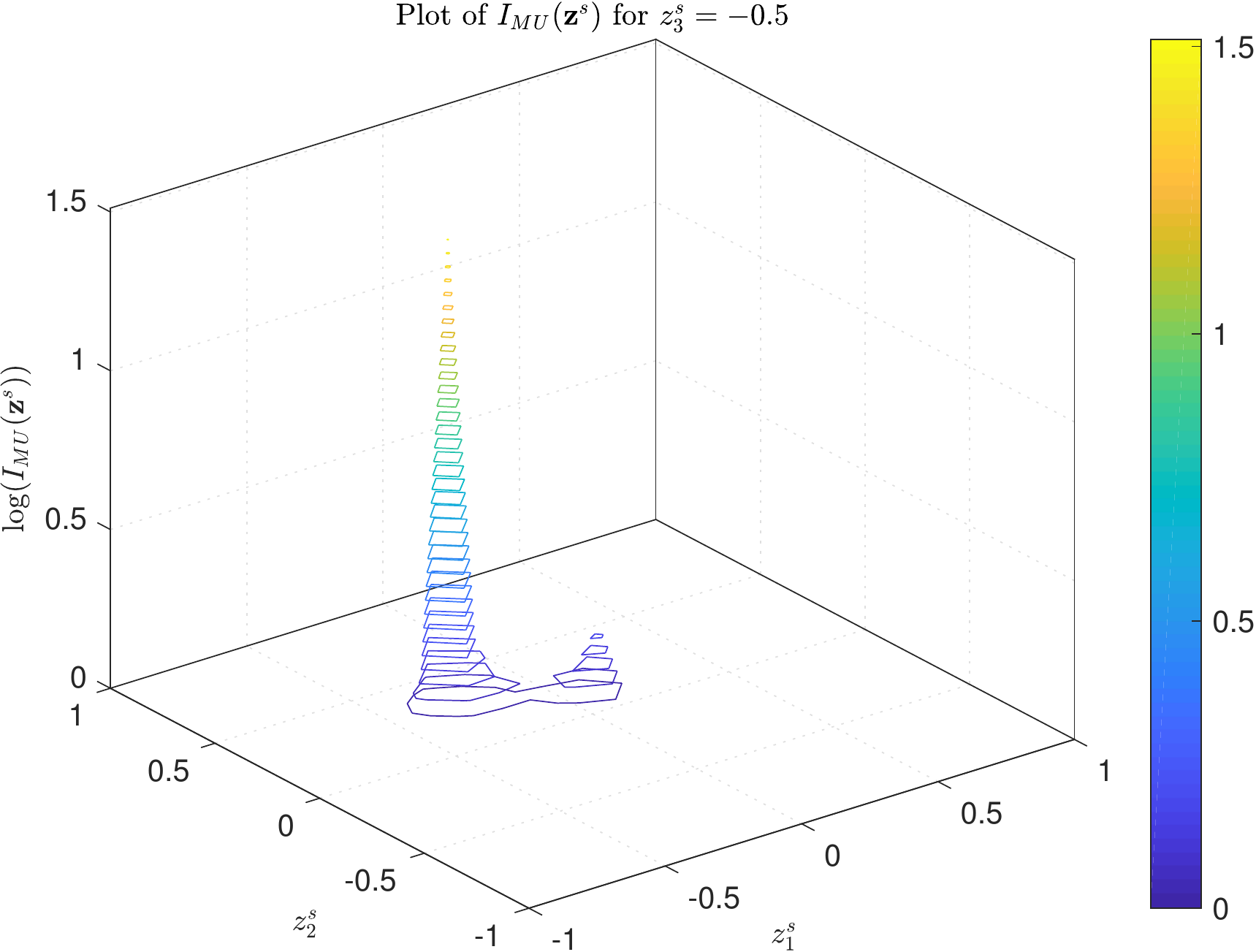} \\
 S_{\text{noise}}=0.01 S_4(A_0) & S_{\text{noise}}=0.1 S_4(A_0)
 \end{array}$
\end{center}
\caption{The imaging function $I_{MU}$: Evaluated on the plane $-0.5{\vec e}_3$ for different levels of noise for identifying a coin and tetrahedron at $f= 1\times 10^5\text{Hz}$.} \label{fig:imfunf1_15e5}
\end{figure}
On the other hand, choosing the frequency $f=132\text{Hz}$, such that  $S_n(A_0)$, $n=1,2,3$ are associated with the tetrahedron and $S_n(A_0)$, $n=4,5,6$ with the coin, Figure~\ref{fig:imfunf132} shows that the phenomena is reversed, and with a $10\%$ noise level and  $S_{\text{noise}}=0.1 S_4(A_0)  $, only the tetrahedron can be identified at this frequency.
\begin{figure}
\begin{center}
$\begin{array}{cc}
\includegraphics[width=0.5\textwidth]{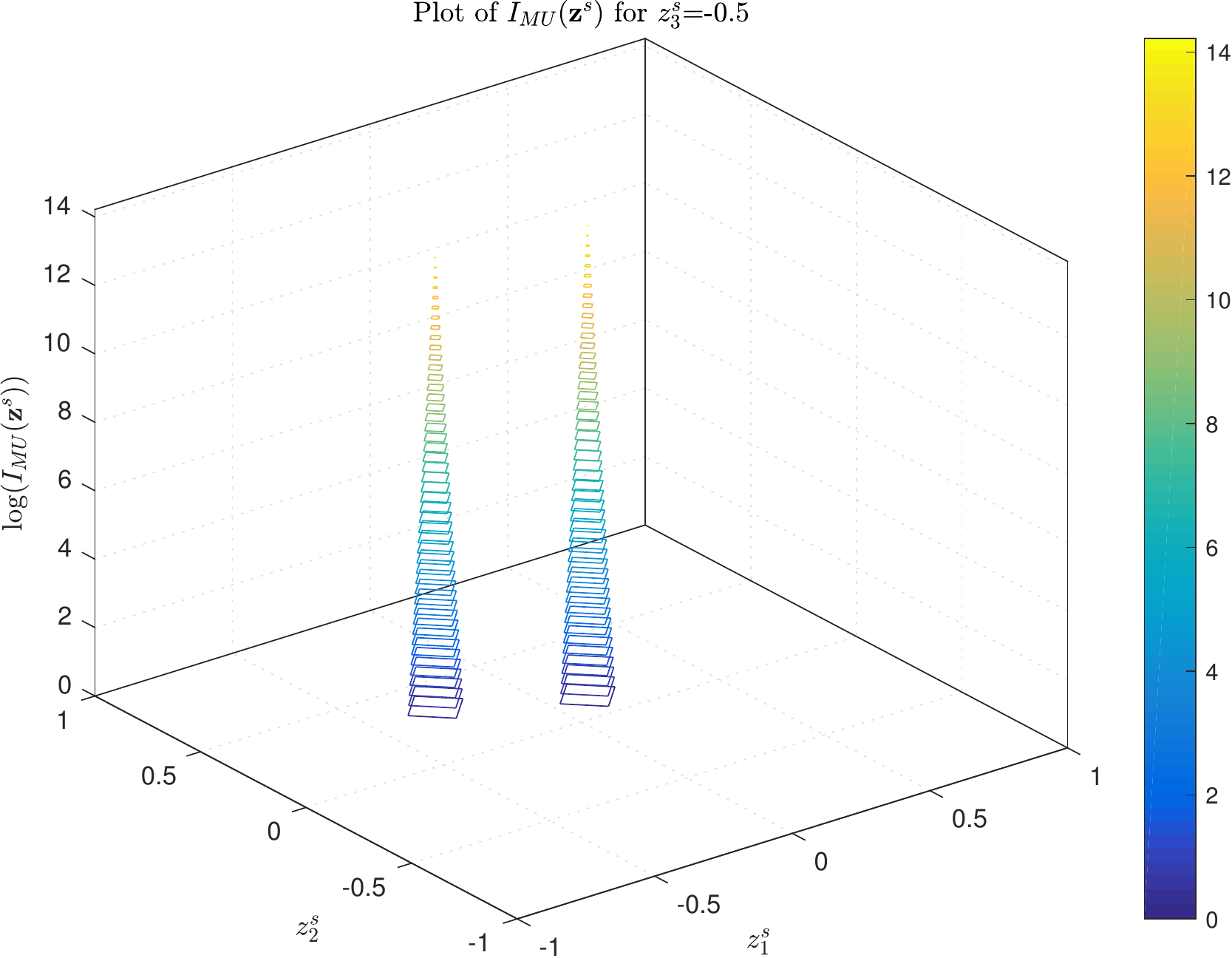} &
\includegraphics[width=0.5\textwidth]{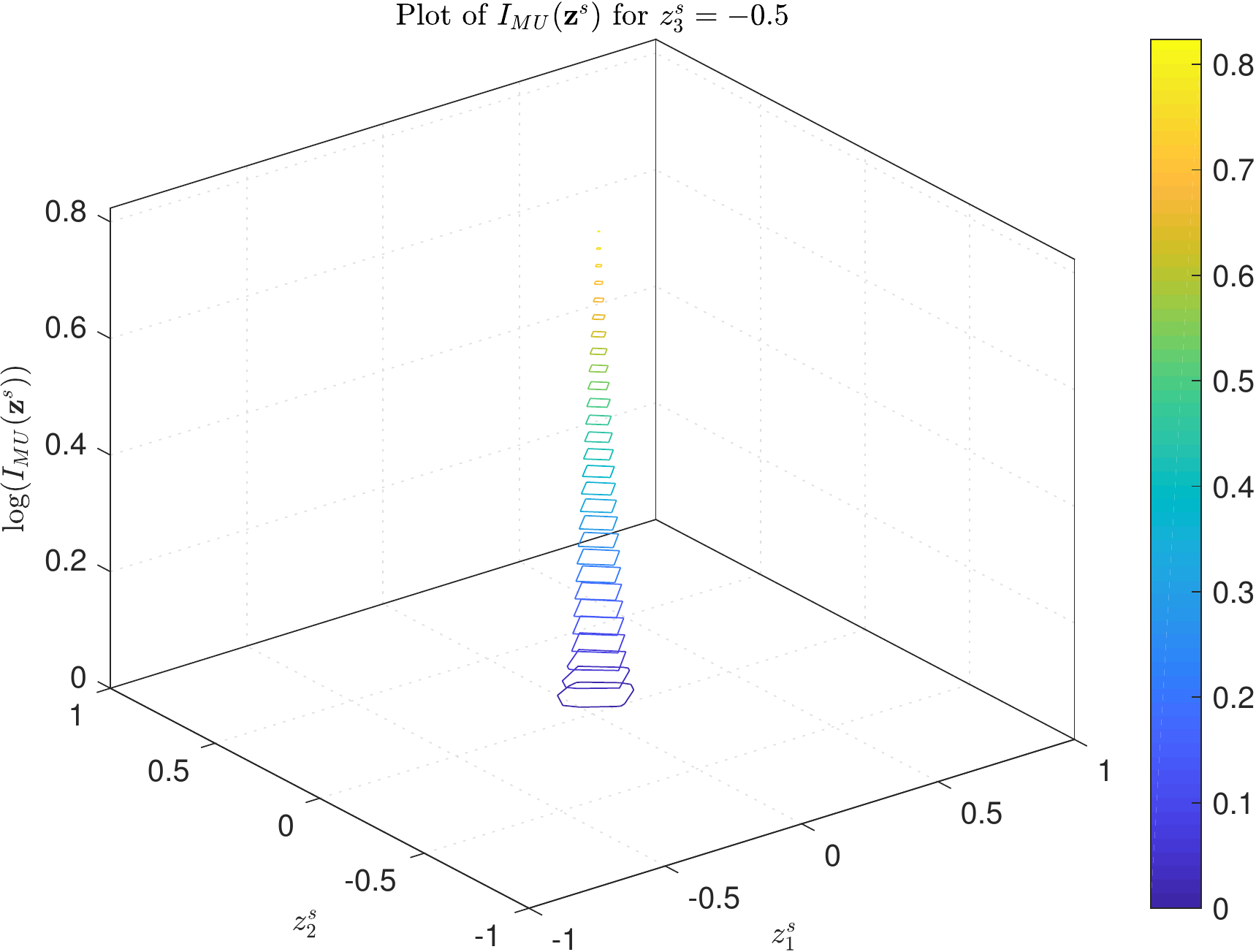} \\
 S_{\text{noise}}=0 &  S_{\text{noise}}=0.01 S_1(A_0) \\
\includegraphics[width=0.5\textwidth]{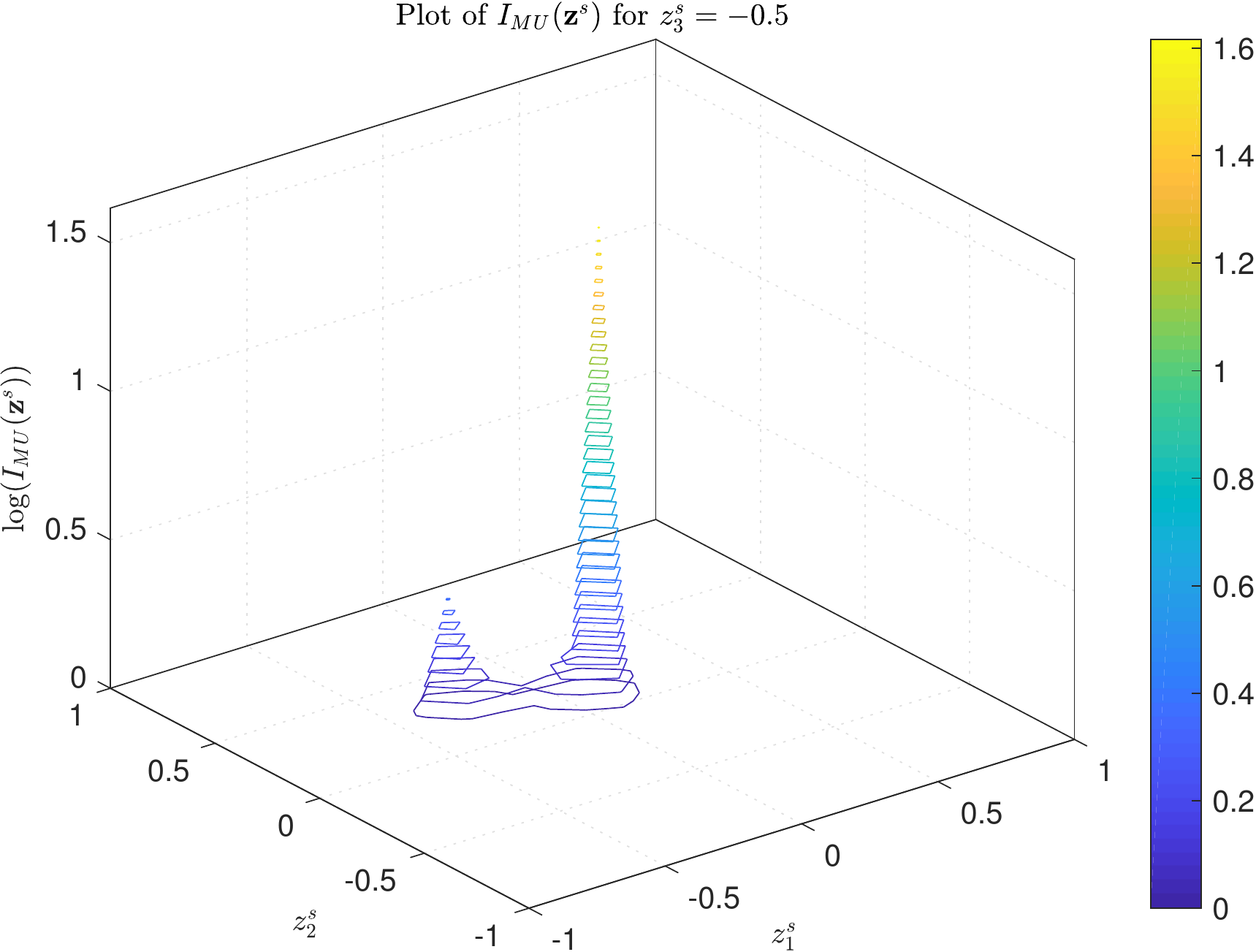} &
\includegraphics[width=0.5\textwidth]{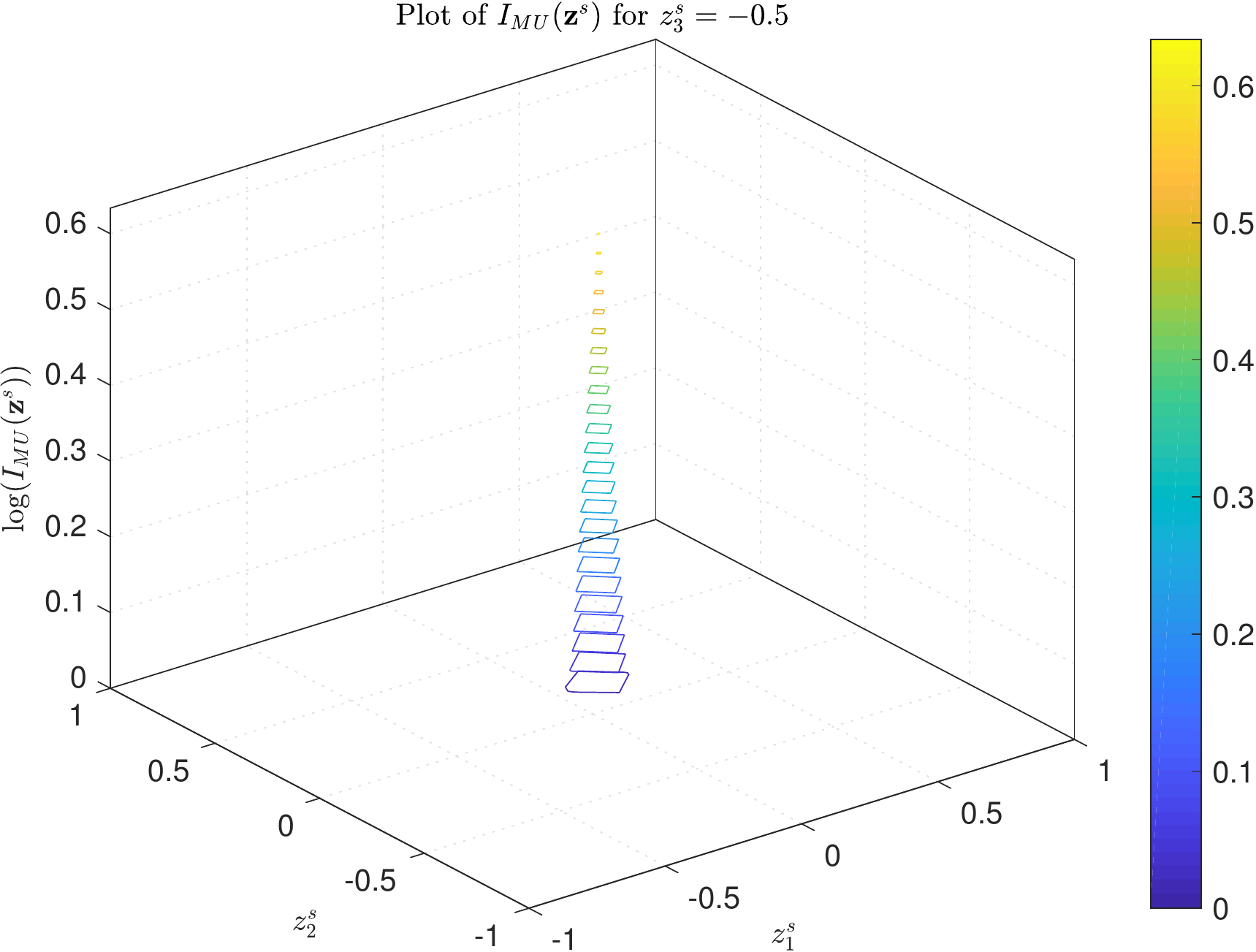} \\
 S_{\text{noise}}=0.01 S_4(A_0) &  S_{\text{noise}}=0.1 S_4(A_0) 
\end{array}$
\end{center}
\caption{The imaging function $I_{MU}$: Evaluated on the plane $-0.5{\vec e}_3$ for different levels of noise for identifying a coin and tetrahedron at $f=132 \text{Hz}$.} \label{fig:imfunf132}
\end{figure}

 \subsection{Object Identification}

 A dictionary-based classification technique for individual object identification has been proposed by Ammari {\em et al.}~\cite{ammarivolkov2013b} and this easily extends to the identification of multiple inhomogeneous objects. We propose a slight variation on that proposed by Ammari {\em et al.}, which uses the eigenvalues of the real and imaginary parts of the MPT as a classifier as oposed to its singular values at a range of frequencies. The motivation for this is the richness of the frequency spectra of the eigenvalues, as shown in Section~\ref{sect:freq}, and that it provides an increased number of values to classify each object. We also propose a strategy in which objects are put in to canonical form before forming the dictionary. The algorithm comprises of two stages as described below.

 \subsubsection{Off-line Stage}
In the off-line stage, given a set of $N_{\text{candidate}}$ candidate objects (which can include both homogenous and inhomogeneous objects), we put them in canonical form $(B_\alpha)^{(i)} = \alpha^{(i)} B^{(i)} +{\vec z}^{(i)}$, $i=1,\ldots, N_{\text{candidate}}$ by ensuring that the origin for ${\vec \xi}^{(i)}$ in $B^{(i)}$ coincides with the centre of mass of $B^{(i)}$ and the object's size $\alpha^{(i)}$ is chosen such that $|{\mathcal N}^0[\alpha^{(i)}{ B}^{(i)}]|=1   $~\footnote{For inhomogeneous objects we require $|{\mathcal N}^0[\alpha^{(i)}{\vec B}^{(i)}]|=1$ and
we replace $(B_\alpha)^{(i)}$ by  ${\vec B}_\alpha^{(i)} = \alpha^{(i)} {\vec B}^{(i)} +{\vec z}^{(i)}$, $B^{(i)}$ by ${\vec B}^{(i)}$ as well as ensuring the centre of mass coincides with the centre of mass of ${\vec B}^{(i)} = \bigcup_{n=1}^N B^{(i,n)}$.}  where ${\mathcal N}^0[\alpha^{(i)} {B}^{(i)}]=   {\mathcal T}[\alpha^{(i)} { B}^{(i)}] $ in the case of a homogenous object and corresponds to the P\'oyla-Szeg\"o tensor as well as being the characterisation for $\sigma_*=0$ for this object~\footnote{If $\mu_* =\mu_0$ we choose the object size by requiring the high conductivity limit to have unit determinant.}. In the case of an object with homogenous materials, the coefficients of ${\mathcal M}[\alpha^{(i)} B^{(i)}]$ are computed  by solving the transmission problem (\ref{eqn:transproblemthetar2}) using finite elements and then applying (\ref{eqn:mcheckmult}) and, in the case of an inhomogeneous object  (\ref{eqn:transproblemthetar3}) and (\ref{eqn:mcheckcup}) are used. In each case, the eigenvalues $\lambda_{\mathcal R} ( \alpha^{(i)} B^{(i)}, \omega_j)$ and $\lambda_{\mathcal I} ( \alpha^{(i)} B^{(i)} , \omega_j)$ are obtained for a range of frequencies $\omega_j$ and 
\begin{align}
D_i =&\{ \lambda_{\mathcal R} ( \alpha^{(i)} B^{(i)} , \omega_j) , \lambda_{\mathcal I} ( \alpha^{(i)} B^{(i)} , \omega_j), j=1,\ldots,N_\omega  \}   \nonumber \\ 
 & /  \max_{k=1,\ldots,N_\omega} ( |\lambda_{\mathcal R} ( \alpha^{(i)} B^{(i)} , \omega_k)| ,| \lambda_{\mathcal I} ( \alpha^{(i)} B^{(i)} , \omega_k)| )  \nonumber ,
\end{align}
forms the $i$th element of the dictionary 
\begin{equation}
{\mathcal D} = \{ D_1 ,D_2 ,\ldots , D_{N_{\text{candidate}} } \} \nonumber  .
\end{equation}

 \subsubsection{On-line Stage}
 In an extension to~\cite{ammarivolkov2013b}, the MPT coefficients for each of the targets $(T_\alpha)^{(i)}$, $i=1,\ldots,N_{\text{target}}$ can be recovered from the same data used to identify the number and locations of objects. Although, to do so, it is important to ensure that the dipole moments of the coils are chosen such that all the $6N_{\text{target}}$  coefficients can be recovered from the measured data~\cite{ledgerlionheart2018}. The coefficients are then the solution of the least squares problem
  \begin{equation}
( {\mathcal M} [(T_\alpha)^{(1)},\omega_j ], \cdots , {\mathcal M} [(T_\alpha)^{(N_{\text{objects}})},\omega_j ]) = \displaystyle \hbox{arg} \min_{\frak M} \| A(\omega_j) - L( {\frak M}) \|, \nonumber
 \end{equation} 
 which is repeated for $j=1,\ldots,N_\omega$.
 
 Then, for each target $(T_\alpha)^{(i)}$, we determine
 \begin{equation}
\hat {D}_i =\{ \lambda_{\mathcal R} ( (T_\alpha)^{(i)}, \omega_j) , \lambda_{\mathcal I} (( T_\alpha)^{(i)}, \omega_j) \} / \ \max_{k=1,\ldots,N_\omega} ( |\lambda_{\mathcal R} ( (T_\alpha)^{(i)}, \omega_k)| ,| \lambda_{\mathcal I} ( (T_\alpha)^{(i)}, \omega_k)| ) , \nonumber
\end{equation}
and find the closest match to $\hat{D}_i$ within the dictionary ${\mathcal D}$~\cite{ammarivolkov2013b}. Notice the target could also be inhomogeneous in which case $(T_\alpha)^{(i)}$ is replaced by ${\vec T}_\alpha^{(i)}$.

 \subsubsection{Numerical Example}
 As a challenging object identification example, we consider a dictionary consisting of parallelepipeds described in Section~\ref{sect:freq}, which consist of either two regions ${\vec P}_1:={\vec B}=B^{(1)} \bigcup B^{(2)}$ with ${\vec B}_\alpha = \alpha {\vec B}= \alpha (B^{(1)}\cup B^{(2)})$  or three regions ${\vec P}_2:={\vec B}=B^{(1)} \cup B^{(2)}\cup B^{(3)} $ with ${\vec B}_\alpha = \alpha {\vec B}= \alpha (B^{(1)}\cup B^{(2)}\cup B^{(3)} )$, and vary the material properties according to the descriptions previously described. We also consider the limiting case where the two (three) regions have the same parameters. The dictionary for these objects is generated according to the {\em off-line stage} with $\omega \in 2 \pi ( 2, 300, 4\times 10^3, 5\times 10^4, 2 \times 10^5)\text{rad/s}$, arbitrarily chosen over the frequency spectrum.
 
 For the {\em on-line stage} take ${\mathcal M} [{\vec T}_\alpha^{(i)},\omega_j ]$, $i=1,2$, $j=1,\ldots, N_\omega=5$ to be given by considering targets ${\vec T}_\alpha^{(i)}=\alpha R({\vec P}_i)$ where $R$ is an arbitrary rotation adding noise. In Figure~\ref{fig:classify} we illustrate the algorithms ability to differentiate between these similar objects. The red bars indicate the predicated classification, which is correct for the examples presented (it was also found to be correct for the cases of the other parallelepipeds). We can observe that greatest similarity in terms of the classification is between the two homogeneous parallelepipeds and between the two parallelepipeds with contrasting $\sigma$ and in each case the classification becomes more challenging as the noise level is increased. 
 
  \begin{figure}
 \begin{center} 
 $\begin{array}{ccc}
 \includegraphics[width=0.3\textwidth]{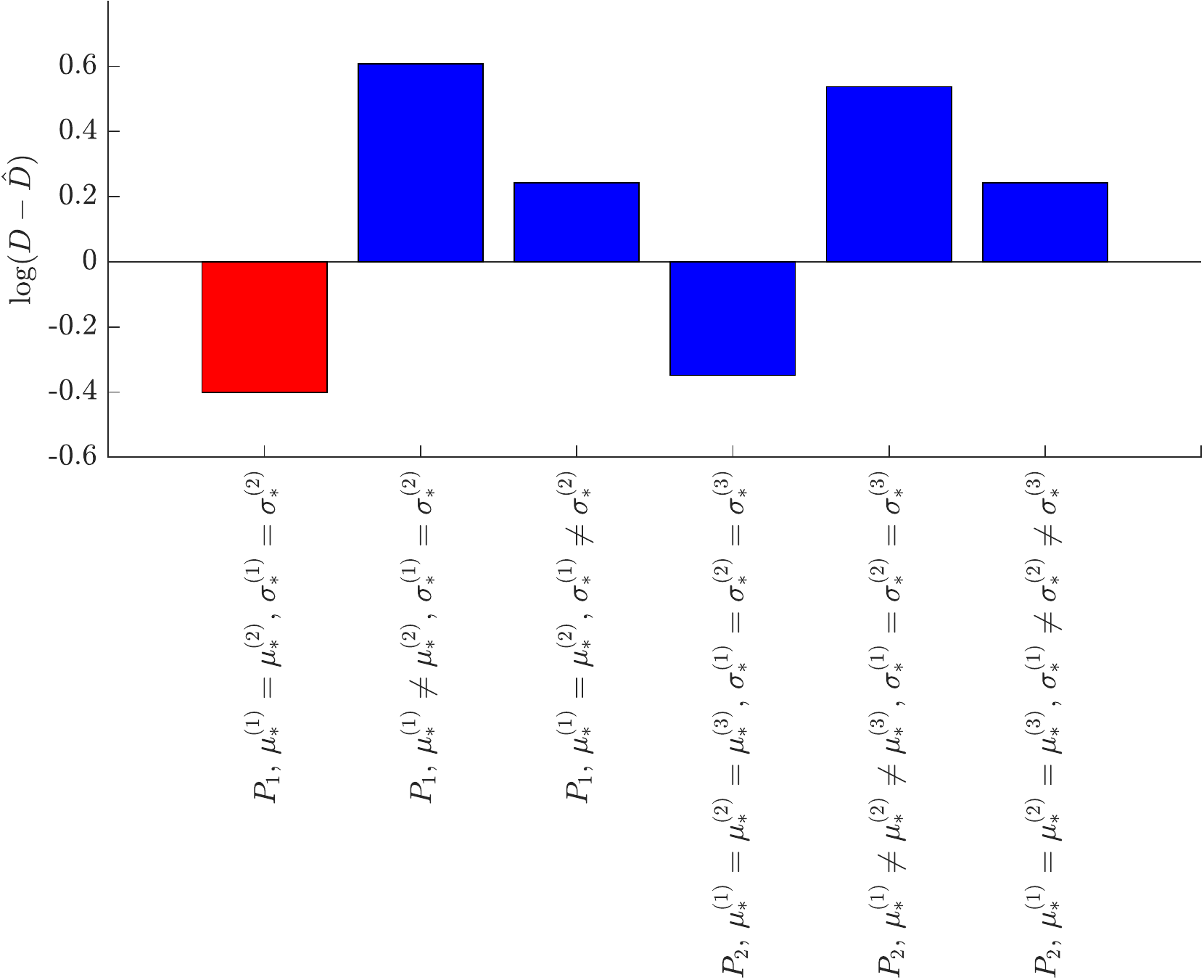} &
 \includegraphics[width=0.3\textwidth]{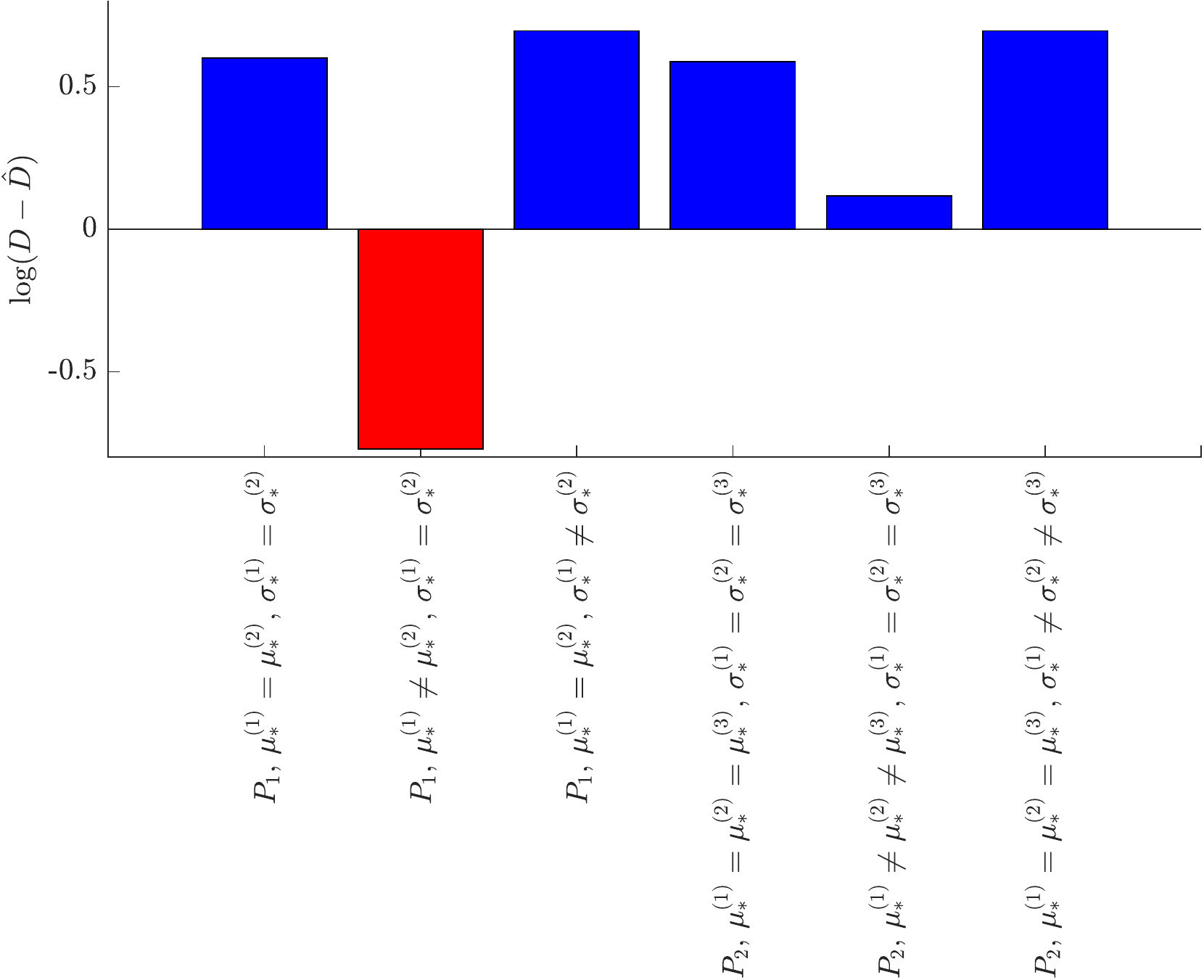} &
 \includegraphics[width=0.3\textwidth]{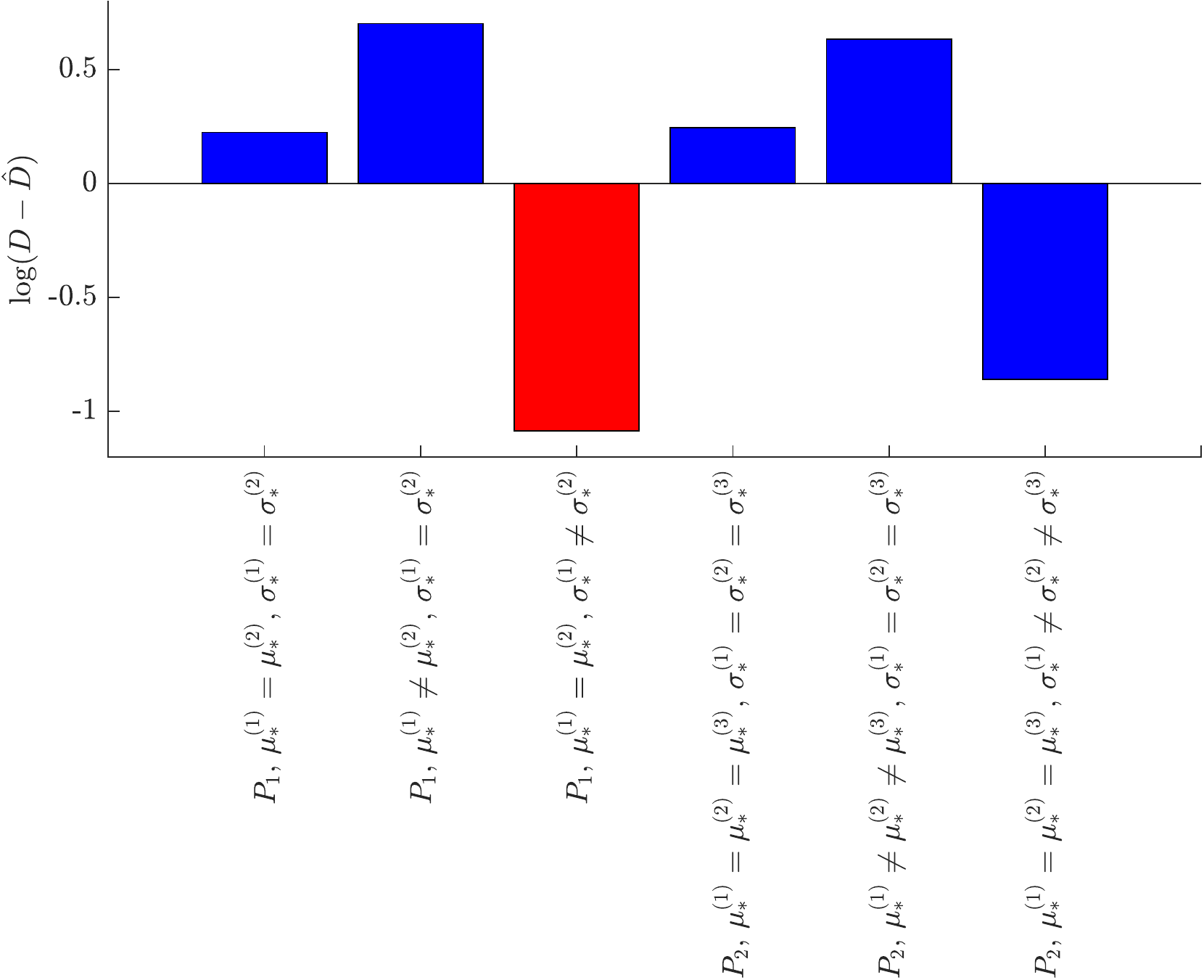} \\
 5\% &  5\% &  5\% \\ 
  \includegraphics[width=0.3\textwidth]{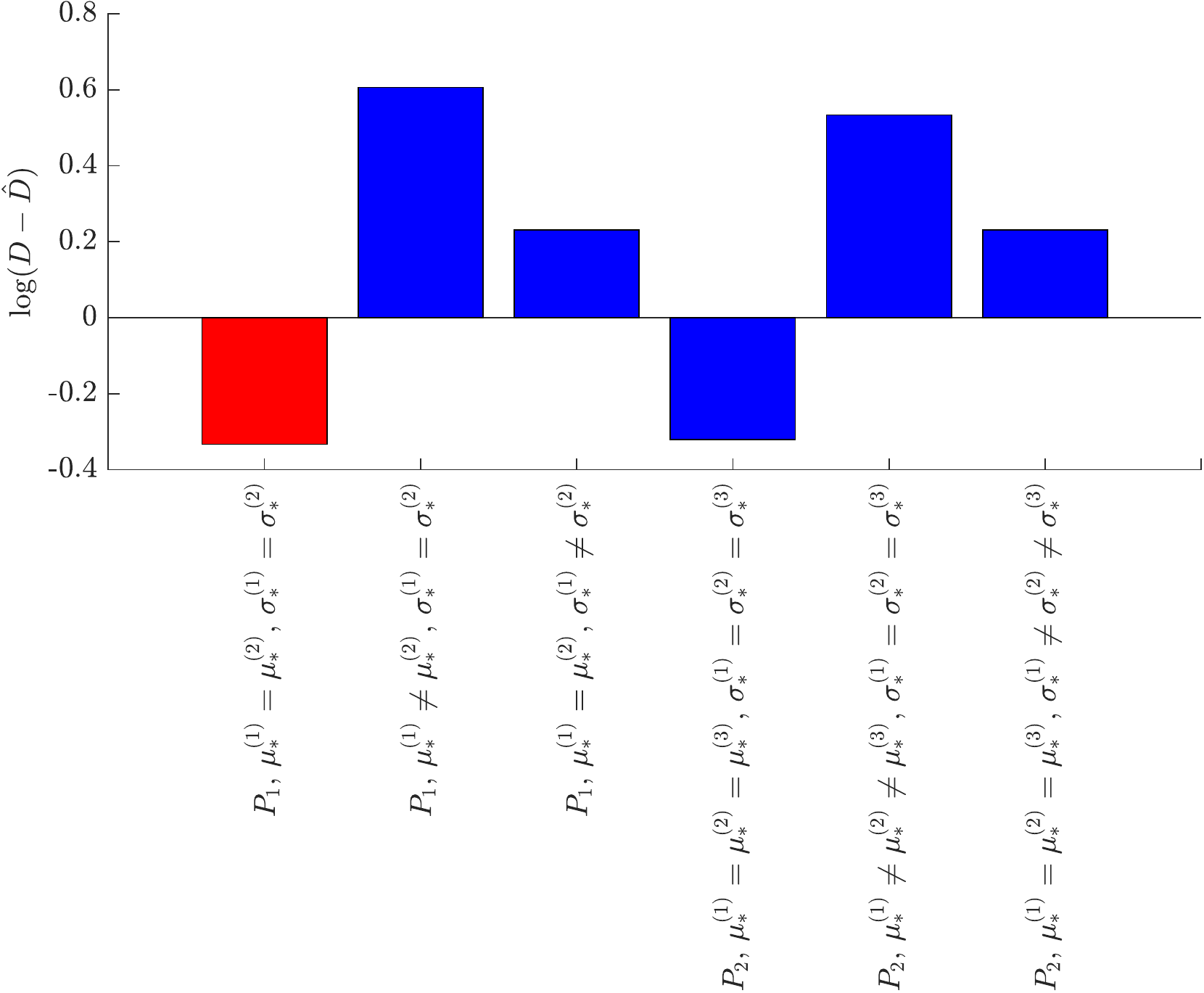} &
 \includegraphics[width=0.3\textwidth]{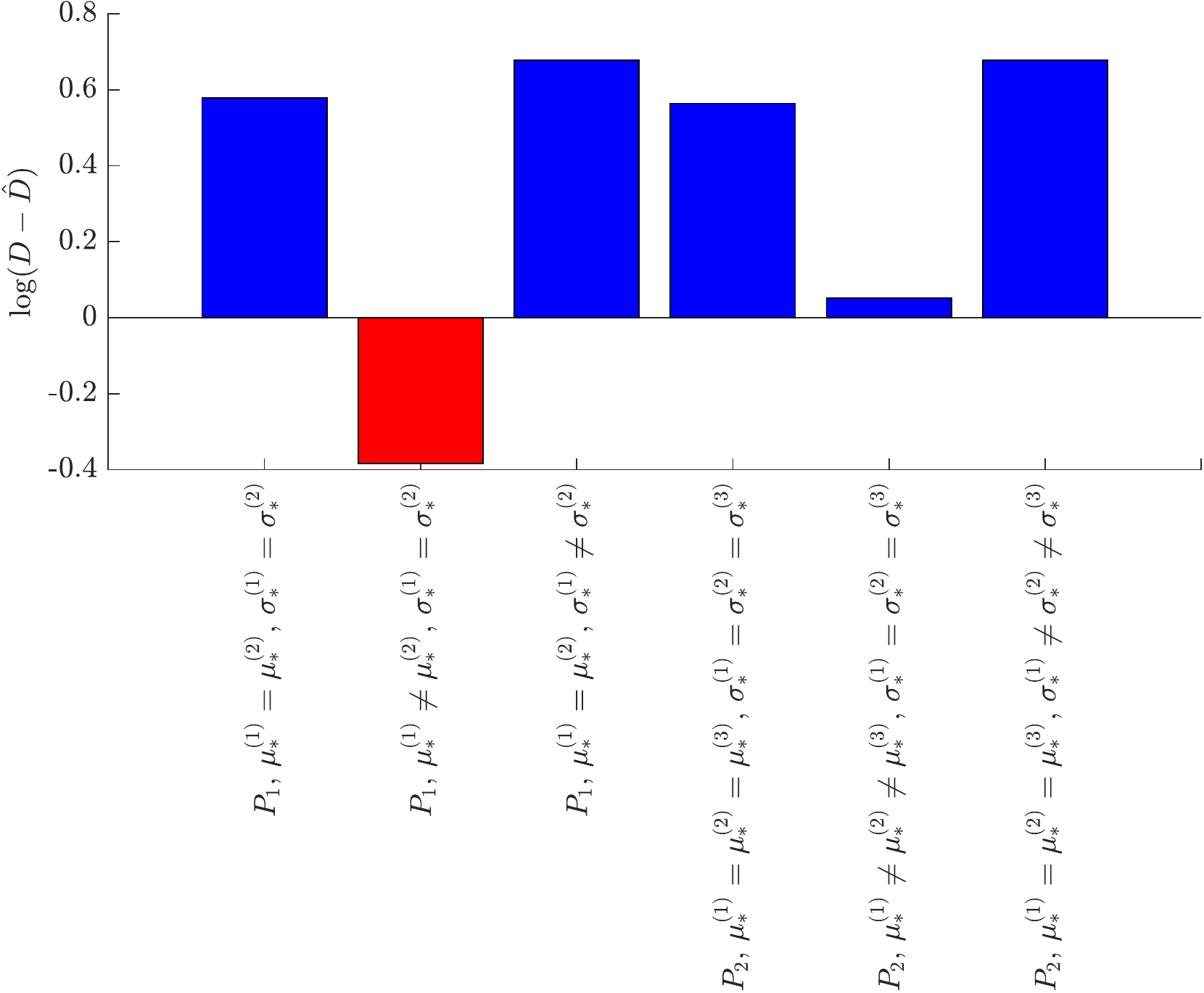} &
 \includegraphics[width=0.3\textwidth]{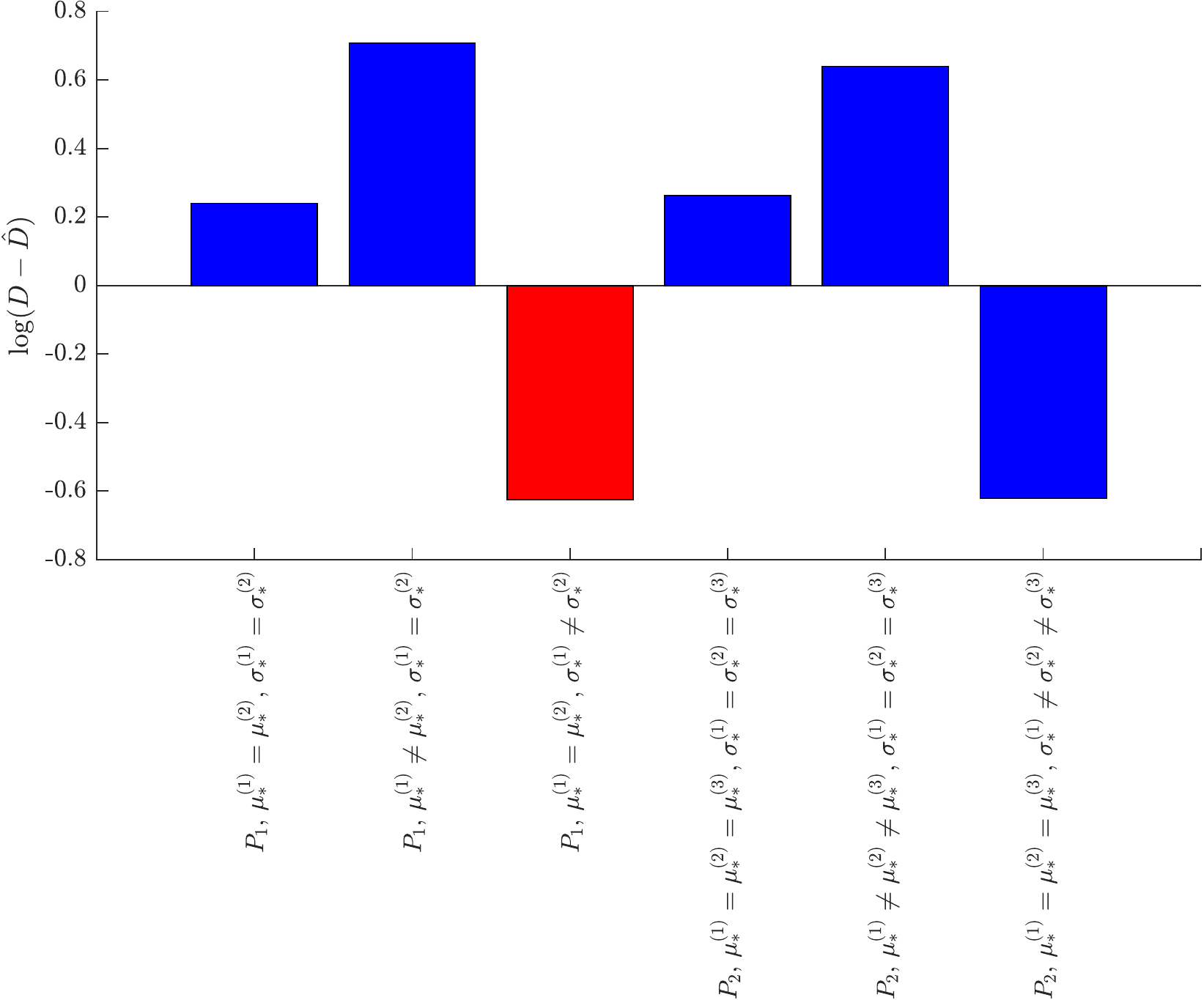} \\
 10\% &  10\% &  10\% 
 \end{array}$
 \end{center}
 \caption{Dictionary classification showing $\log \| {\mathcal D} - \hat{D}_i\|_2 $ : Top row show classification with $5\%$ noise, bottom row with $10\% $ noise, red indicates the predicted object, which is correct in all cases} \label{fig:classify}
 \end{figure}

By increasing the number of frequencies considered so that $N_\omega=7$ with $\omega \in 2 \pi ( 2, 300, 4\times 10^3, 5\times 10^4, 2 \times 10^5, 3 \times 10^6, 4 \times 10^7)\text{rad/s}$ we see in Figure~\ref{fig:classify_moref} that the certainty of the classification is improved for both noise levels.
  \begin{figure}
 \begin{center} 
 $\begin{array}{ccc}
 \includegraphics[width=0.3\textwidth]{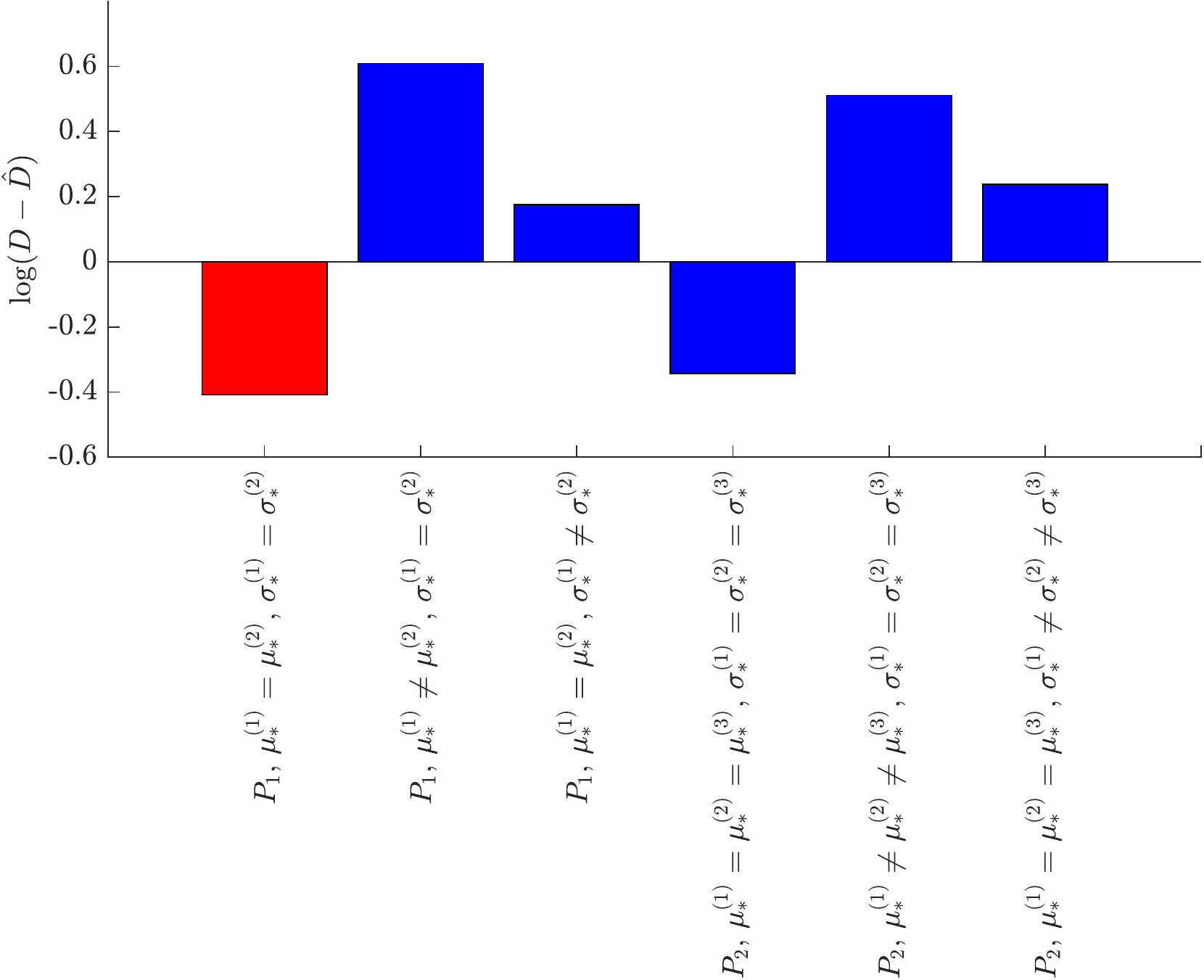} &
 \includegraphics[width=0.3\textwidth]{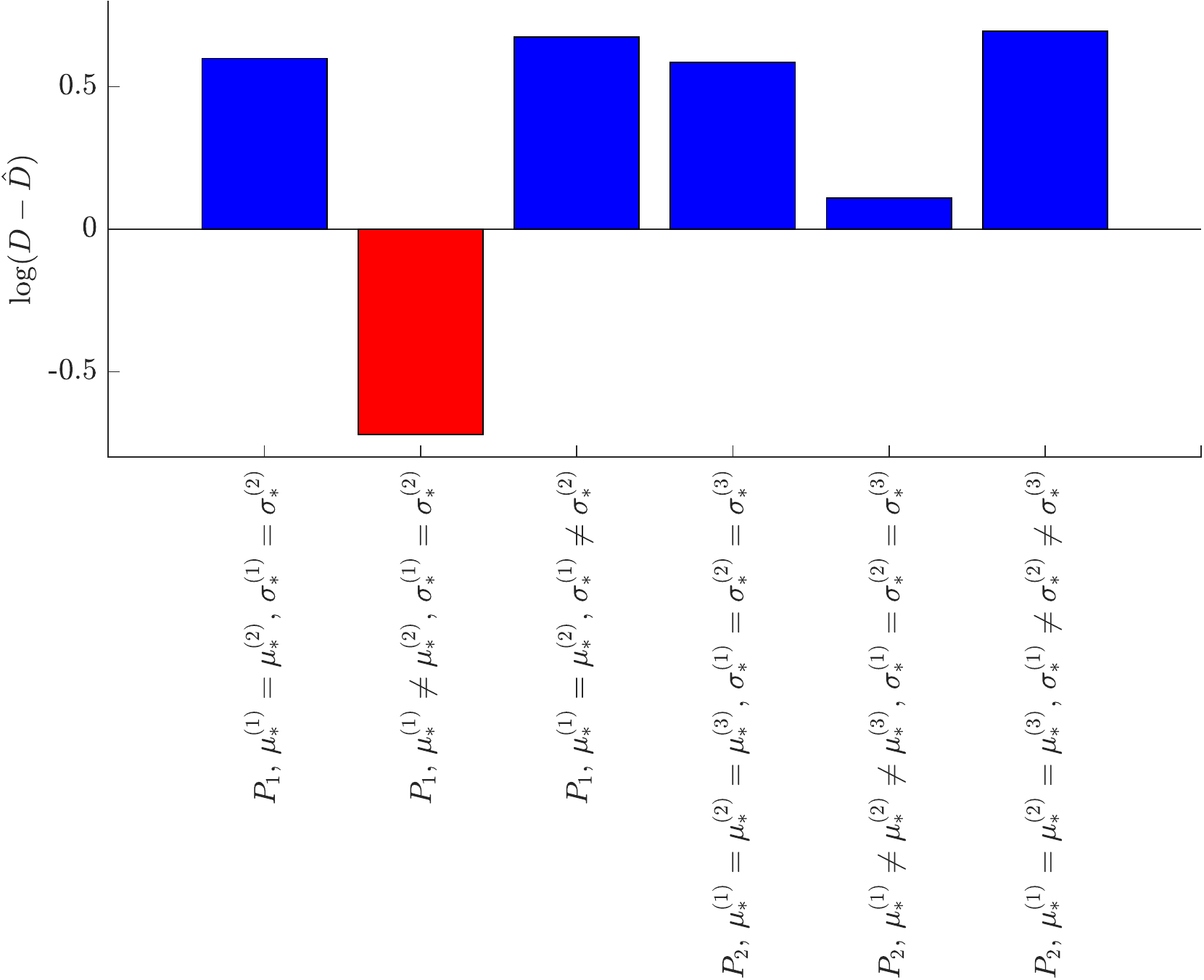} &
 \includegraphics[width=0.3\textwidth]{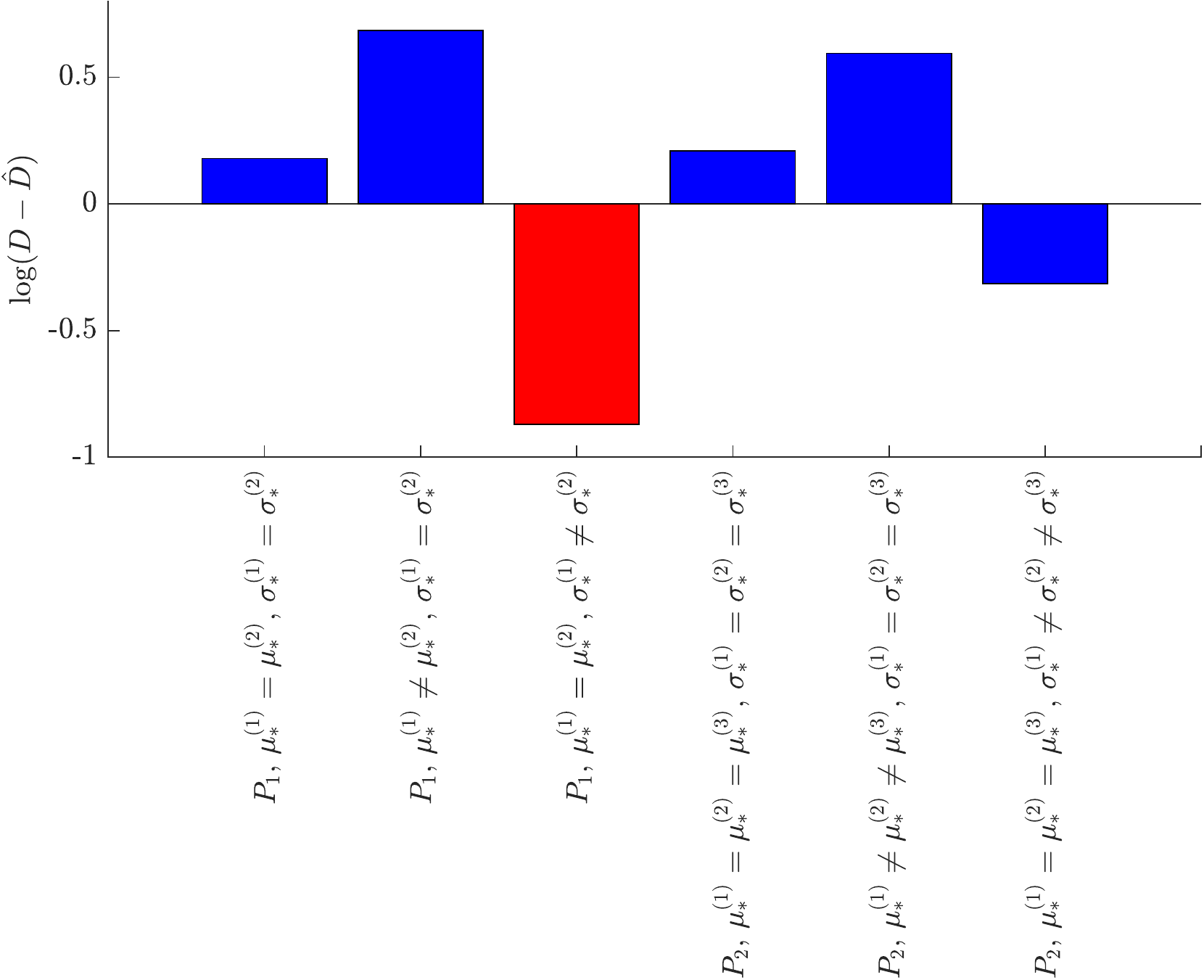} \\
 5\% &  5\% &  5\% \\ 
  \includegraphics[width=0.3\textwidth]{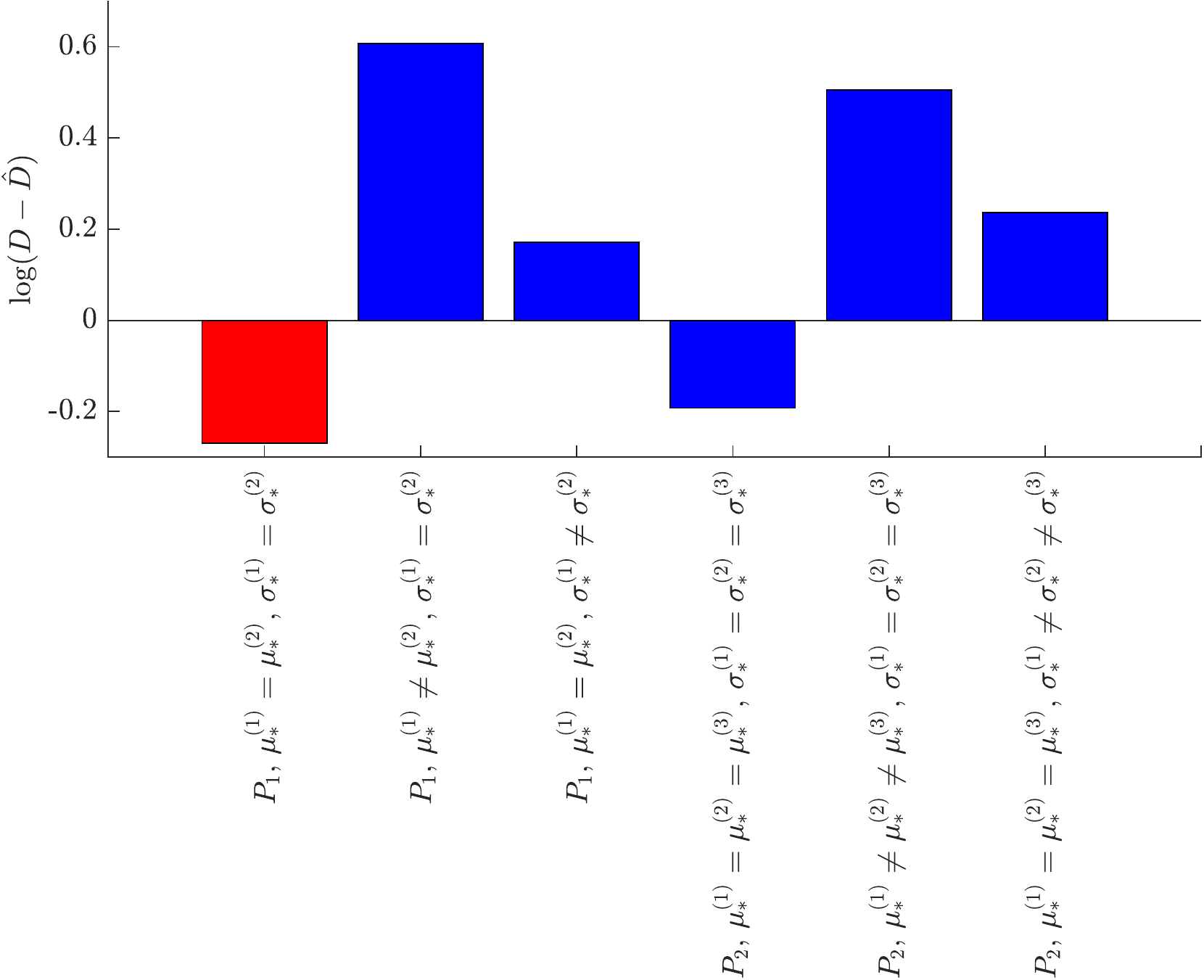} &
 \includegraphics[width=0.3\textwidth]{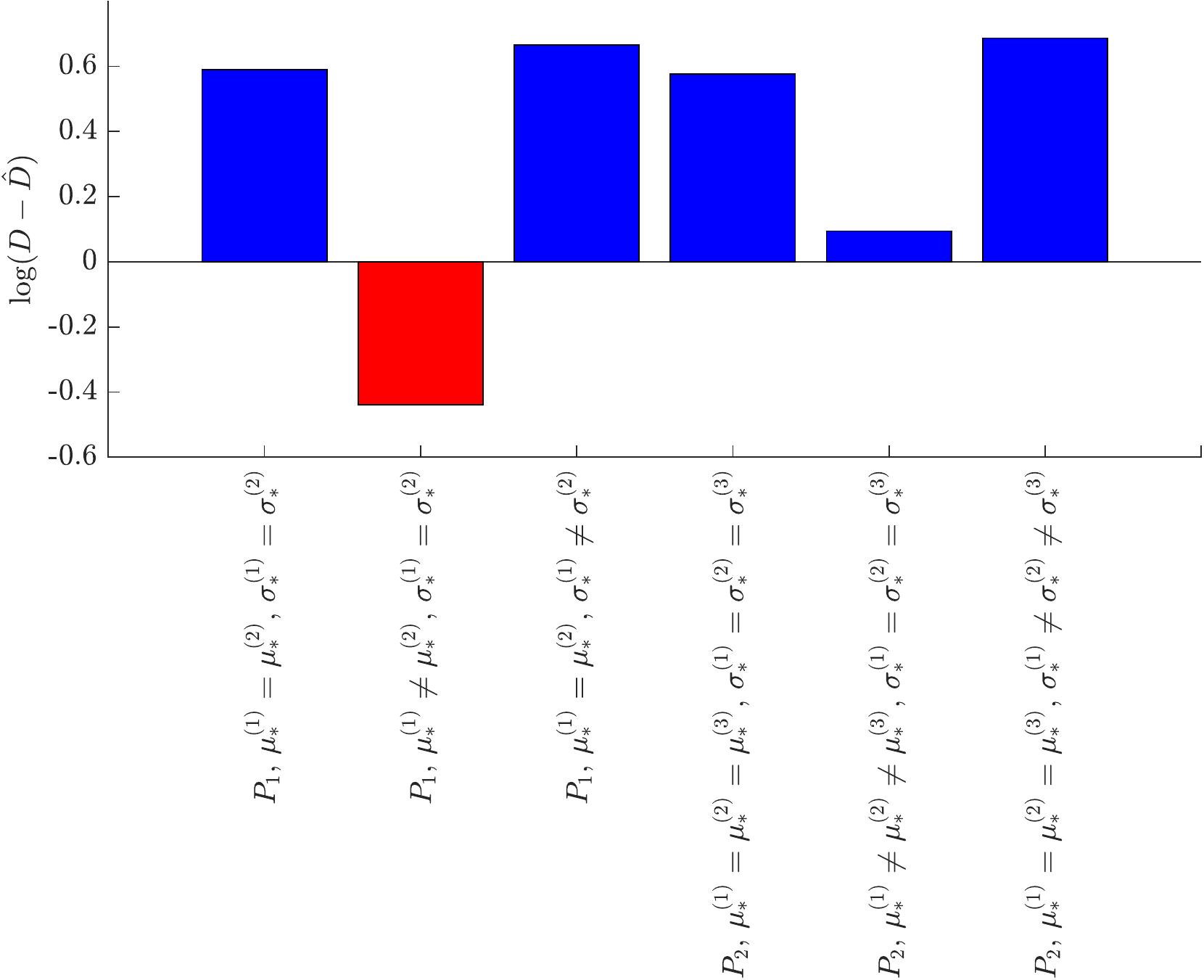} &
 \includegraphics[width=0.3\textwidth]{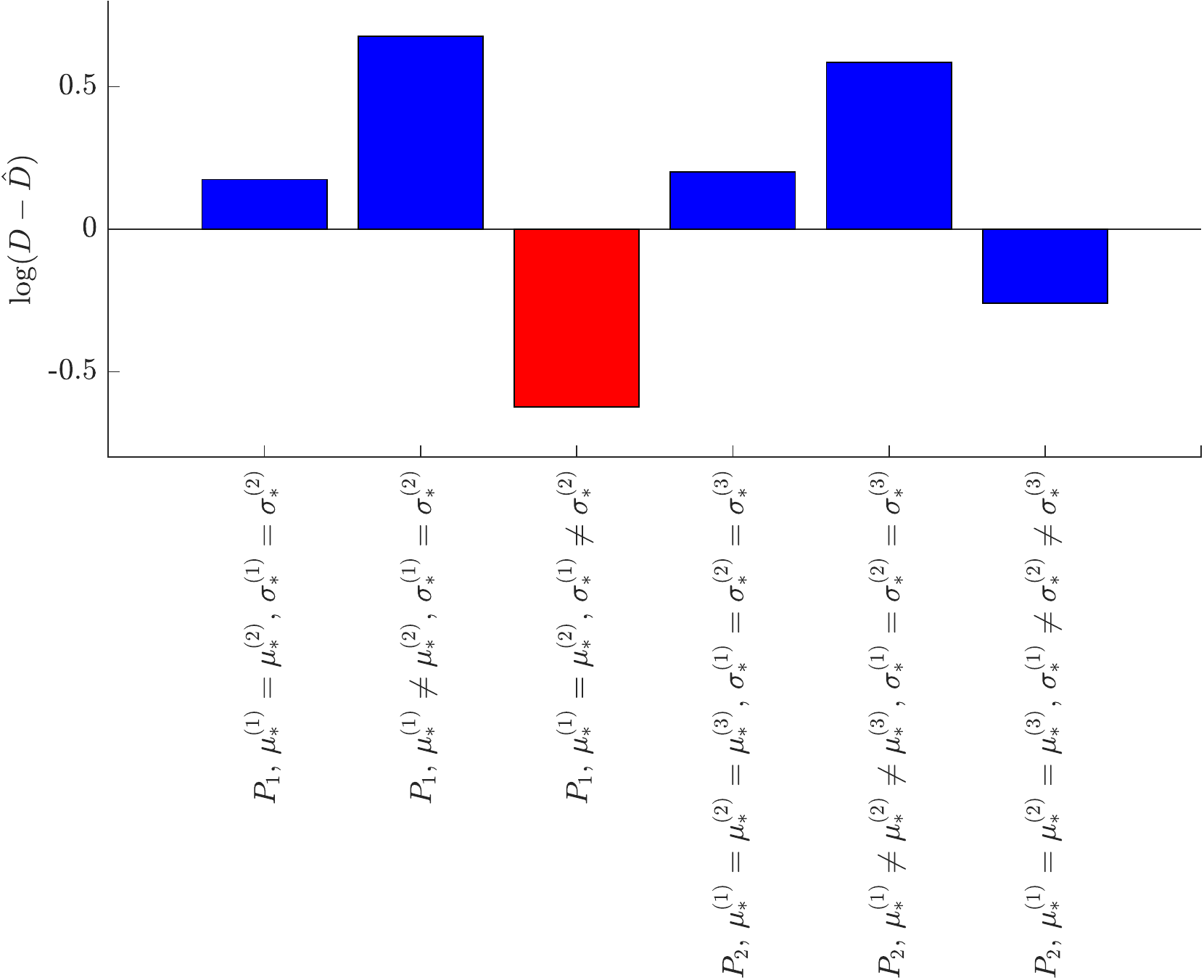} \\
 10\% &  10\% &  10\% 
 \end{array}$
 \end{center}
 \caption{Dictionary classification showing $\log \| {\mathcal D} - \hat{D}_i\|_2 $ : Top row show classification with $5\%$ noise, bottom row with $10\% $ noise, red indicates the predicted object, which is correct in all cases} \label{fig:classify_moref}
 \end{figure}

\bibliographystyle{plain}
\bibliography{ledgerlionheart}

\begin{thebibliography}{10}

\bibitem{ammaribuffa2000}
H.~Ammari, A.~Buffa, and J.~C. {N\'ed\'elec}.
\newblock A justification of eddy currents model for the {M}axwell equations.
\newblock {\em SIAM J. Appl. Math.}, 60:1805--1823, 2000.

\bibitem{ammarivolkov2013}
H.~Ammari, J.~Chen, Z.~Chen, J.~Garnier, and D.~Volkov.
\newblock Target detection and characterization from electromagnetic induction
  data.
\newblock {\em J. {Math.} Pure. {Appl.}}, 101:54--75, 2014.

\bibitem{ammarivolkov2013b}
H.~Ammari, J.~Chen, Z.~Chen, D.~Vollkov, and H.~Wang.
\newblock Detection and classification from electromagnetic induction data.
\newblock {\em J. Comput. Phys.}, 301:201--217, 2015.

\bibitem{baumbook}
C.~E. Baum.
\newblock {\em The Singularity Expansion Method in Electromagnetics}.
\newblock SUMMA Foundation: Albuquerque NM, 2012.

\bibitem{braunisch2002}
H.~Braunisch, O.~A. Chi, K.~O'Neill, and J.~A. Kong.
\newblock Magnetoquasistatic response of a distribution of small conducting and
  permable objects.
\newblock In {\em Geoscience and Remote Sensing Symposium. Taking the Pulse of
  the Planet: The Role of Remote Sensing in Managing the Environment}, page
  6783507, 2000.

\bibitem{davidson2018}
J.L. Davidson, O.A. Abdel-Rehim, P.~Hu, L.A. Marsh, M.D. O'Toole, and A.J.
  Peyton.
\newblock On the magnetic polarizability tensor of us coinage.
\newblock {\em Meas. Sci. Technol.}, 29:035501, 2018.

\bibitem{dekdouk}
B.~Dekdouk, C.~Ktistis, L~A Marsh, D.~W. Armitage, and A.~J. Peyton.
\newblock Towards metal detection and identification for humanitarian demining
  using magnetic polarizability tensor spectroscopy.
\newblock {\em Meas. Sci. Technol.}, 26:115501, 2015.

\bibitem{barrowes2008}
T.M. Gregorcyzk, B.~Zhang, J.A. Kong, B.~E. Barrowes, and K.~O'Neill.
\newblock Electromagnetic induction from highly permeable and conductive
  ellipsoids under arbitrary excitation: application to the detection of
  unexploded ordnances.
\newblock {\em IEEE Trans. Geosci. Remote Sens.}, 46:1164--1176, 2008.

\bibitem{Gregorczy}
T.~M. Grzegorczy, B.~E. Barrowes, F.~Shubitidze, J.~P. Fern\'andez, and
  K.~O'Neill.
\newblock Simultaneous identification of multiple unexploded ordnance using
  electromagnetic induction sensors.
\newblock {\em IEEE Trans. Geosci. Remote Sens.}, 49:2507--2517, 2011.

\bibitem{ledgerlionheart2014}
P.~D. Ledger and W.~R.~B. Lionheart.
\newblock Characterising the shape and material properties of hidden targets
  from magnetic induction data.
\newblock {\em IMA J. Appl. Math.}, 80:1776--1798, 2015.

\bibitem{ledgerlionheart2016}
P.~D. Ledger and W.~R.~B. Lionheart.
\newblock Understanding the magnetic polarizability tensor.
\newblock {\em IEEE Trans. Magn.}, 52:6201216, 2016.

\bibitem{ledgerlionheart2018}
P.~D. Ledger and W.~R.~B. Lionheart.
\newblock An explicit formula for the magnetic polarizability tensor for object
  characterisation.
\newblock {\em IEEE Trans. Geosci. Remote Sens.}, 56:3520--3533, 2018.

\bibitem{ledgerlionheart2017}
P.~D. Ledger and W.~R.~B. Lionheart.
\newblock Generalised magnetic polarizability tensors.
\newblock {\em Math. Meth. Appl. Sci.}, 41:3175--3196, 2018.

\bibitem{ledgerlionheartfeq2018}
P.~D. Ledger and W.~R.~B. Lionheart.
\newblock {TBC} in preperation.
\newblock 2018.

\bibitem{ledgerzaglmayr2010}
P.~D. Ledger and S.~Zaglmayr.
\newblock {$hp$} finite element simulation of three--dimensional eddy current
  problems on multiply connected domains.
\newblock {\em Comp. Method Appl. M.}, 199:3386--3401, 2010.

\bibitem{marsh2015}
J.~Makkonen, L.~A. Marsh, J.~Vihonen, A.~{J\"arvi}, D.~W. Armitage, A.~Visa,
  and A.~J. Peyton.
\newblock Improving reliability for classification of metallic objects using a
  {WTMD} portal.
\newblock {\em Meas. Sci. Technol.}, 26:105103, 2015.

\bibitem{shubitdze2007}
F.~Shubitidze, K.~O'Neill, B.~E. Barrowes, I.~Shamatava, J.~P. {Fern\'andez},
  K.~Sun, and K.~D. Paulsen.
\newblock Application of the normalized surface magnetic charge model to uxo
  discrimination in cases with overlapping signals.
\newblock {\em J. Appl. Geophys.}, 61:292--303, 2007.

\bibitem{shubitidze2004}
F.~Shubitidze, K.~O'Neill, S.A. Haider, K.~Sun, and K.~D. Paulsen.
\newblock Investigation of broadband electromagnetic induction scattering by
  highly conductive permeable arbitrarily shaped {3-D} objects.
\newblock {\em IEEE Trans. Geosci. Remote Sens.}, 42:540--556, 2004.

\bibitem{yin2010}
W.~Yin, X.~Li, P.J. Withers, and A.J. Peyton.
\newblock Non-contact characterisation of hybrid/carbon-fibre-reinforced
  plastic sheets using multi-frequency eddy-current sensors.
\newblock {\em Meas. Sci. Technol.}, 21:105708, 2010.

\bibitem{zhao2016}
Y.~Zhao, W.~Yin, C.~Ktistis, D.~Butterfield, and A.~J. Peyton.
\newblock Determining the electromagnetic polarizability tensors of metal
  objects during in-line scanning.
\newblock {\em IEEE Trans. Instrum. Measure.}, 65:1172--1180, 2016.

\end{thebibliography}

\end{document}